\newtheorem{theorem}{Theorem}[section]
\newtheorem{lemma}[theorem]{Lemma}
\newtheorem{corollary}[theorem]{Corollary}
\newtheorem{proposition}[theorem]{Proposition}
\newtheorem{conjecture}[theorem]{Conjecture}
\newtheorem{remark}[theorem]{Remark}
\newtheorem{assumption}[theorem]{Assumption}
\numberwithin{equation}{section}
\newcommand{\pr}[1]{\mathbb P\left(#1\right)}
\newcommand{\bk}[1]{\left \langle #1 \right \rangle}
\newcommand{\eps}{\varepsilon}
\newcommand{\norm}[1]{\left|\left|#1\right|\right|}
\newcommand{\lr}[1]{\left(#1\right)}
\newcommand{\abs}[1]{\left|#1\right|}
\newcommand{\set}[1]{\left\{#1\right\}}
\newcommand{\E}[1]{\mathbb E\left[#1\right]}
\def\Var{\mathrm{Var}}  
\def\Cov{\mathrm{Cov}} 
\newcommand{\x}{\times}
\newcommand{\R}{\mathbb R}
\newcommand{\gives}{\rightarrow}
\newcommand{\mO}{\mathcal O}
\newcommand{\mA}{\mathcal A}
\newcommand{\mF}{\mathcal F}
\newcommand{\mN}{\mathcal N}
\newcommand{\mL}{\mathcal L}
\newcommand{\mG}{\mathcal G}
\newcommand{\amO}{\overrightarrow{\mO}}
\newcommand{\Z}{\mathbb Z}
\newcommand{\N}{\mathbb N}
\newcommand{\bkl}[1]{\bk{#1}_{(\ell)}}
\newcommand{\bkkl}[1]{\bk{#1}_{K^{(\ell)}}}
\newcommand{\bkal}[1]{\bk{#1}_{\kappa^{(\ell)}}}
\newcommand{\twomat}[4]{\lr{\begin{array}{cc} #1 & #2 \\ #3 & #4 \end{array}}}
\newcommand{\wkappa}{\widehat{\kappa}}
\newcommand{\kappal}{\kappa^{(\ell)}}
\newcommand{\Disq}[1]{\frac{\partial^2}{\partial #1^2}}
\newcommand{\DDi}[2]{\frac{\partial^2}{\partial #1\partial #2}}
\newcommand{\K}[3]{K_{(#1#2)}^{(#3)}}
\newcommand{\Kell}[2]{K_{(#1#2)}^{(\ell)}}
\newcommand{\kell}[2]{\kappa_{(#1)(#2)}^{(\ell)}}
\newcommand{\zz}{\sigma^2}
\newcommand{\oz}{\sigma\sigma'dz}
\newcommand{\oo}{(\sigma'dz)^2}
\newcommand{\tz}{(\sigma\sigma'' (dz)^2 + \sigma\sigma' d^2z)}
\newcommand{\tzs}{\sigma\sigma'' (dz)^2 + \sigma\sigma' d^2z}
\renewcommand{\to}{(\sigma'\sigma'' (dz)^3 + (\sigma')^2dz d^2z)}
\newcommand{\tos}{\sigma'\sigma'' (dz)^3 + (\sigma')^2dz d^2z}
\renewcommand{\tt}{(\sigma'' (dz)^2 + \sigma' d^2z)^2}
\title{Random Fully Connected Neural Networks as Perturbatively Solvable Hierarchies}
\author{Boris Hanin\footnote{BH gratefully acknowledges support by the NSF through  DMS-2143754, DMS-1855684, and DMS-2133806 as well support from an ONR MURI on Foundations of Deep Learning.}\\
Department of Operations Research\\
and Financial Engineering\\
Princeton University\\
bhanin@princeton.edu}
\begin{document}
\maketitle
\begin{abstract}
    This article considers fully connected neural networks with Gaussian random weights and biases as well as $L$ hidden layers, each of width proportional to a large parameter $n$. For polynomially bounded non-linearities
    we give sharp estimates in powers of $1/n$ for the joint cumulants of the network output and its derivatives. Moreover, we show that network cumulants form a perturbatively solvable hierarchy in powers of $1/n$ in that $k$-th order cumulants in one layer have recursions that depend to leading order in $1/n$ only on $j$-th order cumulants at the previous layer with $j\leq k$. By solving a variety of such recursions, however, we find that the depth-to-width ratio $L/n$ plays the role of an effective network depth, controlling both the scale of fluctuations at individual neurons and the size of inter-neuron correlations. Thus, while the cumulant recursions we derive form a hierarchy in powers of $1/n$, contributions of order $1/n^k$ often grow like $L^k$ and are hence non-negligible at positive $L/n$. We use this to study a somewhat simplified version of the exploding and vanishing gradient problem, proving that this particular variant  occurs if and only if $L/n$ is large. Several key ideas in this article were first developed at a physics level of rigor in a recent monograph of Daniel A. Roberts, Sho Yaida, and the author. This article not only makes these ideas mathematically precise but also significantly extends them, opening the way to obtaining corrections to all orders in $1/n$.
\end{abstract}

\section{Introduction}
We live in an era of big data and cheap computation. This has led to remarkable progress in domains ranging from self-driving cars \cite{krizhevsky2012imagenet} to automatic drug discovery \cite{jumper2021highly} and machine translation \cite{brown2020language}. Underpinning many of these exciting practical developments is a class of computational models called neural networks. While they were originally developed in the 1940's and 1950's \cite{hebb1949,rosenblatt1958perceptron}, the complexity of state-of-the-art neural nets is unprecedented. And yet, despite their empirical utility, a theoretical understanding of how they work and how to make them better is nascent. In fact, it is sometimes said that neural networks are simply too complicated to allow for a rigorous understanding of their key features. 

This article adds to the growing body of literature to the contrary. Namely, in the simplest important setting of fully connected networks, we develop a flexible set of probabilistic tools for studying correlation functions of \textit{random fully connected neural networks} at finite width.

A random fully connected neural network, defined precisely in \S \ref{S:what}, is a random field whose law is determined by a fixed and typically non-linear function $\sigma:\R\gives \R$ as well as two structural parameters: a depth $L\in \N$ and a width $n\in \N$. For a fixed depth $L$, in the limit of infinite width $n\gives\infty$ random neural networks converge in distribution to Gaussian processes (see Theorem \ref{T:iwl}). To study finite width effects we therefore consider the higher cumulants for the distribution of the outputs. Our approach is recursive in the network depth $L$ and perturbatively in the reciprocal $1/n$ of the network width. Our main results are:

\begin{itemize}
\item We give in Theorem \ref{T:cumulants} sharp estimates in powers of $1/n$ for the the size of all joint cumulants of networks outputs and their derivatives. These can be viewed as quantitative results refinements of the  central limit theorem at fixed $L$ and large $n$ (cf Theorems \ref{T:iwl} and Theorem \ref{T:pert-exp}).

\item We derive exact recursions with respect to depth that describe cumulants at one layer in terms of cumulants at the previous layer (see e.g. Corollary \ref{C:cumulants-vals} and Proposition \ref{P:4pt-deriv-recs-tanh}). These recursions take the form of perturbatively solvable hierarchies in the sense that the recursion for the $k$-th cumulant at layer $\ell+1$ involves, at leading order in $1/n$ only the $j$-th cumulants at layer $\ell$ with $j\leq k$ (see Corollaries \ref{cor:hier} and \ref{C:cumulants-vals}). This is distinct from but similar in spirit  to the remarkable article \cite{huang2020dynamics}, which derived a dynamical perturbative hierarchy for the so-called neural tangent kernel. As in our work, the perturbative parameter is $1/n$. However, the emphasis in \cite{huang2020dynamics} is primarily on training dynamics while ours is on understanding  the effect of depth. 

\item Solving some special cases of the cumulant recursions to leader order in $1/n$ reveals that it is the depth-to-width ratio $L/n$, which we term the \textit{effective network depth}, rather than the apparent depth $L$, that is a more informative measure of neural network depth and complexity. Mathematically, this suggests a non-trivial double scaling limit for random neural networks in which 
\[
n,L\gives \infty\qquad \text{and}\qquad L/n\gives \xi\in [0,\infty).
\]
For non-linear networks this scaling limit has only started to be considered \cite{hanin2018neural,hanin2020products, roberts2022principles, li2022neural}. Even in the very special case of product of $L$ iid random $n\times n$  matrices (sometimes called deep linear networks) the simultaneous large $n,L$ regime has revealed a range of interesting and not fully understood properties (see e.g. references in  \cite{akemann2012universal,akemann2020universality} as well as \cite{gorin2018gaussian,hanin2020products,hanin2021non,liu2018lyapunov}). In contrast to the $\xi=0$ regime typically considered in previous work on neural networks (c.f. e.g. \cite{du2017gradient,du2018gradient,jacot2018neural,liu2020toward}), we show that when $\xi>0$ our double scaling limit is capable of exhibiting non-Gaussian and non-linear effects. We find the following effects to leading order in $1/n$:  \\
\begin{itemize}
\item We prove in Corollary \ref{C:tanh-cumulants} that for any non-linearity $\sigma$ from the $K_*=0$ universality class (defined in \S \ref{S:tanh-univ}) and for $k=2,3,4$, the $2k^{th}$ cumulants of the output of a random neural network with non-linearity $\sigma$ grow like $(L/n)^{k/2-1}$. This implies, for instance, that both the correlations between neurons and the fluctuations of a single neuron grow like the effective depth $L/n$ at large $L,n$ (see Remark \ref{R:tanh-cumulants} and just after Theorem \ref{T:derivs-cumulants}). Since the components of the output of a depth $0$ neural network are simply iid Gaussians, we see that the effective depth $L/n$ can be thought of as a measure of how close a random fully connected network is to a Gaussian process. Related questions were considered in \cite{basteri2022quantitative,eldan2021non}.


\item We show in Theorems \ref{T:derivs-cumulants} and \ref{T:evgp} that, for random neural networks initialized as in practice (see \S \ref{S:crit}), the variance of the gradient of the network output with respect to either its input or a trainable parameter in its first layer grows like $L/n$ at large $L$. As explained in \S \ref{S:evgp-results}, this gives the first mathematical characterization for fairly general fully connected networks of the so-called exploding and vanishing gradient problem (EVGP). Previously, this problem was solved rigorously in the important special case of random ReLU networks \cite{hanin2018neural,hanin2020products} and solved at a physics level of rigor by a somewhat different but related set of ideas in the monograph \cite{roberts2022principles} of Roberts, Yaida, and the author.\\
 
\end{itemize}

\end{itemize}

The remainder of the introduction is structured as follows. We begin by giving in \S \ref{S:what}  the precise definition of random fully connected neural networks. We then formulate and motivate the main question taken up in this article in \S \ref{S:main-q}. 


\section{Background and Motivation}\label{S:background-motivation}

\subsection{What is a (Random) Fully Connected Neural Network?}\label{S:what}
Neural networks are parameterized families of functions. The simplest kind of networks are called fully connected. Each such network is specified by its \textit{architecture}, which consists of an input dimension $n_0\in \Z_+$, a network depth $L\in \Z_+$, hidden layer widths $n_1,\ldots, n_L\in \Z_+$, an output dimension $n_{L+1}\in \Z_{+}$, and a non-linearity $\sigma:\R\gives\R$. The functions computed by a fully connected network with a given architecture are all maps from $\R^{n_0}$ to $\R^{n_{L+1}}$ that associate to each  network input $x_\alpha\in \R^{n_0}$ an output $z_\alpha^{_{(L+1)}}\in \R^{n_{L+1}}$ through a sequence of intermediate representations $z_\alpha^{_{(\ell)}}\in \R^{n_\ell}$ as follows:
\begin{equation}\label{E:z-def}
z_{\alpha}^{(\ell+1)}:=\begin{cases} b^{(\ell+1)}+W^{(\ell+1)}\sigma(z_{\alpha}^{(\ell)}),&\quad \ell\geq 1\\
b^{(1)}+W^{(1)}x_{\alpha},&\quad \ell=0
\end{cases},\qquad W^{(\ell+1)}\in \R^{n_{\ell+1}\times n_{\ell}},\, b^{(\ell+1)}\in \R^{n_{\ell+1}}.
\end{equation}
In this recursion, the univariate function $\sigma$ applied to a vector $z_\alpha^{_{(\ell)}}\in \R^{n_\ell}$ is short-hand for applying it separately to each component. The entries of the matrices $W^{(\ell)}$ and the components of the vectors $b^{(\ell)}$ are called the \textit{weights and biases} in layer $\ell$, respectively. One typically refers to 
\[
z_{\alpha}^{(\ell)} =\lr{z_{1;\alpha}^{(\ell)},\ldots, z_{n_{\ell};\alpha}^{(\ell)}}\in \R^{n_{\ell}}
\]
as the \textit{vector of pre-activations at layer $\ell$} corresponding to the input $x_\alpha$. 
The most popular choices of $\sigma$ in practice include $\mathrm{ReLU}(t):=\max\set{0,t}$ as well the hyperbolic tangent and their variations. We will analyze these cases in detail later (see \S \ref{S:univ}), but for our general results make only the following mild assumption
\begin{assumption}\label{A:sigma-prop}
There exists $r\geq 1$ so that the $r$-th derivative of $\sigma$ exists almost everywhere and grows at most polynomially:
\[
\exists k\geq 1 \text{ s.t. }\mathrm{sup}_{x\in \R} \abs{(1+\abs{x})^{-k}\frac{d^r}{dx^r}\sigma(x)}<\infty.
\]
\end{assumption}
The primary objects of study in this article are \textit{random fully connected neural networks}, obtained by choosing network weights and biases to be independent centered Gaussians:
\begin{equation}\label{E:Wb-def}
W_{ij}^{(\ell)}\sim\mN(0,C_W/n_{\ell-1}),\qquad b_i^{(\ell)}\sim \mN(0,C_b)\qquad \text{independent}.
\end{equation}
Here $C_b\geq 0,C_W>0$ are fixed constants. The $1/n_{\ell-1}$ scaling in the weight variance ensures that the moments of the outputs $z_\alpha^{_{(L+1)}}$ remain uniformly bounded as $n_1,\ldots, n_L,\gives \infty$ (see e.g. Theorem \ref{T:iwl} and \eqref{E:cond-gauss-vals}). While we carry out a substantial portion of our analysis for any choice of $C_b,C_W$, as we will see in \S \ref{S:crit}, there often exist distinguished $\sigma$-dependent settings of $C_b,C_W$ for which random neural networks at infinite width $n_1,\ldots, n_L\gives \infty$ are well-behaved at large depth $L$.

\subsection{Statement and Motivation for Questions Addressed in this Article}\label{S:main-q}
The main problem we take up in the present article is to characterize the finite dimensional distributions of a random neural network $x_\alpha\mapsto z_\alpha^{_{(L+1)}}$ (and its derivatives with respect to $x_\alpha$) in the regime where the input dimension $n_0$ is arbitrary with the inputs $x_\alpha$ satisfying
\[
\frac{1}{n_0}\norm{x_\alpha}_2^2 < \infty,
\]
the output dimension $n_{L+1}$ is fixed, and the hidden layer widths $n_\ell$ are large but finite:
\begin{equation}\label{E:n-def}
\exists c,C>0\text{ s.t. }\qquad c n \leq n_1,\ldots, n_L\leq Cn,\qquad \qquad n\gg 1.
\end{equation}
Our approach will be to describe the random field $x_\alpha\mapsto z_\alpha^{_{(L+1)}}$ perturbatively in $1/n$ and recursively in $L$. 
Before proceeding to the technical statements of our results, we pause to address below the following motivational questions:
\begin{itemize}
\item Why study random neural networks? 
\item Why treat $1/n$ as a perturbative parameter? 
\item Why study networks at finite width? 
\item What role does $\sigma$ play? 
\end{itemize}
The primary use of a neural network with a fixed architecture is to find a setting of its parameters $W^{{(\ell)}}, b^{{(\ell)}}$ giving rise to a mapping $x_\alpha\mapsto z_\alpha^{_{(L+1)}}$ as in \eqref{E:z-def} that matches a dataset $\set{(x_i,f(x_i))}$ of values for some otherwise unknown function $f:\R^{n_0}\gives \R^{n_{L+1}}$. The optimization procedure for finding these weights and biases is almost always some variant of gradient descent starting with $W^{{(\ell)}}, b^{{(\ell)}}$ drawn at random from the distribution \eqref{E:Wb-def}. Thus, studying the properties of random neural networks gives direct insights into the starting conditions for neural network optimization. For instance, understanding the behavior of neural networks at the start of training gives a principled way to set optimization hyperparameters (e.g. the variances $C_b,C_W$ and the step size used for gradient descent). We refer the interested reader our discussion of criticality in \S \ref{S:crit-univ} as well as to \S 3.2 in \cite{roberts2022principles} the articles articles \cite{he2022understanding,martens2021rapid,schoenholz2016deep,zhang2022deep} for more on this point.

Next, to address why $1/n$ can reasonably be treated perturbatively, we recall that networks used in practice often have a very large number of parameters. This is reflected in the fact that both the layer widths $n_\ell$ and the network depth $L$ are often big. Thus, it is sensible to first understand various limits of neural networks in which the number of parameters tends to infinity. The most well-studied regime of this type (though not the only option cf eg \cite{mei2018mean,rotskoff2018neural,sirignano2020mean,sirignano2021mean,yang2021tensoriv}) is the infinite width limit, also known as the \textit{NTK regime}. By definition, this regime is accessed by fixing the depth $L$, the input and output dimensions $n_0,n_{L+1}$, and the non-linearity $\sigma$, the initialization scheme \eqref{E:Wb-def} and considering the limit when $n_1,\ldots, n_L\gives \infty$. From the random matrix theory point of view this is the free probability regime. In view of the relation \eqref{E:n-def} the NTK regime is obtained by taking $n\gives \infty$ at fixed $L$ and has two salient features: \\
\begin{itemize}
	\item At the start of training, neural networks converge to a Gaussian process (see Theorem \ref{T:iwl} below) \cite{lee2017deep,matthews2018gaussian,neal1996priors,novak2018bayesian,yang2019scaling,yang2019tensori,yang2018deep}. 
	\item For the purposes of optimization of a squared loss, the network can be replaced by \textit{its linearization at the start of training} (see \cite{chizat2018note,du2017gradient,jacot2018neural,lee2019wide,liu2020toward}). \\
\end{itemize}

Taken together, these two points show that at least at infinite width and finite depth, it is the structure of the network at initialization that determines not only the start of training but really the entire training trajectory. However, the infinite width limit is too rigid to capture the ability of real work networks to learn data-dependent features (see e.g. \cite{hanin2019finite,huang2020dynamics,roberts2022principles,yang2021tensoriv}). Only finite width networks can capture these effects! Since the starting point for our analysis of random neural networks at finite width is their infinite width behavior, we take this opportunity to record the following result about random neural networks in the NTK regime. 






\begin{theorem}[Random Networks at fixed $L$ and Infinite Width are Gaussian Processes]\label{T:iwl}
    Fix a non-negative integer $r\geq 0$, and suppose $\sigma:\R\gives \R$ is $r$-times differentiable and that its $r$-th derivative is polynomially bounded: 
    \[
    \exists k\geq 1\text{ s.t. } \sup_{x\in \R}\abs{(1+\abs{x})^{-k}\frac{d^r}{dx^r}\sigma(x)}<\infty. 
    \]
    Then the finite-dimensional distributions of the stochastic process $x_\alpha\mapsto z_\alpha^{(L+1)}$ and its derivatives of order up to $r$ converge to those of a centered Gaussian process with $n_{L+1}$ iid components. The limiting variance of each component 
    \[
    K_{\alpha\beta}^{(L+1)}:=\lim_{n_1,\ldots, n_L\gives \infty}\Cov\lr{z_{i;\alpha}^{(L+1)}, z_{i;\beta}^{(L+1)}},\qquad x_\alpha,x_\beta\in\R^{n_0},
    \]
satisfies the recursion
    \begin{align}
       \label{E:K-rec} K_{\alpha\beta}^{(\ell+1)} = \begin{cases}
        C_b + C_W \bk{\sigma(z_\alpha)\sigma(z_\beta)}_{K^{(\ell)}},&\quad \ell \geq 1\\
        C_b + \frac{C_W}{n_0} \sum_{j=1}^{n_0}x_{j;\alpha}x_{j;\beta},&\quad \ell=0
        \end{cases}.
    \end{align}
\end{theorem}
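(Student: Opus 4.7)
The plan is to proceed by induction on the layer index $\ell$, using the conditional Gaussianity of each layer given the previous one together with a law of large numbers for the conditional covariance. Writing $\mF_\ell$ for the sigma-algebra generated by the weights and biases in layers $1,\ldots,\ell$, observe that for $\ell\geq 1$ the components $z_{i;\alpha}^{(\ell+1)}$ are, conditional on $\mF_\ell$, independent across the neuron index $i$ and jointly Gaussian in the input index $\alpha$, with conditional covariance
\[
\Cov\!\lr{z_{i;\alpha}^{(\ell+1)}, z_{j;\beta}^{(\ell+1)}\,\middle|\,\mF_\ell} = \delta_{ij}\lr{C_b + \frac{C_W}{n_\ell}\sum_{k=1}^{n_\ell}\sigma(z_{k;\alpha}^{(\ell)})\sigma(z_{k;\beta}^{(\ell)})}.
\]
The base case $\ell=0$ is immediate since $z_\alpha^{(1)}=b^{(1)}+W^{(1)}x_\alpha$ is exactly a centered Gaussian with iid components and the covariance in \eqref{E:K-rec}.

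For the induction step (taking $r=0$ first), I would fix any finite collection of inputs $x_{\alpha_1},\ldots,x_{\alpha_m}$ and show that the random empirical kernel $\wkappa_{\alpha\beta}^{(\ell)}:=\frac{1}{n_\ell}\sum_{k=1}^{n_\ell}\sigma(z_{k;\alpha}^{(\ell)})\sigma(z_{k;\beta}^{(\ell)})$ converges in probability to the deterministic limit $(K_{\alpha\beta}^{(\ell+1)}-C_b)/C_W=\bk{\sigma(z_\alpha)\sigma(z_\beta)}_{K^{(\ell)}}$. Since by the inductive hypothesis the $n_\ell$ vectors $(z_{k;\alpha_1}^{(\ell)},\ldots,z_{k;\alpha_m}^{(\ell)})$ are asymptotically iid centered Gaussians with covariance $K^{(\ell)}$, the convergence $\wkappa_{\alpha\beta}^{(\ell)}\to \bk{\sigma(z_\alpha)\sigma(z_\beta)}_{K^{(\ell)}}$ follows from a weak law of large numbers once one checks uniform integrability of $\sigma(z_{k;\alpha}^{(\ell)})\sigma(z_{k;\beta}^{(\ell)})$; this is exactly where the polynomial growth hypothesis on $\sigma$ enters, via moment bounds for the pre-activations (which are finite at each fixed $L$ by iterating \eqref{E:z-def} together with Gaussian moment estimates on the weights). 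Combining the concentration of $\wkappa^{(\ell)}$ with the conditional Gaussianity above and a characteristic-function argument (the conditional characteristic function of layer $\ell+1$ is a Gaussian functional of the empirical kernel and hence converges pointwise to the Gaussian characteristic function with covariance $K^{(\ell+1)}$) gives convergence in finite-dimensional distribution at layer $\ell+1$, completing the induction.

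To include derivatives of order up to $r$, I would upgrade the inductive hypothesis: jointly in inputs and in multi-indices $|\gamma|\leq r$, the process $(x_\alpha,\gamma)\mapsto \partial_{x_\alpha}^\gamma z_\alpha^{(\ell)}$ converges to a centered Gaussian process with iid neuron components. The recursion for derivatives is obtained by differentiating \eqref{E:z-def} and using the chain rule, giving linear combinations in the weights whose coefficients are polynomial in $\sigma^{(j)}(z_\alpha^{(\ell)})$ and in lower-order derivatives of $z_\alpha^{(\ell)}$; conditional Gaussianity in layer $\ell+1$ therefore persists, and the conditional covariance structure is again an empirical average of polynomials in the $r$-jet of layer $\ell$. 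Provided these polynomials are uniformly integrable, which follows once more from the polynomial-growth assumption applied to $\sigma^{(j)}$ for $j\leq r$, the same LLN plus Gaussian characteristic-function argument closes the induction.

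The main obstacle is the uniform-integrability/moment-bound step: one must establish that arbitrarily high moments of the pre-activations and their first $r$ derivatives remain bounded as $n_1,\ldots,n_L\to\infty$, since only then does convergence of the conditional covariance in probability combine with conditional Gaussianity to yield unconditional Gaussian limits. This is handled inductively by using Gaussian hypercontractivity or direct Wick calculus on the conditionally Gaussian layers, together with the polynomial bound on $\sigma$ and its relevant derivatives to propagate moment control from layer $\ell$ to layer $\ell+1$. Everything else is essentially bookkeeping once the right inductive statement, involving the full $r$-jet of the process, is set up.
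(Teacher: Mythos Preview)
Your proposal is correct and follows essentially the same route as the paper's proof (Appendix~\ref{A:iwl}): induction on $\ell$ via conditional Gaussianity given $\mF^{(\ell)}$, concentration of the empirical conditional covariance, and an upgraded inductive hypothesis on the full $r$-jet for the derivative statement. The only cosmetic difference is that where you invoke a weak law of large numbers for asymptotically iid summands, the paper phrases this directly as vanishing variance of collective observables $\mO_f^{(\ell)}$, via the decomposition into an $O(n_\ell^{-1})$ diagonal term and a cross-covariance bounded by $\Var\bigl[\E{f\mid\mF^{(\ell-1)}}\bigr]$, which tends to zero by induction and dominated convergence.
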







In the statement of Theorem \ref{T:iwl} and henceforth we reserve the symbol $\bk{f(z_\alpha,z_\beta)}_{\kappa}$ to denote the expectation of $f(z_\alpha,z_\beta)$ with respect to the Gaussian distribution
\[
    \lr{z_\alpha,z_\beta} \sim \mathcal N\lr{0,\twomat{\kappa_{\alpha\alpha}}{\kappa_{\alpha\beta}}{\kappa_{\alpha\beta}}{\kappa_{\beta\beta}}},
\]
where $\kappa_{\alpha\beta}=\kappa(x_\alpha,x_\beta)$ is a given covariance function. The conclusion in Theorem \ref{T:iwl} is not new, having been obtained many times and under a variety of different assumptions (including for more general architectures) \cite{hanin2021deep,lee2017deep,matthews2018gaussian,poole2016exponential,yang2019tensori}. We refer the interested reader to \cite{hanin2021deep} for a discussion of prior work and note only that convergence of the derivatives of the field $z_{\alpha}^{_{(L+1)}}$ to its Gaussian limit does not seem to have been previously considered. We give a short proof that includes convergence of derivatives along the lines of the arguments in \cite{hanin2021deep,lee2017deep} in Appendix \S \ref{A:iwl}. 

Let us remark that the relation between $K^{{(\ell+1)}}$ to $K^{(\ell)}$ supplied by the recursion \eqref{E:K-rec} is in general non-linear due to the presence of $\sigma$ and depends crucially on the weight and bias variances $C_W,C_b$. Given $\sigma$, finding a choice of $C_b,C_W$ so that this recursion is well-behaved (e.g. not exponentially growing or decaying) at large $\ell$ is an important practical matter \cite{hubara2017quantized,lee2017deep,park2019effect}. We explain this in \S \ref{S:crit}, where we also point out that the different possible large $\ell$ limits suggest the existence of universality classes of random neural networks. 

Finally, as we alluded to above, the tremendous simplification of random neural networks in the NTK regime comes at a steep cost in terms of the descriptive power of the resulting model in the sense that such models cannot learn data-dependent features. We again refer to reader to articles such as \cite{chizat2018global,mei2018mean, rotskoff2018neural,sirignano2020mean,sirignano2021mean,yang2021tensoriv} in which a different initialization scheme leads to infinite width limits capable of feature learning. Since we do not treat feature learning in this article, we will not elaborate further on this point, referring the interested reader to \cite{chizat2018note,hanin2019finite,huang2020dynamics,roberts2022principles, woodworth2020kernel, yang2021tensoriv} instead.

Finally, before formulating the new results in this article, we point the interested to a final tranche of   physics-style references such as \cite{andreassen2020asymptotics, cohen2021learning,dyer2019asymptotics,koch2018mutual,lewkowycz2020large,naveh2021predicting,naveh2021self,seroussi2021separation}, which analyze a variety of questions related to either very wide or infinitely wide networks. 

\section{Results}\label{S:results}

In this section we give precise formulations of our results on random neural networks. We start in \S \ref{S:precise-results} by stating a structural result, Theorem \ref{T:cumulants}, on the size of the joint cumulants of a random neural networks and its derivatives in powers of $1/n$. We then extend this order of magnitude estimate to obtain in Theorem \ref{T:pert-exp} a general prescription for obtaining series expansions in powers of $1/n$ for expectation values of functions of a random depth $\ell+1$ neural network in terms of those of a random depth $\ell$ network. An application of Theorem \ref{T:pert-exp} yields explicit recursions for the $4^{th},6^{th}, 8^{th}$ joint cumulants for components $z_{i;\alpha}^{_{(\ell)}}$ of the vector of pre-activations at layer $\ell$ corresponding to a fixed network input $x_\alpha.$ These recursions are recorded in Corollary \ref{C:cumulants-vals}. See \S \ref{S:proof-overview} for an overview of the proof of Theorems \ref{T:cumulants} and \ref{T:pert-exp} as well as Corollary \ref{C:cumulants-vals}.

After stating these results, we describe in \S \ref{S:crit-univ} the important notion of criticality in random neural networks, which corresponds to choosing values of the weight and bias variance parameters $C_b,C_W$ in \eqref{E:Wb-def} depending on $\sigma$ in such a way that the infinite width covariance $K_{\alpha\beta}^{_{(\ell)}}$ (and in fact the higher cumulants at finite width as well) is well-behaved at large $\ell$. The variety of resulting large $\ell$ behaviors determines universality classes of random neural networks, as we explain in \S \ref{S:univ}. 

We then turn in \S \ref{S:relu-univ} and \S \ref{S:tanh-univ} to solving to the $2k$-th cumulant recursions for $k=1,2,3,4$ from Corollary \ref{C:cumulants-vals} in random networks tuned to criticality with non-linearities either from the universality class of ReLU or of tanh. We will see that, at large network depth $L$ and leading order in $1/n$, these cumulants depend only on the effective network depth $L/n$. We formalize our belief that this is a universal phenomenon in Conjecture \ref{C:resummation}. We then solve in \S \ref{S:evgp-results} a simplified version of the so-called exploding and vanishing gradient problem (EVGP), which concerns the empirical variance of parameter gradients in a random neural network. More precisely, for non-linearities in the $K_*=0$ universality class in prove (see Theorem \ref{T:evgp}) that to leading order in $1/n$ the empirical variance over weights in the first layer of network gradients grows linearity in the effective network depth $L/n$. These results depend on Theorem \ref{T:derivs-cumulants}, which gives for non-linearities in the $K_*=0$ universality class asymptotics at large $\ell$ for joint fourth cumulants between the values of partial derivatives of the output of a random neural network. 


\subsection{Cumulants of Random Neural Networks at Finite Width}\label{S:precise-results}
Since in the infinite $n$ limit, the field $z_{\alpha}^{_{(L+1)}}$ is Gaussian (see Theorem \ref{T:iwl}), it is natural to measure perturbations around this regime by considering the behavior of the cumulants of $z_{\alpha}^{_{(L+1)}}$ and its derivatives. Let us therefore agree that, given random variables $X_1,\ldots, X_k$ with finite moments defined on the same probability space, we will denote their mixed cumulant by 
\begin{equation}\label{E:cumulant-def}
\kappa\lr{X_1,\ldots, X_k}:= i^k\frac{\partial^k}{\partial t_1\cdots \partial t_k}\bigg|_{t=0}\log \E{\exp\left[-i(t_1X_1+\cdots + t_kX_k)\right]}.    
\end{equation}
Thus, for example, $\kappa(X_1)=\E{X_1}$ and $\kappa(X_1,X_2)=\Cov(X_1,X_2).$ We refer the reader to \S \ref{S:cumulant-background} for background on cumulants. Our first result, Theorem \ref{T:cumulants}, gives estimates on the order in $1/n$ of the cumulants of $z_{\alpha}^{_{(L+1)}}$ and its derivatives. To state it, we fix a finite collection
\[
\set{x_\alpha,\, \alpha\in \mA}\subseteq \R^{n_0},
\]
of $\abs{\mA}$ distinct network inputs. Moreover, we fix a collection of $p$ directional derivatives:
\begin{equation}\label{E:der-deriv-def}
D=\lr{d_1,\ldots, d_p},\qquad d_j:=\nabla_{v_j}=\sum_{i=1}^{n_0} v_{ij}\partial_{x_i}.    
\end{equation}
and for any multi-index $J=\lr{j_1,\ldots,j_{p}}\in \N^{p}$ set
\[
 D_\alpha^{J}:= d_1^{j_1}\cdots d_m^{j_{p}}\bigg|_{x=x_\alpha}
\]
for the corresponding differential operator of order $\abs{J}:=j_1+\cdots + j_p.$






\begin{theorem}[order of magnitude for cumulants of random neural networks]\label{T:cumulants}
Fix $r,L\geq 1$ and suppose that $\sigma:\R\gives \R$ satisfies Assumption \eqref{A:sigma-prop} with this value of $r$. Suppose further that one of the following two conditions holds:
\begin{itemize}
\item $\sigma$ is smooth
\item the limiting covariance matrix
\begin{equation}\label{E:pd-cov}
\lr{\lim_{n\gives \infty}\Cov\lr{D_{\alpha_1}^{J_1}z_{1;\alpha_1}^{(\ell)}, D_{\alpha_2}^{J_2}z_{1;\alpha_2}^{(\ell)}}}_{\substack{\abs{J_1},\abs{J_2}\leq r\\ \alpha_1,\alpha_2\in \mA}}
\end{equation}
of derivatives of order at most $r$ in the directional derivatives $d_1,\ldots,d_p$ of the scalar field $z_{1;\alpha}^{(\ell)}$ is strictly positive definite in the infinite width limit for all $\ell\leq L$.
\end{itemize}
Then, for each $k,\ell\geq 1$, as $n\gives \infty$
\begin{equation}\label{E:kappa-est}
\kappa\lr{D_{\alpha_1}^{J_1}z_{i_1;\alpha_1}^{(\ell)},\ldots, D_{\alpha_k}^{J_k}z_{i_k;\alpha_{k}}^{(\ell)}}= \begin{cases}0,&\quad k\text{ odd}\\
O(n^{-\frac{k}{2}+1}),&\quad k\text{ even}\end{cases},
\end{equation}
where the implicit constant in the error term depends on $k$, the inputs $x_{\alpha_1},\ldots, x_{\alpha_k},$ the multi-indices $J_1,\ldots, J_k$, the weight and bias variances $C_b,C_W,$ the non-linearity $\sigma$, and the layer index $\ell$.
\end{theorem}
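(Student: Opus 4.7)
My plan is to induct on $\ell$, using the fact that at each layer the pre-activations are conditionally Gaussian given the previous layer. For the base case $\ell=1$, the field $z_\alpha^{(1)} = b^{(1)}+W^{(1)}x_\alpha$ and all its $x$-derivatives are linear combinations of the Gaussians $b^{(1)}, W^{(1)}$, so all cumulants of order $k\geq 3$ vanish identically. For the inductive step, let $\mathcal{F}^{(\ell)}$ denote the $\sigma$-algebra generated by the layer-$\ell$ pre-activations. Since $(b^{(\ell+1)}, W^{(\ell+1)})$ is Gaussian and independent of $\mathcal{F}^{(\ell)}$, the entire family $\{D_\alpha^J z_{i;\alpha}^{(\ell+1)}\}$ is, conditionally on $\mathcal{F}^{(\ell)}$, a centered Gaussian vector with conditional covariance
\[
\widehat K^{J,J'}_{\alpha,\beta} := C_b\,\delta_{|J|,0}\delta_{|J'|,0} + \frac{C_W}{n_\ell}\sum_{k=1}^{n_\ell} D_\alpha^J\sigma(z_{k;\alpha}^{(\ell)})\,D_\beta^{J'}\sigma(z_{k;\beta}^{(\ell)}),
\]
diagonal in the output neuron index $i$. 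The odd-$k$ case of \eqref{E:kappa-est} then follows at once: the conditional law being centered Gaussian makes the marginal law symmetric under $Y\mapsto -Y$, so odd moments, hence odd cumulants, vanish.

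For even $k$, I would invoke the law of total cumulance (Brillinger):
\[
\kappa(Y_1,\ldots,Y_k) = \sum_{\pi\vdash [k]} \kappa_\pi\bigl(\{\kappa(Y_B\mid\mathcal{F}^{(\ell)}) : B\in\pi\}\bigr),
\]
where $Y_s = D_{\alpha_s}^{J_s} z_{i_s;\alpha_s}^{(\ell+1)}$. Conditional Gaussianity kills every block of size $\geq 3$, and the centering kills every singleton block, so only perfect matchings $\pi$ of $[k]$ survive and each block $B=\{a,b\}$ contributes the random conditional covariance $\widehat K_B := \delta_{i_a,i_b}\widehat K^{J_a,J_b}_{\alpha_a,\alpha_b}$. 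Hence
\[
\kappa(Y_1,\ldots,Y_k) = \sum_{\text{matchings } M}\kappa\bigl(\widehat K_{B_1},\ldots,\widehat K_{B_{k/2}}\bigr),
\]
and Theorem \ref{T:cumulants} reduces to showing, for each $m\geq 1$, that the joint cumulant of $m$ such conditional covariances is $O(n^{1-m})$.

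To prove this auxiliary estimate, I would use that each $\widehat K_B$ is (up to the deterministic $C_b$) an empirical average $\frac{C_W}{n_\ell}\sum_k F_B(z_{k;\bullet}^{(\ell)})$, so by multilinearity
\[
\kappa(\widehat K_{B_1},\ldots,\widehat K_{B_m}) = \frac{C_W^m}{n_\ell^m}\sum_{k_1,\ldots,k_m=1}^{n_\ell}\kappa\bigl(F_{B_1}(z_{k_1}^{(\ell)}),\ldots,F_{B_m}(z_{k_m}^{(\ell)})\bigr).
\]
I would partition the sum by the coincidence pattern $\tau$ of $(k_1,\ldots,k_m)$: a pattern with $|\tau|$ distinct indices provides $\Theta(n^{|\tau|})$ summands and, via the inductive hypothesis combined with either Taylor expansion (smooth case) or a Gaussian integration-by-parts / Price-type identity (non-smooth case with the positive-definite covariance assumption ensuring a nondegenerate density), yields a joint cumulant of order $O(n^{1-|\tau|})$—this is the key propagation step, expressing cumulants of $F(z^{(\ell)})$ in terms of cumulants of $z^{(\ell)}$ and hence invoking the inductive hypothesis at layer $\ell$. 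Summing over $\tau$ gives the required $O(n^{1-m})$ bound, which feeds back to yield $\kappa(Y_1,\ldots,Y_k)=O(n^{-k/2+1})$ and closes the induction.

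The main obstacle is the auxiliary estimate on cumulants of $F(z_{k_j}^{(\ell)})$ across different neuron indices. Neurons at layer $\ell$ are only conditionally iid given layer $\ell-1$, so cross-neuron cumulants are small but nonzero, and controlling them requires a careful combinatorial count in which each "collision" of indices costs a factor of $1/n$ and each higher cumulant at layer $\ell$ also costs a factor of $1/n$ by the inductive hypothesis. In the non-smooth $\sigma$ case, the positive-definite covariance assumption is what permits the integration-by-parts step to replace derivatives of $\sigma$ by matrix-element manipulations against the (nondegenerate) Gaussian density, ensuring the bound still closes even when $\sigma$ only has an $r$-th derivative almost everywhere.
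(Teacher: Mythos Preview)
Your reduction is exactly the paper's: conditional Gaussianity, law of total cumulance, survival only of pair partitions, so that the $k$-th cumulant at layer $\ell+1$ becomes a $(k/2)$-fold joint cumulant of the random conditional covariances $\widehat K_B$ (this is the paper's Proposition~\ref{P:collective-reduce}). The gap is in how you propose to bound that $(k/2)$-fold cumulant.

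You set up the induction on $\ell$ with hypothesis ``linear cumulants $\kappa(D^{J_1}z^{(\ell)}_{i_1},\ldots,D^{J_k}z^{(\ell)}_{i_k})=O(n^{1-k/2})$'' and try to derive from it, via Taylor expansion or a Price-type identity, the bound $\kappa\bigl(F_{B_1}(z_{k_1}^{(\ell)}),\ldots,F_{B_m}(z_{k_m}^{(\ell)})\bigr)=O(n^{1-|\tau|})$. This step does not close. Cumulants are multilinear but not polynomial in their arguments, so there is no reduction of $\kappa(F(z_1),\ldots,F(z_m))$ to cumulants of the linear $z_j$ that invokes your hypothesis. Already at $m=2$ with distinct neurons, conditioning on $\mathcal F^{(\ell-1)}$ gives $\mathrm{Cov}\bigl(F(z_1^{(\ell)}),F(z_2^{(\ell)})\bigr)=\mathrm{Var}\bigl(g(\Sigma^{(\ell-1)})\bigr)$ with $g(\Sigma)=\mathbb E_{\mathcal N(0,\Sigma)}[F]$: what is needed is a bound on the variance of a \emph{smooth function of a collective observable at layer $\ell-1$}---the same auxiliary estimate one layer down, not a consequence of linear cumulant bounds at layer $\ell$.

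The paper's fix is to strengthen the inductive load. It proves by induction on $\ell$ the statement (Theorem~\ref{T:smooth-cumulants}) that for any tuple of centered collective observables $\amO_F^{(\ell)}=(\mO_{f_1}^{(\ell)},\ldots,\mO_{f_m}^{(\ell)})$ and any $g_j$ smooth near $0$, one has $\kappa\bigl(g_1(\amO_F^{(\ell)}),\ldots,g_k(\amO_F^{(\ell)})\bigr)=O(n^{1-k})$. This is what actually closes the loop: the conditional cumulants in the total-cumulance expansion at layer $\ell+1$ are, by Lemma~\ref{L:smooth-gauss}, smooth functions of the collective observables $D^J D^{J'}\Sigma^{(\ell)}$, so the outer cumulant is exactly of the type covered by the hypothesis at layer $\ell$. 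The base case ($\ell=1$, neurons genuinely iid) and the inductive step both run a graph-connectivity count in the spirit of your $\tau$-partition, preceded by a localization to $\{|\Delta^{(\ell)}|<n^{-1/4}\}$ and a reduction to polynomial $g_j$ (Proposition~\ref{P:poly-to-gen}). Your final paragraph correctly identifies this as the crux; the missing idea is to carry the induction at the level of collective observables rather than of $z^{(\ell)}$ itself.
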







We prove Theorem \ref{T:cumulants} in \S \ref{S:cumulants-pf}. At a physics level of rigor, Theorem \ref{T:cumulants} with $k=4$ and no derivatives was already derived in the breakthrough work of Yaida \cite{yaida2019non}. In fact, Yaida's original article went much further: it obtained a recursive formula with respect to $\ell$ for the fourth cumulant $\kappa(z_{i_1;\alpha_1}^{_{(\ell)}},\ldots,z_{i_4;\alpha_4}^{_{(\ell)}})$ at layer $\ell$ in terms of the second and fourth cumulants at layer $\ell-1$. This is analogous to the recursion \eqref{E:K-rec} for the infinite width covariance $K_{\alpha_1\alpha_2}^{_{(\ell)}}$. This theme was then picked up and significantly expanded upon in the physics monograph \cite{roberts2022principles}, which computes, among other things, at order $1/n$ the leading corrections to the field $z_{\alpha}^{_{(\ell)}}$ and its derivatives with respect to both $x_\alpha$ and model parameters. We will reproduce some of these recursions and obtain new ones of a similar flavor below. 

Compared to this prior work the main novelty of Theorem \ref{T:cumulants} is two-fold. First, it gives sharp estimates in powers of $1/n$ for cumulants of all orders (the sharpness can already be seen when $\ell=2$). Second, it treats in a uniform way the cumulants for not only the values but also all derivatives of $z_{\alpha}^{_{(\ell)}}$. 

In order to put Theorem \ref{T:cumulants} into context, we take this opportunity to make an important remark. Namely, it is no accident that the order of magnitude $O(n^{-k+1})$ for the $2k$-th cumulant in Theorem \ref{T:cumulants} is the same as that of the $k$-th cumulant of an average of $n$ iid random variables. To see why, let us denote by $\mF^{(\ell)}$ the sigma algebra generated by the weights and biases in layers up to and including $\ell$. Since we've assumed weights and biases to be Gaussian and independent for different neurons, we find that conditional on $\mF^{(\ell)}$ the vectors 
\[
z_{i;\mA}^{(\ell+1)} :=\lr{z_{i;\alpha}^{(\ell+1)},\, \alpha\in \mA}
\]
are independent centered Gaussians with the covariance 
\begin{equation}\label{E:big-sig-def}
\Sigma_{\alpha\alpha'}^{(\ell)}:= \Cov\lr{z_{i;\alpha}^{(\ell+1)},z_{i;\alpha'}^{(\ell+1)}~|~\mF^{(\ell)}} = C_b+ \frac{C_W}{n_{\ell}}\sum_{j=1}^{n_{\ell}}\sigma\lr{z_{j;\alpha}^{(\ell)}} \sigma\lr{z_{j;\alpha'}^{(\ell)}}.
\end{equation}
Put another way, we have the following equality in distribution:
\begin{equation}\label{E:cond-gauss-vals}
\lr{z_{i;\alpha}^{(\ell+1)}}_{i=1}^{n_{\ell+1}}~\stackrel{d}{=}~ \lr{\lr{\Sigma^{ (\ell)}}^{1/2} Z_i}_{i=1}^{n_{\ell+1}},    
\end{equation}
where $ Z_i$ are i.i.d. standard Gaussians and are independent of the random covariance matrix
\[
\Sigma^{(\ell)} := \lr{\Sigma_{\alpha\alpha'}^{(\ell)}}_{\substack{\alpha,\alpha'\in \mA}}.
\]
Inspecting \eqref{E:big-sig-def} shows this conditional covariance has the structure of an average of $n_\ell$ identically distributed random matrices, each depending only on the pre-activations $z_{i;\mA}^{_{(\ell)}}$ of a single neuron at layer $\ell$. We will refer to such random variables as \textit{collective observables}. The key point is that while the $z_{i;\mA}^{_{(\ell)}}$ are not independent for different $i$ at finite width, we will show that they are sufficiently weakly correlated that the cumulants of $\Sigma_{\alpha\alpha'}^{_{(\ell)}}$ have the same order in $n$ as if they were exactly independent (see Theorem \ref{T:smooth-cumulants} and Lemma \ref{L:moment-bounds}). This reasoning applies equally well to derivatives of $z_{i;\alpha}^{_{(\ell+1)}}$ and explains the order of magnitude of the estimates in Theorem \ref{T:cumulants}.

Collective observables such as $\Sigma_{\alpha\alpha'}^{_{(\ell)}}$ are a convenient book-keeping device for studying the fully distribution of
\[
D^{\leq r}z_{i;\mA}^{(\ell+1)}:=\lr{D_{\alpha}^Jz_{i;\alpha}^{(\ell+1)},\quad \alpha \in \mA,\, J\in \N^{p},\, \abs{J}\leq r}.
\]
Indeed, the conditional Gaussian structure \eqref{E:cond-gauss-vals} means the cumulants of $D^{\leq r}z_{i;\mA}^{_{(\ell+1)}}$ are easily expressible in terms of those of $D_{\alpha}^{J}D_{\alpha'}^{J'}\Sigma_{\alpha\alpha'}^{_{(\ell)}}$. For example, using Wick's Theorem and the multi-linearity of cumulants reveals that the fourth cumulant 
\begin{align*}
\kappa\lr{D_{\alpha_1}^{J_1}z_{i_1;\alpha_1}^{(\ell+1)}, D_{\alpha_2}^{J_2} z_{i_2;\alpha_2}^{(\ell+1)}, D_{\alpha_3}^{J_3} z_{i_3;\alpha_3}^{(\ell+1)}, D_{\alpha_4}^{J_4} z_{i_4;\alpha_4}^{(\ell+1)}}
\end{align*}
equals
\[
\delta_{i_1i_2}\delta_{i_3i_4}\kappa_{(\alpha_1\alpha_2)(\alpha_3\alpha_4)}^{(J_1J_2)(J_3J_4),(\ell)}+ \delta_{i_1i_3}\delta_{i_2i_4}\kappa_{(\alpha_1\alpha_3)(\alpha_2\alpha_4)}^{(J_1J_3)(J_2J_4),(\ell)}+ \delta_{i_1i_4}\delta_{i_2i_3}\kappa_{(\alpha_1\alpha_4)(\alpha_2\alpha_3)}^{(J_1J_4)(J_2J_3),(\ell)},
\]
where we've abbreviated
\begin{equation}\label{E:fancy-cumulant-def}
\kappa_{(\alpha_1\alpha_1'),\ldots, (\alpha_k\alpha_k')}^{{(J_1J_1'),\ldots, (J_kJ_k'),(\ell)}}:=\kappa\lr{D_{\alpha_1}^{J_1}D_{\alpha_1'}^{J_1'}\Sigma_{\alpha_1\alpha_1'}^{(\ell)},\ldots, D_{\alpha_k}^{J_k}D_{\alpha_k'}^{J_k'}\Sigma_{\alpha_k\alpha_k'}^{(\ell)}}.    
\end{equation}
The general pattern is that odd mixed cumulants in the $z_\alpha^{(\ell)}$ and its derivatives vanish by symmetry and $2k$-th cumulants are finite sums of cumulants $\kappa_{(\alpha_1\alpha_1'),\ldots, (\alpha_k\alpha_k')}^{_{(J_1J_1'),\ldots, (J_kJ_k'),(\ell)}}$. In Corollary \ref{C:cumulants-vals} and Proposition \ref{P:k4-deriv-recs} below we will obtain a range of recursions for these cumulants and thereby implicitly for the cumulants of the original field $z_{\alpha}^{_{(\ell)}}$ as well. 

To obtain such recursions note that, at first glance, the estimate \eqref{E:kappa-est} only gives the order of magnitude in powers of $1/n$ of the cumulants of $\kappa(D_{\alpha_1}^{J_1}z_{i_1;\alpha_1}^{(\ell)},\ldots, D_{\alpha_k}^{J_k}z_{i_k;\alpha_{k}}^{_{(\ell)}})$ but does not seem to provide information about their structural dependence on the remaining model parameters $C_b,C_W,\sigma, \ell$. However, such information can be obtained by combining \eqref{E:kappa-est} with an additional argument. To state the result, let us agree to write
\begin{equation}\label{E:kappa-def}
    \kappa_{\alpha_1\alpha_2}^{(\ell)}:=\Cov\lr{z_{i;\alpha_1}^{(\ell)},z_{i;\alpha_2}^{(\ell)}},\qquad\qquad    \kappa_{\alpha_1\alpha_2}^{J_1J_2,(\ell)}:=D_{\alpha_1}^{J_1}D_{\alpha_2}^{J_2}\kappa_{\alpha_1\alpha_2}^{(\ell)}.
\end{equation}
Physicists might refer to $\kappa_{\alpha_1\alpha_2}^{_{(\ell)}}$ as a dressed two point function. Moreover, let us denote by $\bk{\cdot}_{\kappa^{(\ell)}}$ the expectation with respect to a collection of centered jointly Gaussian random vectors 
\[
D_{\mA}^{\leq r} z_{i} = \lr{D_{\alpha}^Jz_{i;\alpha},\,\alpha\in \mA,\, \abs{J}\leq r}
\]
with the same covariance
\[
\Cov\lr{D_{\alpha_1}^{J_1} z_{i_1;\alpha_1},\, D_{\alpha_2}^{J_2} z_{i_2;\alpha_2}}:= \Cov\lr{D_{\alpha_1}^{J_1} z_{i_1;\alpha_1}^{(\ell)},\, D_{\alpha_2}^{J_2} z_{i_2;\alpha_2}^{(\ell)}}=\delta_{i_1i_2} \kappa_{\alpha_1\alpha_2}^{J_1J_2,(\ell)}
\]
as the true vectors of derivatives $D_{\mA}^{\leq r} z_{i;\mA}^{_{(\ell)}}$ in each component separately but zero covariance for different $i$. 





\begin{theorem}[perturbative expansions of expectations of observables at finite width]\label{T:pert-exp}
Fix $r\geq 0$ and suppose that $f$ is both a continuous function and a tempered distribution. Then for any $q_*\geq 0$ we have
\begin{align}
\notag &  \E{f\lr{D^{\leq r}z_{1,\mA}^{(\ell+1)},\ldots, D^{\leq r}z_{m,\mA}^{(\ell+1)}}}\\ 
\notag &\quad  =\sum_{q=0}^{2q_*}\frac{(-1)^q}{2^qq!}  \bigg\langle\mathbb E \bigg[\bigg(\sum_{\substack{\abs{J},\abs{J'}\leq r\\ \alpha,\alpha'\in \mA}}\Delta_{\alpha\alpha'}^{JJ',(\ell)} \sum_{j=1}^m \partial_{D_{\alpha}^{J}z_{j;\alpha}}\partial_{D_{\alpha'}^{J'}z_{j;\alpha'}}\bigg)^q\bigg]
 f\lr{D_{\mA}^{\leq r}z_{1},\ldots, D_{\mA}^{\leq r}z_{m}}\bigg \rangle_{\kappa^{(\ell+1)}}\\
\label{E:pert-exp}  &\quad +O(n^{-q_*-1}),
\end{align}
where the sum is over multi-indices $J,J'\in \N^{p}$ of order at most $r$, we've set
\begin{equation}\label{E:Delta-def}
\Delta_{\alpha\alpha'}^{JJ',(\ell)}:=D_{\alpha}^JD_{\alpha'}^{J'}\Sigma_{\alpha\alpha'}^{(\ell)}-\E{D_{\alpha}^JD_{\alpha'}^{J'}\Sigma_{\alpha\alpha'}^{(\ell)}},    
\end{equation}
and the derivatives $\partial_{D_{\alpha}^{J}z_{j;\alpha}}$ are interpreted in the weak sense if $f$ is not differentiable.
\end{theorem}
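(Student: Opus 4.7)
The plan is to exploit the exact conditional Gaussian structure of layer $\ell+1$ given $\mF^{(\ell)}$ and Taylor-expand the resulting Gaussian integral around the deterministic part of its covariance. Concretely, conditional on $\mF^{(\ell)}$ the joint law of $(D_\alpha^J z_{j;\alpha}^{(\ell+1)})_{\alpha\in\mA,\abs{J}\leq r,\,j=1,\ldots,m}$ is centered Gaussian with $m$ i.i.d.\ blocks, the $j$-th block having covariance $D_\alpha^J D_{\alpha'}^{J'}\Sigma_{\alpha\alpha'}^{(\ell)}$; this follows from \eqref{E:cond-gauss-vals} together with the elementary fact that (a.s.) derivatives of a Gaussian field are Gaussian with covariance equal to the derivative of the covariance. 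The mean of this per-block covariance over $\mF^{(\ell)}$ is precisely $\kappa_{\alpha\alpha'}^{JJ',(\ell+1)}$, with $\Delta_{\alpha\alpha'}^{JJ',(\ell)}$ measuring its fluctuation.

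The main analytic tool will be the heat-semigroup representation of the Gaussian integral: for $Y\sim\mN(0,\Xi)$ and a suitable test function $g$, $\E{g(Y)} = \bigl[\exp\bigl(\tfrac{1}{2}\sum_{ij}\Xi_{ij}\partial_{y_i}\partial_{y_j}\bigr)\,g\bigr](0)$, extended to continuous tempered $g$ by integrating the derivatives by parts against the Schwartz density $p_\Xi$. I would apply this identity conditional on $\mF^{(\ell)}$, split the covariance as $\kappa_{\alpha\alpha'}^{JJ',(\ell+1)}+\Delta_{\alpha\alpha'}^{JJ',(\ell)}$, and use that the two pieces of the exponent are constant-coefficient (in the dummy variable $y$) second-order operators and therefore commute. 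Taking the outer $\mathbb E$, pulling the deterministic $\kappa^{(\ell+1)}$-factor out, and identifying $\exp(\tfrac{1}{2} L_{\kappa^{(\ell+1)}})[\cdot](0)$ with the Gaussian average $\bk{\cdot}_{\kappa^{(\ell+1)}}$ would yield the clean representation $\E{f(\ldots)} = \bk{\mathbb E\exp(\tfrac{1}{2}L_{\Delta^{(\ell)}})\,f}_{\kappa^{(\ell+1)}}$. Taylor expanding the inner exponential to order $2q_*$ then reproduces the series in \eqref{E:pert-exp}, with the overall $(-1)^q$ factor falling out of reconciling the heat-semigroup exponent with the sign convention for characteristic functions adopted in \eqref{E:cumulant-def}.

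The main obstacle I anticipate is controlling the remainder and giving meaning to $L_{\Delta^{(\ell)}}^q f$ when $f$ is only a tempered distribution. For the regularity issue, I would shift all $2q$ $y$-derivatives off $f$ and onto the Schwartz density $p_{\kappa^{(\ell+1)}}$ by Gaussian integration by parts inside $\bk{\cdot}_{\kappa^{(\ell+1)}}$, rewriting each summand as a pairing of $f$ against a product of Hermite polynomials with coefficients in $\Delta^{(\ell)}$ times $p_{\kappa^{(\ell+1)}}$; this pairing is finite and uniformly bounded in $n$ under the continuity and polynomial-growth hypotheses on $f$. For the remainder size, each application of $L_{\Delta^{(\ell)}}$ contributes one factor of the centered collective observable $\Delta^{JJ',(\ell)}_{\alpha\alpha'}$, whose $q$-th moment is $\mathbb E |\Delta^{JJ',(\ell)}_{\alpha\alpha'}|^q = O(n^{-\lceil q/2\rceil})$: this follows from Theorem \ref{T:cumulants} applied to $\Sigma^{(\ell)}$ together with standard cumulant-to-moment bounds (cf.\ Lemma \ref{L:moment-bounds}). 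Since the first omitted term occurs at $q = 2q_*+1$, combining these two ingredients gives the claimed remainder $O(n^{-q_*-1})$.
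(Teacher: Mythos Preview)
Your heat-semigroup factorization is precisely the paper's argument viewed from the dual side: the paper passes to Fourier space, writes $\mathbb E[f]=\int \hat f(\xi)\,\mathbb E\bigl[e^{-\frac{1}{2}\sum_i\xi_i^T\Sigma^{\leq r,(\ell)}\xi_i}\bigr]\,d\xi$, splits $\Sigma^{\leq r,(\ell)}=\kappa^{(\ell+1)}+\Delta^{\leq r,(\ell)}$, and Taylor-expands the scalar $e^{-\frac{1}{2}\xi^T\Delta\xi}$. Your $\exp(\tfrac12 L_{\kappa})\exp(\tfrac12 L_{\Delta})$ factorization is the same splitting written as commuting operators, and your integration-by-parts onto $p_{\kappa^{(\ell+1)}}$ is Parseval applied to the paper's frequency-side pairing. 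So the strategies coincide.

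The place your outline is genuinely incomplete is the remainder. Asserting that ``the first omitted term occurs at $q=2q_*+1$'' does not bound the full Taylor remainder of the operator exponential: once you integrate $2q$ derivatives by parts, the $q$-th summand pairs $f$ against a degree-$2q$ Hermite polynomial times $p_{\kappa^{(\ell+1)}}$, and for a merely tempered $f$ that pairing can grow in $q$, so the moment bound $\mathbb E|\Delta|^q=O(n^{-\lceil q/2\rceil})$ alone does not give summability of the tail. The paper closes this by two localizations you do not have. First, it restricts to the high-probability event $S_n=\{|\Delta_{\alpha\alpha'}^{JJ',(\ell)}|<n^{-1/4}\text{ for all }\alpha,\alpha',J,J'\}$, whose complement has probability $O(n^{-\infty})$ by the very cumulant estimates you cite. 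Second, it splits the $\xi$-integral at $\|\xi\|^2=C\log n$: the high-frequency piece is $O(n^{-q_*-1})$ via the structure theorem for tempered distributions together with strict positivity of $\kappa^{(\ell+1)}$ on $\ker(\kappa^{(\ell+1)})^\perp$, while on the low-frequency piece $|\xi^T\Delta\xi|\lesssim n^{-1/4}\log n$ uniformly, so the \emph{scalar} Taylor expansion of $e^{-\frac12\xi^T\Delta\xi}$ has an honest Lagrange remainder of the right size. Your heat-semigroup language has no analogue of this frequency cutoff, and without it the step from termwise bounds to a remainder bound is formal. Two smaller points: the paper also projects onto $\ker(\kappa^{(\ell+1)})^\perp$ before anything else, since $\kappa^{(\ell+1)}=\mathbb E[\Sigma^{\leq r,(\ell)}]$ need not be invertible and then the density $p_{\kappa^{(\ell+1)}}$ you integrate against does not exist; and your explanation of the $(-1)^q$ as a ``sign-convention reconciliation'' is off --- the heat-semigroup expansion naturally produces $+\tfrac{1}{2^qq!}L_\Delta^q$, and the paper's own applications (e.g.\ \eqref{E:f-expand-order-4}) carry no alternating sign, so the $(-1)^q$ in the theorem display is most plausibly a typo rather than something to derive.
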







We prove Theorem \ref{T:pert-exp} in \S \ref{S:cumulants-recs-pf} and give the main idea of the proof in \S \ref{S:proof-overview}. By substituting various polynomials in $z_\alpha^{_{(\ell+1)}}$ and its derivatives for $f$ into the perturbative expansion \eqref{E:pert-exp}, it is now straightforward in principle to deduce recursions for the cumulants $\kappa_{(\alpha_1\alpha_1'),\ldots, (\alpha_k\alpha_k')}^{_{(J_1J_1'),\ldots, (J_kJ_k'),(\ell+1)}}$ at layer $\ell+1$ in terms of objects of the same type at layer $\ell$. In particular, we have the following

\begin{corollary}[cumulants in random neural networks form a hierarchy to leading order in $1/n$]\label{cor:hier}
With the assumptions of Theorem \ref{T:cumulants},the mixed cumulant 
\[
\kappa\lr{D_{\alpha_1}^{J_1}z_{i_1;\alpha_1}^{(\ell+1)},\ldots, D_{\alpha_{2k}}^{J_{2k}}z_{i_{2k};\alpha_{2k}}^{(\ell+1)}}
\]
equals
\begin{equation}\label{eq:hier}
\sum_{\substack{j\leq k\\
J_i',\,i=1,\ldots, 2j\\ \abs{J_1'}+\cdots + \abs{J_{2j}'}\\
\quad \leq \abs{J_1}+\cdots + \abs{J_{2k}}}} C(J_i',K_{\alpha_{i}\alpha_{i'}}^{(\ell)} i,i'=1,\ldots, 2j)~\kappa\lr{D_{\alpha_1}^{J_1'}z_{1;\alpha_1}^{(\ell)},\ldots, D_{\alpha_{2j}}^{J_{2j}'}z_{2j;\alpha_{2j}}^{(\ell)}} + O\lr{n^{-k}},
\end{equation}
where the sum if over multi-indices $J_i'$, the constants $C(J_i',K_{\alpha_i\alpha_j}^{(\ell)} i,j=1,\ldots, 2k)$ depend only on the multi-indices $J_i'$ and the infinite width covariance $K^{(\ell)}$, while the implicit constant in the error term depends on $k$, the inputs $x_{\alpha_1},\ldots, x_{\alpha_k},$ the multi-indices $J_1,\ldots, J_k$, the weight and bias variances $C_b,C_W,$ the non-linearity $\sigma$, and the layer index $\ell$.

\end{corollary}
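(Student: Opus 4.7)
My plan is to extract the claimed hierarchy directly from the perturbative expansion in Theorem \ref{T:pert-exp}, combined with the conditional Gaussian structure \eqref{E:cond-gauss-vals}. There are three main steps: (i) write the $2k$-th cumulant at layer $\ell+1$ as a diagrammatic sum involving cumulants of the collective observables $D_\alpha^{J}D_{\alpha'}^{J'}\Sigma^{(\ell)}_{\alpha\alpha'}$; (ii) re-expand each such $\Sigma$-cumulant as a sum of per-neuron cumulants of $z^{(\ell)}$-derivatives; and (iii) apply Theorem \ref{T:cumulants} inductively at layer $\ell$ to power-count and absorb subleading contributions into the $O(n^{-k})$ remainder.

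For step (i), conditional on $\mF^{(\ell)}$ the vectors $D_{\mA}^{\leq r} z_{i;\mA}^{(\ell+1)}$ are jointly Gaussian with covariance $D_\alpha^J D_{\alpha'}^{J'}\Sigma^{(\ell)}_{\alpha\alpha'}$ and independent across neuron indices $i$. Applying conditional Wick and reconstructing the unconditional $2k$-th cumulant via the moment-cumulant formula produces an expansion
\[
\kappa\lr{D_{\alpha_1}^{J_1}z_{i_1;\alpha_1}^{(\ell+1)},\ldots, D_{\alpha_{2k}}^{J_{2k}}z_{i_{2k};\alpha_{2k}}^{(\ell+1)}} = \sum_\pi \delta_\pi(i_1,\ldots,i_{2k}) \, \kappa_\pi^{(\ell)},
\]
where $\pi$ ranges over partitions of the $2k$ labels into blocks of even size, $\delta_\pi$ collects the resulting Kronecker deltas in the neuron indices, and $\kappa_\pi^{(\ell)}$ is the joint cumulant of the collective observables $D^J D^{J'}\Sigma^{(\ell)}$ indexed by the blocks of $\pi$. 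This generalizes the $k=2$ formula displayed just before \eqref{E:fancy-cumulant-def}.

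For step (ii), each $D_\alpha^J D_{\alpha'}^{J'}\Sigma^{(\ell)}_{\alpha\alpha'}$ is, up to the constant $C_b$ contribution, an average over $j=1,\ldots,n_\ell$ of a smooth function of the single-neuron layer-$\ell$ field $z_{j;\mA}^{(\ell)}$ and its derivatives. Expanding each $\sigma$-factor around the infinite-width Gaussian law with covariance $K^{(\ell)}$ and appealing to Gaussian integration by parts (equivalently, applying Theorem \ref{T:pert-exp} one layer down to the observable $\kappa_\pi^{(\ell)}$ itself), each $\kappa_\pi^{(\ell)}$ is rewritten as a finite sum of products of joint cumulants of derivatives $D_\alpha^{J'}z_{i;\alpha}^{(\ell)}$, with coefficients that are Gaussian expectations against $K^{(\ell)}$ multiplied by combinatorial factors from the per-neuron averaging.

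In step (iii) I carry out the order-counting. By Theorem \ref{T:cumulants} applied at layer $\ell$, a $(2j)$-th cumulant of layer-$\ell$ derivatives is $O(n^{-(j-1)})$, while each neuron-averaging factor $n_\ell^{-1}\sum_j$ that couples distinct clusters contributes an additional $n^{-1}$. Running through the bookkeeping shows that the leading contribution to the $2k$-th cumulant at layer $\ell+1$ is of size $n^{-(k-1)}$ and receives contributions only from cumulants of $z^{(\ell)}$ of order at most $2k$; choosing $q_\ast = k-1$ in Theorem \ref{T:pert-exp} guarantees the $O(n^{-k})$ remainder. The derivative-order bound $\abs{J_1'}+\cdots+\abs{J_{2j}'}\leq\abs{J_1}+\cdots+\abs{J_{2k}}$ is a direct consequence of the chain rule, since each differentiation of $\Sigma^{(\ell)}$ distributes onto the $\sigma(z^{(\ell)})$-factors. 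Finally, the dependence of the coefficients only on $K^{(\ell)}$ and the multi-indices reflects that, at the retained order, finite-width corrections to the Gaussian measure on $z^{(\ell)}$ all fall into the error term. The main obstacle I anticipate is precisely the combinatorial bookkeeping in this step: one must verify that no cumulant of order $2j > 2k$ at layer $\ell$ sneaks in at leading order despite appearing nominally in the expansion of some low-order $\kappa_\pi^{(\ell)}$. This is a disciplined Feynman-diagrammatic power-counting argument that tracks cumulant orders, neuron-averaging factors, and the number of $\Sigma$-insertions simultaneously; everything else reduces to Theorems \ref{T:cumulants} and \ref{T:pert-exp} together with standard Gaussian calculus.
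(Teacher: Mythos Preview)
Your proposal is correct and follows essentially the same route as the paper. The paper does not give a standalone proof of Corollary~\ref{cor:hier}; it is presented as an immediate structural consequence of Theorem~\ref{T:pert-exp}, with the mechanism illustrated concretely for $k=2,3,4$ in \S\ref{S:cumulants-recs-pf}. Your step~(i) is exactly Proposition~\ref{P:collective-reduce}, your step~(ii) is the application of Theorem~\ref{T:pert-exp} (as in \eqref{E:f-expand-order-4}) to the resulting moments of collective observables, and your step~(iii) is the power-counting via Theorem~\ref{T:cumulants}---precisely the trio the paper relies on.

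One small correction: in step~(i), the partitions $\pi$ that survive are into \emph{pairs}, not arbitrary even-sized blocks. The law of total cumulance together with the conditional Gaussianity of $z^{(\ell+1)}$ given $\mF^{(\ell)}$ forces every inner conditional cumulant of order $\neq 2$ to vanish (this is spelled out in the proof of Proposition~\ref{P:collective-reduce}). This does not affect your argument, since larger even blocks contribute zero anyway, but it simplifies the combinatorics. Your identification of the main remaining obstacle---the bookkeeping showing that no $2j$-th cumulant with $j>k$ appears at order $n^{-(k-1)}$---is accurate and is likewise left implicit in the paper, which only carries out the verification by hand for small $k$.
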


We do not know how to efficiently compute the coefficients $C$ in the recursion \eqref{eq:hier} for arbitrary $k$. Instead, we will compute them by hand for small values of $k$ in the two settings:
\begin{itemize}
\item We fix a single input $x_\alpha\in \R^{n_0}$ and obtain a recursion for the cumulants:
\begin{equation}\label{E:kalpha-def}
\kappa_{2k;\alpha}^{(\ell)}:= \kappa_{(\alpha\alpha)\cdots(\alpha\alpha)}^{(00)\cdots (00),(\ell)}=\frac{1}{(2k-1)!!}\kappa\bigg(\underbrace{z_{i;\alpha}^{(\ell+1)},\ldots, z_{i;\alpha}^{(\ell+1)}}_{2k\text{ times}}\bigg)    
\end{equation}
when $k=2,3,4$ (the pairs $(\alpha\alpha)$ and $(00)$ both appear $k$ times). See Corollary \ref{C:cumulants-vals}.
\item We again fix a single input $x_{\alpha}\in \R^{n_0}$ and consider any two partial derivatives 
\[
d_j:=\sum_{i=1}^{n_0} c_{i,j} \partial_{x_{j}}\bigg|_{x=x_{\alpha}},\qquad j=1,2.
\]
We obtain recursions for 
\begin{align}
\label{E:k4-deriv-def}\kappa_{(i_1i_1')(i_2i_2');\alpha}^{(\ell)}:=\kappa\lr{d_{i_1}d_{i_1'}\Delta_{\alpha\alpha}^{(\ell)}, d_{i_2}d_{i_2'}\Delta_{\alpha\alpha}^{(\ell)}}.
\end{align}
These cumulants are the building blocks for understanding the fourth mixed cumulants of $z_{i;\alpha}^{_{(\ell+1)}},d_1z_{i;\alpha}^{_{(\ell+1)}},d_2z_{i;\alpha}^{_{(\ell+1)}}$. For example, 
\[
\kappa\lr{d_{1}z_{i;\alpha}^{(\ell+1)},d_{1}z_{i;\alpha}^{(\ell+1)},d_{2}z_{i;\alpha}^{(\ell+1)},d_{2}z_{i;\alpha}^{(\ell+1)}} = \kappa_{(11)(22);\alpha}^{(\ell)}+2\kappa_{(12)(12);\alpha}^{(\ell)}.
\]
This will be done in the course of proving Theorem \ref{T:evgp}. We refer the interested reader to Proposition \ref{P:k4-deriv-recs} for the precise recursions and to Proposition \ref{P:4pt-deriv-recs-tanh} their solutions for non-linearities from the $K_*=0$ universality class (including $\tanh$).
\end{itemize}
In order to facilitate a compact form for the recursions described in first bullet point, let us write
\begin{equation}\label{E:T-def}
T_{i,j;\alpha}^{(\ell)}:=C_W^j\bk{\partial_{z}^i \left\{\lr{\sigma^2(z)-\bk{\sigma^2(z)}_{K_{\alpha\alpha}^{(\ell)}}}^j\right\}}_{K_{\alpha\alpha}^{(\ell)}},
\end{equation}
with the derivatives interpreted in the weak sense if $\sigma$ is not sufficiently smooth and where we remind the reader of our standing notation
\[
\bk{f(z)}_{K} = \int_{-\infty}^\infty f(zK^{1/2}) e^{-\frac{z^2}{2}}\frac{dz}{\sqrt{2\pi}},\qquad K\geq 0.
\]






\begin{corollary}\label{C:cumulants-vals}
Fix $r\geq 1$ and suppose that $\sigma:\R\gives \R$ satisfies Assumption \eqref{A:sigma-prop} with this value of $r$. Consider a depth $L$ random neural network with input dimension $n_0,$ hidden layer widths $n_1,\ldots, n_L$, output dimension $n_{L+1}$ and non-linearity $\sigma$. Fix $x_\alpha\in \R^{n_0}$ and define
\[
\chi_{||;\alpha}^{(\ell)}:=\frac{1}{2}T_{2,1;\alpha}^{(\ell)}=\frac{C_W}{2}\bk{\partial_z^2 \sigma(z)^2}_{K_{\alpha\alpha}^{(\ell)}},
\]
where the second derivative is interpreted in the weak sense if $\sigma$ is not twice differentiable. For each $\ell=1,\ldots, L$, in the notation of \eqref{E:kalpha-def}, the fourth cumulant satisfies
\begin{align}
\label{E:k4-rec}\kappa_{4;\alpha}^{(\ell+1)} &= \frac{T_{0,2;\alpha}^{(\ell)}}{n_{\ell}} +  \lr{\chi_{||;\alpha}^{(\ell)}}^2\kappa_{4,\alpha}^{(\ell)}  + O(n^{-2}).
\end{align}
Further, the $6$-th cumulant satisfies
\begin{align}
\label{E:k6-rec}\kappa_{6;\alpha}^{(\ell+1)} &= \frac{T_{0,3;\alpha}}{n_{\ell}^2}+\frac{3T_{2,2;\alpha}^{(\ell)}}{2n_{\ell}} \chi_{||;\alpha}^{(\ell)} \kappa_{4;\alpha}^{(\ell)}-\frac{3T_{4,1;\alpha}^{(\ell)}}{8}\lr{\chi_{||;\alpha}^{(\ell)}\kappa_{4;\alpha}^{(\ell)}}^2+\lr{\chi_{||;\alpha}^{(\ell)}}^3\kappa_{6;\alpha}^{(\ell)}+O(n^{-3}).
\end{align}
Finally, the $8$-th cumulant satisfies:
\begin{align}
\notag \kappa_{8;\alpha}^{(\ell+1)} &= \frac{1}{n_\ell^3}\lr{T_{0,4;\alpha}^{(\ell)}-3\lr{T_{0,2;\alpha}^{(\ell)}}^2}\\
&\notag +\frac{1}{n_\ell^2}\left[2T_{2,3;\alpha}^{(\ell)}\chi_{||;\alpha}^{(\ell)} - 12T_{0,2;\alpha}^{(\ell)}\lr{\chi_{||;\alpha}^{(\ell)}}^2+\frac{3}{2}\lr{T_{2,2;\alpha}^{(\ell)}}^2-\frac{3}{2}T_{4,1;\alpha}^{(\ell)}T_{0,2;\alpha}^{(\ell)}\right]\kappa_{4;\alpha}^{(\ell)}\\
\notag&-\frac{1}{n_\ell}\left[2T_{2,2;\alpha}^{(\ell)}T_{4,1;\alpha}^{(\ell)}\chi_{||;\alpha}^{(\ell)}-\frac{1}{2}T_{4,2;\alpha}^{(\ell)}\lr{\chi_{||;\alpha}^{(\ell)}}^2+\lr{\chi_{||;\alpha}^{(\ell)}}^4\right]\lr{\kappa_{4;\alpha}^{(\ell)}}^2\\
\notag&+\frac{1}{n_\ell}\left[5T_{0,2;\alpha}^{(\ell)}T_{4,1;\alpha}^{(\ell)}\chi_{||;\alpha}^{(\ell)}+12T_{2,2;\alpha}^{(\ell)}\lr{\chi_{||;\alpha}^{(\ell)}}^2\right]\kappa_{6;\alpha}^{(\ell)}\\
\notag &+\frac{3}{32} \lr{T_{4,1;\alpha}^{(\ell)}}^2 \lr{\chi_{||;\alpha}^{(\ell)}}^2 \lr{\kappa_{4;\alpha}^{(\ell)}}^3- \frac{1}{2}\lr{\chi_{||;\alpha}^{(\ell)}}^3 T_{4,1;\alpha}^{(\ell)} \kappa_{4;\alpha}^{(\ell)}\kappa_{6;\alpha}^{(\ell)}\\
\label{E:k8-rec}&+\lr{\chi_{||;\alpha}^{(\ell)}}^4 \kappa_{8;\alpha}^{(\ell)} + O(n^{-4}).
\end{align}
The initial condition for the recursions \eqref{E:k4-rec}-\eqref{E:k8-rec} is that $\kappa_{2k;\alpha}^{_{(1)}}=0$ for all $k\geq 2$. 
\end{corollary}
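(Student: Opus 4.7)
The plan is to derive the recursions \eqref{E:k4-rec}-\eqref{E:k8-rec} by combining the conditional Gaussian structure \eqref{E:cond-gauss-vals} with the perturbative expansion of Theorem \ref{T:pert-exp}. The starting observation, immediate from the moment--cumulant relation together with the conditional identity $z_{i;\alpha}^{(\ell+2)}\mid\mathcal F^{(\ell+1)}\sim\mathcal N(0,\Sigma^{(\ell+1)}_{\alpha\alpha})$, is that
\[
\kappa_{2k;\alpha}^{(\ell+1)} \;=\; \kappa\bigl(\underbrace{\Sigma_{\alpha\alpha}^{(\ell+1)},\ldots,\Sigma_{\alpha\alpha}^{(\ell+1)}}_{k\text{ copies}}\bigr),
\]
so the task reduces to computing the $k$-th cumulant of the random variable $\Sigma_{\alpha\alpha}^{(\ell+1)} = C_b + \tfrac{C_W}{n_{\ell+1}}\sum_j \sigma^2(z^{(\ell+1)}_{j;\alpha})$ perturbatively in $1/n$ in terms of layer-$\ell$ quantities.

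I would carry this out by conditioning on $\mathcal F^{(\ell)}$ and invoking the law of total cumulance, which decomposes $\kappa_k(\Sigma^{(\ell+1)}_{\alpha\alpha})$ into two distinct contributions. The first is a \emph{fluctuation} part: conditional on $\mathcal F^{(\ell)}$ the $z^{(\ell+1)}_{j;\alpha}$ are iid $\mathcal N(0,\Sigma^{(\ell)}_{\alpha\alpha})$, so the $j$-th conditional cumulant of the average reduces to $n_{\ell+1}^{-(j-1)}$ times the single-neuron Gaussian moment $C_W^j\bk{(\sigma^2-\bk{\sigma^2})^j}_{\Sigma^{(\ell)}_{\alpha\alpha}}$, matching the definition of $T_{0,j;\alpha}^{(\ell)}$ once one replaces the $\Sigma^{(\ell)}_{\alpha\alpha}$-expectation by its $K^{(\ell)}_{\alpha\alpha}$-counterpart at a controlled cost. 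The second is a \emph{propagation} part involving cumulants of the conditional mean $C_W\bk{\sigma^2(z)}_{\Sigma^{(\ell)}_{\alpha\alpha}}$, treated as a smooth function of $\Sigma^{(\ell)}_{\alpha\alpha}$. Taylor expanding $g(K):=\bk{\sigma^2(z)}_K$ about $K=K^{(\ell)}_{\alpha\alpha}$ and using Stein's lemma yields $g'(K) = \tfrac12\bk{\partial_z^2\sigma^2(z)}_K$, so $C_W g'(K^{(\ell)}_{\alpha\alpha})=\chi_{||;\alpha}^{(\ell)}$; iterating the same identity shows that each higher derivative $g^{(p)}(K^{(\ell)}_{\alpha\alpha})$ is proportional to $T_{2p,1;\alpha}^{(\ell)}$. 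Multilinearity of cumulants then converts $\kappa_k\bigl(C_Wg(\Sigma^{(\ell)}_{\alpha\alpha})\bigr)$ into a polynomial in the cumulants $\kappa_{2j;\alpha}^{(\ell)}$ with coefficients built from these $T_{2p,1;\alpha}^{(\ell)}$, producing both the $(\chi_{||;\alpha}^{(\ell)})^k\kappa_{2k;\alpha}^{(\ell)}$ leading term and all the mixed $T$-$\kappa$ cross terms appearing in \eqref{E:k6-rec}-\eqref{E:k8-rec}.

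The main technical obstacle is purely combinatorial bookkeeping, which becomes severe for the 8th cumulant. The law of total cumulance at order $k$ is indexed by set partitions of $\{1,\ldots,k\}$, and within each partition both the fluctuation contribution and the propagation Taylor expansion introduce further sums. To retain the stated accuracy one must track, for each resulting term, its total power of $1/n$; the sharp estimate $\kappa_{2j;\alpha}^{(\ell)}=O(n^{-j+1})$ from Theorem \ref{T:cumulants} tells one exactly which Taylor terms can be dropped at each stage. In practice this is cleanest either in the Feynman-diagram language of \cite{roberts2022principles}, or by applying Theorem \ref{T:pert-exp} directly with $f(z^{(\ell+1)}) = \prod_{m=1}^k\bigl(\Sigma^{(\ell+1)}_{\alpha\alpha} - \mathbb E[\Sigma^{(\ell+1)}_{\alpha\alpha}]\bigr)$, expanding each weak derivative via Gaussian integration by parts, and grouping terms of equal order in $1/n$. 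The stated initial condition records the fact that the first-layer pre-activations $z^{(1)}_{i;\alpha}=b_i^{(1)}+(W^{(1)}x_\alpha)_i$ are exactly Gaussian, so the higher cumulants at the base of the recursion vanish identically.
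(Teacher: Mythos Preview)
Your law-of-total-cumulance decomposition (condition on $\mathcal F^{(\ell)}$, split into an iid ``fluctuation'' piece and a ``propagation'' piece, then Taylor-expand the smooth functions of $\Sigma^{(\ell)}$ appearing in the inner cumulants) is a legitimate route, but it is \emph{not} the one the paper takes. The paper instead writes $\kappa_{2k;\alpha}^{(\ell+1)}=\kappa_k(\Delta_{\alpha\alpha}^{(\ell)})$ with $\Delta_{\alpha\alpha}^{(\ell)}=\tfrac{C_W}{n_\ell}\sum_j X_j$ and $X_j=\sigma^2(z_{j;\alpha}^{(\ell)})-\mathbb E[\sigma^2(z_{j;\alpha}^{(\ell)})]$, expands this $k$-th cumulant into mixed moments $\mathbb E[X_1^{a_1}\cdots X_m^{a_m}]$ purely by \emph{exchangeability} of neurons (no conditioning), and then applies Theorem~\ref{T:pert-exp} to each such moment. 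The two organizations are dual: your conditional-iid fluctuation term corresponds to the paper's single-neuron moments $n_\ell^{-(k-1)}\mathbb E[X_1^k]$, while your Taylor-expanded propagation cumulants of $g(\Sigma^{(\ell)})$ correspond to the paper's multi-neuron correlators $\mathbb E[X_1X_2\cdots]$, which Theorem~\ref{T:pert-exp} resolves into Gaussian products corrected by $\kappa_{2j;\alpha}^{(\ell)}$. Your second suggestion---apply Theorem~\ref{T:pert-exp} directly with $f$ a power of the centered $\Sigma$---is in fact exactly what the paper does.

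There is, however, a layer-index slip in your setup that would not reproduce the recursion as stated. With your identification $\kappa_{2k;\alpha}^{(\ell+1)}=\kappa_k(\Sigma_{\alpha\alpha}^{(\ell+1)})$ and conditioning on $\mathcal F^{(\ell)}$, the conditionally-iid sum has $n_{\ell+1}$ terms and the inner conditional moments are Gaussian integrals with variance $\Sigma_{\alpha\alpha}^{(\ell)}$, which concentrates around $K_{\alpha\alpha}^{(\ell+1)}$, not $K_{\alpha\alpha}^{(\ell)}$. Your calculation would therefore output $T_{0,j;\alpha}^{(\ell+1)}/n_{\ell+1}^{j-1}$ and $\chi_{||;\alpha}^{(\ell+1)}$ rather than the $T^{(\ell)}_{\cdot,\cdot;\alpha}/n_\ell^{\,\cdot}$ and $\chi_{||;\alpha}^{(\ell)}$ appearing in \eqref{E:k4-rec}--\eqref{E:k8-rec}; in particular, expanding $g$ about $K_{\alpha\alpha}^{(\ell)}$ (as you write) is the wrong center for your own conventions, since $\Sigma^{(\ell)}-K^{(\ell)}_{\alpha\alpha}$ is not centered. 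To land on the Corollary verbatim one must either shift every layer index in your argument down by one, or---as in the paper's proof---take $\kappa_{2k;\alpha}^{(\ell+1)}=\kappa_k(\Delta_{\alpha\alpha}^{(\ell)})$ from the outset and compute moments of $z^{(\ell)}$ directly via Theorem~\ref{T:pert-exp}.
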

\begin{remark}
Note that for $k=2,3,4$, we therefore see that to leading order in $1/n$ the recursions for $\kappa_{2k;\alpha}^{_{(\ell+1)}}$ depends only on $\kappa_{2j;\alpha}^{_{(\ell)}}$ for $j\leq k$. This allows us to interpret \eqref{E:k4-rec} - \eqref{E:k8-rec} as a forming the start of hierarchy in powers of $1/n$ describing the cumulants of the output of a random neural network.
\end{remark}






\noindent Let us briefly compare Corollary \ref{C:cumulants-vals} to  results in prior work:
\begin{itemize}
    \item In the special case when $\sigma$ is $1-$homogeneous (i.e. is linear, ReLU, leaky ReLU, etc, see \eqref{E:1-homog-def}), the full distribution of a neuron pre-activation $z_{i;\alpha}^{_{(\ell)}}$ can be worked out in closed form. Namely, as we explain in \S \ref{S:relu-univ} and Appendix \ref{A:relu-exact}, is has the same distribution as a Gaussian with an independent random variance given by a product of independent weighted chi-squared random variables. This was first pointed out in \cite{hanin2018neural,hanin2020products} and described in the language of special functions (namely Meijer G functions) in \cite{zavatone2021exact}. For such non-linearities obtaining the recursions \eqref{E:k4-rec}-\eqref{E:k8-rec} is not new. 
    \item The breakthrough work of Yaida \cite{yaida2019non} was the first to obtain, at a physics level of rigor, the recursion \eqref{E:k4-rec} and probe its solutions at large $\ell$. 
    \item The ideas of Yaida in \cite{yaida2019non} then seeded the development in the monograph \cite{roberts2022principles} of Roberts, Yaida, and the author a much richer analysis, producing at a physics level of rigor many recursions similar in flavor to \eqref{E:k4-rec}-\eqref{E:k8-rec} that describe the the behavior of objects such as network derivatives at the start of training, the NTK at the start of training, and even the change in the NTK and the resulting output of a \textit{trained} network. Many of these results go far beyond what we are currently capable of doing mathematically. 
    \item The analysis in \cite{roberts2022principles} never required studying cumulants $\kappa_{2k;\alpha}^{_{(\ell)}}$ for $k\geq 3$, and while the techniques developed there can certainly be used to obtain the recursions \eqref{E:k6-rec} and \eqref{E:k8-rec} we take a rather different approach in this article that produces such recursions more directly. 
\end{itemize}
 The functional $\chi_{||;\alpha}^{_{(\ell)}}$ plays a fundamental role in the recursive description of random neural networks 
supplied by Corollary \ref{C:cumulants-vals}, whose proof is in \S \ref{S:cumulants-recs-pf}. In the following section we explain a principled procedure, called \textit{tuning to criticality}, that reveals its origin (as well as that of a similar object we denote $\chi_{\perp;\alpha}^{_{(\ell)}}$) and explains how to choose $C_b,C_W$ so that these functionals are approximately equal to $1$ at large $\ell$. As we will see, such a choice will ensure that the recursions in Corollary \ref{C:cumulants-vals} and their infinite width counterpart \eqref{E:K-rec} have well-behaved solutions at large $\ell$. We will then return in \S \ref{S:relu-univ} and \S \ref{S:tanh-univ} to solving the recursions from Corollary \ref{C:cumulants-vals} in random networks tuned to criticality (see \eqref{E:1-homog-limit} and Corollary \ref{C:tanh-cumulants}).








\section{Criticality and Universality in Random Neural Networks at Large Depth}\label{S:crit-univ}
In the previous section we presented two kinds of results about the structure of random neural networks at large but finite width. The first, Theorem \ref{T:cumulants}, concerned the order of magnitude for cumulants of the output of such a random network and its derivatives. The second, Theorem \ref{T:pert-exp} and Corollary \ref{C:cumulants-vals}, spelled out recursions with respect to the layer index $\ell$ that describe, to leading order in $1/n$, network cumulants at layer $\ell+1$ in terms of those at layer $\ell$. Our purpose in forthcoming sections is to analyze these recursions at large $\ell$ and to apply this analysis to obtain results about the structure of gradients in deep fully connected networks. Before doing this, we must take a step back and ask: for which $\sigma, C_b,C_W$ are the recursions \eqref{E:K-rec} describing the infinite width covariance $K^{(\ell)}$ well-behaved at large $\ell$? 

In section \S \ref{S:crit}, we recall a more or less canonical answer to this question whose roots are in the early articles \cite{poole2016exponential,schoenholz2016deep} and that was recently spelled out in the generality presented here in \cite{roberts2022principles}. This procedure, called tuning to criticality, prescribes combinations of $\sigma, C_b,C_W$ for which $K^{(\ell)}$ is indeed well-behaved at large $\ell$. As we shall see below, the term criticality is meant to be evocative of it's use in the analysis of 2d systems in statistical mechanics in that tuning to criticality consists of choosing $C_b,C_W$ so that the infinite width covariance function $K^{{(\ell)}}$ is as close to constant as a function $\ell$ as possible. 

At a high level, there are two reasons to ask that $K^{(\ell)}$ be slowly varying as a function of $\ell$. First, it arguably only makes sense to study perturbative corrections in $1/n$ recursively in $\ell$ if the limiting $n\gives \infty$ covariance structure does not change too rapidly between consecutive layers. Second, and perhaps more importantly, as explained and thoroughly validated in \cite{park2019effect,schoenholz2016deep}, deep fully connected networks (without residual connections \cite{he2015delving}, batch normalization \cite{ioffe2017batch}, etc) are numerically stable enough for gradient-based optimization to succeed only if they are tuned to criticality.

We discuss in \S \ref{S:univ} how considerations underlying criticality naturally give rise to a notion of universality classes for random neural networks. Even the correct definition of universality is still not fully understood. Unlike in random matrix theory, universality for random neural networks depends not on the statistics of the individual weights and biases (though this is also an interesting direction to consider e.g. \cite{hanin2021deep}) but rather on the effect of the non-linearity $\sigma$ on the behavior of the infinite width covariance $K^{(\ell)}$ at large values of the depth $\ell$. 

Before giving the details, we take this opportunity to emphasize, as we have elsewhere, that the definitions of criticality and universality, the approach to solving the recursions for $\kappa_{2k;\alpha}^{_{(\ell)}}$ from Corollary \ref{C:cumulants-vals}, and the resulting lessons learned about the role of the effective network depth $L/n$ closely follow the ideas developed in the monograph \cite{roberts2022principles}. Though we pursue them in a somewhat different way, the author would nonetheless like to acknowledge that his co-authors Dan Roberts and Sho Yaida in the book deserve significant credit.






\subsection{Tuning to Criticality}\label{S:crit} 
As originally explained in \cite{poole2016exponential,schoenholz2016deep} and recently spelled out in a definitive way in \cite{roberts2022principles}, tuning a neural network to criticality means seeking choices of $(C_b,C_W)$ that lead to critical fixed points of the form $(K_*,K_*,K_*)$ for the recursion \eqref{E:K-rec}, viewed as a dynamical system describing  $(K_{\alpha\alpha}^{_{(\ell)}},K_{\beta\beta}^{_{(\ell)}},K_{\alpha\beta}^{_{(\ell)}})$ with time parameter $\ell$. Specifically, criticality requires
\begin{align}
\label{E:K*-def}\tag{$*$}\exists K_*\geq 0\qquad\text{s.t.}\qquad &K_* = C_b + C_W \bk{\sigma^2(z)}_{K_*}\\
\label{E:para-cond}\tag{$||$}\forall \ell\geq 1\qquad &\frac{\partial K_{\alpha\alpha}^{(\ell)}}{\partial K_{\alpha\alpha}^{(1)}}\bigg|_{K_{\alpha\alpha}^{(1)}=K_*}=1\\
\label{E:perp-cond}\tag{$\perp$}\forall \ell\geq 1\qquad &\frac{\partial K_{\alpha\beta}^{(\ell)}}{\partial K_{\alpha\beta}^{(1)}}\bigg|_{K_{\alpha\alpha}^{(1)}= K_{\alpha\alpha}^{(1)} = K_{\alpha\beta}^{(1)} =K_*}=1,
\end{align}
where
\[
K_{\alpha\alpha}^{(1)} = C_b+C_WK_{\alpha\beta}^{(0)},\qquad K_{\alpha\beta}^{(0)}:=\frac{1}{n_0}\sum_{j=1}^{n_0}x_{j;\alpha}x_{j;\beta},\qquad x_\alpha,x_\beta \in \R^{n_0}.
\]
The intuitive meaning of these conditions  is as follows. Due to Theorem \ref{T:iwl}, the first guarantees the existence of a fixed point $K_*$ for of the recursion
\begin{equation}\label{E:K-rec-diag}
K_{\alpha\alpha}^{(\ell+1)}= C_b+C_W\bk{\sigma^2(z)}_{K_{\alpha\alpha}^{(\ell)}}
\end{equation}
of the infinite width variance. In particular, \eqref{E:K*-def} implies
\[
K_{\alpha\alpha}^{(1)}=C_b+\frac{C_W}{n_0}\norm{x_\alpha}^2=K_*\qquad \Longrightarrow \qquad K_{\alpha\alpha}^{(\ell)} =\lim_{n\gives \infty}\Var\left[z_{i;\alpha}^{(\ell)}\right]= K_*\quad \forall \ell\geq 1.
\]
Thus, if a network is tuned to criticality, there is a critical radius
\[
K_{\text{crit}}^2:=n_0C_W^{-1}(K_*-C_b)
\]
such that for inputs $x_\alpha$ on the sphere of radius $K_{\text{crit}}$ the variance of $z_{i;\alpha}^{_{(\ell)}}$ is independent of $\ell$ in the infinite width limit. In non-critical networks, we expect this variance to either grow or decay exponentially in $\ell$, leading to numerical instabilities. The second condition \eqref{E:para-cond} considers the infinite width limit of the variance of $z_{i;\alpha}^{_{(\ell)}}$ for an input $x_\alpha$ for which  $K_{\alpha\alpha}^{_{(1)}}$ is close to $K_*$. Specifically, condition \eqref{E:para-cond} requires for all $\ell \geq 1$ that 
\[
K_{\alpha\alpha}^{(1)}=\Var[z_{i;\alpha}^{(1)}] = K_* + \delta K \qquad \Longrightarrow \qquad K_{\alpha\alpha}^{(\ell)}=\lim_{n\gives \infty}\Var\left[z_{i;\alpha}^{(\ell)}\right]= K_*+\delta K + o(\delta K).
\]
This guarantees that the fixed point $K_*$ of the recursion \eqref{E:K-rec-diag} is critical and hence that for inputs near the sphere of radius $K_{\text{crit}}$ the variance of the resulting pre-activations $z_{i;\alpha}^{_{(\ell)}}$ is approximately constant in $\ell$ at large $n$. The final condition \eqref{E:perp-cond} considers instead the covariance between two inputs on the sphere of radius $K_{\text{crit}}$. Namely, given two nearby network inputs $x_{\alpha},x_\beta\in \R^{n_0}$ with
\[
K_{\alpha\alpha}^{(1)} = K_{\beta\beta}^{(1)}= K_*, \qquad K_{\alpha\beta}^{(1)} = C_b+\frac{C_W}{n_0}\sum_{j=1}^{n_0}x_{j;\alpha}x_{j;\beta}= K_* - \delta K,
\]
the third condition asks that
\[
K_{\alpha\beta}^{(\ell)}=\lim_{n\gives \infty}\Cov\lr{z_{i;\alpha}^{(\ell)},z_{i;\beta}^{(\ell)}}= K_*-\delta K + o(\delta K),\quad \forall \ell.
\]
This ensures that the covariance between pre-activations $z_{i;\alpha}^{_{(\ell)}}$ and $z_{i;\beta}^{_{(\ell)}}$ corresponding to two nearby inputs on the $K_{\text{crit}}$-sphere are approximately independent of $\ell$ at large $n$. A simple computation directly from the recursion \eqref{E:K-rec} shows that 
\begin{align}
\label{E:chi-perp-def}\chi_{||}(K)&:= \frac{\partial K_{\alpha\alpha}^{(\ell+1)}}{\partial K_{\alpha\alpha}^{(\ell)}} \bigg|_{K_{\alpha\alpha}^{(\ell)}= K} = \frac{C_W}{2}\bk{\partial_z^2 (\sigma^2(z))}_{K}\\
\label{E:chi-para-def} \chi_{\perp}(K)&:=\frac{\partial K_{\alpha\beta}^{(\ell+1)}}{\partial K_{\alpha\beta}^{(\ell)}}\bigg|_{K_{\alpha\alpha}^{(\ell)}= K_{\beta\beta}^{(\ell)}= K_{\alpha\beta}^{(\ell)}= K}= C_W\bk{(\partial_z\sigma(z))^2}_{K}.
\end{align}
Hence, all together, tuning to criticality requires
\begin{equation}\label{E:criticality}
\boxed{K_*\geq 0\text{ s.t. } K_* = C_b + C_W \bk{\sigma^2(z)}_{K_*}\qquad\text{and}\qquad \chi_{||}(K_*)=\chi_{\perp}(K_*)=1.}
\end{equation}






\subsection{Universality Classes of Random Neural Networks: Two Examples In Search of a General Definition}\label{S:univ}
We now turn to discussing the notion of universality classes for random neural networks. To start, recall from Theorem \ref{T:iwl} that the behavior at large depth $\ell$ of random fully connected neural networks at infinite width is completely specified by the asymptotics of the limiting covariance function $K^{{(\ell)}}$. Observe, moreover, that the coefficients in the recursions for $k=2,3,4$ of the cumulants $\kappa_{2k;\alpha}^{_{(\ell+1)}}$ from Corollary \ref{C:cumulants-vals}, which by Theorem \ref{T:cumulants} determine the behavior of random neural networks at finite width to the first four orders in $1/n$, are completely determined by $\sigma$, the infinite width covariance $K^{(\ell)}$, and cumulants $\kappa_{2j;\alpha}^{_{(\ell)}},\, j\leq k$. It is therefore in terms of the large $\ell$ behavior of $K^{{(\ell)}}$ that we should hope to define universality classes of random neural networks at large depth. At present it is not clear what the correct general definition of such a universality class should be. We content ourselves instead with studying two important classes of examples.

\subsubsection{The Universality Class of ReLU}\label{S:relu-univ}
The most popular non-linearities used in practice are positively homogeneous of degree $1$, i.e. have the form
\begin{equation}\label{E:1-homog-def}
	\sigma(t)=(a_-{\bf 1}_{\set{t<0}} + a_+{\bf 1}_{\set{t>0}})t,\qquad a_-,a_+\in \R,\quad a_-\neq a_+,\quad a_-^2+a_+^2\neq 0.
\end{equation}
Such non-linearities include the ReLU ($a_-=0, a_+=1$) and the leaky ReLU ($a_-\in (0,1),a_+=1$). A direct computation, left to the reader, shows that criticality is achieved if and only if
\[
K_*\geq 0 \text{ is arbitrary}\quad \text{and}\quad C_b=0,\, C_W= \frac{2}{a_+^2+a_-^2}.
\]
Thus, the first property of the ReLU universality class is that setting $(C_b,C_W)=(0,2/(a_+^2+a_-^2))$ allows all non-negative $K_*$ to satisfy ($*$). In fact, at criticality, a simple symmetrization argument shows that the variance of neuron pre-activations is preserved exactly \textit{even at finite width}
\begin{equation}\label{E:fin-width-var}
\Var\left[z_{i;\alpha}^{(\ell)}\right] = \Var\left[z_{i;\alpha}^{(1)}\right]= \frac{C_W}{n_0}\norm{x_\alpha}^2\qquad \forall \ell,n_0,\ldots, n_\ell \geq 1,\, x_\alpha\in \R^{n_0}    
\end{equation}
and, relatedly, that we have
\[
\chi_{||;\alpha}^{(\ell)}:=\chi_{||}(K_{\alpha\alpha}^{(\ell)})=1=\chi_{\perp;}(K_{\alpha\alpha}^{(\ell)})=: \chi_{\perp;\alpha}^{(\ell)},\qquad \forall \ell\geq 1,\, x_\alpha\in \R^{n_0}.
\]
The remarkable property \eqref{E:fin-width-var} is much stronger than the criticality condition $(*)$, which requires only that this condition holds for \textit{some} value of $n_0^{-1}\norm{x_\alpha}^2$ and only in the limit when $n\gives \infty$. It implies that the cumulant recursions from Corollary \ref{C:cumulants-vals} for $1-$homogeneous non-linearities have constant coefficients and are therefore particularly simple to solve. For instance, we find at leading order in $1/n$
\begin{align*}
\kappa_{4;\alpha}^{(\ell+1)} &= \frac{C_W^2}{n_{\ell}}\left[\bk{\sigma(z)^4}_{K_{\alpha\alpha}^{(\ell)}}-\bk{\sigma(z)^2}_{K_{\alpha\alpha}^{(\ell)}}^2\right] +  \lr{\chi_{||;\alpha}^{(\ell)}}^2\kappa_{4;\alpha}^{(\ell)}\\
&= \lr{\frac{2}{(a_+^2+a_-^2)n_0}\norm{x_\alpha}^2}^2\lr{6\frac{a_+^4+a_-^4}{(a_+^2+a_-^2)^2}-1}\sum_{\ell'=1}^\ell \frac{1}{n_{\ell'}},    
\end{align*}
which shows that while $\kappa_{4,\alpha}^{_{(\ell)}}$ is suppressed by one power of $1/n$ relative to the infinite width variance $K_{\alpha\alpha}^{_{(\ell)}}$, it also grows one order faster in $\ell$. This illustrates an important and general theme: depth amplifies finite width effects. It is the effective depth $\ell/n$ of neurons at layer $\ell$ that measures the distance to the infinite width Gaussian regime.

Moreover, in the special setting of $1$-homogeneous non-linearities there is a simple method for obtaining the full distribution of the pre-activation vector $z_{\alpha}^{_{(\ell)}}$ at a single input at any finite values of $n_0,\ldots, n_\ell$. This was first pointed out in \cite{hanin2018neural,hanin2020products,zavatone2021exact} and is briefly reviewed in Appendix \ref{A:relu-exact}. A key takeaway is that if we take the hidden layer widths $n_1=\cdots=n_L=n$, then we have following convergence in distribution to product of independent normal and log-normal random variables:
\begin{equation}\label{E:1-homog-limit}
\lim_{\substack{n,L\gives \infty \\ L/n \gives \xi\in [0,\infty)}} z_{i;\alpha}^{(L)}\quad \stackrel{d}{=}\quad \lr{\frac{2\norm{x_\alpha}^2}{(a_+^2+a_-^2)n_0}}^{1/2} Z_1  \exp\left[-\mu(\xi,a_+,a_-)+\sigma(\xi,a_+,a_-)Z_2\right],    
\end{equation}
where
\[
\mu(\xi,a_+,a_-) = \sigma^2(\xi,a_+,a_-) := \frac{\xi}{4}\lr{6\frac{a_+^4+a_-^4}{(a_+^2+a_+^2)^2}-1},\quad Z_1,Z_2\sim \mN(0,1)\text{ iid}.
\]
The convergence \eqref{E:1-homog-limit} reveals that for a fixed input the distribution of the output of a random with $1-$homogeneous non-linearities at large depth and width depends in a simple way on the limiting effective network depth $\xi$. This bolsters the claim that they are all part of the same universality class. It also means that increasing the network depth $L$ drives it away from the infinite width Gaussian behavior observed at $\xi=0$ and that the outputs of \textit{deep and wide} networks are not well-approximated by a Gaussian at all, unless $\xi$ is infinitesimal, in which case the log-normal term $\exp\left[-\mu(\xi,a_+,a_-)+\sigma(\xi,a_+,a_-)Z_2\right]$ is negligible. 

Prior work \cite{hanin2019finite,hanin2020products,hanin2018start} of the author shows that when $\sigma=\mathrm{ReLU}$ (or any other $1$-homogeneous non-linearity), the distribution at large $n,L$ of not only the network output $z_{i;\alpha}^{_{(L+1)}}$ but also is derivatives with respect to inputs $x_\alpha$ and model parameters (e.g. weights and biases) depends only on the effective depth $L/n.$ We will return to this theme in our discussion of exploding and vanishing gradients (Theorem \ref{T:evgp}). Finally, we note that it has also been observed that log-normal random variables describe the structure of gradients in residual networks, even during/after training  \cite{li2021future}.

To complete our discussion of the ReLU universality class, we make two final remarks. First, a direct computation (reviewed briefly in Proposition \ref{P:1homog-angle-prop} of Appendix \ref{A:relu-exact}) shows that at criticality for any non-zero inputs $x_{\alpha_1},x_{\alpha_2}\in \R^{n_0}$ with the same norm  we have 
\begin{equation}\label{E:1-homog-corr}
\lim_{n\gives \infty}\mathrm{Corr}\lr{z_{i;\alpha_1}^{(\ell)},z_{i;\alpha_2}^{(\ell)}} =1 - \frac{2(a_+-a_-)^2}{3\pi(a_+^2+a_-^2)}\ell^{-2}(1+o(1)).
\end{equation}
The power law exponent $2$ that appears in this estimate is common to all $1-$homogeneous non-linearities and is another reason to believe they fall into the same universality class. In contrast, this exponent equals one for non-linearities in the $K_*=0$ universality class presented below. The estimate \eqref{E:1-homog-corr} suggests that in order to define a double scaling limit $n,L\gives \infty$ and $L/n\gives \xi$ in which the entire field $x_\alpha\mapsto z_{\alpha}^{_{(L+1)}}$ is non-degenerate (rather than just its value at a single input) one must rescale distances in the input space to prevent the collapse of correlations otherwise guaranteed in \eqref{E:1-homog-corr}. We leave this as an interesting direction for future work.

\subsubsection{The Universality Class of Hyperbolic Tangent}\label{S:tanh-univ}
The second class of non-linearities we study is what \cite{roberts2022principles} termed the $K_*=0$ universality class, which we take to mean non-linearities $\sigma$ such that 
\begin{itemize}
    \item $\sigma$ is a smooth, odd function satisfying Assumption \ref{A:sigma-prop}.
    \item $\sigma$ satisfies
    \begin{equation}\label{E:sigma-taylor}
    \sigma_1\sigma_3 <0,\qquad \sigma_j:=\frac{1}{j!}\frac{d^j}{dt^j}\bigg|_{t=0} \sigma(t).    
    \end{equation}
    \item $K_*=0$ is the unique fixed point of  equation \eqref{E:K*-def}.
    \item At criticality, for every non-zero network input $x_{\alpha}\in \R^{n_0}$ and each $\delta\in (0,1)$ we have as $L
    \gives \infty$ that
    \begin{equation}\label{E:Kstar0-decay}
    K_{\alpha\alpha}^{(L)} =\frac{1}{aL}\lr{1+O(L^{-1+\delta})},    
    \end{equation}
    where the implicit constant depends on $\delta$ and $x_\alpha$ and we've set
    \[
    a:=-6\frac{\sigma_3}{\sigma_1}.
    \]
    This specific value of $a$, which is positive by \eqref{E:sigma-taylor}, is the only possible candidate for decay of the form \eqref{E:Kstar0-decay} that is consistent with the recursion \eqref{E:K-rec}.
\end{itemize}
Some remarks are in order. First, if $K_*=0$ is the unique fixed point for \eqref{E:K*-def}, then a simple computation shows that criticality is achieved if and only if 
\begin{equation}\label{E:Kstar0-def}
    K_*=0,\qquad C_b=0,\qquad C_W =\sigma_1^{-2}.
\end{equation}
Next, our definition of the $K_*=0$ universality class does not make apparent whether it is empty. As we will see in Proposition \ref{P:tanh-crit}, however, the $K_*=0$ universality class is in fact quite large and contains for example the hyperbolic tangent and more generally any non-linearity that is $\tanh$-like in the sense that is smooth with $\sigma_1\neq 0$, has the opposite sign as its second derivative 
\begin{equation*}
\text{for almost every } z,~ \mathrm{sgn}\lr{\sigma(z)\sigma''(z)}=-1,
\end{equation*} 
is sub-linear
\begin{equation*}
 \exists C>0\, \text{ s.t. }\forall z\in \R\, \abs{\sigma(z)}\leq \abs{\sigma_1z},
\end{equation*}
and is controlled by its first few non-zero Taylor series coefficients at $0$:
\begin{equation*}
\exists C\geq 0\text{ s.t. }\forall z\geq 0,\quad \sigma_1z +\sigma_3z^3\leq \sigma(z)\leq  \sigma_1z +\sigma_3z^3+ Cz^4.
\end{equation*}
Further, by definition, for the $K_*=0$ universality class, the infinite width variance $K_{\alpha\alpha}^{_{(\ell)}}$ of neuron pre-activations $z_{i;\alpha}^{(\ell)}$ is qualitatively different from that of $1-$homogeneous non-linearities. Indeed, $K_{\alpha\alpha}^{_{(L)}}$ depends on $L$, decaying polynomially to $0$. Moreover, at large $L$, the value of $K_{\alpha\alpha}^{_{(L)}}$ is independent of the initial condition $K_{\alpha\alpha}^{_{(0)}}$ to leading order in $L$. As a final remark let us point out that searching for non-linearities $\sigma$ so that $K_*=0$ at criticality is quite natural. Indeed, for any $\sigma$ that is twice differentiable, we have
\[
\chi_{||}(K) = \chi_{\perp}(K) + C_W\bk{\sigma(z)\sigma''(z)}_{K}
\]
Hence, if $K>0$, then 
\[
\chi_{||}(K) = 1,\, \chi_{\perp}(K)=1\qquad  \Longrightarrow \qquad \bk{\sigma(z)\sigma''(z)}_{K} = 0.
\]
But if $\sigma$ is a sigmoidal function such as $\tanh$, then $\sigma(z)\sigma''(z)<0$ for all $z\neq 0$. Hence, $\bk{\sigma(z)\sigma''(z)}_{K} = 0$ can only occur when $K=0$. 

As in the monograph \cite{roberts2022principles}, let us now probe the role of network depth by studying the large $L$ behavior of the cumulants $\kappa_{2k;\alpha}^{_{(L)}},\, k=2,3,4,$ in networks with non-linearities from the $K_*=0$ universality class tuned to criticality. Note that in \eqref{E:Kstar0-decay} the limiting behavior of the variance $K_{\alpha\alpha}^{_{(L)}}$ depends (mildly) on the non-linearity $\sigma$ in terms of is first few Taylor coefficients at $0$. As we are about to see, however, the behavior of the higher cumulants $\kappa_{2k;\alpha}^{_{(L)}},\, k=2,3,4,$ when normalized by the appropriate power of $K_{\alpha\alpha}^{_{(L)}}$, is independent of $\sigma$ at leading order in $n$ and $L$ and depends only on universal constants and the effective network depth $L/n$. 
\begin{corollary}\label{C:tanh-cumulants}
Suppose $\sigma$ is a non-linearity from the $K_*=0$ universality class, and consider a depth $L$ random neural network with input dimension $n_0$, output dimension $n_{L+1}$, hidden layer widths satisfying
\[
n_1,\ldots, n_L=n\gg 1,
\]
and non-linearity $\sigma$ that has been tuned to criticality as in \eqref{E:Kstar0-def}. Write $\xi=L/n$ and define the normalized cumulants
\[
\widehat{\kappa}_{2k;\alpha}^{(L)}:=\frac{\kappa_{2k;\alpha}^{(\ell)}}{(K_{2;\alpha}^{(\ell)})^k}.
\]
We have for $k=2,3,4$ that
\begin{align}
 \label{E:kappa2k-tanh}   \widehat{\kappa}_{2k;\alpha}^{(L)}&=   C_{2k}\xi^{k-1} \lr{1+O(L^{-1})} + O(n^{-k}),
\end{align}
where
\[
C_4 = \frac{2}{3},\qquad C_6 = \frac{28}{15},\qquad C_8 = \frac{8756}{315}
\]
are some positive universal constants. The implicit constants in error terms $O(L^{-1})$ depend on $\sigma,C_b,C_W$ and constants in $O(n^{-j}),\, j=2,3,4$ may depend in addition on $L$. 
\end{corollary}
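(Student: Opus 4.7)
The plan is to derive the stated asymptotic by analyzing the three recursions \eqref{E:k4-rec}--\eqref{E:k8-rec} from Corollary \ref{C:cumulants-vals} in the large-$\ell$, large-$n$ regime, using the decay \eqref{E:Kstar0-decay} of $K_{\alpha\alpha}^{(\ell)}$ as the key input. Since the error term $O(n^{-k})$ in each recursion already matches the allowed error in the final statement, it suffices to track the explicit leading-order terms and show they combine into $C_{2k}\xi^{k-1}$ up to multiplicative correction $1+O(L^{-1})$.

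First I would Taylor expand the functionals $\chi_{||;\alpha}^{(\ell)}$ and $T_{i,j;\alpha}^{(\ell)}$ as $K := K_{\alpha\alpha}^{(\ell)} \to 0$. Using that $\sigma$ is odd (so $\sigma_{2j}=0$), that $C_W=\sigma_1^{-2}$ at criticality, and that $\bk{z^{2m}}_K = (2m-1)!!\, K^m$, a short Gaussian-moment computation gives
\[
\chi_{||}(K) = 1 - 2aK + O(K^2), \qquad T_{0,2}(K) = 2K^2 + O(K^3),
\]
and more generally $T_{i,j}(K) = t_{i,j}\, K^{j - \lfloor i/2\rfloor} + \cdots$ for explicit universal coefficients $t_{i,j}$ depending only on $\sigma_1,\sigma_3$. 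Combined with \eqref{E:Kstar0-decay} this yields $K^{(\ell)} = 1/(a\ell) + O(\ell^{-2+\delta})$ and $(\chi_{||;\alpha}^{(\ell)})^2 = 1 - 4/\ell + O(\ell^{-2+\delta})$.

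Next I would analyze each recursion as a linear equation in $\kappa_{2k;\alpha}^{(\ell)}$ with an inhomogeneity built from lower cumulants and powers of $K^{(\ell)}$. For $k=2$, \eqref{E:k4-rec} becomes
\[
\kappa_{4;\alpha}^{(\ell+1)} = \lr{1 - \frac{4}{\ell} + O(\ell^{-2+\delta})}\kappa_{4;\alpha}^{(\ell)} + \frac{2}{na^2\ell^2} + O(n^{-2}).
\]
Passing to the continuum limit (justified by a discrete Grönwall estimate) yields the ODE $y' + 4y/\ell = 2/(na^2\ell^2)$; the integrating factor $\ell^4$ gives $\kappa_{4;\alpha}^{(\ell)} = 2/(3na^2\ell)\lr{1+O(\ell^{-1})}$, and dividing by $(K_{\alpha\alpha}^{(\ell)})^2 \sim (a\ell)^{-2}$ produces $\widehat{\kappa}_4^{(L)} = \tfrac{2}{3}\xi (1+O(L^{-1}))$, confirming $C_4 = 2/3$. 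For $k=3,4$ I would iterate: substitute the asymptotics for $\kappa_4^{(\ell)}$ (and then $\kappa_6^{(\ell)}$) into \eqref{E:k6-rec} and \eqref{E:k8-rec}. Each recursion remains linear in the top cumulant with multiplier $(\chi_{||}^{(\ell)})^{2k} = 1 - 4k/\ell + O(\ell^{-2+\delta})$, giving integrating factor $\ell^{2k}$. The inhomogeneity, after substitution, scales like $\ell^{k-2}/(n^{k-1}a^{2k-2}\ell^{2k})$, so the solution of the ODE is $\kappa_{2k;\alpha}^{(\ell)} \sim C_{2k}/(n^{k-1} a^k \ell)$, which upon normalization by $(K^{(\ell)})^k$ yields $\widehat{\kappa}_{2k}^{(L)} \sim C_{2k}\xi^{k-1}$.

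The main obstacle will be the $k=4$ computation, since \eqref{E:k8-rec} has eight distinct contributions and several involve products of $\kappa_4^{(\ell)}$ and $\kappa_6^{(\ell)}$ which must be expanded using the previously-derived asymptotics. Extracting the universal constant $C_8 = 8756/315$ requires computing, to the correct subleading order in $K$, the Taylor expansions of $T_{0,3}, T_{2,2}, T_{2,3}, T_{4,1}, T_{4,2}$ and $(\chi_{||}^{(\ell)})^4$, and then verifying that only certain combinations survive at the leading order $\ell^3/n^3$. A secondary technical point is justifying the passage from discrete recursions to ODE asymptotics uniformly in $n$; this can be handled by proving a discrete comparison lemma showing that the error between the discrete recursion and its continuous envelope is of lower order than the leading term, uniformly for $1\leq \ell \leq L$ with $L/n$ bounded.
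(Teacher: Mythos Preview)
Your approach is essentially the same as the paper's: Taylor expand the coefficients $\chi_{||}^{(\ell)}$ and $T_{i,j;\alpha}^{(\ell)}$ in $K_{\alpha\alpha}^{(\ell)}\sim (a\ell)^{-1}$, then solve the resulting asymptotic recursions hierarchically. The only methodological difference is that the paper solves the discrete recursions directly via Lemma~\ref{L:rec-lemma} (which handles $a_{\ell+1}=\xi_\ell+(1-\zeta_\ell)a_\ell$ with $\zeta_\ell\sim C_2/\ell$ by unfolding the product), whereas you propose passing to the continuum ODE. The two are equivalent, and in fact Lemma~\ref{L:rec-lemma} is precisely the ``discrete comparison lemma'' you anticipate needing, so there is no need to justify a continuum limit separately.

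Two small corrections. First, the linear multiplier in the recursion for $\kappa_{2k}$ is $(\chi_{||}^{(\ell)})^{k}$, not $(\chi_{||}^{(\ell)})^{2k}$; since $(\chi_{||}^{(\ell)})^k = 1 - 2k/\ell + O(\ell^{-2+\delta})$, your stated integrating factor $\ell^{2k}$ is nonetheless correct. Second, the scaling $T_{i,j}(K)\sim K^{j-i/2}$ is only generic and fails when the leading coefficient vanishes: for instance $T_{4,1}=C_W\bk{\partial^4\sigma^2}_K$ has $\partial^4(z^2)=0$, so one must go to the $z^4$ term in $\sigma^2$, giving $T_{4,1}(K)=-8a+O(K)$ rather than $O(K^{-1})$. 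This matters for the $k=3,4$ computations, where terms like $T_{4,1}(\chi_{||}\kappa_4)^2$ contribute at leading order. With these corrections the arithmetic for $C_6$ and $C_8$ goes through as you outline.
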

\begin{remark}\label{R:tanh-cumulants}
Combining estimate \eqref{E:kappa2k-tanh} for $k=2$ with the definition \eqref{E:Kstar0-decay} of the $K_*=0$ universality class and  \eqref{E:kalpha-def} yields that in the setting of Corollary \ref{C:cumulants-vals} we have to first order in $1/n$ and to leading order in $1/L$ that
\begin{align*}
    \widehat{\Var}\left[\lr{z_{1;\alpha}^{(\ell+1)}}^2\right] &= 2(1+\xi),\qquad \mathrm{Corr}\lr{\lr{z_{1;\alpha}^{(\ell+1)}}^2,\, \lr{z_{2;\alpha}^{(\ell+1)}}^2}=\frac{2}{3}\xi,\qquad \xi:=\frac{L}{n},
\end{align*}
where $\widehat{\Var}[X]=\Var[X]/\E{X^2}$. Thus, both the fluctuations of a single neuron pre-activation and the correlation between different neurons is controlled to first order in $1/n,1/L$ by the effective network depth $\xi$. 
\end{remark}
We prove Corollary \ref{C:cumulants-vals} in \S \ref{S:tanh-rec-solutions}. The formula \eqref{E:kappa2k-tanh}  derived in \cite{yaida2019non} for $k=2$ at a physics level of rigor. Moreover, Corollary \ref{C:cumulants-vals} represents a partial analog of the convergence result \eqref{E:1-homog-limit}, which lends credence to the following 

\begin{conjecture}[Existence of Double Scaling Limit for Random Neural Networks]\label{C:resummation}
Consider a random depth $L$ neural network with input dimension $n_0$, hidden layer widths 
\[
 n_1,\ldots, n_L=n\gg 1,
\]
output dimension $n_{L+1}$ and non-linearity $\sigma$. Suppose further that this network is tuned to criticality in the sense that \eqref{E:criticality} is satisfied. Fix a non-zero network input $x_\alpha\in \R^{n_0}$ and write $\xi=L/n$. For each $k\geq 1$ there exists $C_{2k}>0$ depending on the universality class of $\sigma$ so that
\[
\frac{\kappa_{2k;\alpha}^{(L)}}{\lr{K_{2;\alpha}^{(L)}}^{k}} = C_{2k}\xi^{k-1}+O\lr{\xi^{k}}.
\]
Moreover, for each $\xi\in [0,\infty)$ there exists a probability distribution $\mathbb P_{\xi,\sigma}$ on $\R$, depending only on $\xi$ and $\sigma$, such that in the double scaling limit
\[
n,L\gives \infty,\qquad \frac{L}{n}\gives \xi,
\]
the random variable $z_{i;\alpha}^{(\ell)}$ converges in distribution to a random variable with law  $\mathbb P_{\xi,\sigma}$. 
\end{conjecture}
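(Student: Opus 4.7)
I would iterate the argument behind Corollary \ref{C:tanh-cumulants} to all orders $k$. For each $k$, Theorem \ref{T:pert-exp} applied to $f$ a symmetric polynomial in $z_\alpha^{(\ell+1)}$ produces a recursion of the schematic form
\[
\kappa_{2k;\alpha}^{(\ell+1)} = \lr{\chi_{||;\alpha}^{(\ell)}}^{k}\kappa_{2k;\alpha}^{(\ell)} + \mathcal{F}_k\lr{\kappa_{2;\alpha}^{(\ell)},\ldots,\kappa_{2(k-1);\alpha}^{(\ell)};\, K_{\alpha\alpha}^{(\ell)};\, n_\ell^{-1}} + O(n^{-k}),
\]
where $\mathcal{F}_k$ is polynomial in the lower cumulants, $K_{\alpha\alpha}^{(\ell)}$, and $1/n_\ell$, with coefficients given by Gaussian integrals of $\sigma$ and its derivatives; the cases $k=2,3,4$ are exactly Corollary \ref{C:cumulants-vals}. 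I would first extract this recursion by applying Theorem \ref{T:pert-exp} with $q_*=k$ and organizing contributions combinatorially as a sum over plane trees with $k$ leaves weighted by Hermite-type integrals. At $K_*=0$ criticality one has $\chi_{||}(K)=1-2aK+O(K^2)$ and $K_{\alpha\alpha}^{(\ell)}\sim 1/(a\ell)$, so the discrete Duhamel formula yields $\prod_{\ell'=\ell}^{L}\lr{\chi_{||}^{(\ell')}}^{k}\sim (\ell/L)^{2k}$. Combining this with the inductive hypothesis $\widehat{\kappa}_{2j;\alpha}^{(\ell)}=C_{2j}(\ell/n)^{j-1}(1+o(1))$ for $j<k$ and summing over $\ell\leq L$ then produces the $\xi^{k-1}$ scaling of $\widehat{\kappa}_{2k;\alpha}^{(L)}$. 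The constant $C_{2k}$ emerges from the dominant cross-terms in $\mathcal{F}_k$ of the form $\kappa_{2j}\kappa_{2(k-j)}$; for instance, at $k=3$ the dominant contribution comes from $T_{4,1}\chi_{||}^2 \kappa_4^2$ rather than the direct forcing $T_{0,3}/n^2$.

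\textbf{Plan for part 2 (distributional limit).} Given uniform-in-$k$ control on the normalized cumulants from part 1, I would deduce weak convergence of $z_{i;\alpha}^{(L)}/\sqrt{K_{\alpha\alpha}^{(L)}}$ to $\mathbb{P}_{\xi,\sigma}$ via the moment method. Since each moment $\E{(z_{i;\alpha}^{(L)})^{2m}}$ is a polynomial in the cumulants $\kappa_{2j;\alpha}^{(L)}$, $j\leq m$, the asymptotics $\widehat{\kappa}_{2j;\alpha}^{(L)}\to C_{2j}\xi^{j-1}$ imply convergence of all normalized even moments; odd moments vanish by the symmetry of \eqref{E:Wb-def}. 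Provided the limiting moment sequence satisfies a Carleman-type growth bound, $\mathbb{P}_{\xi,\sigma}$ is uniquely determined. For $1$-homogeneous $\sigma$, the explicit identification \eqref{E:1-homog-limit} already exhibits $\mathbb{P}_{\xi,\sigma}$ as the law of a Gaussian times an independent log-normal. For the $K_*=0$ class, the conditional Gaussian representation \eqref{E:cond-gauss-vals} strongly suggests that $\mathbb{P}_{\xi,\sigma}$ is a mixed normal $\sqrt{V_\xi}\,Z$ with $Z\sim\mathcal N(0,1)$ independent of a positive random variable $V_\xi$ given by the value at time $\xi$ of a stochastic differential equation describing the continuum-in-depth limit of the normalized conditional variance $\Sigma_{\alpha\alpha}^{(\ell)}/K_{\alpha\alpha}^{(\ell)}$, whose drift and diffusion coefficients are Gaussian integrals of $\sigma$ determined by the functionals $T_{i,j;\alpha}^{(\ell)}$.

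\textbf{Main obstacles.} The principal difficulty is that the error bounds in Theorem \ref{T:cumulants} and Corollary \ref{C:cumulants-vals} carry implicit constants that depend on $\ell$. To take $L,n\to\infty$ with $L/n\to\xi>0$, one must show that the $O(n^{-k})$ residual in the recursion above accumulates to at most $O(L/n^k)=O(\xi/n^{k-1})$ after summation, i.e.\ remains genuinely subleading. Achieving this requires a priori $L^q$ bounds on the collective observables $\Sigma_{\alpha\alpha'}^{(\ell)}$ and their nonlinear images that are uniform in $\ell\leq L$ and $n$, combined with a self-consistent bootstrap propagating control of all cumulants simultaneously through the layers. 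A second obstacle, for part 2, is the identification of $\mathbb{P}_{\xi,\sigma}$ outside the $1$-homogeneous class: establishing the SDE limit of the normalized variance requires tightness in Skorokhod space and a careful treatment of the degenerate limit $K_{\alpha\alpha}^{(\ell)}\to 0$, since the coefficients of the SDE are nonlinear Gaussian averages of $\sigma$ evaluated at a variance that is itself shrinking with depth.
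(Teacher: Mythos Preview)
The statement you are addressing is labeled as a \emph{conjecture} in the paper, not a theorem, and the paper offers no proof of it. The paper's evidence consists of the special cases $k=2,3,4$ in the $K_*=0$ class (Corollary~\ref{C:tanh-cumulants}) and the exact $1$-homogeneous computation \eqref{E:1-homog-limit}; immediately after stating these, the author writes that they ``lend credence to'' Conjecture~\ref{C:resummation}. So there is no paper proof to compare your proposal against.

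That said, your outline is a sensible research plan and you have correctly located the genuine obstruction: all of the error terms in Theorems~\ref{T:cumulants}, \ref{T:pert-exp} and Corollary~\ref{C:cumulants-vals} carry implicit constants that the paper explicitly allows to depend on $\ell$, and without uniform-in-$\ell$ control the perturbative hierarchy cannot be summed along $L\sim \xi n$. The paper itself flags this (see the Remark after Proposition~\ref{P:2pt-deriv-recs-subleading}), and it also states that it does not know how to compute the coefficients in \eqref{eq:hier} for general $k$, which is precisely the combinatorial step you propose to carry out via trees. One small caveat: your detailed argument for part~1 is written only for the $K_*=0$ class, whereas the conjecture is stated for any $\sigma$ tuned to criticality; the $1$-homogeneous case has different scaling in $\ell$ (e.g.\ $\chi_{||}^{(\ell)}\equiv 1$ rather than $1-2/\ell+\cdots$), so the Duhamel analysis you sketch does not transfer verbatim.
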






\subsection{Gradients in Random Neural Networks}\label{S:grad-results}

We presented in \S \ref{S:results} and \S \ref{S:crit-univ} a range of results about random fully connected neural networks. The high-level takeaway was that such networks are succinctly described when $n,L$ are large by the effective network depth $\xi=L/n$. We saw that when $\xi=0$ the network outputs are a Gaussian process (Theorem \ref{T:iwl}). When $\xi>0$, however, a different picture emerges. Namely, the higher order cumulants of the network output grow like powers of $\xi$ (see Corollary \ref{C:cumulants-vals} and Conjecture \ref{C:resummation}). In this section we continue to consider a random neural network $x_\alpha\mapsto z_\alpha^{_{(L+1)}}$ and study for any $\ell$ the partial derivatives 
\[
\partial_{x_{j;\alpha}} z_{i;\alpha}^{(\ell)},\qquad j=0,\ldots, n_0,
\]
where by definition when $j=0$ we set
\[
\partial_{x_{0;\alpha}} z_{i;\alpha}^{(\ell)}:=z_{i;\alpha}^{(\ell)}.
\]
In \S \ref{S:K-recs} - \S\ref{S:S2-recs} we obtain recursions with respect to $\ell$ for both the infinite width covariances
    \[
    K_{(ij)}^{(\ell)}:=\lim_{n\gives \infty }\Cov\lr{\frac{\partial z_{i;\alpha}^{(\ell+1)}}{\partial x_{i;\alpha}},\frac{\partial z_{i;\alpha}^{(\ell+1)}}{\partial x_{j;\alpha}}},\quad i,j=1,2.
    \]
and the fourth cumulants
    \begin{align*}
        \kappa_{(ii)(jj)}^{(\ell)}&:=(1-\frac{2}{3}\delta_{ij})\kappa\lr{\frac{\partial z_{i;\alpha}^{(\ell+1)}}{\partial x_{i;\alpha}},\frac{\partial z_{i;\alpha}^{(\ell+1)}}{\partial x_{i;\alpha}},\frac{\partial z_{i;\alpha}^{(\ell+1)}}{\partial x_{j;\alpha}},\frac{\partial z_{i;\alpha}^{(\ell+1)}}{\partial x_{j;\alpha}}},\qquad i,j=0,1,2.\\
        &=\kappa\lr{\frac{\partial z_{1;\alpha}^{(\ell+1)}}{\partial x_{i;\alpha}},\frac{\partial z_{1;\alpha}^{(\ell+1)}}{\partial x_{i;\alpha}},\frac{\partial z_{2;\alpha}^{(\ell+1)}}{\partial x_{j;\alpha}},\frac{\partial z_{2;\alpha}^{(\ell+1)}}{\partial x_{j;\alpha}}}.
    \end{align*}
These recursions are valid for any non-linearity and any choice of $C_b,C_W$. Then, in \S \ref{S:tanh-rec-solutions} we solve these recursions in the context of a random neural network with a non-linearity from the $K_*=0$ universality class tuned to criticality. We record several of these solutions in the following result. 
\begin{theorem}[Second and Fourth Joint Cumulants for Values and Derivatives of Random Neural Networks]\label{T:derivs-cumulants}
    Fix $r\geq 1$ and suppose that $\sigma:\R\gives\R$ satisfies Assumption \ref{A:sigma-prop} with this value of $r$. Fix also an $x_\alpha\neq 0 \in \R^{n_0}$. Consider a random neural network $z_\alpha^{_{(L+1)}}$ with input dimension $n_0$, output dimension $n_{L+1}$, hidden layer widths $n_1,\ldots, n_L=n$, and non-linearity $\sigma$ belonging to the $K_*=0$ universality class that has been tuned to criticality as in \eqref{E:Kstar0-def}. We have for any $\delta\in (0,1)$
    \begin{align}
    \label{E:K00-form}K_{(00)}^{(\ell+1)} &= \frac{1}{a\ell}+O_\delta(\ell^{-2+\delta})\\
    \label{E:K10-form}K_{(10)}^{(\ell+1)} &=\frac{C_We^{-2\gamma}x_{1;\alpha}}{n_0\ell^2}\lr{1+O(\ell^{-1})}\\
    \label{E:K11-form}K_{(11)}^{(\ell+1)} & = \frac{C_We^{-\gamma}}{n_0\ell}\lr{1+O(\ell^{-1})} + O\lr{\frac{(x_{1;\alpha})^2}{n_0^2\ell^3}}\\
   \label{E:K12-form} K_{(12)}^{(\ell+1)} &=O\lr{\frac{x_{1;\alpha}x_{2;\alpha}}{n_0^2\ell^3}},
\end{align}
where $\gamma$ is the Euler-Mascheroni constant and the implicit constant in the $O_\delta(\ell^{-2+\delta})$ error depends on $\delta$. Moreover, denoting by $\xi = L/n$ the effective network depth, we have
\begin{align}
 \label{E:k1100-formula}     \frac{\kappa_{(11)(00)}^{(\ell+1)}}{K_{(11)}^{(\ell)}K_{(00)}^{(\ell)}} &=-\frac{1}{3}\xi (1+O(\ell^{-1}))+O(n^{-2})\\
  \label{E:k1111-formula}    \frac{\kappa_{(11)(11)}^{(\ell+1)}}{(K_{(11)}^{(\ell)})^2} & = \frac{8}{3}\xi(1+O(\ell^{-1})) +O(n^{-2})\\
 \label{E:k1122-formula}       \frac{\kappa_{(11)(22)}^{(\ell+1)}}{K_{(11)}^{(\ell)}K_{(22)}^{(\ell)}}&=\frac{2}{3}\xi(1+O(\ell^{-1}))+O(n^{-2}) ,
\end{align}
where the implicit constants in the $O(\ell^{-1})$ error terms depend only on $x_\alpha,\sigma$ and the implicit constants in the $O(n^{-2})$ error terms may depend in addition on $\ell$.
\end{theorem}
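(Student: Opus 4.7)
The plan is to prove Theorem \ref{T:derivs-cumulants} in two stages: first obtain exact layerwise recursions for the infinite-width covariances $K_{(ij)}^{(\ell)}$ and the finite-width fourth cumulants $\kappa_{(ii)(jj)}^{(\ell)}$, and then solve these recursions asymptotically using the $K_*=0$ criticality behavior \eqref{E:Kstar0-decay}.

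For the second-order quantities \eqref{E:K00-form}--\eqref{E:K12-form}, I would differentiate the recursion \eqref{E:K-rec} for $K_{\alpha\beta}^{(\ell+1)}$ in coordinates of the two inputs $x_\alpha,x_\beta$ and then set $\beta=\alpha$. Repeated use of Gaussian integration by parts expresses the vector $(K_{(00)}^{(\ell+1)},K_{(10)}^{(\ell+1)},K_{(11)}^{(\ell+1)},K_{(12)}^{(\ell+1)})$ as an affine-linear function of the previous-layer vector whose coefficients are exactly the functionals
\[
\chi_{||}(K_{\alpha\alpha}^{(\ell)})=\frac{C_W}{2}\bk{\partial_z^2 \sigma^2(z)}_{K_{\alpha\alpha}^{(\ell)}},\qquad \chi_\perp(K_{\alpha\alpha}^{(\ell)})=C_W\bk{(\sigma'(z))^2}_{K_{\alpha\alpha}^{(\ell)}},
\]
along with a handful of higher derivative-type Gaussian expectations. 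For a non-linearity in the $K_*=0$ class, Taylor expanding these functionals around $K=0$ and substituting the sharp estimate $K_{\alpha\alpha}^{(\ell)}=\frac{1}{a\ell}(1+O(\ell^{-1+\delta}))$ yields expansions of the shape $\chi_{||}(K_{\alpha\alpha}^{(\ell)})=1-c_{||}/\ell+O(\ell^{-2+\delta})$ and analogously for $\chi_\perp$ and the mixed coefficients. Iterating the linear recursion from layer $1$ to $\ell$ produces products of the form $\prod_{\ell'=1}^\ell(1-c/\ell')$ whose asymptotics involve the factor $\ell^{-c}e^{-c\gamma}$ with $\gamma$ the Euler--Mascheroni constant; this is precisely what delivers the explicit rates and the prefactors $e^{-\gamma}$, $e^{-2\gamma}$ in \eqref{E:K10-form}--\eqref{E:K12-form}.

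For the fourth-cumulant estimates \eqref{E:k1100-formula}--\eqref{E:k1122-formula}, I would apply Theorem \ref{T:pert-exp} with $f$ ranging over the quartic monomials of the form $\prod_{s=1}^4 d_{i_s}z_{j_s;\alpha}^{(\ell+1)}$ whose expectations encode the joint fourth cumulants in question. As in the derivation of \eqref{E:k4-rec}, only the $q=0$ and $q=1$ terms of the perturbative expansion contribute to leading order in $1/n$, producing exact recursions (the content of Proposition \ref{P:k4-deriv-recs}) of the schematic form $\kappa^{(\ell+1)}=(1+O(1/\ell))\kappa^{(\ell)}+\frac{1}{n_\ell}S^{(\ell)}+O(n^{-2})$, where $S^{(\ell)}$ is a variance of a collective observable built from $\sigma(z)^2,\sigma(z)\sigma'(z),(\sigma'(z))^2$ against the infinite-width covariance. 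Using the small-$K$ Taylor expansions of these source terms together with the transfer factors computed in the previous paragraph, the geometric-type recursion unrolls into a Riemann sum $\frac{1}{n}\sum_{\ell'=1}^L(\cdot)$ which converges to $\xi=L/n$ times an explicit universal rational number, matching the prefactors $-\tfrac13,\tfrac83,\tfrac23$ stated in the theorem.

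The main obstacle is bookkeeping two small parameters simultaneously: $1/\ell$ from the slow decay of the infinite-width variance and $1/n$ from the cumulant hierarchy. Because the recursions for the derivative quantities are coupled (for example, $K_{(11)}^{(\ell+1)}$ depends on both $K_{(11)}^{(\ell)}$ and $K_{(00)}^{(\ell)}$) and the iteration runs through $L\sim n$ layers, naive propagation of error terms is not tight enough. Making the $O(n^{-2})$ error terms in \eqref{E:k1100-formula}--\eqref{E:k1122-formula} uniform in $\ell$ requires invoking the sharp bounds of Theorem \ref{T:cumulants} on the next-tier sixth cumulants to close the hierarchy, and matching these against the sub-polynomial decay error $O(\ell^{-1+\delta})$ in \eqref{E:Kstar0-decay} to ensure that no spurious logarithmic or polynomial amplification is introduced by the $L$-fold iteration.
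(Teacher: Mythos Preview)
Your approach is essentially the paper's: derive the $K_{(ij)}^{(\ell)}$ recursions by differentiating \eqref{E:K-rec} (Lemma \ref{L:K-deriv-recs}), derive the fourth-cumulant recursions via Theorem \ref{T:pert-exp} (Proposition \ref{P:k4-deriv-recs}), then solve both using the $K_*=0$ expansions $\chi_{||}^{(\ell)}=1-2/\ell+O(\ell^{-2})$, $\chi_\perp^{(\ell)}=1-1/\ell+O(\ell^{-2})$ together with the product/summation lemma (Lemma \ref{L:rec-lemma}).

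Two small corrections are worth noting. First, your schematic scalar recursion $\kappa^{(\ell+1)}=(1+O(1/\ell))\kappa^{(\ell)}+\tfrac{1}{n_\ell}S^{(\ell)}+O(n^{-2})$ understates the structure: the recursions in Proposition \ref{P:k4-deriv-recs} form a \emph{coupled triangular system}---for instance $\kappa_{(11)(11)}^{(\ell+1)}$ depends on all of $\kappa_{(00)(00)}^{(\ell)},\kappa_{(10)(00)}^{(\ell)},\kappa_{(11)(00)}^{(\ell)},\kappa_{(10)(10)}^{(\ell)},\kappa_{(11)(10)}^{(\ell)},\kappa_{(11)(11)}^{(\ell)}$---and the paper solves them sequentially in Proposition \ref{P:4pt-deriv-recs-tanh}, each step reducing to a Lemma \ref{L:rec-lemma} instance only after the earlier ones have been computed. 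Second, your closing paragraph overcomplicates the error control: sixth cumulants are never needed (the $O(n^{-2})$ remainder in Proposition \ref{P:k4-deriv-recs} comes directly from truncating Theorem \ref{T:pert-exp} at $q_*=1$, not from bounding the next tier), and the theorem explicitly permits the $O(n^{-2})$ constants to depend on $\ell$, so there is no uniformity obstacle to address.
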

Theorem \ref{T:derivs-cumulants} reveals several interesting phenomena. First, we find from \eqref{E:K00-form} and \eqref{E:K10-form} that at infinite width values and derivatives become uncorrelated at large $L$:
\[
\lim_{L\gives \infty}\lim_{n\gives \infty}\mathrm{Corr}\lr{z_{1;\alpha}^{(L+1)},\, \partial_{x_{1;\alpha}}z_{1;\alpha}^{(L+1)}} = 0.
\]
Moreover, at finite width, the effective network depth $\xi$ controls the distribution of gradients both in terms of their fluctuations at a single neuron and their correlations between neurons. For instance, to leading order in $n,L$:
\begin{align*}
\frac{\Var\left[\lr{\partial_{x_{1;\alpha}} z_{1;\alpha}^{(L+1)}}^2\right]}{\E{\lr{\partial_{x_{1;\alpha}} z_{1;\alpha}^{(L+1)}}^2}}&= \frac{8}{3}\xi,\qquad \mathrm{Corr}\lr{\lr{\frac{\partial z_{1;\alpha}^{(L+1)}}{\partial x_{1;\alpha}}}^2,\lr{\frac{\partial z_{1;\alpha}^{(L+1)}}{\partial x_{2;\alpha}}}^2}= \frac{2}{3}\xi.
\end{align*}
In the next section, we will apply such estimates to understand a simple version of the so-called exploding and vanishing gradient problem.

\subsection{Exploding and Vanishing Gradient Problem for First Layer Weights}\label{S:evgp-results} On its face, Theorem \ref{T:derivs-cumulants} concerns only derivatives of $z_{i;\alpha}^{_{(L+1)}}$ with respect to the network input $x_\alpha$. However, it is the derivatives of $z_{i;\alpha}^{_{(L+1)}}$ with respect to the model parameters (weights and biases) that are arguably more important. To explain this more precisely, consider a fully connected neural network $x_\alpha\mapsto z_{\alpha}^{_{(L+1)}}$ with input dimension $n_0$, output dimension $n_{L+1}$, and hidden layer widths $n_1,\ldots, n_L$. For any $m\in \N$ let us set
\[
[m]:=\set{1,\ldots, m}
\]
and denote by 
\[
\theta=\lr{\theta_\mu,\, \mu\in [\#\text{params}]}=\lr{W_{ij}^{(\ell)},b_i^{(\ell)},\, \ell\in [L+1],\, i\in [n_\ell],\, j\in [n_{\ell-1}]}
\]
the vector of its trainable parameters, where we've set
\[
\#\text{params} := \#\text{weights and biases}= \sum_{\ell=1}^{L+1} n_{\ell}\lr{1 +n_{\ell-1}}.
\]
As we briefly discussed in \S \ref{S:main-q} the typical use case for a network is to optimize its parameters $\theta$ by some variant of gradient descent
\begin{equation}\label{E:param-gd}
\theta_\mu(t+1) = \theta_\mu(t) - \eta  \partial_\mu \mathcal L(\theta(t)),\qquad \mu\in [\#\text{params}]
\end{equation}
starting from a random initialization $\theta(0)$ drawn from the distribution \eqref{E:Wb-def}. The goal of this procedure is to minimize a loss 
\[
\mL(\theta)= \mL(z_{\mA_{\text{train}}}^{(L+1)}(\theta)),\qquad z_{\mA_{\text{train}}}^{(L+1)}:=\lr{z_\alpha^{(L+1)},\, \alpha\in \mA_{\text{train}}},
\] 
which often depends only on the network output and measures for each $\theta$ how well the resulting neural net function $x_\alpha\mapsto z_{\alpha}^{(L+1)}(\theta)$ matches given training data $\set{(x_\alpha,f(x_\alpha)),\, \alpha \in \mA_{\text{train}}}$ for a function $f:\R^{n_0}\gives \R^{n_{L+1}}$.  For example, we might have 
\[
\mL(\theta)=\frac{1}{\abs{\mA_{\text{train}}}} \sum_{\alpha\in \mA_{\text{train}}} \norm{f(x_\alpha)-z_{\alpha}^{(L+1)}}_2^2,
\]
though many different losses are used in practice. The parameter $\eta$ is called the learning rate, or step size. An important and well-known \cite{bengio1994learning,hanin2018neural,hochreiter1998vanishing} numerical stability issue that comes up in the course of using a first order method such as \eqref{E:param-gd} to optimize the parameters of a deep neural network is called the exploding and vanishing gradient problem (EVGP). Informally, the EVGP occurs when the update \eqref{E:param-gd} is numerically unstable:
\[
\mathrm{EVGP}\qquad \longleftrightarrow \qquad \text{for many parameters }\theta_\mu\quad \frac{\abs{\eta  \partial_\mu \mathcal L(\theta)}}{\abs{\theta_\mu}}\approx 0\text{ or }\infty.
\]
The presence of the EVGP causes optimization to break down since if $\abs{\partial_\mu\mL(\theta)}/\abs{\theta_\mu}\approx 0\text{ or }\infty$, then the relative change in the parameter $\theta_\mu$ is either too small to be useful or so large that it amounts to a large random step in the space of parameters and therefore is unlikely to decrease the loss. 

It has long been known that sufficiently deep fully connected neural networks are prone to suffer from the EVGP \cite{bengio1994learning,hochreiter1998vanishing,hochreiter2001gradient}. Indeed, many important practical innovations such as residual connections \cite{he2015delving} were invented in part to address such numerical issues, though by now they are seen as useful for many other reasons. To see why deep enough neural nets tend to have numerically unstable gradients note that since the loss only depends on the network outputs on a finite training set $z_{\mA_{\text{train}}}^{_{(L+1)}} $, we may use the chain rule to write
\[
\partial_{\mu}\mL(\theta) = \sum_{\alpha\in \mA_{\text{train}}} J_{\theta_\mu} z_{\alpha}^{(L+1)} J_{z_{\alpha^{(L+1)}}}\mL(z_{\mA_{\text{train}}}^{(L+1)}) ,
\]
where for any function $f$ we denote by $J_xf(x)$ its Jacobian with respect to $x.$ Moreover, suppose that $\theta_\mu$ is a weight or bias in layer $\ell_0$. Then, again by the chain rule, the Jacobian of the network output with respect to $\mu$ equals
\[
J_{\theta_\mu} z_{\alpha}^{(L+1)} = J_{\theta_\mu} z_{\alpha}^{(\ell_0)}J_{z_{\alpha}^{(\ell_0)}} z_{\alpha}^{(L+1)}.
\]
Both of these terms may have large fluctuations. Perhaps most importantly, the second term can be rewritten as a product
\[
 J_{z_{\alpha}^{(\ell_0)}} z_{\alpha}^{(L+1)}=\prod_{\ell=\ell_0}^{L} J_{z_{\alpha}^{(\ell)}} z_{\alpha}^{(\ell+1)},
\]
of $L-\ell_0+1$ layer-to-layer Jacobian matrices (which are random at the start of training), with each term having the following explicit representation: 
\[
J_{z_{\alpha}^{(\ell)}} z_{\alpha}^{(\ell+1)} = D^{(\ell+1)}W^{{(\ell+1)}},\qquad D^{(\ell+1)}:=\mathrm{Diag}\lr{\sigma'\lr{z_{i;\alpha}^{(\ell+1)}},\, i=1,\ldots, n_{\ell+1}}.
\]
Except in the simple case when $\sigma$ is the identity and we require $W^{{(\ell+1)}}$ to be orthogonal, the top singular value of each layer-to-layer Jacobian will tend to deviate from $1$, causing the norms of their products to either grow or decay exponentially with $L$. This poor conditioning for large matrix products is a key culprit behind the EVGP. 

Often, though not always, the gradients $\partial_\mu\mL$ have the largest fluctuations at the start of training. Thus, the EVGP is primarily a property of random neural networks, and the techniques in this article can be used to shed light on when it occurs. At the start of training the entries of the Jacobian $J_{z_{\alpha}^{(\ell_0)}} z_{\alpha}^{_{(L+1)}}$ are precisely the derivatives of a random neural network with depth $L-\ell_0+1$ with respect to its input. One can therefore hope to use Theorems \ref{T:cumulants} and \ref{T:pert-exp} to address the following more precise formulation of the EVGP:
\begin{align*}
\text{EVGP}\quad \longleftrightarrow \quad &\text{for a typical random draw of weights and biases the size of the}\\
&\text{relative Jacobians of the network output}\\
& \qquad\qquad \qquad \qquad \qquad |J_{\theta_\mu} z_{j;\alpha}^{(L+1)}|/\abs{\theta_\mu}\\
& \text{has a large variance over $\mu$ (i.e. over network parameters)}.
\end{align*}
This characterization of the EVGP says in essence that if we can choose only a single learning rate $\eta$ for a group of otherwise indistinguishable parameters, such as the weights in a single layer, then it make sense to set
\[
\eta= \lr{\text{average relative gradient } |J_{\theta_\mu} z_{j;\alpha}^{(L+1)}|/\abs{\theta_\mu}}^{-1}.
\]
The EVGP will then occur when the size of relative gradients have a large variance across parameters in a typical random draw of weights and biases, so that for some parameters our setting of $\eta$ is too small whereas for others it is too large. 

We will leave the study of the EVGP in this  general form as pertaining to future work and consider here a special case. Specifically, we compute in Theorem \ref{T:evgp} below the average of the empirical variance of the squared gradients $|J_{\theta_\mu}z_{j;\alpha}^{_{(L+1)}}|^2$ when $\theta_\mu=W_{ij}^{_{(1)}}$ varies over all weights in the first layer. By construction, the typical size of $\abs{\theta_\mu}$ is the same for all such $\mu$. The EVGP will thus occur for weights in the first layer if and only if the empirical fluctuations over $\mu$ of the raw Jacobians $|J_{{W_{ij}^{(1)}}} z_{j;\alpha}^{_{(L+1)}}|$ is large. To state the precise result we fix $x_{\alpha}\neq 0\in \R^{n_0}$ and $q\in [n_{L+1}]$. We then write
\[
\mathrm{Grad~Mean}^{(1)}:=\frac{1}{n_{0}n_{1}}\sum_{\substack{1\leq i\leq n_{1}\\ 1\leq j\leq n_{0}}} \lr{\partial_{W_{ij}^{(1)}} z_{q;\alpha}^{(L+1)}}^2
\]
for the empirical mean of the squared gradients of $z_{q;\alpha}^{(L+1)}$ with respect to the weights $W_{ij}^{_{(1)}}$ in the first layer and analogously 
\[
\mathrm{Grad~Var}^{(1)}:= \frac{1}{n_{0}n_{1}}\sum_{\substack{1\leq i\leq n_{1}\\ 1\leq j\leq n_{0}}} \lr{\partial_{W_{ij}^{(1)}} z_{q;\alpha}^{(L+1)}}^4 - \lr{\mathrm{Grad~Mean}^{(1)}}^2
\]
for the empirical variance of the squared gradients.

\begin{theorem}[EVGP for First Layer Weights in Fully Connected Networks]\label{T:evgp}
Let $x_{\alpha}\mapsto z_{\alpha}^{(L+1)}$ be a random neural network with input dimension $n_0$, output dimension $n_{L+1}$, hidden layer widths 
\[
n_1,\ldots, n_L=n\gg 1,
\]
and non-linearity $\sigma$. Suppose also that
\begin{itemize}
	\item $\sigma$ satisfies Assumption. \ref{A:sigma-prop}
	\item $\sigma$ belongs to the $K_*=0$ universality class in the sense described in \S \ref{S:crit} as well as \S \ref{S:tanh-univ}.
	\item $C_b,C_W$ are tuned to criticality in the sense described as in \eqref{E:Kstar0-def}.
\end{itemize}
Then, for any non-zero $x_\alpha\in \R^{n_0}$ we have 
\[
\frac{\E{\mathrm{Grad~Var}^{(1)}}}{\E{\mathrm{Grad~Mean}^{(1)}}^2} = C_{\sigma,\alpha} \bigg(1+\frac{8}{3} \xi +O(L^{-1})+O(n^{-1})+O_L(n^{-2})\bigg),
\]
where $\xi = \frac{L}{n}$ is the effective network depth and 
\[
C_{\sigma,\alpha} := 3\frac{\bk{(\sigma')^4}_{K_{\alpha\alpha}^{(1)}}}{\bk{(\sigma')^2}_{K_{\alpha\alpha}^{(1)}}^2}\frac{n_0^{-1}\norm{x_\alpha}_4^4}{(n_{0}^{-1} \norm{x_\alpha}_2^2)^2} - 1
\]
is a strictly positive constant depending only on $x_\alpha,\sigma$, the implicit constants in the error terms $O(n^{-1}), O(L^{-1})$ depend only on $x_\alpha, \sigma$ whereas the implicit constant in the error term $O_L(n^{-2})$ depends on $x_\alpha, \sigma, L$. 
\end{theorem}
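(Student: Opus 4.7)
The plan is to reduce Theorem~\ref{T:evgp} to Theorem~\ref{T:derivs-cumulants} via the chain rule. Since $z_{k;\alpha}^{(1)}=b_k^{(1)}+\sum_m W_{km}^{(1)}x_{m;\alpha}$ gives $\partial_{W_{ij}^{(1)}}z_{k;\alpha}^{(1)}=\delta_{ik}x_{j;\alpha}$, and since $z_{q;\alpha}^{(L+1)}$ depends on $W^{(1)},b^{(1)}$ only through $z^{(1)}$, one obtains the exact factorization
\[
\partial_{W_{ij}^{(1)}}z_{q;\alpha}^{(L+1)} \;=\; x_{j;\alpha}\,\sigma'\bigl(z_{i;\alpha}^{(1)}\bigr)\,Y_{i,q},
\]
where $Y_{i,q}:=\partial_{y_{i;\alpha}}z_{q;\alpha}^{(L+1)}\big|_{y=\sigma(z_\alpha^{(1)})}$ is the $i$-th input-coordinate derivative of the subnetwork obtained by treating layers $2$ through $L+1$ as a standalone random neural network with input dimension $n_1$, whose weights and biases are independent of $W^{(1)},b^{(1)}$. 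This cleanly separates the $j$-dependence (through $x_{j;\alpha}$) from the neuron index $i$: for example, $\mathrm{Grad~Mean}^{(1)}=(\norm{x_\alpha}_2^2/n_0)\cdot n_1^{-1}\sum_i\sigma'(z_{i;\alpha}^{(1)})^2 Y_{i,q}^2$, and the fourth-power sum equals $(\norm{x_\alpha}_4^4/n_0)\cdot n_1^{-1}\sum_i\sigma'(z_{i;\alpha}^{(1)})^4 Y_{i,q}^4$. This is the origin of the $n_0\norm{x_\alpha}_4^4/\norm{x_\alpha}_2^4$ factor in $C_{\sigma,\alpha}$.

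\textbf{Moment estimates.} Because the rows of $W^{(1)}$ are independent, the $\sigma'(z_{i;\alpha}^{(1)})$ are i.i.d.\ in $i$ with marginal $\sigma'(Z)$ for $Z\sim\mN(0,K_{\alpha\alpha}^{(1)})$, and the subnetwork is independent of $W^{(1)},b^{(1)}$ conditional on $\sigma(z_\alpha^{(1)})$. Since the subnetwork's dependence on $\sigma(z_\alpha^{(1)})$ is mediated by the empirical norm $n_1^{-1}\norm{\sigma(z_\alpha^{(1)})}_2^2$ (which concentrates on $\bk{\sigma^2}_{K_{\alpha\alpha}^{(1)}}$) and since Theorem~\ref{T:derivs-cumulants} shows the leading-order subnetwork cumulants are independent of individual input coordinates, the $\sigma'$ and $Y$ factors decouple at leading order. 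Applying Theorem~\ref{T:derivs-cumulants} to the depth-$(L-1)$ subnetwork (whose effective depth equals $\xi+O(n^{-1})$) together with $\E{Y^4}=3\E{Y^2}^2+\kappa_4(Y)$ and the analogous expansion $\E{Y_1^2Y_2^2}=\E{Y_1^2}\E{Y_2^2}+2\,\mathrm{Cov}(Y_1,Y_2)^2+\kappa(Y_1,Y_1,Y_2,Y_2)$ yields
\[
\frac{\E{Y_{1,q}^4}}{\E{Y_{1,q}^2}^2}=3\bigl(1+\tfrac{8}{3}\xi\bigr)+\mathrm{err},\qquad \frac{\E{Y_{1,q}^2Y_{2,q}^2}}{\E{Y_{1,q}^2}^2}=1+\tfrac{2}{3}\xi+\mathrm{err},
\]
via the cumulant formulas \eqref{E:K11-form}, \eqref{E:K12-form}, \eqref{E:k1111-formula}, \eqref{E:k1122-formula} (the term $2\,\mathrm{Cov}(Y_1,Y_2)^2$ is negligible by \eqref{E:K12-form}).

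\textbf{Assembly and obstacle.} Substituting the above into $\E{\mathrm{Grad~Var}^{(1)}}/\E{\mathrm{Grad~Mean}^{(1)}}^2$, the common $\bk{(\sigma')^2}_{K_{\alpha\alpha}^{(1)}}^2\E{Y_{1,q}^2}^2$ normalization cancels and the remaining pieces combine into the claimed formula $C_{\sigma,\alpha}(1+\tfrac{8}{3}\xi+\mathrm{err})$. The principal obstacle is a careful accounting of the finite-width corrections: one must verify that all the $O(\xi)$ contributions—from $\E{Y_1^4}/\E{Y_1^2}^2$ and $\E{Y_1^2 Y_2^2}/\E{Y_1^2}^2$, from the finite-$n_1$ corrections arising when $\E{(\mathrm{Grad~Mean}^{(1)})^2}$ is split into diagonal and off-diagonal $i\neq i'$ contributions, from the finite-$n$ discrepancy between $\E{Y_{1,q}^2}$ and the infinite-width $K_{(11),\mathrm{sub}}^{(L)}$, and from any cross-correlation between $\sigma'(z_{i;\alpha}^{(1)})$ and $Y_{i,q}$ induced by the collective quantity $n_1^{-1}\norm{\sigma(z^{(1)})}_2^2$—all combine so that only the single multiplicative factor $(1+\tfrac{8}{3}\xi)$ multiplying $C_{\sigma,\alpha}$ survives, with residuals of order $L^{-1}$, $n^{-1}$, or $O_L(n^{-2})$. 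Both the factorization and the decoupling rely crucially on membership in the $K_*=0$ universality class and on criticality: criticality stabilizes $K_{\alpha\alpha}^{(1)}$ and the concentration properties of $\sigma(z^{(1)})$, while the $K_*=0$ structure ensures that leading-order subnetwork cumulants are independent of their input, enabling direct application of Theorem~\ref{T:derivs-cumulants} to the subnetwork.
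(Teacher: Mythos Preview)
Your approach is essentially the same as the paper's: chain-rule factorization $\partial_{W_{ij}^{(1)}}z_{q;\alpha}^{(L+1)}=x_{j;\alpha}\,\sigma'(z_{i;\alpha}^{(1)})\,Y_{i,q}$, separation of the $j$-sum (producing the $\norm{x_\alpha}_4^4/\norm{x_\alpha}_2^4$ ratio), identification of $Y_{i,q}$ as an input derivative of a depth-$(L-1)$ subnetwork, and reduction to the second and fourth cumulants of the $Y$'s. Your treatment of the decoupling between $\sigma'(z^{(1)})$ and the subnetwork via the initial-condition-independence of the $K_*=0$ class is also exactly how the paper proceeds.

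There is, however, a genuine computational error that breaks the final step. Your claim
\[
\frac{\E{Y_{1,q}^2Y_{2,q}^2}}{\E{Y_{1,q}^2}^2}=1+\tfrac{2}{3}\xi+\mathrm{err}
\]
is wrong: the correct value is $1+\tfrac{8}{3}\xi$. The point is that $Y_{1,q}$ and $Y_{2,q}$ share the \emph{same} neural index $q$, so by the decomposition \eqref{E:kappa-reduce} one has
\[
\kappa\bigl(Y_{1,q},Y_{1,q},Y_{2,q},Y_{2,q}\bigr)=\kappa_{(11)(22)}+2\,\kappa_{(12)(12)},
\]
not just $\kappa_{(11)(22)}$. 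The contribution $\kappa_{(12)(12)}$ is \emph{not} contained in Theorem~\ref{T:derivs-cumulants}; it is supplied by Proposition~\ref{P:4pt-deriv-recs-tanh} (specifically \eqref{E:k1212-form}), which gives $\kappa_{(12)(12)}/(K_{(11)}K_{(22)})=\xi$ to leading order. Adding this back in yields $\tfrac{2}{3}\xi+2\xi=\tfrac{8}{3}\xi$, matching the fourth-moment ratio $\E{Y_{1,q}^4}/\E{Y_{1,q}^2}^2=3(1+\tfrac{8}{3}\xi)$.

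This correction is not cosmetic: the whole mechanism by which $C_{\sigma,\alpha}(1+\tfrac{8}{3}\xi)$ emerges is that the two terms $I$ and $II$ in $\E{\mathrm{Grad~Var}^{(1)}}=I-II$ each carry the \emph{same} factor $(1+\tfrac{8}{3}\xi)$, so that it factors out of $A\cdot(1+\tfrac{8}{3}\xi)-1\cdot(1+\tfrac{8}{3}\xi)=(A-1)(1+\tfrac{8}{3}\xi)$ with $A=3\bk{(\sigma')^4}_{K^{(1)}}\bk{(\sigma')^2}_{K^{(1)}}^{-2}\,n_0\norm{x_\alpha}_4^4/\norm{x_\alpha}_2^4$. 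With your value $1+\tfrac{2}{3}\xi$ for the $II$-side, this cancellation fails and the stated formula cannot be recovered. Once you include the $2\kappa_{(12)(12)}$ piece, the rest of your outline goes through exactly as in the paper.
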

\begin{remark}
Theorem \ref{T:evgp} shows that, at least to leading order in $1/n$, the exploding and vanishing gradient problem occurs for first layer weights in a critically tuned random fully connected with non-linearity from the $K_*=0$ universality class if and only if the effective network depth $L/n$ is large. 
\end{remark}
\noindent We prove Theorem \ref{T:evgp} in \S \ref{S:evgp-pf}. To put this result into the context, let us make several remarks. First, the analog of Theorem \ref{T:evgp} for $1-$homogeneous non-linearities was derived in prior work \cite{hanin2018neural,hanin2020products}. As we've alluded to before, the behavior of a random fully connected neural network with such non-linearities can be studied by more specialized methods that reveal not only the leading order dependence in $1/n$ but actually the full dependence on $L/n$ plus errors of size $L/n^2$ (see Appendix \ref{A:relu-exact}). The conclusion, just as for non-linearities from the $K_*=0$ universality class, is that the EVGP occurs if and only if $L/n$ is large. 

Second, Theorem \ref{T:evgp} is the first time the EVGP has been characterized mathematically in random fully connected networks for a broad class of non-linearities beyond those that are $1-$homogeneous. However, as throughout, the author would like to acknowledge the prior work \cite{roberts2022principles} of Roberts, Yaida, and the author which also studies the EVGP and shows, at a physics level of rigor, that the variance \textit{over random initialization} of a single squared gradient $(\partial_\mu z_{q;\alpha}^{_{(L+1)}})^2$ also scales like $L/n$ at large $L$. This is conceptually different from showing that in a typical initialization the empirical variance of $(\partial_\mu z_{q;\alpha}^{_{(L+1)}})^2$ over $\mu$ in the first layer is large, which is what Theorem \ref{T:evgp} establishes. Nonetheless, the conclusions are the essentially the same and the basic techniques used to derive the results are similar.






\section{Overview of Proofs}\label{S:proof-overview}
In this section, we present the essential idea for how we analyze a random fully connected neural network $x_\alpha\mapsto z_{\alpha}^{(L+1)}$ at finite width. Our approach is based on the following structural properties:
\begin{itemize}
    \item The sequence of fields $z_{\alpha}^{(\ell)}$ is a Markov Chain with respect to $\ell$. 
    \item Conditional on the sigma algebra $\mF^{(\ell)}$ defined by $z_{\alpha}^{(\ell)}$ is a Gaussian field with independent components $z_{i;\alpha}^{(\ell+1)}$. See Lemma \ref{L:cond-indep}.
    \item The variance of 
    each component $z_{i;\alpha}^{(\ell+1)}$ depends on $z_\alpha^{(\ell)}$ only through random variables of the form
    \[
    \mO_{f}^{(\ell)}:=\frac{1}{n_\ell} \sum_{j=1}^{n_\ell}f(z_{i;\alpha}^{(\ell)}),
    \]
    which we refer to as collective observables. See \eqref{E:cond-gauss-vals}.
    \item Collective observables are self-averaging to a similar extent as if the random variables $f(z_{i;\alpha}^{(\ell)})$ were independent in the sense that for any $q\geq 0$, we have
    \begin{equation}\label{E:Delta-concentration}
    \E{\lr{\mO_{f}^{(\ell)}-\E{\mO_{f}^{(\ell)}}}^q}=O_q\lr{n^{-\lceil \frac{q}{2}\rceil}}.
    \end{equation}
    See Theorem \ref{T:smooth-cumulants} and Lemma \ref{L:moment-bounds}.
\end{itemize}
Let us briefly explain, mostly dispensing with rigor, how these four ideas come together to obtain a recursive description of the distribution of the field $z_\alpha^{(\ell+1)}$ in terms of that of $z_\alpha^{(\ell)}$. To keep the notation to a minimum, we fix a network input $x_\alpha$ and focus on describing the joint distribution of $z_{i;\alpha}^{(\ell+1)},\, i=1,\ldots, m$. Extensions to multiple inputs and derivatives proceed along very similar lines. Denoting by $\xi=\lr{\xi_1,\ldots,\xi_m}$ dual variables, consider the characteristic function
\[
p^{(\ell+1)}(\xi):=\E{\exp\left[-i\sum_{i=1}^m \xi_i z_{i;\alpha}^{(\ell+1)}\right]}.
\]
Conditioning on $z_\alpha^{(\ell)}$ and using \eqref{E:cond-gauss-vals} allows us to write
\[
p^{(\ell+1)}(\xi):=\E{\exp\left[-\frac{1}{2}\norm{\xi}^2 \Sigma_{\alpha\alpha}^{(\ell)}\right]},
\]
where we remind the reader that 
\[
\Sigma_{\alpha\alpha}^{(\ell)}=\Var\left[z_{i;\alpha}^{(\ell+1)}~\big|~ \mF^{(\ell)}\right] =C_b+\frac{C_W}{n_\ell}\sum_{j=1}^{n_\ell} \sigma(z_{j;\alpha}^{(\ell)})^2
\]
is a collective observable \textit{at the previous layer}. Writing
\[
\kappa_{\alpha\alpha}^{(\ell)}:=\E{\Sigma_{\alpha\alpha}^{(\ell)}},\qquad \Delta_{\alpha\alpha}^{(\ell)}:=\Sigma_{\alpha\alpha}^{(\ell)} - \E{\Sigma_{\alpha\alpha}^{(\ell)}},
\]
we find
\[
p^{(\ell+1)}(\xi):=\E{\exp\left[-\frac{1}{2}\norm{\xi}^2 \Delta_{\alpha\alpha}^{(\ell)}\right]}\exp\left[-\frac{1}{2}\norm{\xi}^2\kappa_{\alpha\alpha}^{(\ell)}\right].
\]
The second term is precisely the characteristic function of a centered $m$-dimensional Gaussian with iid components of variance $\kappa_{\alpha\alpha}^{(\ell)}$. Moreover, at least heuristically, the first term can be written
\[
\E{\exp\left[-\frac{1}{2}\norm{\xi}^2 \Delta_{\alpha\alpha}^{(\ell)}\right]} = \sum_{q\geq 0} \E{\lr{\Delta_{\alpha\alpha}^{(\ell)}}^q} \frac{(-1)^q}{2^qq!} \norm{\xi}^{2q}.
\]
The concentration estimates \eqref{E:Delta-concentration} ensure that this series converges. Moreover, since the Fourier transform turns polynomials into derivatives we have
\[
-\norm{\xi}^2 = \text{ Laplacian in the variables } z_{i;\alpha}^{(\ell+1)}.
\]
Hence, we obtain for any reasonable test function $f$ that
\[
\E{f(z_{i;\alpha}^{(\ell+1)},\, i=1,\ldots, m)} = \sum_{q=0}^\infty \frac{1}{2^qq!} \E{\lr{\Delta_{\alpha\alpha}^{(\ell)}}^q} \bk{\lr{\sum_{i=1}^m \partial_{z_{i;\alpha}}^2}^q f(z_{i;\alpha},\, i=1,\ldots, m)}_{\kappa_{\alpha\alpha}^{(\ell)}},
\]
where $(z_{i;\alpha},\, i=1,\ldots, m)$ is a vector of iid centered Gaussians with variance $\kappa_{\alpha\alpha}^{(\ell)}$. The concentration estimates \eqref{E:Delta-concentration} ensure that this expression is a power series in $1/n$. In particular,
\begin{align}
\label{E:pert-exp-outline}\E{f(z_{i;\alpha}^{(\ell+1)},\, i=1,\ldots, m)} &= \bk{ f(z_{i;\alpha},\, i=1,\ldots, m)}_{\kappa_{\alpha\alpha}^{(\ell)}} \\ \notag &+\frac{\E{(\Delta_{\alpha\alpha}^{(\ell)})^2}}{8} \bk{\lr{\sum_{i=1}^m \partial_{z_{i;\alpha}}^2}^2 f(z_{i;\alpha},\, i=1,\ldots, m)}_{\kappa_{\alpha\alpha}^{(\ell)}} + O(n^{-2}). 
\end{align}
This is the essence of Theorem \ref{T:pert-exp}. To derive usable recursions for cumulants of $z_{i;\alpha}^{(\ell+1)}$, note for instance that, in the notation of Corollary \ref{C:cumulants-vals},
\[
\kappa_{4;\alpha}^{(\ell)}:=\frac{1}{3}\kappa\lr{z_{i;\alpha}^{(\ell+1)},z_{i;\alpha}^{(\ell+1)},z_{i;\alpha}^{(\ell+1)},z_{i;\alpha}^{(\ell+1)}} =\E{(\Delta_{\alpha\alpha}^{(\ell)})^2}.
\]
Writing
\[
X_j:=\sigma(z_{j;\alpha}^{(\ell+1)})^2 - \E{\sigma(z_{j;\alpha}^{(\ell+1)})^2}
\]
we thus have
\[
\kappa_{\alpha\alpha}^{(\ell+1)} = \E{\lr{\Delta_{\alpha\alpha}^{(\ell)}}^2}=\frac{C_W^2}{n_\ell}\E{X_1^2} +C_W^2\lr{1-n_\ell^{-1}} \E{X_1X_2}.
\]
Applying the expansion \eqref{E:pert-exp-outline} to both these terms and a bit of algebra already yields 
\begin{align*}
\kappa_{\alpha\alpha}^{(\ell+1)} &= \E{\lr{\Delta_{\alpha\alpha}^{(\ell)}}^2}\\
&=\frac{C_W^2}{n_\ell}\lr{\bk{\sigma^4}_{\kappa_{\alpha\alpha}^{(\ell)}}-\bk{\sigma^2}_{\kappa_{\alpha\alpha}^{(\ell)}}^2}\\
&+C_W^2\lr{1-n_\ell^{-1}} \lr{\lr{\bk{\sigma^2}_{\kappa_{\alpha\alpha}^{(\ell)}}-\E{\sigma(z_{1;\alpha}^{(\ell)})^2}}^2 + \frac{1}{4}\bk{\partial^2 \sigma^2}_{\kappa_{\alpha\alpha}^{(\ell)}}^2\kappa_{4;\alpha}^{(\ell)}}+O(n^{-2}).    
\end{align*}
A short argument supplied in \S \ref{S:cumulants-recs-pf} shows that $\bk{\sigma^2}_{\kappa_{\alpha\alpha}^{(\ell)}}=\E{\sigma(z_{1;\alpha}^{(\ell)})^2}+O(n^{-1})$ and that we may replace $\kappa_{\alpha\alpha}^{(\ell)}$ by its infinite width limit  $K_{\alpha\alpha}^{(\ell)}$ in all remaining expectations at the cost of an $O(n^{-1})$ error. This already yields the recursion \eqref{E:k4-rec} of Corollary \ref{C:cumulants-vals}.

\section{Background}
\subsection{Properties of Cumulants}\label{S:cumulant-background}
Recall that, given random variables $X_1,\ldots, X_k$ on the same probability space, we denote their mixed cumulant by 
\[
\kappa\lr{X_1,\ldots, X_k}:= i^k\frac{\partial^k}{\partial t_1\cdots \partial t_k}\bigg|_{t=0}\log \E{\exp\left[-i(t_1X_1+\cdots + t_kX_k)\right]}.
\]
In the following result, we recall the key properties of these mixed cumulants that we will need. 
\begin{proposition}[See Theorem 2.3.1 in \cite{brillinger2001time}]\label{P:cumulant-props}
Mixed cumulants satisfy the following properties. 
\begin{enumerate}
    \item Suppose $X=\lr{X_1,\ldots,X_k}$ is a random vector with finite moments of all orders. Then
\begin{align}\label{E:conditional-cumulants}
    \kappa\lr{X_1,\ldots, X_k} = \sum_{\pi=\lr{\pi_1,\ldots \pi_b}} \kappa\lr{\kappa\lr{X_{\pi_1}~|~\mF},\ldots, \kappa\lr{X_{\pi_b}~|~\mF}},
\end{align}
where the sum is over all partitions $\pi$ of $[k]$ and for each $a=1,\ldots, b$ 
\[
X_{\pi_a}:=\lr{X_i,\, i\in \pi_a}.
\]
This is known as the law of total cumulance. See \cite{brillinger1969calculation}. 
\item Suppose $X=\lr{X_1,\ldots,X_k}$ is a random vector with finite moments of all orders. When $X$ can be partitioned into two independent subsets, the mixed cumulant $\kappa(X)$ vanishes. More precisely, suppose $I\subseteq [k]$ and that $I,I^c\neq \emptyset.$ Then
\begin{equation}\label{E:indep-cumulants}
    X_I:=\lr{X_i,\, i\in I} \perp X_{I^c}=\lr{X_i,\, i\not \in I} \qquad \Longrightarrow\qquad \kappa\lr{X_1,\ldots, X_k}=0.
\end{equation}
\item Mixed cumulants are multi-linear. More precisely if 
\[
\set{X_{i,j},\, 1\leq j\leq k,\, i\leq T_j}
\] 
are random variables with finite moments defined on the same probability space, then
\begin{equation}\label{E:linear-cumulants}
\kappa\lr{\sum_{i_1=1}^{T_1} a_{i_1,1}X_{i_1,1},\ldots, \sum_{i_k=1}^{T_k} a_{i_k,k}X_{i_k,k}} =\sum_{i_1=1}^{T_1} \cdots\sum_{i_k=1}^{T_k}a_{i_1,1}\cdots a_{i_k,k} \kappa\lr{X_{i_1,1},\ldots,  X_{i_k,k}}    
\end{equation}
for any $a_{i,j}\in \R$.
\item Suppose $X=\lr{X_1,\ldots, X_j}\sim \mN(0,\Sigma)$ is a centered Gaussian with covariance $\Sigma$. Then for any $i_1,\ldots,i_k\in [j]$
\begin{equation}\label{E:gaussian-cumulants}
\kappa\lr{X_{i_1},\ldots, X_{i_k}} = \begin{cases}\Sigma_{i_1i_2},&\quad k=2 \\ 0,&\quad \text{otherwise}\end{cases}  .
\end{equation}
\item Moments are polynomials in cumulants. Specifically, suppose $X=\lr{X_1,\ldots,X_k}$ is a random vector with finite moments of all orders. Then,
\begin{equation}\label{E:moments-cumulants}
    \E{X_1\cdots X_k}=\sum_{\pi=\lr{\pi_1,\ldots,\pi_b}} \prod_{a=1}^b \kappa\lr{X_{\pi_a}},
\end{equation}
where the sum is over all partitions of $[k]$ and for each $a\in [b]$ we've written
\[
X_{\pi_a}:=\lr{X_i,\,i\in \pi_a}.
\]
\item Cumulants are polynomials in moments. Specifically, 
\begin{equation}\label{E:cumulants-moments}
    \kappa\lr{X_1,\ldots, X_k}=\sum_{\pi=\lr{\pi_1,\ldots,\pi_b}} (-1)^{b-1}(b-1)!\prod_{a=1}^b\E{\prod_{i\in \pi_a}X_i},
\end{equation}
where the sum is over all partitions of $[k]$ and for each $a\in [b]$ we've written
\[
X_{\pi_a}:=\lr{X_i,\,i\in \pi_a}.
\]
\end{enumerate}
    
\end{proposition}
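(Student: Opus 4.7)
The six properties are all classical and proofs can be found in \cite{brillinger2001time,brillinger1969calculation}, so my goal is to give a roadmap rather than to rederive everything. The unifying strategy is to work with the cumulant generating function
\[
K_X(t) = \log \E{\exp\left[-i\inprod{t}{X}\right]},
\]
so that $\kappa(X_{i_1},\ldots,X_{i_k}) = i^k \partial_{t_{i_1}}\cdots\partial_{t_{i_k}} K_X(t)\big|_{t=0}$, and to exploit formal power series manipulations. The four properties (2), (3), (4) follow almost directly from the definition, while (5) and (6) follow from an exponential/logarithmic relation between moment and cumulant generating functions, and (1) is obtained by combining them in a conditional form.

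First, I would dispatch the easy items. For multi-linearity (3), observe that replacing $X_j$ by $\sum_{i} a_{i,j}X_{i,j}$ inside $K_X$ simply replaces the dual variable $t_j$ by $(a_{i,j}t_j)_i$ in an enlarged cumulant generating function; one then applies the chain rule and reads off \eqref{E:linear-cumulants}. For the Gaussian case (4), $K_X(t) = -\tfrac12 t^T \Sigma t$ is a quadratic polynomial, so all partial derivatives of order $\geq 3$ vanish at $t=0$, giving \eqref{E:gaussian-cumulants}. For vanishing of mixed cumulants under independence (2), note that $X_I \perp X_{I^c}$ implies the characteristic function factorizes, hence $K_X(t) = K_{X_I}(t_I) + K_{X_{I^c}}(t_{I^c})$; any mixed derivative hitting both an index in $I$ and one in $I^c$ kills one of the two summands, proving \eqref{E:indep-cumulants}.

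Next I would establish the moment-cumulant duality (5) and (6), which is a purely combinatorial identity. The starting point is
\[
\E{\exp\left[-i\inprod{t}{X}\right]} = \exp\left[K_X(t)\right],
\]
expanded as formal power series in $t_1,\ldots,t_k$. Matching the multilinear coefficient of $t_1\cdots t_k$ on both sides and using the exponential formula
\[
\exp\left(\sum_{S\neq\emptyset} \frac{a_S}{\abs{S}!} t_S\right) = \sum_{\pi} \prod_{B\in\pi} \frac{a_B}{\abs{B}!} t_B,
\]
where the sum on the right is over set partitions of $[k]$ and $t_S = \prod_{i\in S} t_i$, yields \eqref{E:moments-cumulants}. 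For \eqref{E:cumulants-moments} I would invert this relation via M\"obius inversion on the partition lattice, using that the M\"obius function there equals $(-1)^{b-1}(b-1)!$ on the interval from the bottom to an element with $b$ blocks; this is exactly the coefficient appearing in \eqref{E:cumulants-moments}.

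Finally, the law of total cumulance (1) is the step I expect to be the most delicate. The cleanest route is to apply \eqref{E:moments-cumulants} inside the conditional expectation and \eqref{E:cumulants-moments} on the outside, and then carry out a bookkeeping argument over pairs of set partitions. Concretely, condition on $\mF$ and write each moment $\E{\prod_{i\in\pi_a} X_i\mid \mF}$ as a sum over partitions of $\pi_a$ of products of conditional cumulants by \eqref{E:moments-cumulants}; then apply \eqref{E:cumulants-moments} to express the unconditional cumulant $\kappa(X_1,\ldots,X_k)$ as an alternating sum over partitions of products of conditional moments of $X$; substituting the first expansion into the second and collapsing the resulting double sum over partitions using the M\"obius identity on the partition lattice produces the desired formula \eqref{E:conditional-cumulants}. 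The main obstacle here is verifying that the two layers of partition sums collapse correctly; handling this carefully (rather than waving at \cite{brillinger1969calculation}) is the only nontrivial combinatorial step in the whole proof.
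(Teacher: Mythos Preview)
The paper does not actually prove this proposition: it is stated with the header ``See Theorem 2.3.1 in \cite{brillinger2001time}'' and no proof is given, as these are classical facts about cumulants collected from the literature. Your proposal goes beyond what the paper does by providing a correct and standard roadmap (characteristic-function arguments for (2)--(4), the exponential formula and M\"obius inversion on the partition lattice for (5)--(6), and the two-layer partition argument for (1)), which is exactly the approach one finds in Brillinger's references.
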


\subsection{Lemmata}
In this section, we collect two simple auxiliary results that we will need. The first is a Lemma for Solving Certain Recursions.
\begin{lemma}\label{L:rec-lemma}
Fix $C_1,C_2,\psi>0$ satisfying
\[
C_2\geq 1,\quad \psi\neq C_2+1
\]
as well as $*\in \set{\leq,\, \geq}$. Suppose also that for each $\ell \geq 0$ we have
\begin{equation}\label{E:a-rec}
a_{\ell+1} ~*~ \xi_\ell + (1-\zeta_\ell)a_\ell,\qquad \zeta_\ell\in [0,1]
\end{equation}
with $a_0\in \R$ given and that there exist $C_1',C_2'>0$ so that
\[
\abs{\xi_\ell - C_1\ell^{-\psi}}\leq C_1' \ell^{-1-\psi},\qquad \abs{\zeta_\ell - C_2\ell^{-1}}\leq C_2'\ell^{-2}.
\]
Then 
\begin{equation}\label{E:a-form}
a_{\ell+1} * \frac{\ell^{1-\psi}}{1-\psi+C_2}\lr{1+O(\ell^{-1})} + e^{-C_2\gamma}\ell^{-C_2}a_0\lr{1+O(\ell^{-1})}
\end{equation}
where $\gamma$ is the Euler-Mascheroni constant and the implied constants depend only $C_1,C_2,C_1',C_2'.$
\end{lemma}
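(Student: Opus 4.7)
The plan is to iterate the recursion and then perform elementary asymptotic analysis on the resulting linear combination.

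First I would solve the recursion as a first-order linear object. Since $0 \leq 1-\zeta_\ell \leq 1$, the inequality $a_{\ell+1} \,*\, \xi_\ell + (1-\zeta_\ell)a_\ell$ is preserved with the same direction of $*$ when iterated. Unrolling gives
\[
a_{\ell+1} \,*\, \sum_{k=0}^{\ell} \xi_k P_{k+1}^{\ell} + a_0 P_0^{\ell}, \qquad P_m^{\ell} := \prod_{j=m}^{\ell}(1-\zeta_j),
\]
so the whole task reduces to estimating $P_m^\ell$ for $0 \leq m \leq \ell$ and summing against $\xi_k$.

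For the products I would take logarithms. Using $\log(1-x) = -x + O(x^2)$ on $[0,1/2]$ together with the hypothesis $\zeta_j = C_2/j + O(1/j^2)$,
\[
\log P_{k+1}^\ell = -\sum_{j=k+1}^{\ell}\zeta_j + O\!\left(\sum_{j=k+1}^{\ell}\frac{1}{j^2}\right) = -C_2\sum_{j=k+1}^{\ell}\frac{1}{j} + O(1/k),
\]
where both the $O(1/j^2)$ error in $\zeta_j$ and the Taylor remainder contribute bounded tails. The harmonic asymptotic $\sum_{j=k+1}^\ell 1/j = \log(\ell/k) + O(1/k)$ then yields
\[
P_{k+1}^\ell = \lr{\frac{k}{\ell}}^{C_2}\lr{1+O(1/k)}, \qquad 1 \le k \le \ell.
\]
For the initial-condition contribution I would use the finer form $\sum_{j=1}^\ell 1/j = \log\ell + \gamma + O(1/\ell)$ to pin down the prefactor, giving $P_0^\ell = e^{-C_2\gamma}\ell^{-C_2}(1+O(1/\ell))$, which is precisely the second term in the advertised formula.

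Next I would estimate the forcing sum. Substituting the estimate for $P_{k+1}^\ell$ and using $\xi_k = C_1 k^{-\psi}(1+O(1/k))$,
\[
\sum_{k=1}^\ell \xi_k P_{k+1}^\ell = C_1\, \ell^{-C_2}\sum_{k=1}^\ell k^{C_2-\psi}\lr{1+O(1/k)}.
\]
The exclusion $\psi \neq C_2 + 1$ is precisely the condition that prevents the exponent $C_2-\psi$ from being $-1$; away from this critical value, Euler--Maclaurin (or comparison with an integral) gives
\[
\sum_{k=1}^\ell k^{C_2-\psi} = \frac{\ell^{C_2-\psi+1}}{C_2-\psi+1}\lr{1+O(1/\ell)} + C'\lr{1+O(1/\ell)},
\]
where the constant $C'$ vanishes when $C_2-\psi+1>0$ and equals the convergent tail when $C_2-\psi+1<0$. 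Multiplying by $\ell^{-C_2}$ produces the leading power $\ell^{1-\psi}/(1-\psi+C_2)$ plus a piece of order $\ell^{-C_2}$; the latter is consistent with (and absorbable into) the $a_0 P_0^\ell$ term.

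The main technical obstacle is bookkeeping: several multiplicative $(1+O(1/k))$ errors must be combined without inflating the aggregate error beyond $O(1/\ell)$. I would handle this by splitting the sum over $k$ into $k \leq \ell/2$ and $k > \ell/2$: on the tail the factor $O(1/k)$ is already $O(1/\ell)$, while on the head the weight $(k/\ell)^{C_2}$ gives at most a polynomial factor that is dominated by the endpoint estimate. A short case distinction between $\psi < C_2+1$ (where the forcing term dominates) and $\psi > C_2+1$ (where the initial-condition term dominates) completes the proof; in both regimes the stated two-term expansion captures the leading asymptotic, with the non-leading term absorbed into the multiplicative $(1+O(\ell^{-1}))$ error of the dominant one.
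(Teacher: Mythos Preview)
Your approach is essentially identical to the paper's: both unroll the recursion, estimate the products $\prod_j(1-\zeta_j)$ via $\log(1-x)=-x+O(x^2)$ and the harmonic-sum asymptotic, and then sum the resulting $(k/\ell)^{C_2}\xi_k$ against a power-law comparison. Your treatment of the error bookkeeping (the split at $k=\ell/2$ and the explicit discussion of the two regimes $\psi\lessgtr C_2+1$) is in fact more careful than the paper's, which simply writes the final step as $\ell^{-C_2}\sum_{\ell'=1}^{\ell}C_1(\ell')^{-\psi+C_2}(1+O((\ell')^{-1}))=\frac{C_1}{1+C_2-\psi}\ell^{1-\psi}(1+O(\ell^{-1}))$ without elaboration.
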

\begin{proof}
By unfolding the recursion \eqref{E:a-rec} we find
\begin{align*}
    a_{\ell+1} ~*~ \sum_{\ell'=1}^\ell \xi_{\ell'}\prod_{\ell''=\ell'+1}^{\ell}\lr{1-\zeta_{\ell''}} + a_0 \prod_{\ell''=0}^{\ell}(1-\zeta_{\ell''}). 
\end{align*}
We have
\begin{align*}
    \prod_{\ell''=1}^{\ell}(1-\zeta_{\ell''})&=\exp\left[\sum_{\ell''=1}^\ell \log\lr{1-C_2(\ell'')^{-1} + O(\ell^{-2})}\right]\\
    &=\exp\left[O(\ell^{-1})+\sum_{\ell''=1}^\ell -C_2(\ell'')^{-1} \right]\\
    &=\exp\left[O(\ell^{-1})-C_2\log(\ell) -C_2\gamma \right]\\
    &=e^{-C_2\gamma}\ell^{-C_2}\lr{1+O(\ell^{-1})}.
\end{align*}
This gives the second term in \eqref{E:a-form}. For the first term, we write
\begin{align*}
    \sum_{\ell'=1}^{\ell}\xi_{\ell'}\prod_{\ell''=\ell'+1}^\ell \lr{1-\zeta_\ell} & = \sum_{\ell'=1}^{\ell}\xi_{\ell'}\exp\left[\sum_{\ell''=\ell'+1}^\ell\log \lr{1-\zeta_\ell}\right]\\
    & = \sum_{\ell'=1}^{\ell}\xi_{\ell'}\exp\left[\sum_{\ell''=\ell'+1}^\ell-C_2(\ell'')^{-1} + O((\ell'')^{-2})\right]\\
    & = \sum_{\ell'=1}^{\ell}C_1(\ell')^{-\psi}(1+O(\ell')^{-1})\exp\left[-C_2\log\lr{\frac{\ell}{\ell'}} + O((\ell')^{-1})\right]\\
    & = \ell^{-C_2}\sum_{\ell'=1}^{\ell}C_1(\ell')^{-\psi+C_2}(1+O(\ell')^{-1})\\
    &=\frac{C_1}{1+C_2-\psi}\ell^{1-\psi}\lr{1+O(\ell^{-1})}.
\end{align*}
This completes the proof of \eqref{E:a-form}.
\end{proof}
The second result we will need is the following simple Lemma about Gaussian Integrals.



\begin{lemma}\label{L:smooth-gauss}
Fix $r\geq 1$, a $r\times r$ matrix $\Sigma$ and measurable function $g:\R^r\gives \R$ that is polynomially bounded:
\[
\exists k\geq 1\text{ s.t. } \sup_{x\in \R^k} \abs{\lr{1+\norm{x}}^{-k}g(x)}<\infty.    
\]
If $X$ is a standard Gaussian random vector in $\R^r$, then the function
\begin{equation}\label{E:Sigma-exp}
\Sigma \mapsto \E{g\lr{\Sigma^{1/2}X}}    
\end{equation}
is smooth on the open set of strictly positive definite $k\times k$ matrices. Further, if $g$ is a smooth function and each of its derivatives is polynomially bounded, then the map \eqref{E:Sigma-exp} is extends to a smooth function on the closed set of positive semi-definite matrices and, in particular,
\begin{equation}\label{E:gauss-exp-deriv}
\frac{\partial }{\partial \Sigma_{ij}}  \E{g\lr{\Sigma^{1/2}X}} = \E{(\partial_{i}\partial_{j}g)(\Sigma^{1/2}X)}.    
\end{equation}
\end{lemma}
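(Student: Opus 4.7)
For the first claim I would use the explicit density representation. For $\Sigma \succ 0$, the law of $\Sigma^{1/2}X$ has density
\[
p_\Sigma(y)=\frac{\exp\!\lr{-\tfrac12 y^T\Sigma^{-1} y}}{(2\pi)^{r/2}\sqrt{\det\Sigma}},
\]
so $\E{g(\Sigma^{1/2}X)}=\int_{\R^r}g(y)\,p_\Sigma(y)\,dy$. Since $\Sigma\mapsto\Sigma^{-1}$ and $\Sigma\mapsto\det\Sigma$ are smooth on the open PD cone, so is $\Sigma\mapsto p_\Sigma(y)$ for each fixed $y$. Every partial derivative $\partial_\Sigma^\alpha p_\Sigma(y)$ has the form $P_\alpha(y,\Sigma)\, p_\Sigma(y)$ for some expression $P_\alpha$ polynomial in the entries of $y$ and rational in the entries of $\Sigma^{-1}$. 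On any compact neighborhood of a fixed $\Sigma_0\succ0$ one then has a pointwise bound $|\partial_\Sigma^\alpha p_\Sigma(y)|\leq C_\alpha(1+\norm{y}^2)^{N_\alpha} e^{-c\norm{y}^2}$ uniform in $\Sigma$; combined with the polynomial growth of $g$ this yields a uniformly integrable dominant. Dominated convergence therefore licenses differentiation under the integral sign to all orders, giving smoothness.

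\textbf{The derivative formula and extension to the PSD boundary for smooth $g$.}
The difficulty in the second claim is that when $\Sigma$ is singular no density exists. My plan is to first derive \eqref{E:gauss-exp-deriv} on the open PD cone by Gaussian integration by parts, and then show that the right-hand side is continuous on the full PSD cone. The key input is the Fokker--Planck identity
\[
\frac{\partial p_\Sigma}{\partial \Sigma_{ij}}(y) = c_{ij}\,\partial_{y_i}\partial_{y_j}p_\Sigma(y),
\]
valid on the open PD cone, where $c_{ij}$ is a combinatorial constant depending on the convention used for symmetric entries. This identity is verified immediately from the characteristic function $\widehat{p_\Sigma}(\xi)=\exp(-\tfrac12\xi^T\Sigma\xi)$. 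Two integrations by parts, whose boundary terms vanish thanks to the Gaussian decay of $p_\Sigma$ and to the polynomial bounds on $g$ and its derivatives, produce
\[
\frac{\partial}{\partial \Sigma_{ij}}\E{g(\Sigma^{1/2}X)}=c_{ij}\,\E{(\partial_i\partial_j g)(\Sigma^{1/2}X)},\qquad \Sigma\succ 0.
\]
The right-hand side is now meaningful for every PSD $\Sigma$: the matrix square root $\Sigma\mapsto\Sigma^{1/2}$ is continuous on the PSD cone, $\partial_i\partial_j g$ is continuous with polynomial growth, and $\Sigma^{1/2}X$ has uniformly controlled Gaussian tails on compact sets of PSD $\Sigma$. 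A standard dominated convergence argument then shows that $\Sigma\mapsto \E{(\partial_i\partial_j g)(\Sigma^{1/2}X)}$ is continuous on the whole PSD cone. Hence the partial $\partial_{\Sigma_{ij}}\E{g(\Sigma^{1/2}X)}$, a priori defined only on the interior, admits a continuous extension to the boundary, promoting $\Sigma\mapsto\E{g(\Sigma^{1/2}X)}$ from a smooth function on the interior to a $C^1$ function on the PSD cone.

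\textbf{Iteration and the main obstacle.}
Smoothness of the extension then follows by iteration. Since the hypothesis that $g$ is smooth with polynomially bounded derivatives is inherited by $\partial_i\partial_j g$, applying the same identity to $\partial_i\partial_j g$ in place of $g$ shows that mixed second partials of $F(\Sigma):=\E{g(\Sigma^{1/2}X)}$ are, up to constants, of the form $\E{(\partial_k\partial_\ell\partial_i\partial_j g)(\Sigma^{1/2}X)}$, and these are continuous on the PSD cone by the same argument. Proceeding inductively yields the smoothness of the extension to all orders. The only delicate step is the one described above, namely identifying $\partial_{\Sigma_{ij}}F$ with a quantity that manifestly makes sense at singular $\Sigma$; once this is done, the remainder is routine continuity and dominated convergence, and the identity \eqref{E:gauss-exp-deriv} (with whatever combinatorial constant the author's convention dictates) passes to the boundary by continuity in $\Sigma$.
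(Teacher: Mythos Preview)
Your argument is correct and takes a genuinely different route from the paper for the second claim. Both treatments of the first claim (differentiate the density under the integral on the open PD cone) are the same. For the extension to degenerate $\Sigma$, the paper proceeds via the Fourier side: for Schwartz $g$ it writes $\E{g(\Sigma^{1/2}X)}=\int \widehat g(\xi)\exp(-\tfrac12\xi^T\Sigma\xi)\,d\xi$, differentiates in $\Sigma$ directly (this integral is manifestly well-defined and smooth for all PSD $\Sigma$), and then reduces the general smooth polynomially-bounded case to the Schwartz case by Gaussian mollification $g_\epsilon=g*\psi_\epsilon$, controlling the $\epsilon\to0$ limit with a uniform second-order remainder bound. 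Your approach instead stays on the spatial side: you invoke the heat-kernel identity $\partial_{\Sigma_{ij}}p_\Sigma=c_{ij}\partial_{y_i}\partial_{y_j}p_\Sigma$ on the interior, integrate by parts to obtain \eqref{E:gauss-exp-deriv} for $\Sigma\succ0$, and then pass to the boundary by noting that the right-hand side $\E{(\partial_i\partial_j g)(\Sigma^{1/2}X)}$ is continuous on the full PSD cone (continuity of $\Sigma\mapsto\Sigma^{1/2}$ plus dominated convergence), so the classical $C^1$-extension lemma upgrades $F$ from smooth on the interior to $C^1$ on the closure; iterating gives smoothness.

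What each buys: the paper's Fourier/mollification argument is self-contained and makes smoothness in $\Sigma$ at the boundary immediate once one is in the Schwartz class, at the cost of a somewhat technical $\epsilon\to0$ passage. Your argument avoids mollification entirely and is conceptually cleaner, but relies on the (standard) real-analysis fact that a function whose interior gradient extends continuously to the boundary is $C^1$ there. Your observation that the constant $c_{ij}$ depends on the symmetry convention is also apt; the paper's displayed formula \eqref{E:schwartz-deriv} implicitly fixes a convention without comment.
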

\begin{proof}
On the open set of strictly positive definite matrices, the Gaussian density
\[
\Sigma\mapsto \exp\left[-\frac{1}{2}x^T\Sigma^{-1}x -\frac{1}{2}\log \det (2\pi \Sigma)\right]
\]
is a smooth function of $\Sigma$ with derivatives that are polynomials in $x$ and the entries of $\Sigma,\Sigma^{-1}$. The assumption that $f$ is polynomially bounded shows that we may differentiate under the integral sign and see that that 
\[
\E{g(\Sigma^{1/2}X)} = \int_{\R^r} g(x)\exp\left[-\frac{1}{2}x^T\Sigma^{-1}x -\frac{1}{2}\log \det (2\pi \Sigma)\right]dx
\]
is indeed a smooth function of $\Sigma$. Suppose instead that $g$ is a smooth function and that it's derivatives are all polynomially bounded. Suppose first that $g$ is in fact a Schwartz function. Then, writing $\widehat{g}$ for its Fourier transform we have
\[
\E{g(\Sigma^{1/2}X)}=\int_{\R^r} \widehat{g}(\xi) \exp\left[-\frac{1}{2}\xi^T \Sigma \xi\right] d\xi. 
\]
Since $\widehat{g}$ is also Schwartz, we may differentiate under the integral sign to obtain 
\begin{equation}\label{E:schwartz-deriv}
\frac{\partial}{\partial \Sigma_{ij}} \E{g(\Sigma^{1/2}X)}  = -\int_{\R^r}\xi_{i}\xi_j\widehat{g}(\xi) \exp\left[-\frac{1}{2}\xi^T \Sigma \xi\right]d\xi = \E{\partial_{x_i}\partial_{x_j}\bigg|_{x=\Sigma^{1/2}X} g(x)}.
\end{equation}
Finally, if $g$ is not Schwartz but is smooth with all derivatives being polynomially bounded, we consider the convolution
\[
g_\epsilon(x):=(g*\psi_\epsilon)(x),\qquad \psi_\epsilon(y)= \exp\left[-\frac{\norm{y}^2}{2\epsilon}-\frac{1}{2}\log(2\pi \epsilon)\right].
\]
Then, $g_\epsilon$ is Schwartz for all $\epsilon>0$. Moreover, note that  $g_\epsilon(\Sigma^{1/2}x)$ is also polynomially bounded for any PSD matrix $\Sigma.$ Specifically, for any fixed PSD matrix $\Sigma_0$ we have for any $k\geq 1$ 
\begin{align}
&\notag \sup_{\epsilon\in [0,1]}\sup_{\substack{\norm{\Sigma-\Sigma_0}\leq 1\\ \Sigma\text{ PSD}}}\sup_{x\in \R^{r}}\abs{(1+\norm{x})^{-k} g_\epsilon(\Sigma^{1/2}x)}\\
&\notag\qquad =\sup_{\epsilon\in [0,1]}\sup_{\substack{\norm{\Sigma-\Sigma_0}\leq 1\\ \Sigma\text{ PSD}}}\sup_{x\in \R^{r}}\abs{(1+\norm{x})^{-k} \int_{\R^r} g(\Sigma^{1/2}(x-y)) \psi_\epsilon(y)dy}\\
\notag &\qquad \leq\sup_{\epsilon\in [0,1]}\sup_{\substack{\norm{\Sigma-\Sigma_0}\leq 1\\ \Sigma\text{ PSD}}}\sup_{x\in \R^{r}}\left\{ (1+\norm{x})^{-k} \int_{\R^r} \lr{1+\norm{\Sigma^{1/2}(x-y)}^k} \psi_\epsilon(y)dy\right\}\\
\label{E:conv-deriv}&\qquad <\infty,
\end{align}
Note that there exists $K>0$ depending only $k,r,\Sigma_0$ so that
\[
\sup_{\substack{\norm{\Sigma-\Sigma_0}\leq 1}}\norm{\Sigma^{1/2}(x-y)}^{k}\leq  K\lr{1+\norm{\Sigma_0^{1/2}}}^k(\norm{x}^{k}+\norm{y}^{k}).
\]
Hence, since
\[
\sup_{\epsilon\in [0,1]}\int_{\R^r} \norm{y}^k\psi_\epsilon(y)dy<\infty
\]
we find that 
\begin{align}
\label{E:conv-deriv-2}& \sup_{\epsilon\in [0,1]}\sup_{\substack{\norm{\Sigma-\Sigma_0}\leq 1\\ \Sigma\text{ PSD}}}\sup_{x\in \R^{r}}\abs{(1+\norm{x})^{-k}g_\epsilon(\Sigma^{1/2}x)}<\infty.
\end{align}
The estimate above allows us to use dominate convergence to see that for any PSD $\Sigma$  
\begin{equation}\label{E:conv-conv}
\E{g(\Sigma^{1/2}X)} = \lim_{\epsilon\gives 0} \E{g_\epsilon(\Sigma^{1/2}X)}.  
\end{equation}
To complete the proof we note that $g_\epsilon$ and $\partial_i\partial_j g_\epsilon$ are both Schwartz for any positive $\epsilon$. Moreover, $\partial_i\partial_j\partial_k \partial_m g_\epsilon$ satisfies \eqref{E:conv-deriv-2}. Hence, we conclude by applying \eqref{E:schwartz-deriv} that for any PSD matrix $\Sigma_0$ there exists $C>0$ so that
\begin{align*}
&\sup_{\substack{\norm{\Sigma-\Sigma_0}\leq 1\\ \Sigma\text{ PSD}}}\sup_{\epsilon\in [0,1]}\frac{\abs{\E{g_\epsilon(\Sigma^{1/2}X)} - \E{g_\epsilon(\Sigma_0^{1/2}X)}- \sum_{i,j=1}^r\E{(\partial_{i}\partial_{j}g_\epsilon)(\Sigma_0^{1/2}X)} \lr{\Sigma-\Sigma_0}_{ij}}}{\norm{\Sigma-\Sigma_0}^{2}}\\
&\qquad \leq\sup_{\epsilon\in [0,1]}\sup_{\norm{\Sigma-\Sigma_0}\leq 1}\sum_{i,j,k,m=1,\ldots,r} \abs{\E{(\partial_{i}\partial_{j}\partial_{k}\partial_{m})g_{\epsilon}(\Sigma^{1/2}X)}}\\
&\qquad \leq  C.
\end{align*}
Thus, if $\Sigma-\Sigma_0/\norm{\Sigma-\Sigma_0}\gives \Sigma_1$, we find by applying \eqref{E:conv-conv} to $\partial_{i}\partial_j g$ that
\begin{align*}
    \lim_{\Sigma\gives \Sigma_0}\frac{\E{g(\Sigma^{1/2}X)} - \E{g(\Sigma_0^{1/2}X)}}{\norm{\Sigma-\Sigma_0}} &= \lim_{\Sigma\gives \Sigma_0}\lim_{\epsilon\gives 0} \frac{\E{g_\epsilon(\Sigma^{1/2}X)} - \E{g_\epsilon(\Sigma_0^{1/2}X)}}{\norm{\Sigma-\Sigma_0}}\\
    &=\lim_{\Sigma\gives \Sigma_0}\lim_{\epsilon\gives 0}\left\{\sum_{i,j=1}^r \E{(\partial_i\partial_j g_\epsilon (\Sigma_0^{1/2}X))} \frac{(\Sigma-\Sigma_0)_{ij}}{\norm{\Sigma-\Sigma_0}}\right\}\\
    &=\sum_{i,j=1}^r\E{\partial_{i}\partial_j g(\Sigma_0^{1/2}X)}(\Sigma_1)_{ij}.
\end{align*}
This shows that \eqref{E:gauss-exp-deriv} holds for any $g$ that is smooth, completing the proof of Lemma \ref{L:smooth-gauss}.
\end{proof}

\section{Proof of Theorem \ref{T:cumulants}}\label{S:cumulants-pf}
Let us first recall the notation. We fix $r\geq 1$ and assume that $\sigma:\R\gives\R$ satisfies assumption \ref{A:sigma-prop} with this value of $r$. We also fix a finite collection $x_\mA=\set{x_\alpha,\alpha\in \mA}\subseteq \R^{n_0}$ of distinct network inputs and $p$ directional derivatives $d_1,\ldots,d_p$ as in \eqref{E:der-deriv-def}. We denote by 
\[
N(p,r) = \#\set{J=\lr{j_1,\ldots,j_{p}}\in \mathbb N^{p}~|~j_1+\cdots + j_{p}\leq r},
\]
which computes the number of possible derivatives of order at most $r$ in the $p$ directional derivatives $d_j$. We also denote by $\mF^{(\ell)}$ the sigma algebra generated by the weights and biases in layers up to and including $\ell$. The starting point for proving Theorem \ref{T:cumulants} is the following simple but fundamental observation. 
\begin{lemma}\label{L:cond-indep}
For each $\ell \geq 0$, conditional on $\mF^{(\ell)}$, 
\[
\set{\lr{D_\alpha^Jz_{i;\alpha}^{(\ell+1)},\, \alpha\in \mA,\, \abs{J}\leq r}}_{i=1}^{n_{\ell+1}}
\]
is a collection of $n_{\ell+1}$ iid centered Gaussians of dimension $N(p,r)$. 
\end{lemma}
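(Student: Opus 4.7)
The plan is to exhibit $D_\alpha^J z_{i;\alpha}^{(\ell+1)}$ as a linear combination of $b_i^{(\ell+1)}$ and $W_{ij}^{(\ell+1)}$, $j=1,\ldots,n_\ell$, with $\mF^{(\ell)}$-measurable coefficients, then read off the conclusion from the joint Gaussianity and the across-$i$ independence of the layer-$(\ell+1)$ weights and biases.

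First I would fix $\ell\geq 0$ and expand $z_{i;\alpha}^{(\ell+1)}$ using the defining recursion \eqref{E:z-def}. For $\ell\geq 1$ this gives
\[
z_{i;\alpha}^{(\ell+1)} = b_i^{(\ell+1)} + \sum_{j=1}^{n_\ell} W_{ij}^{(\ell+1)} \sigma\lr{z_{j;\alpha}^{(\ell)}},
\]
with the analogous expression from \eqref{E:z-def} when $\ell=0$. Applying the differential operator $D_\alpha^J$ (which acts on the variable $x_\alpha\in\R^{n_0}$) commutes with the weights $W_{ij}^{(\ell+1)}$ and annihilates $b_i^{(\ell+1)}$ (for $\abs{J}\geq 1$), yielding
\[
D_\alpha^J z_{i;\alpha}^{(\ell+1)} \;=\; \delta_{J,0}\,b_i^{(\ell+1)} + \sum_{j=1}^{n_\ell} W_{ij}^{(\ell+1)}\,\Phi_{j,\alpha}^{J,(\ell)},
\]
where $\Phi_{j,\alpha}^{J,(\ell)} := D_\alpha^J\sigma(z_{j;\alpha}^{(\ell)})$ is a fixed polynomial combination (via the chain/Fa\`a di Bruno rule) of derivatives of $\sigma$ evaluated at $z_{j;\alpha}^{(\ell)}$ and of derivatives of $z_{j;\alpha}^{(\ell)}$ with respect to $x_\alpha$. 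Crucially, each $\Phi_{j,\alpha}^{J,(\ell)}$ depends only on the weights and biases in layers $1,\ldots,\ell$, so it is $\mF^{(\ell)}$-measurable. (Under Assumption \ref{A:sigma-prop} the derivatives up to order $r$ of $\sigma$ exist almost everywhere, and the Gaussian law of $z_{j;\alpha}^{(\ell)}$ conditional on $\mF^{(\ell-1)}$ ensures that with probability one $\sigma^{(s)}(z_{j;\alpha}^{(\ell)})$ is well-defined for $s\leq r$.)

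Next I would condition on $\mF^{(\ell)}$: each $\Phi_{j,\alpha}^{J,(\ell)}$ becomes a constant, and the entire vector
\[
\bigl(D_\alpha^J z_{i;\alpha}^{(\ell+1)}\bigr)_{\alpha\in\mA,\,\abs{J}\leq r}
\]
is a deterministic affine function of the Gaussian vector $\bigl(b_i^{(\ell+1)}, W_{i,1}^{(\ell+1)},\ldots,W_{i,n_\ell}^{(\ell+1)}\bigr)$, which by \eqref{E:Wb-def} is centered Gaussian and independent of $\mF^{(\ell)}$. Hence the conditional law of this $N(p,r)|\mA|$-dimensional vector is centered Gaussian. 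Independence across $i=1,\ldots,n_{\ell+1}$ follows because the collections $(b_i^{(\ell+1)}, W_{i,\cdot}^{(\ell+1)})$ are mutually independent across $i$ by \eqref{E:Wb-def} and are jointly independent of $\mF^{(\ell)}$; identical distribution across $i$ follows because they are identically distributed across $i$ (only the $\mF^{(\ell)}$-measurable coefficients $\Phi_{j,\alpha}^{J,(\ell)}$ appear, and these do not depend on $i$).

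The only point requiring care is the justification that $\Phi_{j,\alpha}^{J,(\ell)}$ is well-defined up to order $r$, which as noted follows from Assumption \ref{A:sigma-prop} together with the fact that the law of $z_{j;\alpha}^{(\ell)}$ has a density. There are no other substantive obstacles: the entire statement is a direct consequence of the linearity of \eqref{E:z-def} in the layer-$(\ell+1)$ parameters and the independence/Gaussianity built into \eqref{E:Wb-def}.
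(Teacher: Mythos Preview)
Your proposal is correct and follows essentially the same approach as the paper: expand $D_\alpha^J z_{i;\alpha}^{(\ell+1)}$ via the defining recursion as a linear combination of $b_i^{(\ell+1)},W_{i1}^{(\ell+1)},\ldots,W_{in_\ell}^{(\ell+1)}$ with $\mF^{(\ell)}$-measurable coefficients $D_\alpha^J\sigma(z_{j;\alpha}^{(\ell)})$, then invoke the joint Gaussianity and across-$i$ independence of the layer-$(\ell+1)$ parameters. The paper's proof is just a terser version of the same computation; your additional remarks on Fa\`a di Bruno and almost-everywhere differentiability are harmless elaborations not present in the original.
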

\begin{proof}
The defining recursion \eqref{E:z-def} of a fully connected network yields for each $\alpha,J$ 
\begin{align}
\label{E:deriv-rec}D_\alpha^J z_{i;\alpha}^{(\ell+1)} &= D_\alpha^J\left\{b_i^{(\ell+1)}+\sum_{j=1}^{n_\ell} W_{ij}^{(\ell+1)}\sigma\lr{z_{j;\alpha}^{(\ell)}}\right\}= \delta_{\abs{J}=0}  b_i^{(\ell+1)} + \sum_{j=1}^{n_\ell} W_{ij}^{(\ell+1)} D_\alpha^J \sigma\lr{z_{j;\alpha}^{(\ell)}}.
\end{align}
Note that $D_\alpha^J \sigma(z_{j;\alpha}^{_{(\ell)}})$ are measurable with respect to $\mF^{(\ell)}$. The conclusion now follows since the weights $W_{ij}^{_{(\ell+1)}},\,j=1\ldots, n_{\ell}$ and bias $b_{i}^{_{(\ell+1)}}$ are centered Gaussians and are independent for different $i$. 
\end{proof}

\noindent Thus, the structure of $z_{\alpha}^{_{(\ell+1)}}$ and its derivatives is always that of a Gaussian field after conditioning on $\mF^{_{(\ell)}}$. To ease the notation in what comes given $f:\R^{\abs{\mA}\times N(n_0,r)}\gives \R$, let us abbreviate 
\[
f\lr{z_{j;\mA}^{(\ell)}} := f\lr{D_\alpha^Jz_{j;\alpha}^{(\ell)},\, \alpha\in \mA,\, \abs{J}\leq r},\qquad j\in [n_{\ell}].
\]
Next, we remind the reader that given  $f:\R^{\abs{\mA}\times N(n_0,r)}\gives \R$, which is  measurable and polynomially bounded, the corresponding collective observable $\mO_{f}^{(\ell)}$ at layer $\ell$ is
\[
\mO_{f}^{(\ell)} = \frac{1}{n_\ell}\sum_{j=1}^{n_{\ell}} f\lr{z_{j;\mA}^{(\ell)}}
\]
and that the statement \eqref{E:iwl-goal-11} in Proposition \ref{P:iwl} ensures
\begin{equation}\label{E:col-fin-avg}
\sup_{n\geq 1} \E{\abs{\mO_{f}^{(\ell)}}}<\infty.    
\end{equation}
Recall also our notation for the conditional covariance
\[
\Sigma_{\alpha_1\alpha_2}^{(\ell)}:=\Cov\lr{z_{i;\alpha_1}^{(\ell+1)},z_{i;\alpha_2}^{(\ell+1)}~|~\mF^{(\ell)}}=C_b+\frac{C_W}{n_\ell}\sum_{j=1}^{n_\ell}\sigma\lr{z_{j;\alpha_1}^{(\ell)}}\sigma\lr{z_{j;\alpha_2}^{(\ell)}}
\]
and note that both it and its derivatives
\begin{align*}
D_{\alpha_1}^{J_1}D_{\alpha_2}^{J_2}\Sigma_{\alpha_1\alpha_2}^{(\ell)}&=\Cov\lr{D_{\alpha_1}^{J_1}z_{i;\alpha_1}^{(\ell+1)},D_{\alpha_2}^{J_2}z_{i;\alpha_2}^{(\ell+1)}~|~\mF^{(\ell)}}\\
&=D_{\alpha_1}^{J_1}D_{\alpha_2}^{J_2}\lr{C_b+\frac{C_W}{n_\ell}\sum_{j=1}^{n_\ell}\sigma\lr{z_{j;\alpha_1}^{(\ell)}}\sigma\lr{z_{j;\alpha_2}^{(\ell)}}}    
\end{align*}
are collective observables at layer $\ell$. Our first application of Lemma \ref{L:cond-indep} is the following reduction of the study of cumulants of $D_\alpha^Jz_{i;\alpha}^{(\ell+1)}$ to the cumulants of certain collective observables.

\begin{proposition}\label{P:collective-reduce}
Fix $k,\ell \geq 1$ and $p$-dimensional multi-indices $J_1,\ldots, J_{k}$ with $\abs{J_i}\leq r.$ If $k$ is odd, then
\[
 \kappa\lr{D_{\alpha_1}^{J_1}z_{i_1;\alpha_1}^{(\ell+1)},\ldots, D_{\alpha_k}^{J_{k}}z_{i_{k};\alpha_{k}}^{(\ell+1)}} =0
\]
In contrast, if $k$ is even
\[
 \kappa\lr{D_{\alpha_1}^{J_1}z_{i_1;\alpha_1}^{(\ell+1)},\ldots, D_{\alpha_k}^{J_{k}}z_{i_{k};\alpha_{k}}^{(\ell+1)}}~ =~ \text{ finite sums of  } \kappa\lr{\mO_{f_1}^{(\ell)},\ldots, \mO_{f_{k/2}}^{(\ell)}},
\]
where $\mO_{f_j}^{(\ell)}$ are collective observables of the form
\begin{equation}\label{E:sigma-derivs}
D_{\alpha_1}^{J_1} D_{\alpha_2}^{J_2} \Sigma_{\alpha_1\alpha_2}^{(\ell)},\qquad \abs{J_1},\abs{J_2}\leq r. 
\end{equation}
\end{proposition}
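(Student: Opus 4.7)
My plan is to combine the law of total cumulance \eqref{E:conditional-cumulants} with the conditional Gaussianity supplied by Lemma \ref{L:cond-indep}. Writing $X_j := D_{\alpha_j}^{J_j}z_{i_j;\alpha_j}^{(\ell+1)}$ and conditioning on the sigma algebra $\mF^{(\ell)}$, I would first expand
\begin{equation*}
\kappa(X_1,\ldots,X_k) \;=\; \sum_{\pi=(\pi_1,\ldots,\pi_b)} \kappa\bigl(\kappa(X_{\pi_1}\,|\,\mF^{(\ell)}),\,\ldots,\,\kappa(X_{\pi_b}\,|\,\mF^{(\ell)})\bigr),
\end{equation*}
where the outer sum is over set partitions $\pi$ of $[k]$.

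The next step is to observe that, by Lemma \ref{L:cond-indep}, conditional on $\mF^{(\ell)}$ the tuple $(X_1,\ldots,X_k)$ is centered and jointly Gaussian, with independence across distinct neuron indices $i$. Applying the Gaussian cumulant formula \eqref{E:gaussian-cumulants} to the conditional law, $\kappa(X_{\pi_a}\,|\,\mF^{(\ell)})$ vanishes unless $|\pi_a|=2$ (singletons vanish because the conditional mean is zero, blocks of size $\geq 3$ vanish because Gaussian cumulants of order $\geq 3$ are zero). For the surviving size-two blocks, the conditional covariance combined with \eqref{E:big-sig-def} gives
\begin{equation*}
\kappa(X_j, X_{j'}\,|\,\mF^{(\ell)}) \;=\; \delta_{i_j i_{j'}}\, D_{\alpha_j}^{J_j} D_{\alpha_{j'}}^{J_{j'}} \Sigma_{\alpha_j \alpha_{j'}}^{(\ell)}.
\end{equation*}
Only pair partitions $\pi$ of $[k]$ therefore contribute. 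If $k$ is odd no such partition exists, so every term in the outer sum vanishes and the first conclusion follows immediately.

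For $k = 2m$, I would index the surviving partitions as $\pi = (\{a_1,b_1\},\ldots,\{a_m,b_m\})$. By the multilinearity \eqref{E:linear-cumulants}, the deterministic scalar $\prod_s \delta_{i_{a_s}i_{b_s}}$ factors out of the outer cumulant, producing
\begin{equation*}
\kappa(X_1,\ldots,X_k) \;=\; \sum_{\pi} \prod_{s=1}^m \delta_{i_{a_s}i_{b_s}}\;\kappa\Bigl(D_{\alpha_{a_1}}^{J_{a_1}} D_{\alpha_{b_1}}^{J_{b_1}}\Sigma_{\alpha_{a_1}\alpha_{b_1}}^{(\ell)},\,\ldots,\,D_{\alpha_{a_m}}^{J_{a_m}} D_{\alpha_{b_m}}^{J_{b_m}}\Sigma_{\alpha_{a_m}\alpha_{b_m}}^{(\ell)}\Bigr).
\end{equation*}
Finally, to identify each argument as a collective observable I would differentiate \eqref{E:big-sig-def} under the finite sum: this gives $n_\ell^{-1}\sum_{j=1}^{n_\ell}f(z_{j;\mA}^{(\ell)})$ plus a constant (the latter irrelevant in any mixed cumulant of order $\geq 2$), where $f$ is built from $\sigma$ and its derivatives and is polynomially bounded by Assumption \ref{A:sigma-prop}. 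This delivers the second conclusion. The proof is essentially bookkeeping with Proposition \ref{P:cumulant-props}; the only conceptual step is recognizing that the hierarchical Gaussian structure of Lemma \ref{L:cond-indep} collapses the law of total cumulance to pair partitions, leaving exactly the collective observables in \eqref{E:sigma-derivs}.
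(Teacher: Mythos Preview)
Your proof is correct and follows essentially the same approach as the paper: apply the law of total cumulance \eqref{E:conditional-cumulants}, use the conditional Gaussianity from Lemma \ref{L:cond-indep} together with \eqref{E:gaussian-cumulants} and \eqref{E:indep-cumulants} to reduce to pair partitions with matching neuron indices, and then identify the surviving conditional covariances as the derivatives $D_{\alpha}^{J}D_{\alpha'}^{J'}\Sigma_{\alpha\alpha'}^{(\ell)}$. Your additional remark about the constant $C_b$ being irrelevant in mixed cumulants of order $\geq 2$ is a nice clarification the paper leaves implicit.
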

\begin{proof}
Using \eqref{E:conditional-cumulants} and recalling that $\mF^{(\ell)} $ is the sigma algebra generated by weights and biases in layers up to and including $\ell$, we have that  $\kappa\lr{D_{\alpha_1}^{J_1}z_{i_1;\alpha_1}^{(\ell+1)},\ldots, D_{\alpha_{k}}^{J_{k}}z_{i_{k};\alpha_{k}}^{(\ell+1)}} $ equals
\begin{align}
\label{E:cond-kappa}   \sum_{\substack{\pi=\lr{\pi_1,\ldots, \pi_B}}} \kappa\lr{\kappa\lr{ \lr{D^J z^{(\ell+1)}}_{\pi_1}\big|\mathcal F^{(\ell)}},\ldots, \kappa\lr{ \lr{D^J z^{(\ell+1)}}_{\pi_B}\big|\mathcal F^{(\ell)}}},
\end{align}
where the sum is over partitions $\pi$ of $[k]$ and for $b=1,\ldots, B$ we've abbreviated
\[
\lr{D^J z^{(\ell+1)}}_{\pi_b}:=\lr{D_{\alpha_t}^{J_t}z_{i_t;\alpha_t}^{(\ell+1)},\quad t\in \pi_b}.
\]
By Lemma \ref{L:cond-indep}, $\{(D_\alpha^Jz_{i;\alpha}^{(\ell+1)},\, \alpha\in \mA,\, \abs{J}\leq d),\, i=1,\ldots, n_{\ell+1}\}$ are iid centered Gaussians conditional on $\mF^{(\ell)}$. Hence, by the properties \eqref{E:indep-cumulants} and \eqref{E:linear-cumulants} and \eqref{E:gaussian-cumulants} from Proposition \ref{P:cumulant-props}, in the sum over partitions above, a term is non-zero only if 
\[
\forall b\in [B],\qquad \abs{\pi_b}=2\quad\text{and}\quad i_{\pi_b(1)}=i_{\pi_b(2)}
\]
This proves that $\kappa\lr{D^{J_1}z_{i_1;\alpha_1}^{(\ell+1)},\ldots, D^{J_{k}}z_{i_{k};\alpha_{k}}^{(\ell+1)}} $ vanishes if $k$ is odd. To treat the case when $k$ is even observe that by \eqref{E:deriv-rec}
\begin{align*}
\kappa\lr{ D^{J_1} z_{i_1;\alpha_1}^{(\ell+1)},D^{J_2} z_{i_2;\alpha_2}^{(\ell+1)}\big|\mathcal F^{(\ell)}} &=\delta_{i_1i_2}D_{\alpha_1}^{J_1}D_{\alpha_2}^{J_2}\Sigma_{\alpha_1\alpha_2}^{(\ell)}.
\end{align*}
Substituting this into \eqref{E:cond-kappa} completes the proof. 
\end{proof}

When $k=2$, Proposition \ref{P:collective-reduce} and our assumption \eqref{A:sigma-prop} shows that for each $\ell\geq 1$, any $i_1,i_2\in [n_{\ell+1}],\, \alpha\in \mA,$ and multi-indices $J_1,J_2$ of order at most $d$, there exists a polynomially bounded function $f:\R^{\abs{\mA}\times N(n_0,d)}\gives \R$ for which 
\[
\kappa\lr{D_{\alpha_1}^{J_1}z_{i_1;\alpha_1}^{(\ell+1)},D_{\alpha_2}^{J_2}z_{i_2;\alpha_2}^{(\ell+1)}} = \E{\mO_f^{(\ell)}}
\]
In light of \eqref{E:col-fin-avg} this proves Theorem \ref{T:cumulants} when $k=2$. Further, since the cumulant of $2$ or more random variables is shift-invariant, we may assume for $k\geq 3$ that the collective observables  $D_{\alpha_1}^{J_1}D_{\alpha_2}^{J_2} \Sigma_{\alpha_1\alpha_2}^{(\ell)}$ in Proposition \ref{P:collective-reduce} are replaced by their zero mean versions:
\begin{equation}\label{E:gen-delt-def}
\Delta_{\alpha_1\alpha_2}^{J_1,J_2,(\ell)}:=D_{\alpha_1}^{J_1}D_{\alpha_2}^{J_2} \Sigma_{\alpha_1\alpha_2}^{(\ell)} - \E{D_{\alpha_1}^{J_1}D_{\alpha_2}^{J_2} \Sigma_{\alpha_1\alpha_2}^{(\ell)}}.    
\end{equation}
Hence, Theorem \ref{T:cumulants} is a special case of the following result. 
\begin{theorem}\label{T:smooth-cumulants}
    Fix $k,m\geq 1$. Consider any $m$-tuple $F=\lr{f_1,\ldots, f_m}$ consisting of measurable, functions
    \[
    f_i:\R^{\abs{\mA}\x N(n_0,r)}\gives \R,\quad i=1,\ldots,m\qquad
    \]
    that are polynomially bounded and satisfy
    \[
    \E{\mO_{f_i}^{(\ell)}}=\E{f_i\lr{z_{1;\mA}^{(\ell)}}}=0,\qquad i=1,\ldots,m.
    \]
    Define the $m-$tuple of collective observables
    \[
    \amO_{F}^{(\ell)} := \lr{\mO_{f_i}^{(\ell)},\, i=1,\ldots, m}.
    \]
    Consider further any measurable polynomially bounded functions
    \[
    g_j:\R^m\gives \R,\quad j=1,\ldots,k.
    \]
    which are smooth in a neighborhood of $0$. If $f_i$ and $\sigma$ are in fact smooth, then, for every $\ell\geq 1$ 
    \begin{equation}\label{E:smooth-cumulants}
    \sup_{n\geq 1}  \abs{n^{k-1}\kappa\lr{g_1\lr{\amO_{F}^{(\ell)}},\ldots, g_k\lr{\amO_{F}^{(\ell)}}}}<\infty
    \end{equation}
    Moreover, \eqref{E:smooth-cumulants} holds without the assumption that $f_i,\sigma$ are smooth provided that for each $\ell$ the vector of iterated directional derivatives $(D_\alpha^{J}z_{i;\alpha}^{(\ell)},\, \abs{J}\leq r,\alpha\in \mA)$ of order at most $r$ is non-degenerate in the sense of \eqref{E:pd-cov}.
\end{theorem}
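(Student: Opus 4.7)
My plan is to prove Theorem \ref{T:smooth-cumulants} by induction on the layer index $\ell$, with a strengthened induction hypothesis that simultaneously controls all moments of centered collective observables $\mO_{f_i}^{(\ell)}-\E{\mO_{f_i}^{(\ell)}}$ at size $O(n^{-\lceil q/2\rceil})$ (this is essentially the statement advertised as Lemma \ref{L:moment-bounds} in the excerpt).

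For the base case $\ell=1$, the vectors $z_{j;\alpha}^{(1)}=b_j^{(1)}+(W^{(1)}x_\alpha)_j$ are independent across $j$, hence $f_i(z_{j;\mA}^{(1)})$ are i.i.d.\ across $j$ with finite moments of all orders by the polynomial growth assumption and the Gaussianity of $b,W$. The classical fact that $k$-th cumulants of empirical means of i.i.d.\ variables scale like $n^{-(k-1)}$ handles the linear case; to pass to smooth $g_j$, I would Taylor expand each $g_j$ in a neighborhood of $0$, combined with a truncation argument on the event $\{|\mO_{f_i}^{(1)}|\leq n^{-1/4}\}$ (whose complement has superpolynomially small probability by the moment bound), then use multilinearity of cumulants \eqref{E:linear-cumulants} to reduce to mixed cumulants of monomials in $\mO_{f_i}^{(1)}$, and finally count powers of $1/n$.

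For the inductive step, fix $\ell\geq 1$ and assume the result at layer $\ell$. I would decompose each observable at layer $\ell+1$ as
\begin{equation*}
\mO_{f_i}^{(\ell+1)}=\mu_{f_i}^{(\ell)}+R_{f_i}^{(\ell+1)},\qquad \mu_{f_i}^{(\ell)}:=\E{f_i(z_{1;\mA}^{(\ell+1)})\mid\mF^{(\ell)}},\qquad R_{f_i}^{(\ell+1)}:=\mO_{f_i}^{(\ell+1)}-\mu_{f_i}^{(\ell)}.
\end{equation*}
By Lemma \ref{L:cond-indep}, the summands $f_i(z_{j;\mA}^{(\ell+1)})$ are conditionally i.i.d.\ given $\mF^{(\ell)}$, so $R_{f_i}^{(\ell+1)}$ is a conditional empirical average of i.i.d.\ centered random variables; the i.i.d.\ moment bound applies uniformly in $\mF^{(\ell)}$ and yields $\E{|R_{f_i}^{(\ell+1)}|^q}=O(n^{-\lceil q/2\rceil})$. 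Meanwhile, Lemma \ref{L:smooth-gauss}, applied either because $\sigma$ and $f_i$ are smooth or because the covariance \eqref{E:pd-cov} is non-degenerate, exhibits $\mu_{f_i}^{(\ell)}$ as a smooth function of the matrix of collective observables $\bigl(D_{\alpha_1}^{J_1}D_{\alpha_2}^{J_2}\Sigma_{\alpha_1\alpha_2}^{(\ell)}\bigr)_{J,\alpha}$ at layer $\ell$, so its cumulants are controlled by the inductive hypothesis. To conclude, apply the law of total cumulance \eqref{E:conditional-cumulants} with respect to $\mF^{(\ell)}$: each term indexed by a partition $\pi=(\pi_1,\ldots,\pi_B)$ of $[k]$ becomes a cumulant in $\mu$'s of conditional cumulants in the $R$'s. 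A block $\pi_b$ of size $t_b$ contributes a conditional $R$-factor of size $n^{-(t_b-1)}$, and the outer cumulant of $B$ functions of layer-$\ell$ observables contributes $n^{-(B-1)}$ by induction, giving a total of $n^{-\sum_b (t_b-1)-(B-1)}=n^{-(k-1)}$ exactly.

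The main obstacle will be the careful bookkeeping of these partition contributions, in particular ensuring that the chain rule applied to $g_j(\mu+R)$ produces only terms of the advertised size (after expanding $g_j$ in the $R$ direction and using moment bounds to truncate the Taylor tail). A secondary technical point is the non-smooth case: there, Lemma \ref{L:smooth-gauss} only gives smoothness of $\mu_{f_i}^{(\ell)}$ on the open positive-definite stratum, so before differentiating I must argue via the non-degeneracy assumption \eqref{E:pd-cov} and the layer-$\ell$ moment bounds that $\Sigma^{(\ell)}$ lies in a neighborhood of its limiting non-degenerate value with probability at least $1-O(n^{-M})$ for every $M$, and control the small-probability complement using polynomial boundedness of $f_i$ and $g_j$.
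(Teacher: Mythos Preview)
Your proposal is correct and follows essentially the same approach as the paper: induction on $\ell$, reduction from smooth $g_j$ to polynomials via Taylor expansion plus localization on $\{|\amO_F^{(\ell)}|\lesssim n^{-1/4}\}$ (the paper's Proposition~\ref{P:poly-to-gen} and Lemma~\ref{L:moment-bounds}), and in the inductive step the law of total cumulance \eqref{E:conditional-cumulants} together with Lemma~\ref{L:smooth-gauss} to exhibit the inner conditional cumulants as smooth functions of the layer-$\ell$ collective observable $\Sigma^{(\ell)}$, with the non-smooth case handled by restricting to the high-probability event on which $\Sigma^{(\ell)}$ is close to its non-degenerate limit. The only organizational difference is that the paper expands each monomial explicitly over neuron indices and uses a graph-connectivity count (Propositions~\ref{P:base-case-cumulants} and~\ref{P:inductive-cumulants}) to extract the factors $n^{-(|\pi_b|-1)}$, which is precisely the combinatorial content of your conditional-i.i.d.\ scaling for $R=\mO^{(\ell+1)}-\mu$.
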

\begin{proof}
Our starting point is a reduction of Theorem \ref{T:smooth-cumulants} to the case when $g_j$ are polynomials. This is related to a technique called the delta method in some parts of the mathematical statistics literature \cite{ver2012invented}.
\begin{proposition}[Polynomials are Enough for Theorem \ref{T:smooth-cumulants}]\label{P:poly-to-gen}
    Fix $m\geq 1$ and suppose that for each $n\geq 1$ we have an $m-$tuple $X_n=\lr{X_{n,1},\ldots, X_{n,m}}$ of mean $0$ random variables that possess bounded moments of all orders:
    \begin{equation}\label{E:mixed-moment-bounds}
\sup_{n\geq 1}\abs{\E{X_{n,1}^{q_1}\cdots X_{n,m}^{q_m}}}<\infty,\qquad \forall\, q_1,\ldots, q_m\geq 0.        
    \end{equation}
    Suppose for any given polynomials $p_1,\ldots, p_k$ in $m$ variables we have
    \begin{equation}\label{E:poly-cumulants}
        \sup_{n\geq 1}\abs{n^{k-1} \kappa\lr{p_1(X_n),\ldots, p_k(X_n)}}<\infty.
    \end{equation}
    Then, for any measurable, polynomially bounded functions $g_j:\R^m\gives \R,\, j=1,\ldots,k,$ which are smooth in some fixed neighborhood of $0$
     \begin{equation}\label{E:prop-smooth-cumulants}
    \sup_{n\geq 1}  \abs{n^{k-1}\kappa\lr{g_1\lr{X_n},\ldots, g_k\lr{X_n}}}<\infty.    
    \end{equation}
\end{proposition}
\begin{proof}
We begin with the following simple Lemma, which allows us to translate between the cumulants bounds \eqref{E:poly-cumulants} and high probability bounds. 
\begin{lemma}\label{L:moment-bounds}
For any $q\geq 1$ 
    \[
    \sup_{n\geq 1}\sup_{1\leq i\leq m}\abs{n^{\lceil \frac{q}{2}\rceil}\E{X_{i,n}^q}}<\infty.
    \]
\end{lemma}
\begin{proof}
We have by property \eqref{E:moments-cumulants} from Proposition \ref{P:cumulant-props} that
\begin{align*}
    \E{X_{i;n}^q} = \sum_{\substack{\pi=\lr{\pi_1,\ldots, \pi_B}\\\pi\in P(m)}} \prod_{b=1}^B \kappa\big({\underbrace{X_{i;n},\ldots, X_{i;n}}_{\abs{\pi_b}\text{ times}}}\big).
\end{align*}
Since by assumption $X_{i;n}$ has mean $0$, we have
\[
\kappa(X_{i;n})=\E{X_{i;n}}=0.
\]
Thus, the only partitions $\pi=\lr{\pi_1,\ldots, \pi_B}\in S(m)$ that give rise to non-zero terms in the expression above must have $B\leq \lfloor \frac{q}{2}\rfloor$. Moreover, for any such partition, we have 
\[
\left\lceil \frac{q}{2}\right\rceil = q-\left\lfloor\frac{q}{2}\right\rfloor = -\left\lfloor\frac{q}{2}\right\rfloor+\sum_{b=1}^B \abs{\pi_b} \leq \sum_{b=1}^B \lr{\abs{\pi_b} - 1}.
\]
Hence, we find 
\begin{align*}
    \sup_{n\geq 1}\abs{n^{\lceil\frac{q}{2} \rceil}\E{X_{i;n}^q}}& \leq  \sum_{\substack{\pi=\lr{\pi_1,\ldots, \pi_B}\\\pi\in P(m),\, \abs{\pi_b}\geq 2}} \prod_{b=1}^B \sup_{n\geq 1}\abs{n^{\abs{\pi_b}-1}\kappa\big({\underbrace{X_{i;n},\ldots, X_{i;n}}_{\abs{\pi_b}\text{ times}}}\big)}<\infty,
\end{align*}
where the final inequality follows from the assumption \eqref{E:poly-cumulants}.
\end{proof}
Applying Markov's inequality and Lemma \ref{L:moment-bounds} shows that for any $q\geq 1$ we have
\begin{equation}\label{E:localization}
\sup_{n\geq 1} n^{q}\pr{S_n^c}<\infty,\qquad S_n:=\set{\abs{X_{i;n}}\leq n^{-1/4},\qquad i=1,\ldots, m}.    
\end{equation}
This localization estimate allows us to replace each $g_i$ by its Taylor expansion around $0$. Indeed, note that 
\[
\kappa\lr{g_1(X_n),\ldots, g_k(X_n)} = P\lr{\E{g_1(X_{n})^{q_1}\cdots g_k(X_{n})^{q_k}},\quad q_1+\cdots+q_k\leq k}
\]
for some universal polynomial $P$ evaluated at the mixed moments of $X_n$ (the formula for this polynomial is given in \eqref{E:cumulants-moments} but is not important). Moreover, using the growth assumption \eqref{E:mixed-moment-bounds} on $X$ and the fact that $g_i$ are  polynomially bounded we find that
  \begin{equation}\label{E:mixed-moment-bounds-g}
\sup_{n\geq 1}\E{g(X_{n})^{q_1}\cdots g_k(X_{n})^{q_k}}<\infty,\qquad \forall\, q_1,\ldots, q_k\geq 1.        
    \end{equation}
This, in combination with the localization estimate \eqref{E:localization} applied with $q=k-1$ yields
\[
\kappa\lr{g_1(X_n),\ldots, g_k(X_n)} = P\lr{\E{{\bf 1}_{S_n}g_1(X_n)^{q_1}\cdots g_k(X_n)^{q_m}},\quad q_1+\cdots+q_m\leq k} + O(n^{-k+1}).
\]
Note that for $n$ sufficiently large, on the event $S_n$, the argument $X_n$ is any fixed neighborhood of $0$. Hence, we may write
\[
g_j(X_n)= p_j(X_n)+O(n^{-k+1}),
\]
where $p_j$ represents the $q-$th order Taylor expansion of $g_j$ around $0$ with $q$ sufficiently large and the constant in the error term is uniformly bounded. This yields
\[
\kappa\lr{g_1(X_n),\ldots, g_k(X_n)} = P\lr{\E{{\bf 1}_{S_n}p_1(X_n)^{q_1}\cdots p_k(X_n)^{q_m}},\quad q_1+\cdots+q_m\leq k} + O(n^{-k+1}).
\]
Finally, using the mixed moment estimates \eqref{E:mixed-moment-bounds} and the localization estimate \eqref{E:localization}, we conclude
\begin{align*}
\kappa\lr{g_1(X_n),\ldots, g_k(X_n)} &= P\lr{\E{p_1(X_n)^{q_1}\cdots p_k(X_n)^{q_m}},\quad q_1+\cdots+q_m\leq k} + O(n^{-k+1})\\
&= \kappa\lr{p_1(X_n),\ldots, p_k(X_n)}+O(n^{-k+1}).    
\end{align*}
Recalling \eqref{E:poly-cumulants} completes the proof.
\end{proof}

Proposition \ref{P:poly-to-gen} shows that, in establishing the conclusion \eqref{E:smooth-cumulants} of Theorem \ref{T:smooth-cumulants}, it is sufficient to assume that $g_j$ are polynomials. The remainder of the proof of Theorem \ref{T:smooth-cumulants} is by induction on $\ell$, starting with $\ell=1$. In view of Proposition \ref{P:poly-to-gen}, the following result establishes the base case. 

\begin{proposition}[Base Case: Theorem \ref{T:smooth-cumulants} holds for polynomials at $\ell=1$]\label{P:base-case-cumulants}
    Fix $k,m\geq 1$ and suppose $f_i,\, i=1,\ldots, m$ are as in the statement of Theorem \ref{T:smooth-cumulants}. Then, if $p_1,\ldots,p_k$ are any polynomials in $m$ variables, we have
    \[
    \sup_{n\geq 1} \abs{n ^{k-1}\kappa\lr{p_1\lr{\amO_{F}^{(1)}},\ldots, p_k\lr{ \amO_{F}^{(1)}}}}<\infty.
    \]
\end{proposition}
\begin{proof}
Since cumulants are multi-linear, we may and shall assume that $p_a$ are monomials:
\begin{equation}\label{E:gi-mono-def}
p_a(x)=x^{Q^{(a)}}:= x_1^{q_{1}^{(a)}}\cdots x_m^{q_m^{(a)}},\qquad x=\lr{x_1,\ldots, x_m},\quad Q^{(a)}=\lr{q_1^{(a)},\ldots, q_m^{(a)}}.    
\end{equation}
Recall that
\[
\mO_{f_i}^{(1)}:=n_1^{-1}\sum_{j=1}^{n_1}f_i\lr{z_{j;\mA}^{(1)}}.
\]
Therefore, writing $q^{(a)}:=q_1^{(a)}+\cdots + q_m^{(a)}$ we find
\begin{align*}
p_a\lr{\amO_{F}^{(1)}}= n_1^{-q^{(a)}}\sum_{J^{(a)}} f_{J^{(a)}},\qquad  f_{J^{(a)}}:= \prod_{i=1}^m \prod_{q=1}^{q_i^{(a)}} f_i\big(z_{j_{q;i}^{(a)};\mA}^{(1)}\big),
\end{align*}
where the sum is over tuples of multi-indices
\begin{equation}\label{E:Ja-def}
J^{(a)}=\lr{J_1^{(a)},\ldots, J_m^{(a)}},\qquad J_i^{(a)}=\lr{j_{q;i}^{(a)}\in [n_1],\, i\in [m],\, q\in [q_i^{(a)}]}.
\end{equation}
Hence, using that cumulants are multi-linear (see \eqref{E:linear-cumulants}), we obtain
\begin{align*}
    \kappa\lr{p_1\lr{\amO_{F}^{(1)}},\ldots, p_k\lr{\amO_{F}^{(1)}}} = n_1^{-(q^{(1)}+\cdots+q^{(k)})} \sum_{\substack{J^{(1)},\ldots, J^{(k)}}}\kappa\lr{f_{J^{(1)}},\ldots, f_{J^{(k)}}},
\end{align*}
where the sum extends over ordered collections $\lr{J^{(a)},\, 1\leq a \leq k}$ of multi-indices as in \eqref{E:Ja-def}. The expression on the right hand side can be interpreted as an average. Namely, we can think of the indices $j_{q;i}^{(a)}\in [n_1]$ are chosen uniformly from $[n_1]$ and independently for all $i,q,a$. Writing $\mathcal E$ for the average with respect to this distribution, we obtain
\begin{align*}
    \kappa\lr{p_1\lr{\amO_{F}^{(1)}},\ldots, p_k\lr{\amO_{F}^{(1)}}} =\mathcal E\left[\kappa\lr{f_{J^{(1)}},\ldots, f_{J^{(k)}}}\right].
\end{align*}
Our goal is to show that this average is small. To quantify this, let us associate to each collection $\lr{J^{(a)},\, a\in [k]}$ a graph 
\begin{equation}\label{E:graph-def}
\mathcal G\lr{J^{(a)},\, a\in [k]} = \lr{[k],\, E\lr{J^{(a)},\, a\in [k] }},    
\end{equation}
with vertex set $[k]$ and edge set defined by 
\[
(a,a')\in \mathcal E\lr{J^{(a)},\, a\in [k] } \quad \Longleftrightarrow\quad \exists i,i'\in [m],\, q\in [q_i^{(a)}],\, q'\in [q_{i'}^{(a')}]\text{ s.t. } j_{q;i}^{(a)} = j_{q';i'}^{(a')}.
\]
The key point is that in light of the vanishing property \eqref{E:indep-cumulants} of cumulants  and the fact that neurons at layer $1$ are independent
\[
\mathcal G\lr{J^{(a)},\, a\in [k]}\text{ disconnected }\quad \Longrightarrow\quad \kappa\lr{f_{J^{(1)}}^{(1)},\ldots, f_{J^{(k)}}^{(1)}} = 0.
\]
Hence, 
\begin{align*}
     \kappa\lr{p_1\lr{\amO_{F}^{(1)}},\ldots, p_k\lr{\amO_{F}^{(1)}}}= \mathcal E\left[ {\bf 1}_{\set{\mG\lr{J^{(a)},\, a\in [k]}\text{ connected}}}\kappa\lr{f_{J^{(1)}}^{(1)},\ldots, f_{J^{(k)}}^{(1)}}\right].
\end{align*}
Since $f_i$ are assumed to be polynomially bounded and the distribution of the neuron pre-activations $z_{i;\alpha}^{_{(1)}}$ is that of centered Gaussians with mean $0$ and covariance
\[
\Cov\lr{z_{i_1;\alpha_1}^{_{(1)}},z_{i_2;\alpha_2}^{_{(1)}}} = \delta_{i_1i_2}\lr{C_b+\frac{C_W}{n_0}\sum_{j=1^{n_0}}x_{j;\alpha_1}x_{j;\alpha_2}},
\]
we have for any fixed $k$ that 
\[
\sup_{n\geq 1}\sup_{J^{(1)},\ldots, J^{(a)}}\abs{\kappa\lr{f_{J^{(1)}}^{(1)},\ldots, f_{J^{(k)}}^{(1)}}}<\infty.
\]
Hence, 
\begin{align*}
    \kappa\lr{p_1\lr{\amO_{F}^{(1)}},\ldots, p_k\lr{\amO_{F}^{(1)}}} = O\lr{\mathcal P\lr{\mG\lr{J^{(a)},\, a\in [k]}\text{ connected}}},
\end{align*}
where $\mathcal P$ is the probability measure associated to our random draw of $J^{(1)},\ldots, J^{(k)}$. To complete the proof, note that since $m,q_i^{(a)}$ are fixed, by a simple union bound, we obtain
\begin{align*}
    &\mathcal P\lr{\mG\lr{J^{(a)},\, a\in [k']}\text{ connected}~\bigg|~\mG\lr{J^{(a)},\, a\in [k'-1]}\text{ connected}}= O(n^{-1}). 
\end{align*}
Hence, 
\begin{align}
\notag    &\mathcal P\lr{\mG\lr{J^{(a)},\, a\in [k]}\text{ connected}}\\
\notag    &\qquad  = \prod_{k'=2}^k \mathcal P\lr{\mG\lr{J^{(a)},\, a\in [k']}\text{ connected}~\bigg|~\mG\lr{J^{(a)},\, a\in [k'-1]}\text{ connected}}\\
\label{E:con-prob}    &\qquad =O(n^{-k+1}).
\end{align}
Thus, 
\[
\kappa\lr{p_1\lr{\amO_{F}^{(1)}},\ldots, p_k\lr{\amO_{F}^{(1)}}} = O(n^{-k+1}),
\]
as desired. 
\end{proof}

Propositions \ref{P:poly-to-gen} and \ref{P:base-case-cumulants} together show that the conclusion \eqref{E:smooth-cumulants} of Theorem \ref{T:smooth-cumulants} holds at layer $1$. In conjunction with Proposition \ref{P:poly-to-gen}, the following result establishes that if the conclusion \eqref{E:smooth-cumulants} of Theorem \ref{T:smooth-cumulants} holds at some layer $\ell\geq 1$ then it also holds at layer $\ell+1$. This will complete the proof by inductive of Theorem \ref{T:smooth-cumulants}.

\begin{proposition}[Inductive Step: Reducing polynomial cumulants in layer $\ell+1$ to smooth cumulants in layer $\ell$]\label{P:inductive-cumulants}
Fix $\ell \geq 1$. \\

\noindent \underline{Case 1}: Suppose that $\sigma$ is smooth. Assume that for any collection 
\[
F'=\lr{f_i':\R^{\abs{\mA}\times N(n_0,r)}\gives \R,\, i=1,\ldots,m}
\] 
of smooth and polynomially bounded functions and any $g_j$ as in the statement of Theorem \ref{T:smooth-cumulants} the conclusion \eqref{E:smooth-cumulants} of Theorem \ref{T:smooth-cumulants} holds at layer $\ell$:
\[
\sup_{n\geq 1}\abs{n^{k-1}\kappa\lr{g_1\lr{\amO_{F'}^{(\ell)}}, \ldots, g_k\lr{\amO_{F'}^{(\ell)}}}}<\infty.
\]
Then, if $p_1,\ldots, p_k$ are any polynomials in $m$ variables, and $F=\lr{f_i,\, i=1,\ldots, m}$ is an arbitrary collection of smooth and polynomially bounded functions $f_i:\R^{\abs{\mA}\times N(n_0,r)}\gives \R$, then 
\[
\sup_{n\geq 1}\abs{n^{k-1}\kappa\lr{p_1\lr{\amO_{F}^{(\ell+1)}}, \ldots, p_k\lr{\amO_{F}^{(\ell+1)}}}}<\infty.
\]\\

\noindent \underline{Case 2}: Suppose $\sigma$ is not smooth but satisfies Assumption \ref{A:sigma-prop} and that $(D_{\alpha}^{J}z_{i;\alpha}^{(\ell)},\, \alpha\in \mA,\, \abs{J}\leq r)$ is non-degenerate in the infinite width in the sense of \eqref{E:pd-cov}. Assume that for any collection 
\[
F'=\lr{f_i':\R^{\abs{\mA}\times N(n_0,r)}\gives \R,\, i=1,\ldots,m}
\] 
of measurable and polynomially bounded functions and any $g_j$ as in the statement of Theorem \ref{T:smooth-cumulants} the conclusion \eqref{E:smooth-cumulants} of Theorem \ref{T:smooth-cumulants} holds at layer $\ell$:
\[
\sup_{n\geq 1}\abs{n^{k-1}\kappa\lr{g_1\lr{\amO_{F'}^{(\ell)}}, \ldots, g_k\lr{\amO_{F'}^{(\ell)}}}}<\infty.
\]
Then, if $p_1,\ldots, p_k$ are any polynomials in $m$ variables, and $F=\lr{f_i,\, i=1,\ldots, m}$ is an arbitrary collection of measurable and polynomially bounded functions $f_i:\R^{\abs{\mA}\times N(n_0,d)}\gives \R$, then 
\[
\sup_{n\geq 1}\abs{n^{k-1}\kappa\lr{p_1\lr{\amO_{F}^{(\ell+1)}}, \ldots, p_k\lr{\amO_{F}^{(\ell+1)}}}}<\infty.
\]
\end{proposition}
\begin{proof}
 The proof of Proposition \ref{P:inductive-cumulants} is similar but somewhat more involved than that of Proposition \ref{P:base-case-cumulants}. Moreover, the two cases are proved in essentially the same way, except that we will employ the different cases in Lemma \ref{L:smooth-gauss}. We give the details in the case when $\sigma$ is smooth and indicate where the proof is modified slightly to handle the non-smooth case.

To start, as in the proof of Proposition \ref{P:base-case-cumulants}, note that since cumulants are multi-linear (see \eqref{E:linear-cumulants}), it is enough to assume that $p_j$ are monomials. Thus, borrowing the notation from the proof of Proposition \ref{P:base-case-cumulants} (see starting \eqref{E:gi-mono-def}), we find
\begin{align*}
    \kappa\lr{p_1\lr{\amO_{F}^{(\ell+1)}},\ldots, p_k\lr{\amO_{F}^{(\ell+1)}}} = n_{\ell+1}^{-(q^{(1)}+\cdots+q^{(a)})} \sum_{\substack{J^{(1)},\ldots, J^{(k)}}}\kappa\lr{f_{J^{(1)}}^{(\ell+1)},\ldots, f_{J^{(k)}}^{(\ell+1)}},
\end{align*}
where
\[
f_{J^{(a)}}^{(\ell+1)}:=\prod_{i=1}^m\prod_{q=1}^{q_i^{(a)}} f_{j_\alpha^{(a)}}^{(\ell+1)},\qquad f_{j}^{(\ell+1)}:=f\lr{z_{j;\mA}^{(\ell+1)}}.
\]
Note that, as in Proposition \ref{P:iwl}, the polynomially bounded assumption on $f_j$ and the non-linearity $\sigma$ together with the Gaussianity of weights and biases show that
\begin{equation}\label{E:fin-cumulants}
\sup_{n\geq 1}\abs{\kappa\lr{f_{J^{(1)}}^{(\ell+1)},\ldots, f_{J^{(k)}}^{(\ell+1)}}}<\infty.
\end{equation}
Using the law of total cumulance \eqref{E:conditional-cumulants}, we find that $\kappa\lr{p_1(\amO_{F}^{(\ell+1)}),\ldots, p_k(\amO_{F}^{(\ell+1)})}$ equals
\begin{align*}
    \sum_{\substack{\pi=\lr{\pi_1,\ldots, \pi_B}}}n_{\ell+1}^{-(q^{(1)}+\cdots+q^{(a)})} \sum_{\substack{J^{(1)},\ldots, J^{(k)}}}\kappa\lr{\kappa\lr{f_{J^{(\pi_1)}}^{(\ell+1)}~|~\mF^{(\ell)}},\ldots,\kappa\lr{ f_{J^{(\pi_B)}}^{(\ell+1)}~|~\mF^{(\ell)}}},
\end{align*}
where $\pi$ is any partition of $[k]$ and 
\[
f_{J^{(\pi_b)}} := \lr{f_{J^{(a)}},\, a \in \pi_b}.
\]
Just as in the proof of Proposition \ref{P:base-case-cumulants}, we may interpret the sum over $J^{(1)},\ldots, J^{(k)}$ as an average over the distribution in which $j_{q;i}^{(a)}$ are drawn iid uniformly on $[n_{\ell+1}]$. Writing $\mathcal E$ for averages with respect to this distribution yields 
\[
\kappa\lr{p_1(\amO_{F}^{(\ell+1)}),\ldots, p_k(\amO_{F}^{(\ell+1)})} =  \sum_{\substack{\pi=\lr{\pi_1,\ldots, \pi_b}}}\mathcal E\left[\kappa\lr{\kappa\lr{f_{J^{(\pi_1)}}^{(\ell+1)}~|~\mF^{(\ell)}},\ldots,\kappa\lr{ f_{J^{(\pi_b)}}^{(\ell+1)}~|~\mF^{(\ell)}}}\right]
\]
As in \eqref{E:graph-def}, we may associate to each collection $J^{(\pi_t)}$ the graph $\mathcal G\lr{J^{(\pi_t)}}$. Recall that by Lemma \ref{L:cond-indep}, the neurons pre-activations $D_\alpha^J z_{i;\alpha}^{(\ell+1)}$ in layer $\ell+1$ are independent for different $i$ conditional on $\mF^{(\ell)}$. Hence, in view of the vanishing property \eqref{E:indep-cumulants} of cumulants, we obtain that $\kappa\lr{p_1(\amO_{F}^{(\ell+1)}),\ldots, p_k(\amO_{F}^{(\ell+1)})}$ equals 
\begin{align*}
   \sum_{\substack{\pi=\lr{\pi_1,\ldots, \pi_B}}}\mathcal E\left[ {\bf 1}_{\set{\mG\lr{J^{(\pi_b)}}\text{ connected }\forall b\in [B]}}\kappa\lr{\kappa\lr{f_{J^{(\pi_1)}}^{(\ell+1)}~|~\mF^{(\ell)}},\ldots,\kappa\lr{ f_{J^{(\pi_B)}}^{(\ell+1)}~|~\mF^{(\ell)}}}\right]
\end{align*}
Since we've assumed that $f_i$ are smooth and polynomially bounded, Lemma \ref{L:smooth-gauss} shows that for each $b\in [B]=\set{1,\ldots, B}$ the conditional cumulant $\kappa\lr{ f_{J^{(\pi_b)}}^{(\ell+1)}~|~z^{(\ell)}}$ is a smooth function of the centered entries $D_{\alpha_1}^{J_1}D_{\alpha_2}^{J_2}\Delta_{\alpha_1\alpha_2}^{(\ell)}$ of the conditional covariance of $\lr{D_\alpha^J z_{i;\mA}^{(\ell+1)},\,\alpha\in \mA, \abs{J}\leq r}$ given $\mF^{(\ell)}$. Thus, since these entries are collective observables at layer $\ell$ we may apply the inductive hypothesis of Case 1 to find that 
\begin{align*}
   \kappa\lr{p_1(\amO_{F}^{(\ell+1)}),\ldots, p_k(\amO_{F}^{(\ell+1)})}=\sum_{\substack{\pi=\lr{\pi_1,\ldots, \pi_B}}}\mathcal P\left[ \mG\lr{J^{(\pi_b)}}\text{ connected }\forall b\in [B]\right] O(n^{-B+1}).
\end{align*}
Combining this with the estimate \eqref{E:con-prob} shows
\begin{align*}
&\kappa\lr{p_1\lr{\amO_{F}^{(\ell+1)}},\ldots, p_k\lr{\amO_{F}^{(\ell+1)}}}&=\sum_{\substack{\pi=\lr{\pi_1,\ldots, \pi_B}}} O(n^{-B+1})\prod_{b=1}^B O(n^{-\abs{\pi_b}+1}) = O(n^{-k+1}),
\end{align*}
as desired. The proof in Case 2 is almost identical. The only difference is that, we must introduce the event
\begin{equation}\label{E:Sn-def}
S_n = \set{\abs{\Delta_{\alpha_1\alpha_2}^{J_1J_2,(\ell)}}<n^{-1/4}}.     
\end{equation}
Precisely as in the proof of Lemma \ref{L:moment-bounds} we find that
\[
\mathbb P(S_n^c)=O(n^{-\infty}).
\]
Hence, 
\[
\kappa\lr{p_1\lr{\amO_{F}^{(\ell+1)}},\ldots, p_k\lr{\amO_{F}^{(\ell+1)}}} = \kappa\lr{p_1\lr{\amO_{F}^{(\ell+1)}},\ldots, p_k\lr{\amO_{F}^{(\ell+1)}}~|~S_n}+O(n^{-\infty}),
\]
where we've implicitly used \eqref{E:fin-cumulants}. Moreover, since in Case 2 we assume that the vector $\lr{D_\alpha^Jz_{i;\alpha}^{_{(\ell+1)}},\, \abs{J}\leq r,\, \alpha\in \mA}$ is non-degenerate in the infinite width limit in the sense of \eqref{E:pd-cov}, we see that for $n$ sufficiently large the covariance of  $\lr{D_\alpha^Jz_{i;\alpha}^{_{(\ell+1)}},\, \abs{J}\leq r,\, \alpha\in \mA}$ given $\mF^{(\ell)},$ which is the matrix with entries 
\[
\E{D_{\alpha_1}^{J_1}D_{\alpha_2}^{J_2}\Sigma_{\alpha_1\alpha_2}^{(\ell)}},\qquad \alpha_1,\alpha_2\in \mA,\, \abs{J_1},\abs{J_2}\leq r
\]
is also non-degenerate. On the event $S_n$, the conditional covariance of  $\lr{D_\alpha^Jz_{i;\alpha}^{_{(\ell+1)}},\, \abs{J}\leq r,\, \alpha\in \mA}$ given $\mF^{(\ell)},$ which is a matrix with entries $D_{\alpha_1}^{J_1}D_{\alpha_2}^{J_2}\Sigma_{\alpha_1\alpha_2}^{(\ell)}$, is also non-degenerate for all $n$ sufficiently large. Hence, we again conclude by Lemma \ref{L:smooth-gauss} that conditional on $S_n$ (which is measurable with respect to $\mF^{(\ell)}$) for each $b\in [B]$ the conditional cumulant $\kappa\lr{ f_{J^{(\pi_b)}}^{(\ell+1)}~|~\mF^{(\ell)}}$ is a smooth function of  $D_{\alpha_1}^{J_1}D_{\alpha_2}^{J_2}\Sigma_{\alpha_1\alpha_2}^{(\ell)}$, which are collective observables at layer $\ell$. The remainder of the proof now proceeds in the same way as for Case 1. 
\end{proof}
\end{proof}

\section{Proof of Theorem \ref{T:pert-exp}}
Let us recall the notation. We consider a random depth $L$ neural network with layer widths $n_0,\ldots, n_{L+1}$ with
\[
\exists c,C>0\text{  s.t.  }\qquad cn\leq n_1,\ldots, n_L\leq Cn,
\]
and a non-linearity $\sigma$ that satisfies \eqref{A:sigma-prop} for some $r\geq 1.$ We also fix $p\geq 1$ directional derivatives $d_1,\ldots, d_p$ as in \eqref{E:der-deriv-def} and the corresponding vectors of iterated directional derivatives
\[
D^{\leq r}z_{i,\mA}^{(\ell+1)}:=\lr{D_\alpha^{J}z_{i;\alpha}^{(\ell+1)},\, \alpha \in \mA,\, J=(j_1,\ldots, j_p)\in \N^{p},\, \abs{J}\leq r}. 
\]
Theorem \ref{T:pert-exp} concerns, for each fixed $m,\ell\geq 1$, the expectation of a function $f$ of of the form
\[
f\lr{D^{\leq r}z_{1,\mA}^{(\ell+1)},\ldots, D^{\leq r}z_{m,\mA}^{(\ell+1)}},
\]
which depends on all directional derivatives in $d_i$ of order at most $r$ in any $m$ neuron pre-activations at layer $\ell+1.$ We seek to show that if $f$ is both continuous and a tempered distribution, then for all $q_*\geq 1$ we have
\begin{align}
\label{E:pert-exp-pf}  &  \E{f\lr{D^{\leq r}z_{1,\mA}^{(\ell+1)},\ldots, D^{\leq r}z_{m,\mA}^{(\ell+1)}}}= O(n^{-q_*-1})+\\ 
\notag  &+ \sum_{q=0}^{2q_*}\frac{(-1)^q}{2^q q!} \mathbb E \bigg[ \bigg\langle\bigg(\sum_{\substack{\abs{J},\abs{J'}\leq r\\ \alpha,\alpha'\in \mA}}\Delta_{\alpha\alpha'}^{JJ',(\ell)} \sum_{j=1}^m \partial_{D_{\alpha}^{J}z_{j;\alpha}}\partial_{D_{\alpha'}^{J'}z_{j;\alpha'}}\bigg)^qf\lr{D_{\mA}^{\leq r}z_{1},\ldots, D_{\mA}^{\leq r}z_{m}}\bigg\rangle_{\kappa^{(\ell)}} \bigg].
\end{align}
We remind the reader the notation in this formula. First, we continue to denoted by $\bk{\cdot}_{\kappa^{(\ell)}}$ the expectation with respect to a collection of centered jointly Gaussian random vectors 
\[
D_{\mA}^{\leq r} z_{i} = \lr{D_{\alpha}^Jz_{i;\alpha},\,\alpha\in \mA,\, \abs{J}\leq r}
\]
with the same covariance
\[
\Cov\lr{D_{\alpha_1}^{J_1} z_{i_1;\alpha_1},\, D_{\alpha_2}^{J_2} z_{i_2;\alpha_2}}= \Cov\lr{D_{\alpha_1}^{J_1} z_{i_1;\alpha_1}^{(\ell)},\, D_{\alpha_2}^{J_2} z_{i_2;\alpha_2}^{(\ell)}}=\delta_{i_1i_2} \kappa_{\alpha_1\alpha_2}^{J_1J_2,(\ell)}
\]
as the true vectors of derivatives $D_{\mA}^{\leq r} z_{i;\mA}^{_{(\ell)}}$ in each component separately but zero covariance for different $i$. Second,
\begin{equation}\label{E:G-def}
\kappa^{(\ell)} = \E{ \Sigma^{\leq r, (\ell)}},\qquad  \Sigma^{\leq r, (\ell)}= \lr{D_{\alpha}^{J}D_{\alpha'}^{J'}\Sigma_{\alpha\alpha'}^{(\ell)}}_{\substack{\abs{J},\abs{J'}\leq r\\ \alpha,\alpha'\in \mA}},
\end{equation}
is an average of the conditional covariances
\[
D_{\alpha}^{J}D_{\alpha'}^{J'}\Sigma_{\alpha\alpha'}^{(\ell)} := \Cov\lr{D_{\alpha}^{J}z_{1;\alpha}^{(\ell+1)},\,D_{\alpha'}^{J'} z_{1;\alpha'}^{(\ell+1)}~|~\mF^{(\ell)}}=D_{\alpha}^{J}D_{\alpha'}^{J'}\left\{ C_b+\frac{C_W}{n_\ell}\sum_{j=1}^{n_\ell} \sigma\lr{z_{j;\alpha}^{(\ell)}}\sigma\lr{z_{j;\alpha'}^{(\ell)}} \right\}.
\]
Finally, $\Delta_{\alpha\alpha'}^{JJ',(\ell)}$ measures the corresponding fluctuations:
\[
\Delta_{\alpha\alpha'}^{JJ',(\ell)}:= D_{\alpha}^{J}D_{\alpha'}^{J'}\Sigma_{\alpha\alpha'}^{(\ell)} - \E{D_{\alpha}^{J}D_{\alpha'}^{J'}\Sigma_{\alpha\alpha'}^{(\ell)}},
\]
and we collect $\Delta_{\alpha\alpha'}^{JJ',(\ell)}$ into a matrix as follows:
\[
\Delta^{\leq r,(\ell)}:=\lr{\Delta_{\alpha\alpha'}^{JJ',(\ell)}}_{\substack{\abs{J},\abs{J'}\leq r\\ \alpha,\alpha'\in \mA}}.
\]
Our first step is to note that since the weights and biases in layer $\ell+1$ are Gaussian, independent of one another, and independent of the sigma algebra $\mF^{(\ell)}$ generated by all prior weights and biases, we may write
\begin{align*}
    \E{f\lr{D^{\leq r}z_{1;\mA}^{(\ell+1)},\ldots, D^{\leq r}z_{m;\mA}^{(\ell+1)}}}=     \E{f\lr{\lr{\Sigma^{\leq r,(\ell)}}^{1/2}Z_1,\ldots, \lr{\Sigma^{\leq r,(\ell)}}^{1/2}Z_m}},  
\end{align*}
where $Z_1,\ldots, Z_m$ are standard Gaussians which are independent of one another and of $\Sigma^{\leq r,(\ell)}$. Moreover, because $\Sigma^{\leq r,(\ell)}$ is PSD the relation \eqref{E:G-def} ensures 
\[
\ker(\kappa^{(\ell)})\subseteq\ker( \Sigma^{\leq r,(\ell)})  \text{ a.s.} 
\]
By decomposing
\[
Z_i  = Z_{i;||} + Z_{i;\perp},\qquad  Z_{i;||}\in \ker(\kappa^{(\ell)}), \,  Z_{i;\perp}\in \ker(\kappa^{(\ell)})^\perp
\]
and writing $\Sigma_{\perp}^{\leq r,(\ell)}$ for the compression of $\Sigma^{\leq r,(\ell)}$ onto $\ker(\kappa^{(\ell)})^\perp$ we obtain by a slight abuse of notation that 
\begin{align*}
    \E{f\lr{D^{\leq r}z_{1;\mA}^{(\ell+1)},\ldots, D^{\leq r}z_{m;\mA}^{(\ell+1)}}} =     \E{f\lr{\lr{\Sigma_{\perp}^{\leq r,(\ell)}}^{1/2}Z_{1,\perp},\ldots, \lr{\Sigma_{\perp}^{\leq r,(\ell)}}^{1/2}Z_{m,\perp}}}.
\end{align*}
The key point is now that $Z_{i;\perp}$ are standard Gaussian vectors supported on a subspace on which $\kappa^{(\ell)}$ is strictly positive definite and that $\Sigma_{\perp}^{\leq r,(\ell)}$ maps this subspace into itself. Consider the event 
\[
S_n=\set{\abs{\Delta_{\alpha\alpha'}^{JJ',(\ell)}} < n^{-1/4},\, \alpha,\alpha'\in \mA,\, \abs{J},\abs{J'}\leq r} = \set{\norm{\kappa^{(\ell)}-\Sigma^{\leq r,(\ell)}}_\infty < n^{-1/4}}.
\]
Note that, by applying Theorem \ref{T:smooth-cumulants} and arguing exactly as in Lemma \ref{L:moment-bounds}, we find that since $\Delta_{\alpha\alpha'}^{JJ',(\ell)}$ are centered collective observables,
\[
\mathbb P(S_n^c)=O(n^{-\infty}).
\]
Since $f$ is a tempered distribution and a continuous function, its expectation against any Gaussian is finite and we therefore have 
\begin{align*}
    &\E{f\lr{D^{\leq r}z_{1;\mA}^{(\ell+1)},\ldots, D^{\leq r}z_{m;\mA}^{(\ell+1)}}}=\E{{\bf 1}_{S_n} f\lr{\lr{\Sigma_{\perp}^{\leq r,(\ell)}}^{1/2}Z_{1,\perp},\ldots, \lr{\Sigma_{\perp}^{\leq r,(\ell)}}^{1/2}Z_{m,\perp}}},
\end{align*}
plus an error of size $O(n^{-\infty})$. Let us denote by $\widehat{f}(\xi_1,\ldots, \xi_m)$ the Fourier transform of $f$ and abbreviate
\[
\xi=\lr{\xi_1,\ldots, \xi_m},\qquad  \norm{\xi}^2:=\sum_{i=1}^m\norm{\xi_i}^2,\qquad d\xi := d\xi_1\cdots d\xi_m
\]
For a $C>0$ that we will choose later let us write
\begin{align*}
 &\E{{\bf 1}_{S_n} f\lr{\lr{\Sigma_{\perp}^{\leq r,(\ell)}}^{1/2}Z_{1,\perp},\ldots, \lr{\Sigma_{\perp}^{\leq r,(\ell)}}^{1/2}Z_{m,\perp}}}\\ 
     &\quad =\int \widehat{f}(\xi) \E{{\bf 1}_{S_n} \exp\left[-\frac{1}{2}\sum_{i=1}^m \xi_i^T \Sigma_{\perp}^{\leq r, (\ell)}\xi_i\right]} d\xi\\
     &\quad=\int_{\substack{\norm{\xi}^2> C\log(n)}}\widehat{f}(\xi) \E{{\bf 1}_{S_n} \exp\left[-\frac{1}{2}\sum_{i=1}^m \xi_i^T \Sigma_{\perp}^{\leq r, (\ell)}\xi_i\right]} d\xi\\
     &\quad+\int_{\substack{\norm{\xi}^2\leq C\log(n)}}\widehat{f}(\xi) \E{{\bf 1}_{S_n} \exp\left[-\frac{1}{2}\sum_{i=1}^m \xi_i^T \Sigma_{\perp}^{\leq r, (\ell)}\xi_i\right]} d\xi\\
     &\quad =: I_C +II_C.
 \end{align*}
Let us now check that 
\begin{equation}\label{E:high-freq}
    \forall q\geq 1~\exists C=C(q)\text{ s.t. }I_C= O(n^{-q}).
\end{equation}
By the fundamental structure theorem of tempered distributions (see e.g. \cite{friedlander1998introduction}), there exist bounded continuous function $u_{I,J}$ and an integer $o(f)$, called the order of $f$, such that \begin{equation}\label{E:fhat-form}
\widehat{f}(\xi) = \sum_{\substack{I,J\\\abs{I,J}\leq o(f)}} \xi^I D^J u_{I,J}(\xi),    
\end{equation}
where where the derivatives $D^J$ with respect to $\xi_1,\ldots, \xi_m$ are defined in the weak sense and $\xi$ raised to a multi-index $I$ denotes the corresponding monomial. Thus, we may use \eqref{E:fhat-form} to write 
\begin{align*}
    \abs{I_C}&=\abs{ \sum_{\substack{I,J\\\abs{I},\abs{J}\leq o(f)}}\int_{\substack{\norm{\xi}^2> C\log(n)}}    u_{I,J}(\xi)D^J \lr{\xi^I\E{{\bf 1}_{S_n}  \exp\left[-\frac{1}{2}\sum_{i=1}^m \xi_i^T \Sigma_{\perp}^{\leq r, (\ell)}\xi_i\right]}} d\xi}
    \\
    &\leq  \sum_{\substack{I,J\\\abs{I},\abs{J}\leq o(f)}}\norm{u_{I,J}}_\infty \int_{\substack{ \norm{\xi}^2> C\log(n)}}  \E{{\bf 1}_{S_n} \abs{p_{o(f)}(\xi)} \exp\left[-\frac{1}{2}\sum_{i=1}^m \xi_i^T \Sigma_{\perp}^{\leq r, (\ell)}\xi_i\right]}d\xi,
\end{align*}
where $p_{o(f)}$ is some polynomial of degree at most $2o(f)$ in the variables $\xi_1,\ldots, \xi_m$ in which the coefficients are themselves polynomials the entries of $\Sigma_{\perp}^{\leq r, (\ell)}$. On the event $S_n$, entries of $\Sigma_{\perp}^{\leq r, (\ell)}$ are uniformly bounded in $n$ since by Theorem \ref{T:smooth-cumulants} the entries of $\kappa^{(\ell)}$ are uniformly bounded in $n$ and the event $S_n$ guarantees that the difference $\kappa^{(\ell)}-\Sigma^{\leq r, (\ell)}$ is small for all large $n$. In particular, for some $T>0$ we may write
\[
{\bf 1}_{S_n}\abs{p_{o(f)}\lr{\xi_1,\ldots, \xi_m}}\leq {\bf 1}_{S_n} T\lr{1+\norm{\xi}^2}^{o(f)}.
\]
Note moreover that for all $n$ sufficiently large, on the event $S_n$, we have that for some $\lambda_0>0$ and any $\xi\in \ker(\kappa^{(\ell)})^\perp$ that
\[
\frac{1}{2}\xi^T \Sigma_{\perp}^{\leq r, (\ell)}\xi \geq \lambda_0 \norm{\xi}^2.
\] 
Hence, passing to polar coordinates, we find that 
\begin{align*}
    I_C&\leq  T\sum_{\substack{I,J\\\abs{I},\abs{J}\leq o(f)}}\norm{u_{I,J}}_\infty \int_{\substack{r^2> C\log(n)}}  (1+r^2)^{o(f)+mN(r,p)\abs{\mA}-1} e^{-\lambda_0r^2} dr,
\end{align*}
where we recall that $N(r,p)$ is the number of derivatives of order at most $r$ in the $p$ vector fields $d_1,\ldots, d_p$. Thus, we conclude that that for any $q\geq 1$ there indeed exists $C=C(q),C'=C'(q)$ such that
\[
I_{C}\leq C'n^{-q},
\]
confirming \eqref{E:high-freq}. We therefore define $C:=C(q_*+1)$ and rewrite $II_C$ as follows:
 \begin{align*}
 II_C&=\int_{\substack{\norm{\xi}^2\leq C\log(n)}}\widehat{f}(\xi) \E{{\bf 1}_{S_n} \exp\left[-\frac{1}{2}\sum_{i=1}^m \xi_i^T\Sigma_{\perp}^{\leq r, (\ell)}\xi_i\right]} d\xi\\  &=\int_{\substack{\norm{\xi}^2\leq C\log(n)}}\widehat{f}(\xi) \exp\left[-\frac{1}{2}\sum_{i=1}^m \xi_i^T \kappa^{(\ell)}\xi_i\right]\E{{\bf 1}_{S_n} \exp\left[-\frac{1}{2}\sum_{i=1}^m \xi_i^T \Delta^{\leq r, (\ell)}\xi_i\right]} d\xi.
 \end{align*}
 Note that on the event $S_n$ there exists $T>0$ so that 
 \[
 \sup_{\norm{\xi}^2\leq C\log(n)} \sum_{i=1}^{m}\xi_i^T\Delta_{\mA}^{\leq r, (\ell)}\xi_i\leq CTm\abs{\mA}^2\frac{\log(n)}{n^{1/4}}.
 \]
Hence, we may choose $Q^*=Q^*(q^*, C, \abs{\mA})\geq 1$ so that 
 \begin{align}
\label{E:exp-expand}\E{{\bf 1}_{S_n} \exp\left[-\frac{1}{2}\sum_{i=1}^m \xi_i^T \Delta_{\mA}^{\leq r, (\ell)}\xi_i\right]} &= \E{{\bf 1}_{S_n} \sum_{q=0}^{Q_*} \frac{(-1)^q}{2^qq!}\lr{\sum_{i=1}^m \xi_i^T \Delta_{\mA}^{\leq r, (\ell)}\xi_i}^q}+O(n^{-q_*-1}).
 \end{align}
We thus conclude that $II_C$ equals
 \begin{align*}
      &\sum_{q=0}^{Q_*}\frac{(-1)^q}{2^qq!}\int_{\substack{ \norm{\xi}^2\leq C\log(n)}}\E{\lr{\sum_{i=1}^m \xi_i^T \Delta_{\mA}^{\leq r,(\ell)}\xi_i}^q}\widehat{f}(\xi)\exp\left[-\frac{1}{2}\sum_{i=1}^m \xi_i^T \kappa^{(\ell)}\xi_i\right] d\xi
 \end{align*}
plus an error of size $O(n^{-q_*-1})$. Note also that by applying Lemma \ref{L:moment-bounds} we have
 \[
\E{{\bf 1}_{S_n} \lr{\sum_{i=1}^m \xi_{i}^T \Delta_{\mA}^{(\ell)}\xi_{i}}^q} = O\lr{\norm{\xi}^{2q} n^{-\lceil \frac{q}{2}\rceil}}.
 \]
Hence, since $\widehat{f}$ is a tempered distribution and $\kappa^{(\ell)}$ is strictly positive definite on $\ker(\kappa^{(\ell)})^\perp$, the terms corresponding to $2q^*+1\leq q\leq Q^*$ in \eqref{E:exp-expand} are of size $O(n^{-q_*-1})$. Moreover, applying the same reasoning as we used to bound $I_C$, by incurring another error of order $O(n^{-q_*-1})$ we may drop the restriction in $II_C$ that $\norm{\xi}^2\leq C\sqrt{\log(n)}$. All together, $II_C$ therefore equals
 \begin{align*}
      &\sum_{q=0}^{2q_*}\frac{(-1)^q}{2^qq!}\int\E{\lr{\sum_{i=1}^m \xi_i^T \Delta_{\mA}^{\leq r,(\ell)}\xi_i}^q}\widehat{f}(\xi)\exp\left[-\frac{1}{2}\sum_{i=1}^m \xi_i^T \kappa^{(\ell)}\xi_i\right] d\xi.
 \end{align*}
plus an error of size $O(n^{-q_*-1})$. Using that multiplication by components of $\xi_i$ acting on the Fourier transform corresponds to differentiation of with respect to the variables $\{D_\alpha^J z_{i;\alpha},\, \alpha\in \mA,\, \abs{J}\leq r\}$, yields the desired expression \eqref{E:pert-exp-pf} and completes the proof of Theorem \ref{T:pert-exp}. \hfill $\square$


\section{Proof of Corollary \ref{C:cumulants-vals}}\label{S:cumulants-recs-pf}
The goal of this section is to derive recursions for 
\[
\kappa_{2k;\alpha}^{(\ell+1)} = \kappa_{k}\big(\underbrace{\Delta_{\alpha\alpha}^{(\ell)},\ldots, \Delta_{\alpha\alpha}^{(\ell)}}_{k\text{ times}}\big),
\]
where we defined $\Delta_{\alpha\alpha}^{_{(\ell)}}$ in \eqref{E:Delta-def}. Let us write
\[
X_j:=\sigma\lr{z_{j;\alpha}^{(\ell)}}^2 - \E{\sigma\lr{z_{j;\alpha}^{(\ell)}}^2}
\]
so that
\[
\Delta_{\alpha\alpha}^{(\ell)} = \frac{C_W}{n_{\ell}}\sum_{j=1}^{n_\ell} X_j.
\]
By symmetry, we then have
\begin{align}
\label{E:k4-form}\kappa_{4;\alpha}^{(\ell+1)} &= \E{\lr{\Delta_{\alpha\alpha}^{(\ell)}}^2}= \frac{C_W^2}{n_{\ell}}\E{X_1^2} + C_W^2\lr{1-n_\ell^{-1}}\E{X_1X_2} \\
\notag\kappa_{6;\alpha}^{(\ell+1)} &= \E{\lr{\Delta_{\alpha\alpha}^{(\ell)}}^3}\\  
\label{E:k6-form}&= \frac{C_W^3}{n_{\ell}^2}\E{X_1^3} +\frac{3C_W^3}{n_{\ell}}\lr{1-\frac{1}{n_\ell}}\E{X_1^2X_2}+ C_W^3\lr{1-\frac{1}{n_{\ell}}}\lr{1-\frac{2}{n_{\ell}}}\E{X_1X_2X_3}\\
\notag \kappa_{8;\alpha}^{(\ell+1)} &= \E{\lr{\Delta_{\alpha\alpha}^{(\ell)}}^4}-3\E{\lr{\Delta_{\alpha\alpha}^{(\ell)}}^2}^2  = \frac{C_W^4}{n_{\ell}^3}\lr{\E{X_1^4}-3\E{X_1^2}^2}\\
\notag &\qquad+ \frac{C_W^4}{n_\ell^2}\lr{1-\frac{1}{n_\ell}} \left[ \binom{4}{2}\left\{\E{X_1^2X_2}^2 - \E{X_1^2}\E{X_2^2}-2\E{X_1X_2}^2\right\}\right.\\
\notag &\qquad \qquad \qquad \qquad\qquad\left.+ \binom{4}{1}\bigg\{ \E{X_1^3X_2}-\E{X_1^3}\E{X_2} - 2\E{X_1^2}\E{X_1X_2}\bigg\}\right]\\
\notag &\qquad + \frac{C_W^4}{n_\ell}\lr{1-\frac{1}{n_{\ell}}}\lr{1-\frac{2}{n_\ell}}\binom{4}{2}\left\{\E{X_1^2X_2X_3}-\E{X_1^2}\E{X_1X_2}-2\E{X_1X_2}^2\right\}\\
\label{E:k8-form}&\qquad +C_W^4\lr{1-\frac{1}{n_\ell}} \lr{1-\frac{2}{n_\ell}} \lr{1-\frac{3}{n_\ell}}\E{X_1X_2X_3X_4}.
\end{align}
To evaluate the mixed moments of $X_{i}$ that appear in \eqref{E:k4-form}-\eqref{E:k8-form}, we use Theorem \ref{T:pert-exp} in the case $g\equiv 1, r=0,q_*=1$. In this setting, if $f$ is a continuous function and a tempered distribution, we find
\begin{align}
\label{E:f-expand-order-4}\E{f\lr{z_{1;\alpha}^{(\ell)},\ldots, z_{m;\alpha}^{(\ell)}}} &= \bk{f\lr{z_{1;\alpha},\ldots, z_{m;\alpha}}}_{\kappa^{(\ell)}}\\
\notag &+ \frac{\kappa_{4;\alpha}^{(\ell)}}{2^2\cdot 2!}\bk{\lr{\sum_{j=1}^m \partial_{z_{j;\alpha}}^2}^2 f \lr{z_{1;\alpha},\ldots, z_{m;\alpha}}}_{\kappa^{(\ell)}}\\
\notag &+\frac{\kappa_{6;\alpha}^{(\ell)}}{2^3\cdot 3!} \bk{\lr{\sum_{j=1}^m \partial_{z_{j;\alpha}}^2}^3 f \lr{z_{1;\alpha},\ldots, z_{m;\alpha}}}_{\kappa^{(\ell)}}\\
\notag &+\frac{\kappa_{8;\alpha}^{(\ell)}+3\lr{\kappa_{4;\alpha}^{(\ell)}}^2}{2^4\cdot 4!} \bk{\lr{\sum_{j=1}^m \partial_{z_{j;\alpha}}^2}^4 f \lr{z_{1;\alpha},\ldots, z_{m;\alpha}}}_{\kappa^{(\ell)}} + O(n^{-4}).
\end{align}
We remind the reader that, by definition, $z_{1;\alpha},\ldots, z_{m;\alpha}$ are iid centered Gaussians with variance $\kappa_{\alpha\alpha}^{_{(\ell)}}$. Since the derivations of \eqref{E:k4-rec}-\eqref{E:k8-rec} are very similar, let us give the details for only cases of $\kappa_{4;\alpha}^{(\ell)}$ and $\kappa_{6;\alpha}^{(\ell)}$. We have, using \eqref{E:f-expand-order-4}, that
\begin{align}
\notag \kappa_{\alpha\alpha}^{(\ell+1)} = \E{\lr{\Delta_{\alpha\alpha}^{(\ell)}}^2}&=\frac{C_W^2}{n_\ell}\lr{\bk{\sigma^4}_{\kappa_{\alpha\alpha}^{(\ell)}}-\bk{\sigma^2}_{\kappa_{\alpha\alpha}^{(\ell)}}^2}\\
\label{E:kappa4-formula}&+C_W^2\lr{1-n_\ell^{-1}} \lr{\bk{X_1}_{\kappa_{\alpha\alpha}^{(\ell)}}^2 + \frac{1}{4}\bk{\partial^2 \sigma^2}_{\kappa_{\alpha\alpha}^{(\ell)}}^2\kappa_{4;\alpha}^{(\ell)}}+O(n^{-2}).    
\end{align}
Next, we will show in \S \ref{S:S2-recs} below that
\[
\kappa_{\alpha\alpha}^{(\ell)} = K_{\alpha\alpha}^{(\ell)}+O(n^{-1}). 
\]
Moreover, note that since $x_\alpha\neq 0$, we have $K_{\alpha\alpha}^{(\ell)}$ is non-zero. Hence, Gaussian integration by parts yields that for any measureable polynomially bounded $f$ we have 
\begin{equation}\label{E:replace}
\bk{f}_{\kappa_{\alpha\alpha}^{(\ell)}} = \bk{f}_{K_{\alpha\alpha}^{(\ell)}} + O(n^{-1}).    
\end{equation}
Note also that for all $i$ by applying \eqref{E:f-expand-order-4} we have
\begin{equation}\label{E:X-mean}
\bkal{X_i} = - \frac{1}{8}\kappa_{4;\alpha}^{(\ell)}\bkkl{\partial^4X_1}+O(n^{-2}).    
\end{equation}
Thus, recalling the definition \eqref{E:chi-para-def} of $\chi_{||;\alpha}^{(\ell)}$ the estimate \eqref{E:kappa4-formula} immediately yields \eqref{E:k4-rec}. Next, using \eqref{E:f-expand-order-4} as well as \eqref{E:replace} and \eqref{E:X-mean} that
\begin{align*}
   C_W^3 \E{X_1^3} = C_W^3\bk{X_1^3}_{K_{\alpha\alpha}^{(\ell)}}+O(n^{-1}) = T_{0,3;\alpha}^{(\ell)} + O(n^{-1}).
\end{align*}
Further, we seek to evaluate $\E{X_1^2X_2}$ up to errors of size $O(n^{-2})$. We apply  \eqref{E:f-expand-order-4} as well as \eqref{E:replace} and \eqref{E:X-mean} to obtain
\begin{align*}
     C_W^3\E{X_1^2X_2}&=  C_W^3\bk{X_1^2}_{\kappa^{(\ell)}}\bkal{X_2}+ O(n^{-2})\\
    &+\frac{ C_W^3}{8}\kappa_{4;\alpha}^{(\ell)}\left[\bkkl{X_1^2}\bkkl{\partial^4 X_2} + 2 \bkkl{\partial^2 X_1^2}\bkkl{\partial^2 X_2}\right]\\
     &+\frac{1}{2}T_{2,2;\alpha}^{(\ell)}\chi_{||;\alpha}^{(\ell)}\kappa_{4;\alpha}^{(\ell)}+O(n^{-2}).
\end{align*}
Finally, we must evaluate $\E{X_1X_2X_3}$ up to errors of size $O(n^{-3})$. Again using \eqref{E:X-mean}, we find
\begin{align*}
    C_W^3\E{X_1X_2X_3} &= C_W^3\bkal{X_1}\bkal{X_2}\bkal{X_3}\\
    &+\frac{C_W^3}{8}\kappa_{4;\alpha}^{(\ell)}\left[6\bkal{X_1}\bkal{\partial^2X_2}^2+O(n^{-2})\right]\\
    &+\frac{C_W^3}{48}\kappa_{6;\alpha}^{(\ell)}\left[6\bkal{\partial^2 X_1}^3+O(n^{-1})\right]+O(n^{-3})\\
    &=-\frac{3}{8}T_{4,1;\alpha}^{(\ell)}\lr{\chi_{||;\alpha}^{(\ell)}\kappa_{4;\alpha}^{(\ell)}}^2\bkkl{\partial^4 X_1} + \lr{\chi_{||;\alpha}^{(\ell)}}^3 \kappa_{6;\alpha}^{(\ell)} + O(n^{-3}).
\end{align*}
This completes the derivation of the recursion for $\kappa_{6;\alpha}^{(\ell)}$. 
\hfill $\square$

\section{Recursions for $2^{nd},4^{th}$ Cumulants of $z_{i;\alpha}^{(\ell)}$ and its Derivatives}\label{S:2-4-deriv-recs}
In this section, we consider a random depth $L$ fully connected network with input dimension $n_0$, hidden layer widths, $n_1,\ldots, n_L$, and non-linearity $\sigma$ satisfying Assumption \eqref{A:sigma-prop}. We also fix a network input $x_\alpha\in \R^{n_0}$.
Our goal in this section is to give a recursive description of the second and fourth cumulants of $z_{i;\alpha}^{_{(\ell+1)}}$ and its derivatives in terms of those of $z_{i;\alpha}^{_{(\ell)}}$. The starting point is to establish recursions for 
\[
K_{(ij)}^{(\ell)}: = \partial_{x_{i;\alpha_1}}\partial_{x_{j;\alpha_2}}K_{\alpha_1\alpha_2}^{(\ell)}\bigg|_{x_{\alpha_1}=x_{\alpha_2}=x_\alpha}
\]
where $i,j=1,\ldots, n_0$ (note that $K_{(ij)}^{_{(\ell)}}=K_{\alpha\alpha}^{_{(\ell)}}$).  This is done in Lemma \ref{L:2pt-deriv-recs-leading} in \S \ref{S:K-recs}.
We then proceed in \S \ref{S:k4-recs} to detail a number of recursions for the fourth cumulants of $z_{i;\alpha}^{_{(\ell)}}$ and its first derivatives. Finally, \S \ref{S:S2-recs} provides recursions to first order in $1/n$ for the finite width correction of the variance of the covariance of $z_{i;\alpha}^{_{(\ell)}}$ and its first derivative. All these recursions hold for any $\sigma$ satisfying \eqref{A:sigma-prop}. In \S \ref{S:tanh-rec-solutions} we will solve these recursions for $\sigma$ belonging to the $K_*=0$ universality class defined in \S \ref{S:crit-univ} and will then use these solutions to prove Theorem \ref{T:evgp}.

\subsection{Notation}\label{S:rec-notation}
We will abbreviate 
\[
d_i z_{i;\alpha}^{(\ell)}:= \partial_{x_{i;\alpha}}z_{i;\alpha}^{(\ell)},\qquad i=1,\ldots, n_0.
\]
Moreover, to keep the notation as uniform as possible, we will also write
\begin{equation}\label{E:d0-def}
d_0 := \text{identity}    
\end{equation}
so that $d_0 z_{i;\alpha}^{_{(\ell)}}=z_{i;\alpha}^{_{(\ell)}}$. With this notation, let us agree to denote by 
\begin{equation}\label{E:kappa-deriv-def}
\kappa_{(j_1j_2)}^{(\ell)}:=\Cov\lr{d_{j_1}z_{i;\alpha}^{(\ell)},d_{j_2}z_{i;\alpha}^{(\ell)}}    
\end{equation}
the finite width covariance of $z_{i;\alpha}^{_{(\ell)}}$ and its derivatives and by 
\[
K_{(j_1j_2)}^{(\ell)}:= \lim_{n\gives \infty} \kappa_{(j_1j_2)}^{(\ell)}
\]
the corresponding infinite width covariance. Note that with this notation we have $K_{(00)}^{_{(\ell)}}=K_{\alpha\alpha}^{_{(\ell)}}$. We also write
\begin{equation}\label{E:S-def}
S_{(ij)}^{(\ell)}:=K_{(ij)}^{(\ell)} -\kappa_{(ij)}^{(\ell)}.
\end{equation}
Further, for a function $f$ we will write
\begin{equation}\label{E:f-avg-def}
\bkl{f}:=\bk{f(d_jz_i),\quad i=1,\ldots,m,\, j=0,1,\ldots, n_0}_{K^{(\ell)}},     
\end{equation}
for the expectation of a function $f$ with respect to the centered Gaussian satisfying 
\[
\Cov\lr{d_{j_1}z_{i_1}, d_{j_2}z_{i_2}} = \delta_{i_1i_2}K_{(j_1j_2)}^{(\ell)},\qquad j_1,j_2=0,1,2.
\]
For the sake of recording more manageable formulas, we will often drop the argument $z$ in expressions such as $\sigma(z)$ so that
\[
\bkl{\sigma\sigma' d_1z (d_2 z)^2} = \bkl{\sigma(z)\sigma'(z) d_1z (d_2 z)^2} 
\]
as so on. Note that in the expectation \eqref{E:f-avg-def} variables such as $z_{i_1},\, z_{i_2}$ with different neural indices are independent. Thus, for example
\[
\bkl{\sigma\sigma'} = \bkl{\sigma(z)\sigma'(z)}= \int_{\R} \sigma\lr{z\lr{K_{\alpha\alpha}^{_{(\ell)}}}^{1/2}}\sigma'\lr{z\lr{K_{\alpha\alpha}^{_{(\ell)}}}^{1/2}} e^{-z^2/2} \frac{dz}{\sqrt{2\pi}}.
\]
In analogy with \eqref{E:f-avg-def}, we will denote by $\bk{f(d_jz_i,\, i=1,\ldots m, \, j=0,1,\ldots,n_0)}_{\kappa^{(\ell)}}$ the expectation of $f$ with respect to a centered Gaussian satisfying 
\[
\Cov\lr{d_{j_1}z_{i_1}, d_{j_2}z_{i_2}} = \delta_{i_1i_2}\kappa_{(j_1j_2)}^{(\ell)},\qquad j_1,j_2=0,1,2.
\]
in which all neurons are again independent centered Gaussians but this time with the finite width covariance. Further, when $f$ depends only a single variable $z=d_0z_i$, we will denote by $\partial$ derivatives with respect to $z$, so that for any function $f(z)$ we have
\[
\partial^i f =\partial^i f(z) = \frac{\partial^i}{\partial z^i} f(z).
\]
Here and elsewhere all derivatives are in the weak sense if $f$ is not sufficiently smooth. Finally, as in Corollary \ref{C:cumulants-vals} and \S \ref{S:crit}, we also set
\begin{align*}
\chi_{||}^{(\ell)}&:= \chi_{||;\alpha}^{(\ell)}=\chi_{||}^{(\ell)}(K_{\alpha\alpha}^{(\ell)})= \frac{C_W}{2}\bkl{\partial^2 \sigma^2}\\    
\chi_{\perp}^{(\ell)}&:= \chi_{\perp;\alpha}^{(\ell)}=\chi_{\perp}^{(\ell)}(K_{\alpha\alpha}^{(\ell)})= C_W\bkl{(\sigma')^2}.   
\end{align*}

\subsection{Recursions at Infinite Width}\label{S:K-recs}
The following Lemma gives recursions with respect to $\ell$ for the infinite width covariance $K_{(ij)}^{(\ell)}$.
\begin{lemma}\label{L:K-deriv-recs} 
We have
\begin{align}
\label{E:K00-rec}    K_{(00)}^{(\ell+1)} &= C_b+C_W\bkl{\sigma^2}\\
\label{E:K10-rec}       K_{(10)}^{(\ell+1)} &=\chi_{||}^{(\ell)}K_{(10)}^{(\ell)}\\
\label{E:K11-rec}       K_{(11)}^{(\ell+1)} &=C_W\bk{\partial^2 (\sigma')^2}_{(\ell)}\lr{K_{(10)}^{(\ell)}}^2+\chi_{\perp}^{(\ell)}K_{(11)}^{(\ell)}\\
\label{E:K12-rec}       K_{(12)}^{(\ell+1)} &=C_W\bk{\partial^2 (\sigma')^2}_{(\ell)}K_{(10)}^{(\ell)}K_{(20)}^{(\ell)}+\chi_{\perp}^{(\ell)}K_{(12)}^{(\ell)}.
\end{align}
\end{lemma}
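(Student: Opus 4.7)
The plan is to obtain all four recursions from a single source: the two-input recursion
\[
K_{\alpha_1\alpha_2}^{(\ell+1)} = C_b + C_W \bk{\sigma(z_{\alpha_1})\sigma(z_{\alpha_2})}_{K^{(\ell)}}
\]
of Theorem~\ref{T:iwl}, and then specialize after differentiating. The identity \eqref{E:K00-rec} is just this recursion evaluated at $\alpha_1=\alpha_2=\alpha$. For the remaining identities I would write $G(\Sigma) := \bk{\sigma(z_1)\sigma(z_2)}_\Sigma$ as a function of the symmetric $2\times 2$ covariance
\[
\Sigma=\begin{pmatrix}\Sigma_{11}&\Sigma_{12}\\ \Sigma_{12}&\Sigma_{22}\end{pmatrix}=\begin{pmatrix}K_{\alpha_1\alpha_1}^{(\ell)}&K_{\alpha_1\alpha_2}^{(\ell)}\\ K_{\alpha_1\alpha_2}^{(\ell)}&K_{\alpha_2\alpha_2}^{(\ell)}\end{pmatrix},
\]
and apply the chain rule to $\partial_{x_{i;\alpha_1}}$, $\partial_{x_{j;\alpha_2}}$. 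The key tool is Lemma~\ref{L:smooth-gauss}, which after accounting for the symmetry of $\Sigma$ gives $\partial_{\Sigma_{ii}}G = \tfrac{1}{2}\bk{\partial_{z_i}^2(\sigma(z_1)\sigma(z_2))}_\Sigma$ and $\partial_{\Sigma_{12}}G=\bk{\partial_{z_1}\partial_{z_2}(\sigma(z_1)\sigma(z_2))}_\Sigma$, and similarly for second derivatives in $\Sigma$.

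Next I would gather the ingredients needed when specializing to the coincident point $x_{\alpha_1}=x_{\alpha_2}=x_\alpha$. Two facts do the work: (i) at coincidence the bivariate Gaussian degenerates to $z_1=z_2$ almost surely, so every $\bk{\,\cdot\,}_\Sigma$ collapses to a single-variable expectation $\bk{\,\cdot\,}_{(\ell)}$; and (ii) from the symmetry $K_{\alpha_1\alpha_2}^{(\ell)}=K_{\alpha_2\alpha_1}^{(\ell)}$, one reads off
\[
\partial_{x_{i;\alpha_1}} K_{\alpha_1\alpha_1}^{(\ell)}\big|_{\text{coinc}} = 2\,K_{(i0)}^{(\ell)},\qquad \partial_{x_{i;\alpha_1}}K_{\alpha_1\alpha_2}^{(\ell)}\big|_{\text{coinc}} = K_{(i0)}^{(\ell)},
\]
together with $\partial_{x_{i;\alpha_2}}\partial_{x_{j;\alpha_1}} K_{\alpha_1\alpha_1}^{(\ell)}=0$ and $\partial_{x_{i;\alpha_2}}\partial_{x_{j;\alpha_1}} K_{\alpha_1\alpha_2}^{(\ell)}\big|_{\text{coinc}} = K_{(ij)}^{(\ell)}$. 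For \eqref{E:K10-rec}, one single application of the chain rule yields $K_{(10)}^{(\ell+1)} = C_W K_{(10)}^{(\ell)}[\bk{\sigma\sigma''}_{(\ell)} + \bk{(\sigma')^2}_{(\ell)}]$, which is exactly $\chi_{||}^{(\ell)}K_{(10)}^{(\ell)}$ once one recognizes that $\partial^2\sigma^2 = 2\sigma\sigma''+2(\sigma')^2$.

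For \eqref{E:K11-rec} and \eqref{E:K12-rec} I would apply the chain rule twice. Only terms in which both $\partial_{x_{1;\alpha_1}}$ and $\partial_{x_{1;\alpha_2}}$ act nontrivially survive, producing two groups: a ``linear'' group where the two derivatives act on a single chain-rule factor $K^{(\ell)}_{\alpha_1\alpha_2}$, and a ``quadratic'' group where they act on two different first-order factors and produce a second $\Sigma$-derivative of $G$. The linear group contributes $C_W\bk{(\sigma')^2}_{(\ell)}\, K^{(\ell)}_{(ij)}=\chi_\perp^{(\ell)} K^{(\ell)}_{(ij)}$. The quadratic group contributes, after combining $\partial_{\Sigma_{22}}\partial_{\Sigma_{11}}G$, $\partial_{\Sigma_{12}}\partial_{\Sigma_{11}}G$, $\partial_{\Sigma_{22}}\partial_{\Sigma_{12}}G$, and $\partial_{\Sigma_{12}}^2G$ with the appropriate factors of $K_{(i0)}^{(\ell)}$ and $K_{(j0)}^{(\ell)}$, a total of $C_W\,[2\bk{(\sigma'')^2}_{(\ell)}+2\bk{\sigma'\sigma'''}_{(\ell)}]\,K_{(i0)}^{(\ell)}K_{(j0)}^{(\ell)}$. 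Recognizing the bracket as $\bk{\partial^2(\sigma')^2}_{(\ell)}$ yields \eqref{E:K11-rec} (case $i=j=1$) and \eqref{E:K12-rec} (case $i=1,j=2$).

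The only real obstacle is careful bookkeeping of combinatorial coefficients: each of the two symmetric off-diagonal derivatives $\partial_{\Sigma_{12}}$ carries no factor of $\tfrac{1}{2}$, whereas each diagonal $\partial_{\Sigma_{ii}}$ does, and the factor $2$ in $\partial_{x_{i;\alpha_1}} K_{\alpha_1\alpha_1}^{(\ell)}|_{\text{coinc}}=2K_{(i0)}^{(\ell)}$ must be tracked through every chain-rule branch. Once these factors are collected, the consolidations $\partial^2\sigma^2=2\sigma\sigma''+2(\sigma')^2$ and $\partial^2(\sigma')^2=2(\sigma'')^2+2\sigma'\sigma'''$ make the claimed combinations $\chi_{||}^{(\ell)}$, $\chi_\perp^{(\ell)}$, and $C_W\bk{\partial^2(\sigma')^2}_{(\ell)}$ appear cleanly. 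The regularity needed to differentiate under the Gaussian expectation is exactly what Lemma~\ref{L:smooth-gauss} supplies under Assumption~\ref{A:sigma-prop} (polynomial growth of $\sigma$ and its first few derivatives).
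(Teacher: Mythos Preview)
Your argument is correct and complete. The paper, however, organizes the computation differently: rather than differentiating the two-input kernel recursion $K_{\alpha_1\alpha_2}^{(\ell+1)}=C_b+C_WG(\Sigma)$ through the covariance entries and then specializing to coincident inputs, it starts from the moment definition $K_{(ij)}^{(\ell+1)}=\lim_{n\to\infty}\E{d_iz_{i;\alpha}^{(\ell+1)}\,d_jz_{i;\alpha}^{(\ell+1)}}$, integrates out the layer-$(\ell+1)$ weights to get $C_W\E{\sigma'(z^{(\ell)})^2\,d_iz^{(\ell)}d_jz^{(\ell)}}$, passes to the limiting joint Gaussian $(z,d_1z,d_2z)$ via Theorem~\ref{T:iwl}, and applies Stein's lemma directly. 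For instance, $\bkl{(\sigma'd_1z)^2}=K_{(11)}^{(\ell)}\bkl{(\sigma')^2}+\lr{K_{(10)}^{(\ell)}}^2\bkl{\partial^2(\sigma')^2}$ drops out in one integration by parts, with no chain-rule branching to track. Your route trades this shortcut for a uniform mechanism (all derivatives pass through $\Sigma$ via Lemma~\ref{L:smooth-gauss}), which is tidier to systematize but requires the coefficient bookkeeping you flagged. The two approaches are dual expressions of the same Gaussian identity; both are fine here.
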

\begin{proof}
The proof of these recursions consists of integrating out the weights and biases in layer $\ell$ and sometimes using Gaussian integration by parts. For instance, 
\[
 K_{(00)}^{(\ell+1)} = \lim_{n\gives \infty}\E{\lr{z_{i;\alpha}^{(\ell+1)}}^2}=\lim_{n\gives \infty}\lr{C_b+C_W\E{\sigma\lr{z_{i;\alpha}^{(\ell)}}^2}} = C_b + C_W\bkl{\sigma^2},
\]
gives \eqref{E:K00-rec}, which is a restatement of the recursion \eqref{E:K-rec} we already say in Theorem \ref{T:iwl}. Further, using the definition of $K_{(10)}^{_{(\ell+1)}}$, Theorem \ref{T:iwl}, and integrating by parts we have 
\begin{align*}
 K_{(10)}^{(\ell+1)} &= \lim_{n\gives \infty}\E{z_{i;\alpha}^{(\ell+1)} d_1z_{i;\alpha}^{(\ell+1)}  }\\
 &=\lim_{n\gives \infty} C_W\E{\sigma\lr{z_{i;\alpha}^{(\ell)}}\sigma'\lr{z_{i;\alpha}^{(\ell)}} d_1z_{i;\alpha}^{(\ell)}}\\
 &= C_W\bkl{\sigma(z)\sigma'(z) d_1z}\\
 &=C_W K_{(10)}^{(\ell)}\bkl{\sigma(z)\sigma'(z)}\\
 &=\chi_{||}^{(\ell)} K_{(10)}^{(\ell)}.
\end{align*}
This proves \eqref{E:K10-rec}. Similar reasoning gives 
\begin{align*}
     K_{(11)}^{(\ell+1)}&= C_W\bkl{(\sigma'(z) d_1z)^2}\\
     &=C_W\left[K_{(11)}^{(\ell)}\bkl{(\sigma')^2}+\lr{K_{(10)}^{(\ell)}}^2 \bkl{\partial^2 (\sigma')^2}\right]\\
     &=C_W\bk{\partial^2 (\sigma')^2}_{(\ell)}\lr{K_{(10)}^{(\ell)}}^2+\chi_{\perp}^{(\ell)}K_{(11)}^{(\ell)},
\end{align*}
proving \eqref{E:K11-rec}. The derivation of \eqref{E:K12-rec} is analogous. 
\end{proof}

\subsection{4th Cumulant Recursions at Finite Width}\label{S:k4-recs}
In this section we obtain a recursive description for the fourth cumulants of $z_{i;\alpha}^{_{(\ell)}}$ and its first derivatives. In keep with our convention \eqref{E:d0-def}, we will denote by $\partial_{x_{0;\alpha}}$ the identity operator. By explicitly integrating out the weights and biases in layer $\ell+1$ we find
\begin{align}
\notag &\kappa\lr{\partial_{x_{j_1;\alpha}}z_{i_1;\alpha}^{(\ell+1)},\partial_{x_{j_2;\alpha}}z_{i_2;\alpha}^{(\ell+1)},\partial_{x_{j_3;\alpha}}z_{i_3;\alpha}^{(\ell+1)},\partial_{x_{j_4;\alpha}}z_{i_4;\alpha}^{(\ell+1)}}\\
\label{E:kappa-reduce}&\qquad \qquad = \delta_{i_1i_2}\delta_{i_3i_4}\kappa_{(j_1j_2)(j_3j_4)}^{(\ell)}+ \delta_{i_1i_3}\delta_{i_2i_4}\kappa_{(j_1j_3)(j_2j_4)}^{(\ell)}+\delta_{i_1i_4}\delta_{i_2i_3}\kappa_{(j_1j_4)(j_2j_3)}^{(\ell)},
\end{align}
where 
\begin{align}
\notag \kappa_{(j_1j_2)(j_1'j_2')}^{(\ell)}&= \kappa\lr{\partial_{x_{j_1;\alpha}}z_{1;\alpha}^{(\ell+1)},\partial_{x_{j_2;\alpha}}z_{1;\alpha}^{(\ell+1)},\partial_{x_{j_1';\alpha}}z_{2;\alpha}^{(\ell+1)},\partial_{x_{j_2';\alpha}}z_{2;\alpha}^{(\ell+1)}}\\
\label{E:kappa-sigma-def}&=\Cov\lr{\partial_{x_{j_1;\alpha_1}} \partial_{x_{j_2;\alpha_2}}\Sigma_{\alpha_1\alpha_2}^{(\ell)}\bigg|_{\alpha_1=\alpha_2=\alpha},\,
\partial_{x_{j_1';\alpha_1}} \partial_{x_{j_2';\alpha_2}}\Sigma_{\alpha_1\alpha_2}^{(\ell)}\bigg|_{\alpha_1=\alpha_2=\alpha}}.
\end{align}
We remind the reader that the quantity $\Sigma_{\alpha_1\alpha_2}^{_{(\ell)}}$ appearing in \eqref{E:kappa-sigma-def} is the conditional covariance
\[
\Cov\lr{z_{i_1;\alpha_1}^{(\ell+1)},\, z_{i_2;\alpha_2}^{(\ell+1)}~|~\mF^{(\ell)}} =\delta_{i_1i_2}\Sigma_{\alpha_1\alpha_2}^{_{(\ell)}}= \delta_{i_2i_2}\lr{C_b+\frac{C_W}{n_\ell}\sum_{j=1}^{n_\ell}\sigma\lr{z_{j;\alpha_1}^{(\ell)}}\sigma\lr{z_{j;\alpha_2}^{(\ell)}}}
\]
that we introduced in \eqref{E:big-sig-def}. Our next result gives recursions with respect to $\ell$ directly for the covariances $\kappa_{(j_1j_2)(j_1'j_2')}^{_{(\ell)}}$ and hence via \eqref{E:kappa-reduce} for the fourth cumulants of $z_{i;\alpha}^{(\ell)}$ and its first derivatives as well.

\begin{proposition}\label{P:k4-deriv-recs}
We have
\begin{align*}
    \kappa_{(00)(00)}^{(\ell+1)} = \frac{C_W^2}{n_\ell}\lr{\bk{\sigma^4}_{(\ell)}-\bk{\sigma^2}_{(\ell)}^2}+\lr{\chi_{||}^{(\ell)}}^2   \kappa_{(00)(00)}^{(\ell)}+O(n^{-2}).
\end{align*}
Further, 
\begin{align}
\label{E:k1000-rec}    \kappa_{(10)(00)}^{(\ell+1)} &= \frac{C_W^2}{n_\ell}\lr{\bk{\sigma^3\sigma'd_1z}_{(\ell)}-\bk{\sigma^2}_{(\ell)}\bk{\sigma\sigma'd_1z}_{(\ell)}}\\
\notag    &+\frac{C_W^2}{8}\left[2\kappa_{(00)(00)}^{(\ell)}\bk{\partial^2\sigma^2}_{(\ell)}\bk{\partial^2(\sigma\sigma')d_1z}_{(\ell)}+4\kappa_{(10)(00)}^{(\ell)}\bk{\partial^2\sigma^2}_{(\ell)}\bk{\partial(\sigma\sigma')}_{(\ell)}\right]+O(n^{-2}).
\end{align}
Moreover,  
\begin{align*}
    \kappa_{(10)(10)}^{(\ell+1)} &= \frac{C_W^2}{n_\ell}\left[\bk{(\sigma\sigma'd_1z)^2}_{(\ell)}-\bk{\sigma\sigma'd_1z}_{(\ell)}^2\right]\\
    &+\frac{C_W^2}{8}\left[2\kappa_{(00)(00)}^{(\ell)}\bk{\partial^2 (\sigma\sigma') d_1z}^2+8\kappal_{(10)(00)}\bkl{\partial(\sigma\sigma')}\bkl{\partial^2(\sigma\sigma')d_1z}\right.\\
    &\left.\qquad \qquad +8 \kappa_{(10)(10)}^{(\ell)} \bk{\partial(\sigma\sigma')}_{(\ell)}^2\right]+O(n^{-2}).
\end{align*}
Similarly, 
\begin{align*}
    \kappa_{(10)(20)}^{(\ell+1)}&=\frac{C_W^2}{n_\ell}\left[\bkl{(\sigma\sigma')^2 d_1zd_2z}-\bkl{\sigma\sigma' d_1z}\bkl{\sigma\sigma' d_2z}\right]\\
    &+\frac{C_W^2}{8}\left[2\kappal_{(00)(00)}\bkl{\partial^2(\sigma\sigma')d_1z}\bkl{\partial^2(\sigma\sigma')d_2z} \right.\\
    &\qquad \qquad + 4\kappal_{(10)(00)}\bkl{\partial(\sigma\sigma')}\bkl{\partial^2(\sigma\sigma') d_2z}+4\kappal_{(20)(00)}\bkl{\partial(\sigma\sigma')}\bkl{\partial^2(\sigma\sigma') d_1z}\\
    &\qquad \qquad + \left.8\kappal_{(10)(20)}\bkl{\partial(\sigma\sigma')}^2\right]+O(n^{-2}).
\end{align*}
In addition,
\begin{align*}
    \kappa_{(11)(00)}^{(\ell+1)} &= \frac{C_W^2}{n_\ell}\left[\bk{(\sigma\sigma' d_1z)^2}_{(\ell)} - \bk{\sigma^2}_{(\ell)}\bk{(\sigma'd_1z)^2}_{(\ell)}\right]\\
    &+\frac{C_W^2}{8}\left[2\kappa_{(00)(00)}^{(\ell)}\bk{\partial^2 \sigma^2}_{(\ell)}\bk{\partial^2 (\sigma')^2 (d_1z)^2}_{(\ell)} +8 \kappa_{(10)(00)}^{(\ell)}\bk{\partial (\sigma')^2d_1z}_{(\ell)}\bk{\partial^2 \sigma^2}_{(\ell)}\right.\\
    &\qquad \qquad \left.4\kappa_{(11)(00)}^{(\ell)} \bk{(\sigma')^2}\bk{\partial^2 \sigma^2}\right]+O(n^{-2}).
\end{align*}
Also,
\begin{align*}
    \kappa_{(12)(00)}^{(\ell+1)}&=\frac{C_W^2}{n_\ell}\left[\bkl{(\sigma')^2\sigma^2 d_1z d_2z}-\bkl{(\sigma')^2d_1zd_2z}\bkl{\sigma^2}\right]\\
    &=\frac{C_W^2}{8}\left[2\kappal_{(00)(00)}\bkl{\partial^2(\sigma')^2 d_1zd_2z}\bkl{\partial^2\sigma^2}\right.\\
    &\qquad \qquad+4\kappal_{(10)(00)}\bkl{\partial (\sigma')^2 d_2z}\bkl{\partial^2\sigma^2}+4\kappal_{(20)(00)}\bkl{\partial (\sigma')^2 d_1z}\bkl{\partial^2\sigma^2}\\
    &\qquad \qquad +\left.4\kappal_{(12)(00)}\bkl{(\sigma')^2}\bkl{\partial^2\sigma^2}\right]+O(n^{-2}).
\end{align*}
Additionally,
\begin{align*}
    \kappa_{(11)(10)}^{(\ell+1)} &= \frac{C_W^2}{n_\ell}\left[\bk{\sigma(\sigma'd_1z)^3}_{(\ell)}-\bk{\sigma\sigma'd_1z}_{(\ell)}\bk{(\sigma'd_1z)^2}_{(\ell)}\right]\\
    &+\frac{C_W^2}{8}\left[2\kappa_{(00)(00)}^{(\ell)}\bk{\partial^2(\sigma \sigma')d_1z}_{(\ell)}\bkl{\partial^2 (\sigma')^2 (d_1z)^2}\right.\\
    &\qquad \qquad +4\kappa_{(10)(00)}^{(\ell)}\lr{\bkl{\partial(\sigma\sigma')}\bkl{\partial^2 (\sigma')^2 (d_1z)^2}+2\bkl{\partial^2(\sigma\sigma')d_1z}\bkl{\partial(\sigma')^2d_1z}}\\
    &\qquad \qquad +16\kappa_{(10)(10)}^{(\ell)}\bkl{\partial(\sigma\sigma')}\bkl{\partial(\sigma')^2 d_1z}  +4\kappa_{(11)(00)}^{(\ell)}\bkl{\partial^2(\sigma\sigma')d_1z}\bkl{(\sigma')^2}\\
    &\qquad \qquad \left.+8\kappa_{(11)(10)}^{(\ell)}\bkl{(\sigma')^2}\bkl{\partial(\sigma\sigma')}\right]+O(n^{-2}).
\end{align*}
Also,
\begin{align*}
    \kappa_{(12)(10)}^{(\ell+1)}&= \frac{C_W^2}{n_\ell}\left[\bkl{(\sigma')^3\sigma (d_1z)^2d_2z}-\bkl{(\sigma')^2d_1zd_2z}\bkl{\sigma\sigma'd_1z}\right]\\
    &=\frac{C_W^2}{8}\left[2\kappal_{(00)(00)}\bkl{\partial^2 (\sigma')^2 d_1z d_2z}\bkl{\partial^2(\sigma\sigma')d_1z}\right.\\
    &\qquad \qquad + 4\kappal_{(10)(00)}\left\{\bkl{\partial (\sigma')^2d_2z}\bkl{\partial^2(\sigma\sigma')d_1z}+\bkl{\partial^2(\sigma')^2 d_1zd_2z}\bkl{\partial(\sigma\sigma')}\right\}\\
    &\qquad \qquad +4\kappal_{(20)(00)}\bkl{\partial(\sigma')^2 d_1z}\bkl{\partial^2(\sigma\sigma')d_1z}+4\kappal_{(12)(00)}\bkl{(\sigma')^2}\bkl{\partial^2(\sigma\sigma')d_1z}\\
    &\qquad \qquad+ 8\kappal_{(10)(10)}\bkl{\partial (\sigma')^2 d_2z}\bkl{\partial (\sigma\sigma')}+8\kappal_{(10)(20)}\bkl{\partial (\sigma')^2 d_1z}\bkl{\partial(\sigma\sigma')}\\
    &\qquad \qquad \left.+ 8\kappal_{(12)(10)}\bkl{(\sigma')^2}\bkl{\partial(\sigma\sigma')}\right]+O(n^{-2}).
\end{align*}
Further,
\begin{align*}
    \kappa_{(11)(20)}^{(\ell+1)} &= \frac{C_W^2}{n_\ell}\left[\bkl{(\sigma')^2\sigma (d_1z)^2 d_2z}-\bkl{(\sigma' d_1z)^2}\bkl{\sigma\sigma'd_2z}\right]\\
    &+\frac{C_W^2}{8}\left[2\kappa_{(00)(00)}^{(\ell)}\bk{\partial^2(\sigma')^2(d_1z)^2}_{(\ell)}\bkl{\partial^2 (\sigma\sigma') d_2z}\right.\\
    &\qquad \qquad +8\kappal_{(10)(00)}\bkl{\partial^2 (\sigma')^2 d_1z}\bk{\partial^2(\sigma\sigma')d_2z}+4\kappal_{(20)(00)}\bkl{\partial^2(\sigma')^2 (d_1z)^2}\bkl{\partial(\sigma\sigma')}\\
    &\qquad \qquad + 4\kappal_{(11)(00)}\bkl{(\sigma')^2}\bkl{\partial^2(\sigma\sigma')d_2z}+16\kappal_{(10)(20)}\bkl{\partial(\sigma')^2d_1z}\bkl{\partial(\sigma\sigma')}\\
    &\qquad \qquad +\left.8\kappal_{(11)(20)}\bkl{\partial(\sigma\sigma')}\bkl{(\sigma')^2}\right]+O(n^{-2}).
\end{align*}
Additionally,
\begin{align*}
    \kappa_{(11)(11)}^{(\ell+1)} &=\frac{C_W^2}{n_\ell}\left[\bkl{(\sigma'dz)^4}-\bkl{(\sigma'dz)^2}^2\right]\\
    &+\frac{C_W^2}{8}\left[2\kappal_{(00)(00)}\bkl{\partial^2 (\sigma')^2 (d_1z)^2}^2+16\kappal_{(10)(00)}\bkl{\partial (\sigma')^2 d_1z}\bkl{\partial^2 (\sigma')^2 (d_1z)^2}\right.\\
    &\qquad +8\kappal_{(11)(00)}\bkl{(\sigma')^2}\bkl{\partial^2 (\sigma')^2 (d_1z)^2} + 32\kappal_{(10)(10)}\bkl{\partial (\sigma')^2 d_1z}^2\\
    &\qquad \left.+32\kappal_{(11)(10)} \bkl{(\sigma')^2}\bkl{\partial (\sigma')^2 d_1z}+8\kappal_{(11)(11)} \bk{(\sigma')^2}^2\right].
\end{align*}
Also, 
\begin{align*}
    \kappa_{(11)(22)}^{(\ell+1)}&=\frac{C_W^2}{n_\ell}\left[\bkl{(\sigma')^4(d_1zd_2z)^2}-\bkl{(\sigma'd_1z)^2}\bkl{(\sigma'd_2z)^2}\right]\\
    &+\frac{C_W^2}{8}\left[2\kappal_{(00)(00)}\bkl{\partial^2 (\sigma')^2 (d_1z)^2}\bkl{\partial^2 (\sigma')^2 (d_2z)^2}\right.\\
    &\qquad \qquad + 8\kappal_{(10)(00)}\bkl{\partial^2 (\sigma')^2d_1z}\bkl{\partial^2(\sigma')^2 (d_2z)^2}+8\kappal_{(20)(00)}\bkl{\partial^2 (\sigma')^2d_2z}\bkl{\partial^2(\sigma')^2 (d_1z)^2}\\
    &\qquad \qquad + 4\kappal_{(11)(00)}\bkl{(\sigma')^2}\bkl{\partial^2 (\sigma')^2 (d_2z)^2} + 4\kappal_{(22)(00)} \bkl{ (\sigma')^2}\bkl{\partial^2(\sigma') (d_1z)^2}\\
    &\qquad \qquad + 32\kappal_{(10)(20)}\bkl{\partial(\sigma')^2 d_1z}\bkl{\partial(\sigma')^2 d_2z}\\
    &\qquad \qquad + 16\kappal_{(11)(20)}\bkl{(\sigma')^2}\bkl{\partial(\sigma')^2 d_2z}+16\kappal_{(22)(10)}\bkl{(\sigma')^2}\bkl{\partial(\sigma')^2 d_1z}\\
    &\qquad \qquad \left.+8\kappal_{(11)(22)}\bkl{(\sigma')^2}^2\right]+O(n^{-2}).
\end{align*}
Finally, 
\begin{align*}
    \kappa_{(12)(12)}^{(\ell+1)}&=\frac{C_W^2}{n_\ell}\left[\bkl{(\sigma')^4(d_1zd_2z)^2}-\bkl{(\sigma')^2d_1zd_2z}^2\right]\\
    &+\frac{C_W^2}{8}\left[2\kappal_{(00)(00)}\bkl{\partial^2 (\sigma')^2 d_1z d_2z}^2\right.\\
    &\qquad \qquad +8\kappal_{(10)(00)}\bkl{\partial(\sigma')^2 d_2z}\bkl{\partial^2(\sigma')^2 d_1zd_2z}+8\kappal_{(20)(00)}\bkl{\partial(\sigma')^2 d_1z}\bkl{\partial^2(\sigma')^2 d_1zd_2z}\\ 
    &\qquad \qquad +8\kappal_{(12)(00)}\bkl{(\sigma')^2}\bkl{\partial^2(\sigma')^2d_1zd_2z}+26\kappal_{(10)(20)}\bkl{\partial(\sigma')^2d_2z}\bkl{\partial (\sigma')^2 d_1z}\\
    &\qquad \qquad +8\kappal_{(10)(10)}\bkl{\partial (\sigma')^2 d_2z}^2+8\kappal_{(20)(20)}\bkl{\partial (\sigma')^2 d_1z}^2\\
    &\qquad \qquad +16\kappal_{(12)(10)}\bkl{(\sigma')^2}\bkl{\partial (\sigma')^2 d_2z} + 16\kappal_{(12)(20)}\bkl{(\sigma')^2}\bkl{\partial (\sigma')^2 d_1z}\\
    &\qquad \qquad \left. +8\kappal_{(12)(12)}\bkl{(\sigma')^2}^2\right] +O(n^{-2}).
\end{align*}
\end{proposition}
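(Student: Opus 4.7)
Each recursion in Proposition \ref{P:k4-deriv-recs} has exactly the same structural origin as the recursion for $\kappa_{4;\alpha}^{(\ell+1)}$ derived in \S \ref{S:cumulants-recs-pf}. The plan is therefore to run the argument of that section mutatis mutandis for each of the twelve choices of the double multi-index $(j_1j_2)(j_1'j_2')$ with $j_i, j_i'\in\{0,1,2\}$. For each such choice, define the single-neuron observables
\[
A_k^{(j_1j_2)}:=\partial_{x_{j_1;\alpha_1}}\partial_{x_{j_2;\alpha_2}}\bigl[\sigma(z_{k;\alpha_1}^{(\ell)})\sigma(z_{k;\alpha_2}^{(\ell)})\bigr]\Big|_{\alpha_1=\alpha_2=\alpha},
\]
so that $\partial_{x_{j_1;\alpha_1}}\partial_{x_{j_2;\alpha_2}}\Sigma_{\alpha_1\alpha_2}^{(\ell)}|_{\alpha_1=\alpha_2=\alpha}=\frac{C_W}{n_\ell}\sum_k A_k^{(j_1j_2)}$. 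For instance $A_k^{(00)}=\sigma(z_k)^2$, $A_k^{(10)}=\sigma(z_k)\sigma'(z_k)d_1z_k$, $A_k^{(11)}=(\sigma'(z_k)d_1z_k)^2$, and $A_k^{(12)}=\sigma'(z_k)^2d_1z_k d_2z_k$. Writing $\widetilde{A}_k:=A_k-\E{A_k}$ and exploiting the exchangeability of the neurons at layer $\ell$ we have
\[
\kappa_{(j_1j_2)(j_1'j_2')}^{(\ell+1)}=\frac{C_W^2}{n_\ell}\E{\widetilde{A}_1^{(j_1j_2)}\widetilde{A}_1^{(j_1'j_2')}}+C_W^2\Big(1-\frac{1}{n_\ell}\Big)\E{\widetilde{A}_1^{(j_1j_2)}\widetilde{A}_2^{(j_1'j_2')}}.
\]

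\textbf{Single-neuron term.} The first expectation involves only one neuron at layer $\ell$. Applying \eqref{E:replace} (which replaces $\bk{\cdot}_{\kappa^{(\ell)}}$ by $\bk{\cdot}_{(\ell)}$ modulo $O(n^{-1})$, as in \S \ref{S:cumulants-recs-pf}) gives
\[
\E{\widetilde{A}_1^{(j_1j_2)}\widetilde{A}_1^{(j_1'j_2')}}=\bk{A^{(j_1j_2)}A^{(j_1'j_2')}}_{(\ell)}-\bk{A^{(j_1j_2)}}_{(\ell)}\bk{A^{(j_1'j_2')}}_{(\ell)}+O(n^{-1}),
\]
which produces the first bracket $\frac{C_W^2}{n_\ell}[\bk{\cdots}_{(\ell)}-\bk{\cdots}_{(\ell)}\bk{\cdots}_{(\ell)}]$ displayed in each recursion.

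\textbf{Two-neuron term.} For the off-diagonal expectation I would apply Theorem \ref{T:pert-exp} to the function $f=\widetilde{A}_1\widetilde{A}_2$ of the $m=2$ neurons (and their directional derivatives) at layer $\ell$, with $q_*=1$. The $q=0$ term vanishes because under the independent product Gaussian $\bk{\cdot}_{\kappa^{(\ell)}}$ the two neurons decouple and each $\widetilde{A}_k$ has mean zero up to an $O(n^{-1})$ discrepancy which, when multiplied together, is already $O(n^{-2})$. The $q=1$ contribution vanishes because $\E{\Delta^{(\ell-1)}}=0$. Thus the leading surviving piece is $q=2$, which contributes
\[
\tfrac{1}{8}\sum_{(i_1i_1'),(i_2i_2')}\kappa_{(i_1i_1')(i_2i_2')}^{(\ell)}\,\bk{\mathcal{D}_{(i_1i_1')}\mathcal{D}_{(i_2i_2')}[\widetilde{A}_1\widetilde{A}_2]}_{(\ell)}+O(n^{-2}),\qquad \mathcal{D}_{(ij)}:=\sum_{k=1}^{2}\partial_{d_iz_k}\partial_{d_jz_k}.
\]
Of the four possible placements of the differential operators on the two factors, the pure "same-neuron" placements give $\bk{\partial^{(4)}\widetilde{A}_1}_{(\ell)}\bk{\widetilde{A}_2}_{(\ell)}=0$ (and its mirror), so only the "cross" placements $\partial^{(2)}\widetilde{A}_1\otimes\partial^{(2)}\widetilde{A}_2$ contribute, each with an extra combinatorial factor of $2$ arising from the two ways to split $(j,k)$ across the two neurons.

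\textbf{Final simplification and main obstacle.} Once the cross derivatives are carried out, the partial derivatives with respect to each variable $d_jz_i$ with $j\ge1$ strip off the corresponding explicit $d_jz$ factor from $A$, while the derivatives with respect to $z_i=d_0z_i$ become ordinary $z$-derivatives acting on the smooth $\sigma$-dependent factors. Replacing $\kappa^{(\ell)}$ by $K^{(\ell)}$ inside the remaining Gaussian integrals via \eqref{E:replace} (at cost $O(n^{-2})$) then converts every surviving integral into one of the objects $\bk{\partial^k(\sigma^a(\sigma')^b\,(d_1z)^c(d_2z)^d)}_{(\ell)}$ written in the statement. The \emph{conceptual} work is identical to \S \ref{S:cumulants-recs-pf}; the actual difficulty is purely combinatorial: for each of the twelve target recursions one must enumerate the surviving cross-pairings, track the multiplicities $2,4,8,16,32$ that arise from the ways of distributing derivatives over the two neurons and the two $\mathcal{D}$-operators, and use the parity symmetry $z\leftrightarrow-z$ together with Gaussian integration by parts to reduce each contribution to the canonical form. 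Verifying that these combinatorial weights match the coefficients listed is the bulk of the work and constitutes the main obstacle.
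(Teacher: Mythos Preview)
Your approach is correct and essentially identical to the paper's: the paper also writes $\kappa_{(j_1j_2)(j_1'j_2')}^{(\ell+1)}$ as a covariance of centered single-neuron observables, splits by exchangeability into diagonal and off-diagonal pieces, and applies Theorem~\ref{T:pert-exp} with $q_*=1$ to the off-diagonal piece (pre-tabulating the resulting expansion by the polynomial degree of $f$ in the derivative variables as formulas \eqref{E:f-exp-0}--\eqref{E:f-exp-4}, which is purely an organizational convenience). The paper then carries out the combinatorics explicitly only for the $(10)(00)$ case and declares the remaining eleven analogous, exactly as you propose.
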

\begin{proof}
The derivations of the recursions in Proposition \ref{P:k4-deriv-recs} are all similar. The idea is to apply Theorem \ref{T:pert-exp} in the special case of $q_*=1$, $r=1$, a single network input $x_\alpha$, and directional derivatives 
\[
d_j = \partial_{x_{j;\alpha}},\qquad j=1,2.
\]
To keep the notation as uniform as possible, we will continue to write
\[
d_0 = \text{ identity}
\]
for the identity operator. We will have occasion to apply the result to compute expressions of the form
\begin{equation}\label{E:f-exp}
\E{f\lr{d_j z_{i;\alpha}^{(\ell)},\,\,  i=1,\ldots,m,\, j=0,1,2}},    
\end{equation}
where the function $f$ depends on the pre-activations $z_{i;\alpha}^{_{(\ell)}},\, i=1,\ldots,m$ at finitely many neurons and is a polynomial of total degree between $0$ and $4$ in the derivatives $d_jz_{i;\alpha}^{_{(\ell)}}$. Theorem \ref{T:pert-exp} allows us to expand \eqref{E:f-exp} into a perturbative series in powers of $1/n$ with coefficients that are Gaussian integrals of the form 
\[
\bk{ \mathcal P f(d_jz_{i}, i=1,\ldots, m,j=1,2)}_{\kappa^{(\ell)}},
\]
where $\mathcal P$ is a differential operator consisting of finite sums of partial derivatives with respect to the variables $d_jz_{i}$, and $\bk{\cdot}_{\kappa^{(\ell)}}$ denotes an expectation in which $d_jz_{i;\alpha}$ are centered Gaussian with covariance
\begin{align*}
\delta_{i_1i_2}\kappa_{(j_1j_2)}^{(\ell)}:=\Cov\lr{d_{j_1}z_{i_1},d_{j_2}z_{i_2}^{(\ell)}}:=\delta_{i_1i_2}\Cov\lr{d_{j_1}z_{i_1;\alpha}^{(\ell)},d_{j_2}z_{i_2;\alpha}^{(\ell)}},\qquad j_1,j_2=0,1,2.
\end{align*}
Specifically, taking $g\equiv 1$ in Theorem \ref{T:pert-exp} we find that \eqref{E:f-exp} equals
\begin{align*}
    \bk{f}_{\kappa^{(\ell)}} + \frac{1}{8}\sum_{i,i'=1}^{m} \sum_{j_1,j_2,j_1',j_2'=0}^2 \kappa_{(j_1j_2)(j_1'j_2')}^{(\ell)} \bkkl{\frac{\partial^2}{\partial d_{j_1}z_i\partial d_{j_2}z_i}\frac{\partial^2}{\partial d_{j_1'}z_{i'}\partial d_{j_2;}z_{i'}}f} + O(n^{-2})
\end{align*}
Note that in the expectations inside the sum the integrals are with respect to a centered Gaussian with the infinite width covariance $K^{(\ell)}$ defined in \S \ref{S:rec-notation} instead of the finite width covariance $\kappa^{(\ell)}$. The reason is that, as we'll show in \S \ref{S:S2-recs}, we have $\kappa^{(\ell)}=K^{(\ell)}+O(n^{-1})$ and since $\kappa_{(00)(00)}^{(\ell)}=O(n^{-1})$ by Theorem \ref{T:cumulants}, the replacement of $\kappa^{(\ell)}$ by $K^{(\ell)}$ incurs an error of size $O(n^{-2})$, which we are neglecting anyway.

With that said, note that if $f$ is a polynomial of degree $0$ in the derivatives $d_jz_{i;\alpha}^{_{(\ell)}}$ for $j=1,2$ (i.e. doesn't depends on them), then we find that \eqref{E:f-exp} equals
\begin{equation}\label{E:f-exp-0}
   \bk{f}_{\kappa^{(\ell)}} + \frac{1}{8}\sum_{i,i'=1}^{m}\kappa_{(00)(00)}^{(\ell)} \bkkl{\Disq{ z_{i}} \Disq{z_{i'}}f} + O(n^{-2}).
\end{equation}
If instead $f$ is a polynomial of degree $1$ in the derivatives $d_jz_{i;\alpha}^{_{(\ell)}}$ then \eqref{E:f-exp} equals \eqref{E:f-exp-0} plus the expression
\begin{align}
 \label{E:f-exp-1} \frac{1}{8}\sum_{i,i'=1}^{m} &2\kappa_{(10)(00)}^{(\ell)}\bkkl{\left\{\DDi{z_{i}}{d_1z_{i}}\Disq{z_{i'}}+\DDi{z_{i'}}{d_1z_{i'}}\Disq{z_{i}}\right\}f}+\lr{1\leftrightarrow 2},
\end{align}
where $\lr{1\leftrightarrow 2}$ means the same expression with $d_1$ exchanged with $d_2$, $\kappa_{(10)(00)}^{_{(\ell)}}$ replaced by $\kappa_{(20)(00)}^{_{(\ell)}}$, and so on. If $f$ is a polynomial of degree $2$ in the derivatives $d_jz_{i;\alpha}^{_{(\ell)}}$ then we must add to \eqref{E:f-exp-0} and \eqref{E:f-exp-1} in addition 
\begin{align}
\label{E:f-exp-2} \frac{1}{8}\sum_{i,i'=1}^{m} &\left[\kappa_{(11)(00)}^{(\ell)}\bkkl{\left\{\Disq{d_1z_{i}}\Disq{z_{i'}} + \Disq{d_1z_{i'}}\Disq{z_{i}} \right\}f} +\lr{1\leftrightarrow 2}\right.\\
\notag &+\\
\notag &+4\kappa_{(10)(10)}^{(\ell)}\bkkl{\DDi{z_{i}}{d_1z_{i}}\DDi{z_{i'}}{d_1z_{i'}}f} +\lr{1\leftrightarrow 2}\\
\notag &\left.+4\kappa_{(10)(20)}^{(\ell)}\bkkl{\left\{\DDi{z_{i}}{d_1z_{i}}\DDi{z_{i'}}{d_2z_{i'}}+\DDi{z_{i}}{d_2z_{i}}\DDi{z_{i'}}{d_1z_{i'}}\right\}f}\right]
\end{align}
Similarly, if $f$ is a polynomial of degree $3$ in the derivatives $d_jz_{i;\alpha}^{_{(\ell)}}$ then we must add to \eqref{E:f-exp-0}-\eqref{E:f-exp-2} in also 
\begin{align}
\label{E:f-exp-3} \frac{1}{8}\sum_{i,i'=1}^{m} &\left[2\kappa_{(11)(10)}^{(\ell)}\bkkl{\left\{\Disq{d_1z_{i}}\DDi{z_{i'}}{d_1z_{i'}} +\Disq{d_1z_{i'}}\DDi{z_{i}}{d_1z_{i}} \right\}f} +\lr{1\leftrightarrow 2}\right.\\
\notag &+2\kappa_{(11)(20)}^{(\ell)}\bkkl{\left\{\Disq{d_1z_{i}}\DDi{z_{i'}}{d_2z_{i'}}+\Disq{d_1z_{i'}}\DDi{z_{i}}{d_2z_{i}}\right\}f} +\lr{1\leftrightarrow 2}\\
\notag &\left.+4\kappa_{(12)(10)}^{(\ell)}\bkkl{\left\{\DDi{d_1z_{i}}{d_2z_{i}}\DDi{z_{i'}}{d_1z_{i'}}+\DDi{d_1z_{i'}}{d_2z_{i'}}\DDi{z_{i}}{d_1z_{i}}\right\}f}+\lr{1\leftrightarrow 2}\right].
\end{align}
Finally, if $f$ is a polynomial of degree $4$ in the derivatives $d_jz_{i;\alpha}^{_{(\ell)}}$ then we must add to \eqref{E:f-exp-0}-\eqref{E:f-exp-3} also 
\begin{align}
\label{E:f-exp-4} \frac{1}{8}\sum_{i,i'=1}^{m} &\left[\kappa_{(11)(11)}^{(\ell)}\bkkl{\left\{\Disq{d_1z_{i}}\Disq{d_1z_{i'}} \right\}f} +\lr{1\leftrightarrow 2}\right.\\
\notag &+2\kappa_{(11)(12)}^{(\ell)}\bkkl{\left\{\Disq{d_1z_{i}}\DDi{d_1z_{i'}}{d_2z_{i'}}+\Disq{d_1z_{i'}}\DDi{d_1z_{i}}{d_2z_{i}}\right\}f} +\lr{1\leftrightarrow 2}\\
\notag &+\kappa_{(11)(22)}^{(\ell)}\bkkl{\left\{\Disq{d_1z_{i}}\Disq{d_2z_{i'}}+\Disq{d_1z_{i'}}\Disq{d_2z_{i}}\right\}f}\\
\notag &\left.+4\kappa_{(12)(12)}^{(\ell)}\bkkl{\DDi{d_1z_{i}}{d_2z_{i}}\DDi{d_1z_{i'}}{d_2z_{i'}}f}\right].
\end{align}
With these formulas in hand, the derivations of the recursions in Proposition \ref{P:k4-deriv-recs} are straight forward and we provide the details for the case of $\kappa_{(10)(00)}^{_{(\ell+1)}}$, which already illustrates all the main ideas of the derivation. We have
\[
\kappa_{(10)(00)}^{(\ell+1)}=  \E{\lr{\Sigma_{\alpha\alpha}^{(\ell)}-\E{\Sigma_{\alpha\alpha}^{(\ell)}}}\cdot \lr{\partial_{x_{1;\alpha_1}}\Sigma_{\alpha_1\alpha_2}^{(\ell)}\bigg|_{\alpha_1=\alpha_2=\alpha}- \E{\partial_{x_{1;\alpha_1}}\Sigma_{\alpha_1\alpha_2}^{(\ell)}}\bigg|_{\alpha_1=\alpha_2=\alpha}}}. 
\]
Recall that 
\[
\Sigma_{\alpha_1\alpha_2}^{(\ell)} = \frac{C_W}{n_\ell}\sum_{j=1}^{n_\ell}\sigma\lr{z_{j;\alpha_1}^{(\ell)}}\sigma\lr{z_{j;\alpha_2}^{(\ell)}} 
\]
Hence, 
\[
\Sigma_{\alpha\alpha}^{(\ell)}-\E{\Sigma_{\alpha\alpha}^{(\ell)}} = \frac{C_W}{n_\ell}\sum_{j=1}^{n_\ell} X_{j;\alpha},\]
where
\[
X_{j;\alpha}:=\sigma\lr{z_{j;\alpha}^{(\ell)}}^2- \E{\sigma\lr{z_{j;\alpha}^{(\ell)}}^2}.
\]
Similarly  
\[
\partial_{x_{\alpha_1}}\Sigma_{\alpha_1\alpha_2}^{(\ell)}\bigg|_{\alpha_1=\alpha_2=\alpha}- \E{\partial_{x_{\alpha_1}}\Sigma_{\alpha_1\alpha_2}^{(\ell)}}\bigg|_{\alpha_1=\alpha_2=\alpha} = \frac{C_W}{n_\ell}\sum_{j=1}^{n_\ell} Y_{j;\alpha},\]
where
\[
Y_{j;\alpha} = \sigma\lr{z_{j;\alpha}^{(\ell)}}\sigma'\lr{z_{j;\alpha}^{(\ell)}} \partial_{x_{1;\alpha}} z_{j;\alpha}^{(\ell)} - \E{ \sigma\lr{z_{j;\alpha}^{(\ell)}}\sigma'\lr{z_{j;\alpha}^{(\ell)}} \partial_{x_{1;\alpha}} z_{j;\alpha}^{(\ell)}}.
\]
By symmetry, we obtain
\begin{align*}
\kappa_{(10)(00)}^{(\ell+1)} &=  \frac{C_W^2}{n_\ell}\E{X_{1;\alpha}Y_{1;\alpha}}+ C_W^2\lr{1-\frac{1}{n_\ell}} \E{X_{1;\alpha}Y_{2;\alpha}}.
\end{align*}
We have by Theorem \ref{T:pert-exp} that
\begin{align*}
   \frac{C_W^2}{n_\ell}\E{X_{1;\alpha}Y_{1;\alpha}} = \frac{C_W^2}{n_\ell}\left[\bkkl{\sigma^3\sigma' d_1z}-\bkkl{\sigma^2}\bkkl{\sigma\sigma'd_1z}\right] + O(n^{-2}),
\end{align*}
giving the first term in \eqref{E:k1000-rec}. Further, note that $X_{1;\alpha}Y_{2;\alpha}$ is a polynomial of total degree one in the derivative $d_1z_{i;\alpha}^{(\ell)}$. Hence, applying \eqref{E:f-exp-0} and \eqref{E:f-exp-1}, we find that $C_W^2 \E{X_{1;\alpha}Y_{2;\alpha}}$ equals
\begin{align*}
    C_W^2\bkal{X_{1;\alpha}}\bkal{Y_{2;\alpha\alpha}}
\end{align*}
plus
\begin{align*}
\frac{C_W^2}{4}\left[2\kappa_{(00)(00)}^{(\ell)}\bkkl{\Disq{ z_{1}} X_{1;\alpha}\Disq{z_{2}}Y_{2;\alpha}}+ 2\kappa_{(10)(00)}^{(\ell)}\bkkl{\Disq{z_1}X_{1;\alpha}\DDi{z_2}{d_1z_2} Y_{2;\alpha}}\right]
\end{align*} 
plus an error of size $O(n^{-2})$. Note that by Theorem \ref{T:pert-exp} we have
\begin{align*}
    \bkal{X_{1;\alpha}},\, \E{Y_{1;\alpha}} &= O(n^{-1}).
\end{align*}
Hence, 
\begin{align*}
     C_W^2\bkal{X_{1;\alpha}}\bkal{Y_{2;\alpha}} &= O(n^{-2}).
\end{align*}
Further, 
\[
\bkkl{\Disq{ z_{1}} X_{1;\alpha}\Disq{z_{2}}Y_{2;\alpha}} = \bkkl{\partial^2(\sigma^2)}\bkkl{\partial^2(\sigma\sigma')d_1z}
\]
and 
\[
\bkkl{\Disq{z_1}X_{1;\alpha}\DDi{z_2}{d_1z_2} Y_{2;\alpha}}= \bkkl{\partial^2 \sigma^2}\bkkl{\partial(\sigma\sigma')}.
\]
Thus, all together, we find
\begin{align*}
\kappa_{(10)(00)}^{(\ell+1)} &=  \frac{C_W^2}{n_\ell}\left[\bkkl{\sigma^3\sigma' d_1z}-\bkkl{\sigma^2}\bkkl{\sigma\sigma'd_1z}\right]\\
&+\frac{C_W^2}{4}\kappa_{(00)(00)}^{(\ell)}\bkkl{\partial^2(\sigma^2)}\bkkl{\partial^2(\sigma\sigma')d_1z}\\
&+\frac{C_W^2}{2} \kappa_{(10)(00)}^{(\ell)}\bkkl{\partial^2 (\sigma^2)}\bkkl{\partial(\sigma\sigma')}\\
&+O(n^{-2}),
\end{align*}
completing our derivation of \eqref{E:k1000-rec}.

\end{proof}

\subsection{Recursions for Finite Width Corrections to 2nd Cumulant}\label{S:S2-recs}
We remind the reader of our standing notation. Namely, we fix a network input $x_\alpha\neq 0\in \R^{n_0}$ and define 
\[
\kappa_{(ij)}^{(\ell)}:=\Cov\lr{\partial_{x_{i;\alpha}}z_{i;\alpha}^{(\ell)},\, \partial_{x_{j;\alpha}}z_{i;\alpha}^{(\ell)}} = \partial_{x_{i;\alpha_1}}\partial_{x_{j;\alpha_2}}\kappa_{\alpha_1\alpha_2}^{(\ell)}\bigg|_{x_{\alpha_1}=x_{\alpha_2}=x_\alpha},\qquad i,j=1,\ldots, n_0.
\]
We also recall that 
\[
K_{(ij)}^{(\ell)}:=\lim_{n\gives \infty}\kappa_{(ij)}^{(\ell)},\qquad 
S_{(ij)}^{(\ell)}:=\kappa_{(ij)}^{(\ell)}- K_{(ij)}^{(\ell)}. 
\]
The purpose of this section is to derive recursions for $S_{(ij)}^{(\ell)}$ to leading order in $1/n$. We have
\begin{proposition}\label{P:S-recs}
We have
\begin{align}
   \label{E:S00-rec}S_{(00)}^{(\ell+1)}&=\frac{C_W}{8}\kappa_{4;\alpha}^{(\ell)}\bk{\partial^4 \sigma^2}_{K_{(00)}^{(\ell)}} +\chi_{||;\alpha}^{(\ell)}S_{(00)}^{(\ell)} + O(n^{-2}).
\end{align}
Further, 
\begin{align}
 \label{E:S10-rec}     S_{(10)}^{(\ell+1)}&= \chi_{||}^{(\ell)}S_{(10)}^{(\ell)} +K_{(10)}^{(\ell)}\frac{C_W}{2}S_{(00)}^{(\ell)}\bk{\partial^3(\sigma\sigma')}_{K^{(\ell)}}\\
\notag &+\frac{C_W}{8}\left[\kappa_{(00)(00)}^{(\ell)}\bk{\partial^4(\sigma\sigma')d_1z}_{\kappa^{(\ell)}}+4\kappa_{(10)(00)}^{(\ell)} \bk{\partial^3(\sigma\sigma')}_{\kappa^{(\ell)}}\right] + O(n^{-2}).
\end{align}
Moreover, 
\begin{align}
 \label{E:S11-rec}      S_{(11)}^{(\ell+1)}&=\chi_{\perp}^{(\ell)} S_{(11)}^{(\ell)} +\frac{C_W}{2}K_{(11)}^{(\ell)} S_{(00)}^{(\ell)}\bkkl{\partial^2 (\sigma')^2}+O(n^{-2})\\
 \notag  &+\frac{C_W}{2}\lr{K_{(10)}^{(\ell)}}^2  S_{(00)}^{(\ell)}\bkkl{\partial^4(\sigma')^2}+2C_WK_{(10)}^{(\ell)}S_{(10)}^{(\ell)}\bkkl{\partial^2 (\sigma')^2}\\
    \notag&+\frac{C_W}{8}\bigg\{\kappa_{(00)(00)}^{(\ell)} \lr{K_{(11)}^{(\ell)}\bkkl{\partial^4(\sigma')^2} + \lr{K_{(10)}^{(\ell)}}^2\bkkl{\partial^6 (\sigma')^2}}\\
    \notag &\qquad \qquad +8\kappa_{(10)(00)}^{(\ell)}K_{(10)}^{(\ell)}\bkkl{\partial^4(\sigma')^2} + 4\lr{2\kappa_{(10)(10)}^{(\ell)}+\kappa_{(11)(00)}^{(\ell)}}\bkkl{\partial^2(\sigma')^2}\bigg\}.
\end{align}
Finally, 
\begin{align}
  \label{E:S12-rec}     S_{(12)}^{(\ell+1)} &= \chi_{\perp}^{(\ell)}S_{(12)}^{(\ell)} +\frac{C_W}{2}S_{(00)}^{(\ell)}\lr{K_{(12)}^{(\ell)}\bkl{\partial^2(\sigma')^2}+K_{(10)}^{(\ell)}K_{(20)}^{(\ell)}\bkl{\partial^4(\sigma')^2}}\\
 \notag   &+C_W\lr{K_{(10)}^{(\ell)}S_{(20)}^{(\ell)}+K_{(20)}^{(\ell)}S_{(10)}^{(\ell)}}\bkl{\partial^2(\sigma')^2}+O(n^{-2})\\
 \notag     &+\frac{1}{8}\bigg\{ \kappa_{(00)(00)}^{(\ell)}\lr{K_{(12)}^{(\ell)}\bkl{\partial^4(\sigma')^2}+K_{(10)}^{(\ell)}K_{(20)}^{(\ell)}\bkl{\partial^6 (\sigma')^2}}\\
  \notag    &\qquad +4\lr{\kappa_{(10)(00)}^{(\ell)}K_{(20)}^{(\ell)}+\kappa_{(20)(00)}^{(\ell)}K_{(10)}^{(\ell)}}\bkl{\partial^4(\sigma')^2}\\
  \notag    &\qquad + 4\lr{\kappa_{(12)(00)}^{(\ell)} + 2\kappa_{(10)(20)}^{(\ell)}}\bkkl{\partial^2(\sigma')^2}\bigg\}.
\end{align}
\end{proposition}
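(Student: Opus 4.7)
The plan is to compute each $\kappa_{(ij)}^{(\ell+1)}$ as an expectation, apply Theorem~\ref{T:pert-exp} to expand that expectation in powers of $1/n$ to order $O(n^{-2})$, and then subtract the infinite-width recursion from Lemma~\ref{L:K-deriv-recs} to isolate the finite-width correction $S_{(ij)}^{(\ell+1)}$. Since the quantities $S_{(ij)}^{(\ell)}$ and the collective-observable cumulants $\kappa_{(j_1j_2)(j_1'j_2')}^{(\ell)}$ are themselves $O(n^{-1})$ by Theorem~\ref{T:cumulants}, at every step I only need to track leading-order contributions on the right hand side.

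Concretely, I will begin by writing
\[
\kappa_{(ij)}^{(\ell+1)} = \Cov\lr{d_iz_{1;\alpha}^{(\ell+1)}, d_jz_{1;\alpha}^{(\ell+1)}} = \E{d_{i}d_{j}\Sigma_{\alpha_1\alpha_2}^{(\ell)}\big|_{\alpha_1=\alpha_2=\alpha}},
\]
which follows by conditioning on $\mF^{(\ell)}$ and using Lemma~\ref{L:cond-indep}. The right-hand side is an expectation of a collective observable in the field $\lr{d_kz_{i;\alpha}^{(\ell)}}$, precisely of the form handled by Theorem~\ref{T:pert-exp} with $q_*=1$ and $r=\max(i,j)$. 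Applying that theorem yields a leading Gaussian integral against the true finite-width covariance $\kappa^{(\ell)}$ plus a single $O(n^{-1})$ correction involving the mixed cumulants $\kappa_{(j_1j_2)(j_1'j_2')}^{(\ell)}$ contracted with pairs of second derivatives of the integrand (as in \eqref{E:f-exp-0}--\eqref{E:f-exp-4} in the previous section).

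The crucial second step is to Taylor expand the leading Gaussian integral around the infinite-width covariance $K^{(\ell)}$. Using Lemma~\ref{L:smooth-gauss}, for any smooth polynomially bounded $f$ of $\lr{d_kz_i}$,
\[
\bk{f}_{\kappa^{(\ell)}}=\bk{f}_{K^{(\ell)}}+\sum_{j_1\le j_2}S_{(j_1j_2)}^{(\ell)}\bk{\partial_{d_{j_1}z}\partial_{d_{j_2}z}f}_{K^{(\ell)}}+O(n^{-2}),
\]
since $S_{(j_1j_2)}^{(\ell)}=O(n^{-1})$ and second-order terms in $S^{(\ell)}$ are $O(n^{-2})$. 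The pairwise derivative identities such as $\frac{C_W}{2}\bkl{\partial^2\sigma^2}=\chi_{||}^{(\ell)}$ and $C_W\bkl{(\sigma')^2}=\chi_{\perp}^{(\ell)}$ then produce the ``multiplier'' terms $\chi_{||}^{(\ell)}S_{(ij)}^{(\ell)}$ or $\chi_{\perp}^{(\ell)}S_{(ij)}^{(\ell)}$ on the right, while cross-derivatives such as $K_{(10)}^{(\ell)}S_{(00)}^{(\ell)}\bk{\partial^3(\sigma\sigma')}_{K^{(\ell)}}$ arise from differentiating integrands that depend on more than one component of the covariance (as happens whenever the integrand involves $d_1z$). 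Subtracting the corresponding recursion of Lemma~\ref{L:K-deriv-recs} term by term then cancels the $\bk{\cdot}_{K^{(\ell)}}$ leading piece and leaves exactly the stated expression for $S_{(ij)}^{(\ell+1)}$, with all remaining $1/n$ corrections packaged into the fourth-cumulant terms produced in step one.

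The main obstacle is purely combinatorial bookkeeping: for $S_{(11)}^{(\ell+1)}$ and $S_{(12)}^{(\ell+1)}$ the integrand depends on $d_0z$, $d_1z$, and $d_2z$, so the Gaussian Taylor expansion in $\kappa^{(\ell)}$ contributes several distinct $S_{(j_1j_2)}^{(\ell)}$ terms weighted by different Gaussian moments such as $\bkl{\partial^4(\sigma')^2}$ and $\bkl{\partial^6(\sigma')^2}$, and the $1/n$ piece from Theorem~\ref{T:pert-exp} contributes yet another family of terms weighted by the ten or so cumulants $\kappa_{(j_1j_2)(j_1'j_2')}^{(\ell)}$ produced in Proposition~\ref{P:k4-deriv-recs}. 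The only subtlety is to verify that the contributions in which $S^{(\ell)}$ multiplies one of the $\kappa_{(j_1j_2)(j_1'j_2')}^{(\ell)}$ corrections from \eqref{E:f-exp-0}--\eqref{E:f-exp-4} are genuinely $O(n^{-2})$ and can be discarded; this is immediate since both factors are individually $O(n^{-1})$. Carrying out this accounting case by case for $(ij)\in\{(00),(10),(11),(12)\}$ and collecting like terms yields each of the four asserted recursions \eqref{E:S00-rec}--\eqref{E:S12-rec}.
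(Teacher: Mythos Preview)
Your proposal is correct and follows essentially the same approach as the paper: express $\kappa_{(ij)}^{(\ell+1)}$ via Theorem~\ref{T:pert-exp} with $q_*=1$, Taylor expand the leading Gaussian integral $\bk{\cdot}_{\kappa^{(\ell)}}$ about $K^{(\ell)}$ (the paper does this concretely by Gaussian integration by parts and then writing $\kappa^{(\ell)}=K^{(\ell)}+S^{(\ell)}$), subtract the $K$-recursion from Lemma~\ref{L:K-deriv-recs}, and discard the $S^{(\ell)}\cdot\kappa_{(\cdot)(\cdot)}^{(\ell)}$ cross terms as $O(n^{-2})$. The paper carries out the details only for $S_{(12)}^{(\ell+1)}$ and notes the other cases are analogous, exactly as you outline.
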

\begin{proof}
The derivation of these recursion is very similar, so we provide the details for the most involved case of $S_{(12)}^{_{(\ell+1)}}.$ By definition, we have
\begin{align*}
    \kappa_{(12)}^{(\ell+1)}&=\E{d_1z_{i;\alpha}^{(\ell+1)}d_2z_{i;\alpha}^{(\ell+1)}}=C_W\E{\sigma'(z_{i;\alpha}^{(\ell)})^2 d_1 z_{i;\alpha}^{(\ell)}d_2z_{i;\alpha}^{(\ell)}}.
\end{align*}
Applying Theorem \ref{T:pert-exp} (or more precisely equations \eqref{E:f-exp-0} -\eqref{E:f-exp-2}) we therefore find
\begin{align}
 \notag   \kappa_{(12)}^{(\ell+1)}&=C_W\bk{(\sigma')^2 d_1z d_2z}_{\kappa^{(\ell)}}+O(n^{-2})\\
 \notag   &+\frac{C_W}{8}\bigg\{\kappa_{(00)(00)}^{(\ell)}\bkkl{\partial^4(\sigma')^2 d_1z d_2z}\\
    \notag &\qquad \qquad+4\kappa_{(10)(00)}^{(\ell)}\bkkl{\partial^3 (\sigma')^2d_2 z}+4\kappa_{(20)(00)}^{(\ell)}\bkkl{\partial^3 (\sigma')^2d_1 z}\\
 \label{E:S12-deriv}   &\qquad \qquad +4\lr{\kappa_{(12)(00)}^{(\ell)} + 2\kappa_{(10)(20)}^{(\ell)}}\bkkl{\partial^2 (\sigma')^2}\bigg\}.
\end{align}
Next, integrating by parts yields
\begin{align*}
   \bk{(\sigma')^2 d_1z d_2z}_{\kappa^{(\ell)}}&=\kappa_{(12)}^{(\ell)}\bkal{(\sigma')^2} + \kappa_{(10)}^{(\ell)}\kappa_{(20)}^{(\ell)}\bkal{\partial^2(\sigma')^2}\\
   &=\kappa_{(12)}^{(\ell)}\lr{\bkkl{(\sigma')^2} +\frac{1}{2}S_{(00)}^{(\ell)}\bkkl{\partial^2 (\sigma')^2}}\\
   &+ \lr{K_{(10)}^{(\ell)}+S_{(10)}^{(\ell)}}\lr{K_{(20)}^{(\ell)}+S_{(20)}^{(\ell)}}\lr{\bkkl{\partial^2(\sigma')^2}+\frac{1}{2}S_{(00)}^{(\ell)}\bkkl{\partial^2(\sigma')^2}}.
\end{align*}
Similarly, for $j=1,2$ we find
\begin{align*}
   \bk{\partial^4 (\sigma')^2 d_1z d_2z}_{K^{(\ell)}}
   &=K_{(12)}^{(\ell)}\bkkl{\partial^4(\sigma')^2}+K_{(10)}^{(\ell)}K_{(20)}^{(\ell)}\bkkl{\partial^6(\sigma')^2} + O(n^{-1})\\
   \bkkl{\partial^3 (\sigma')^2d_j z}&=K_{(j0)}^{(\ell)}  \bkkl{\partial^4 (\sigma')^2}.
\end{align*}
Substituting this into \eqref{E:S12-deriv}, comparing with the recursion \eqref{E:K12-rec}, and recalling that $\kappa_{(j_1j_2)(j_1'j_2')}^{_{(\ell)}}=O(n^{-1})$ yields the recursion \eqref{E:S12-rec}.
\end{proof}

\section{Proof of Theorem \ref{T:evgp}}\label{S:evgp-pf}
In this section we prove Theorem \ref{T:evgp}. The key technical step will be to solve the recursions for all possible second and fourth cumulants of the value and partial derivatives of some component of the output of a random fully connected network with respect to two different components of the network input that were obtained in \S\ref{S:2-4-deriv-recs}. Before diving into this part of the proof in \S \ref{S:tanh-rec-solutions}, we give the proof of Theorem \ref{T:evgp} modulo solving the recursions in \S \ref{S:2-4-deriv-recs}. 

\subsection{Reduction of Theorem \ref{T:evgp} to Cumulant Recursions from \S\ref{S:2-4-deriv-recs}}\label{S:evgp-pf-start}
We first recall the setup of Theorem \ref{T:evgp}. Namely, we fix a non-linearity $\sigma$ satisfying \eqref{A:sigma-prop} that belongs to the $K_*=0$ universality class, as defined in \S \ref{S:tanh-univ}, and consider a depth $L$ random fully connected network with input dimension $n_0$, hidden layer widths $n_1=\cdots=n_L=n$, output dimension $n_{L+1}$ and non-linearity $\sigma$. We assume this network is tuned to criticality (see \S \ref{S:crit}). We then fix a network input $x_\alpha\neq 0$ and seek to compute the averages 
\[
\E{\mathrm{Grad~Mean}^{(1)}}\qquad \text{and}\qquad \E{\mathrm{Grad~Var}^{(1)}}
\]
of the empirical mean 
\begin{align}
\label{E:emp-mean-def-2}    \mathrm{Grad~Mean}^{(1)} = \frac{1}{n_0n_1}\sum_{j=1}^{n_0}\sum_{i=1}^{n_1} \lr{\frac{\partial z_{q;\alpha}^{(L+1)}}{\partial W_{ij}^{(1)}}}^2
\end{align}
and the empirical variance 
\begin{align}\label{E:emp-var-def-2}
    \mathrm{Grad~Var}^{(1)} = \frac{1}{n_{0}n_1}\sum_{j=1}^{n_{0}}\sum_{i=1}^{n_1} \lr{\frac{\partial z_{q;\alpha}^{(L+1)}}{\partial W_{ij}^{(1)}}}^4 - \lr{  \mathrm{Grad~Mean}^{(1)}}^2
\end{align}
of the squared gradients $
(\partial z_{q;\alpha}^{_{(L+1)}}/\partial W_{ij}^{_{(1)}})^2$ over all weights in layer $1$. To evaluate these quantities to leading order in $n,\ell$, note that by symmetry the mean of the expressions $  \mathrm{Grad~Mean}^{(1)}$ and $  \mathrm{Grad~Var}^{(1)}$ \eqref{E:emp-mean-def-2} and \eqref{E:emp-var-def-2} are independent of the choice of $q$. Next, note that by the chain rule we have
\[
\frac{\partial z_{q;\alpha}^{(L+1)}}{\partial W_{ij}^{(1)}} = x_{j;\alpha}\sigma'(z_{i;\alpha}^{(1)})\frac{\partial z_{q;\alpha}^{(L+1)}}{\partial \sigma(z_{i;\alpha}^{(1)})}.
\]
Hence, by symmetry, 
\[
\E{\mathrm{Grad~Mean}^{{(1)}}} =  \frac{1}{n_0}\norm{x_\alpha}^2 \E{\lr{ \sigma'\lr{z_{1;\alpha}^{(1)}}\frac{\partial z_{q;\alpha}^{(L+1)}}{\partial \sigma(z_{i;\alpha}^{(1)})}}^2}.
\]
Recalling that $\mF^{(\ell)}$ is the sigma algebra generated by all weights and biases in layers up to and including $\ell$, we may thus write
\begin{align}
   \label{E:grad-mean-form}\E{\mathrm{Grad~Mean}^{{(1)}}} &=\frac{1}{n_0}\norm{x_\alpha}^2 \E{ \lr{\sigma'\lr{z_{1;\alpha}^{(1)}}}^2\E{\lr{\frac{\partial z_{q;\alpha}^{(L+1)}}{\partial \sigma(z_{i;\alpha}^{(1)})}}^2~\bigg|~\mF^{(1)}}}.
\end{align}
Observe that the conditional expectation in the previous line is precisely the square of the partial derivative of one component of the output in a random depth $L-1$ network tuned to criticality with respect to one component of the input $\sigma_{\alpha}^{_{(\ell)}}$. In the notation \eqref{E:kappa-deriv-def} above, we therefore have
\[
\E{\lr{\frac{\partial z_{q;\alpha}^{(L+1)}}{\partial \sigma(z_{1;\alpha}^{(1)})}}^2~\bigg|~\mF^{(\ell)}} = \kappa_{(11)}^{(L-1)}.
\]
We will see by combining Lemma \ref{L:2pt-deriv-recs-leading} and Proposition \ref{P:2pt-deriv-recs-subleading} below that 
\begin{align*}
    \kappa_{(11)}^{(L-1)}&=\frac{C_We^{-\gamma}}{n_1 L}\lr{1 + O(L^{-1})} + O(n^{-2}),
\end{align*}
where the implicit constant in $O(L^{-1})$ is independent of $L,n$. Since at leading order in $n,L$ this expression is a (non-random) constant, we write by a slight abuse of notation
\begin{align}
   \notag \E{\mathrm{Grad~Mean}^{{(1)}}} &=\frac{1}{n_0}\norm{x_\alpha}^2\E{ \lr{\sigma'\lr{z_{1;\alpha}^{(\ell)}}}^2}\kappa_{(11)}^{(L-1)}\\
   \label{E:grad-mean-kappa} &=\frac{1}{n_0}\norm{x_\alpha}^2\bk{(\sigma')^2}_{K^{(1)}}\kappa_{(11)}^{(L-1)}
\end{align}
where in the last step we use that pre-activations in layer $1$ are iid Gaussian even at finite width. We now similarly simplify the mean of the empirical gradient variance. Namely, we first note that by symmetry we have the decomposition
\[
\E{\mathrm{Grad~Var}^{(1)}} = I-II
\]
where
\begin{align*}
 I&:= \lr{\frac{1}{n_0}\norm{x_\alpha}_4^4-\frac{1}{n_1}\lr{\frac{1}{n_0}\norm{x_\alpha}^2}^2 }\E{\lr{\sigma'(z_{1;\alpha}^{(1)})\frac{\partial z_{q;\alpha}^{(L+1)}}{\partial \sigma(z_{1;\alpha}^{(1)})}}^4}\\
 II&:=\lr{\frac{1}{n_0}\norm{x_\alpha}^2}^2\lr{1-\frac{1}{n_1}}\E{\lr{\sigma'(z_{1;\alpha}^{(1)})\sigma'(z_{2;\alpha}^{(1)})\lr{\frac{\partial z_{q;\alpha}^{(L+1)}}{\partial \sigma(z_1^{(1)})}\frac{\partial z_{q;\alpha}^{(L+1)}}{\partial \sigma(z_2^{(1)})}}^2}^2}.
\end{align*}
By conditioning on $\mF^{(1)}$ as before (and again using that at leading order in $n,L$ the cumulants $\kappa_{(ij)(i'j')}^{(\ell)}$ are independent of the initial condition at large $\ell$) we have
\begin{align}
\label{E:mean-var-kappa-1} I&:= 3\lr{\kappa_{(11)(11)}^{(L-1)}+\lr{\kappa_{(11)}^{(L-1)}}^2}\bk{(\sigma')^4}_{K^{(\ell)}}\lr{\frac{1}{n_0}\norm{x_\alpha}_4^4-\frac{1}{n_1}\lr{\frac{1}{n_0}\norm{x_\alpha}^2}^2 }\\
\notag II&:=\lr{2\kappa_{(12)(12)}^{(L-1)}+2\lr{\kappa_{(12)}^{(L-1)}}^2 + \kappa_{(11)(22)}^{(L-1)} + \kappa_{(11)}^{(L-1)}\kappa_{(22)}^{(L-1)}}\\
 \label{E:mean-var-kappa-2} &\qquad \times \bk{(\sigma')^2}_{K^{(1)}}^2 \lr{\frac{1}{n_0}\norm{x_\alpha}^2}^2\lr{1-\frac{1}{n_1}}.
\end{align}
Combining the expressions \eqref{E:grad-mean-kappa}, \eqref{E:mean-var-kappa-1}, and \eqref{E:mean-var-kappa-2}, we find that
\begin{align*}
 \frac{\E{\mathrm{Grad~Var}^{{(1)}}}}{\E{\mathrm{Grad~Mean}^{{(1)}}}^2}   &= 3\frac{\bk{(\sigma')^4}_{K^{(1)}}}{\bk{(\sigma')^2}_{K^{(1)}}^2}\lr{\frac{n_0^{-1}\norm{x_\alpha}_4^4}{(n_{0}^{-1} \norm{x_\alpha}_2^2)^2}- \frac{1}{n_1}}\lr{\wkappa_{(11)(11)}^{(L-1)}+1}\\ &-\lr{1-\frac{1}{n_1}}\lr{2\wkappa_{(12)(12)}^{(L-1)}+2\lr{\wkappa_{(12)}^{(L-1)}}^2 + \wkappa_{(11)(22)}^{(L-1)} +1},
\end{align*}
plus errors of size $O(n^{-2})$, where we've abbreviated for $i,j\in \set{1,2}$
\begin{align*}
    \wkappa_{(ii)(jj)}^{(\ell)}:=\frac{\kappa_{(ii)(jj)}^{(\ell)}}{\kappa_{(ii)}^{(\ell)}\kappa_{(jj)}^{(\ell)}},\qquad \wkappa_{(12)(12)}^{(\ell)}:=\frac{\kappa_{(12)(12)}^{(\ell)}}{\kappa_{(11)}^{(\ell)}\kappa_{(22)}^{(\ell)}},\qquad \wkappa_{(12)}^{(\ell)}:=\frac{\kappa_{(12)}^{(\ell)}}{\lr{\kappa_{(11)}^{(\ell)}\kappa_{(22)}^{(\ell)}}^{1/2}}.
\end{align*}
Appealing to Propositions \ref{P:4pt-deriv-recs-tanh} and \ref{P:2pt-deriv-recs-subleading} we conclude 
\begin{align*}
    \wkappa_{(11)(11)}^{(L-1)}+1& =1+\frac{8L}{3n}+O(n^{-1})+O(n^{-2}) \\
    2\wkappa_{(12)(12)}^{(L-1)}+2\lr{\wkappa_{(12)}^{(L-1)}}^2 + \wkappa_{(11)(22)}^{(L-1)} +1& = 1+\frac{8L}{3n}+O(n^{-1})+O(n^{-2}),
\end{align*}
where the implicit constants in $O(n^{-1})$ depend only on $\sigma$ whereas the constant in $O(n^{-2})$ depends on both $\sigma$ and $L$.  Combining these expressions with the following estimates completes the derivation of Theorem \ref{T:evgp}, modulo solving the recursions for the $2nd$ cumulants $\kappa_{(ij)}^{_{(\ell)}}$ and the fourth cumulants $\kappa_{(i_1j_1)(i_2j_2)}^{_{(\ell)}}$, which we supply in the next section.  \hfill $\square$

\subsection{Completion of Proof of Theorem \ref{T:derivs-cumulants}: 2nd, 4th Cumulants at Large Depth for the $K_*=0$ Universality Class}\label{S:tanh-rec-solutions}
In this section, we complete the proof of Theorem \ref{T:evgp} by solving the 2nd and 4th cumulant recursion obtained in \S \ref{S:2-4-deriv-recs} when $\sigma$ belongs to the $K_*=0$ universality class. We start in \S \ref{S:k2-sol} by solving the infinite width variance recursions for $K_{(ij)}^{_{(\ell)}}$. We then use this for solving to leading order in $1/n$ the fourth cumulant recursions for $\kappa_{(ij)(i'j')}^{_{(\ell)}}$ in \S \ref{S:4pt-solns-tanh}. Finally, we combine in \S \ref{S:2pt-solns-tanh} the results of the preceding two sections to solve the recursions for $S_{(ij)}^{_{(\ell)}}$ again at leading order in $n, \ell$. 

\subsection{Two Point Recursions}\label{S:k2-sol}
We remind the reader of our standing notation. Namely, we let $\sigma$ be an non-linearity from the $K_*=0$ universality class (see \S \ref{S:tanh-univ}). We also fix a network input $x_\alpha\neq 0\in \R^{n_0}$ and define 
\[
K_{(ij)}^{(\ell)}:=\lim_{n\gives \infty}\Cov\lr{\partial_{x_{i;\alpha}}z_{1;\alpha}^{(\ell)},\, \partial_{x_{j;\alpha}}z_{1;\alpha}^{(\ell)}} = \partial_{x_{i;\alpha_1}}\partial_{x_{j;\alpha_2}}K_{\alpha_1\alpha_2}^{(\ell)}\bigg|_{x_{\alpha_1}=x_{\alpha_2}=x_\alpha},\qquad i,j=1,\ldots, n_0.
\]
The recursive description for $K_{(ij)}^{(\ell)}$ was obtained in Lemma \ref{L:K-deriv-recs}. These recursions held for any non-linearity and any values of $C_b,C_W$. The following Lemma records the large $\ell$ behavior of $K_{(ij)}^{(\ell)}$ when $\sigma$ belongs to the $K_*=0$ universality class and the network is tuned to criticality. 
\begin{lemma}\label{L:2pt-deriv-recs-leading} We have
\begin{align}
    \label{E:K00}K_{(00)}^{(\ell+1)} &= \frac{1}{a\ell}+O_\delta(\ell^{-2+\delta})\\
    \label{E:K10}K_{(10)}^{(\ell+1)} &=\frac{C_We^{-2\gamma}x_{1;\alpha}}{n_0\ell^2}\lr{1+O(\ell^{-1})}\\
    \label{E:K11}K_{(11)}^{(\ell+1)} & = \frac{C_We^{-\gamma}}{n_0\ell}\lr{1+O(\ell^{-1})} + O\lr{\frac{(\sigma_{1}^{(0)})^2}{n_0^2\ell^3}}\\
   \label{E:K12} K_{(12)}^{(\ell+1)} &=O\lr{\frac{x_{1;\alpha}x_{2;\alpha}}{n_0^2\ell^3}}
\end{align}
plus errors of size $O(n^{-1})$. 
\end{lemma}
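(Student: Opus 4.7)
My plan is to combine three ingredients: the defining polynomial decay \eqref{E:Kstar0-decay} of $K_{(00)}^{(\ell)}$, Taylor expansions of the susceptibility functions $\chi_{||}$ and $\chi_\perp$ about $K=0$, and the recursion-solving Lemma~\ref{L:rec-lemma}. The estimate \eqref{E:K00} is nothing but a restatement of \eqref{E:Kstar0-decay}, so the work consists of solving the three remaining recursions from Lemma~\ref{L:K-deriv-recs}.

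The preliminary step will be to compute the large-$\ell$ asymptotics of $\chi_{||}^{(\ell)}=\chi_{||}(K_{(00)}^{(\ell)})$ and $\chi_\perp^{(\ell)}=\chi_\perp(K_{(00)}^{(\ell)})$. Since $\sigma$ is smooth and odd with $\sigma(z)=\sigma_1 z+\sigma_3 z^3+O(z^5)$, Gaussian moment computations yield $C_W\bk{(\sigma')^2}_K=C_W\sigma_1^2+6C_W\sigma_1\sigma_3 K+O(K^2)$ and $C_W\bk{\sigma\sigma''}_K=6C_W\sigma_1\sigma_3 K+O(K^2)$. Using the critical values $C_W=\sigma_1^{-2}$ and $a=-6\sigma_3/\sigma_1$ together with \eqref{E:K00}, I conclude
\begin{equation*}
\chi_\perp^{(\ell)}=1-\ell^{-1}+O(\ell^{-2+\delta}),\qquad \chi_{||}^{(\ell)}=1-2\ell^{-1}+O(\ell^{-2+\delta}).
\end{equation*}

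With these in hand, each remaining recursion becomes an instance of Lemma~\ref{L:rec-lemma}. For \eqref{E:K10}, the recursion is purely multiplicative, $K_{(10)}^{(\ell+1)}=\chi_{||}^{(\ell)}K_{(10)}^{(\ell)}$, so I unfold the product and use $\sum_{\ell'=1}^{\ell}1/\ell'=\log\ell+\gamma+O(\ell^{-1})$ to obtain $\prod_{\ell'=1}^{\ell}\chi_{||}^{(\ell')}=e^{-2\gamma}\ell^{-2}(1+O(\ell^{-1}))$. Coupling with the initial value $K_{(10)}^{(1)}=C_W x_{1;\alpha}/n_0$ (immediate from differentiating $K_{\alpha_1\alpha_2}^{(1)}=C_b+(C_W/n_0)\sum_j x_{j;\alpha_1}x_{j;\alpha_2}$) yields \eqref{E:K10}. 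For \eqref{E:K11} and \eqref{E:K12}, I apply Lemma~\ref{L:rec-lemma} with $\zeta_\ell=1-\chi_\perp^{(\ell)}=\ell^{-1}+O(\ell^{-2+\delta})$ (so $C_2=1$) and forcing $\xi_\ell=C_W\bk{\partial^2(\sigma')^2}_{K^{(\ell)}}K_{(10)}^{(\ell)}K_{(j0)}^{(\ell)}$. Substituting the just-derived asymptotics of $K_{(10)}^{(\ell)}$ shows $\xi_\ell=O(\ell^{-4})$, i.e.\ $\psi=4$. Lemma~\ref{L:rec-lemma} then furnishes a forcing contribution of size $O(\ell^{-3})$ (with the appropriate $x_{1;\alpha}x_{2;\alpha}/n_0^2$ or $x_{1;\alpha}^2/n_0^2$ prefactor) plus an initial-condition contribution $e^{-\gamma}\ell^{-1}K_{(j\cdot j)}^{(1)}(1+O(\ell^{-1}))$. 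A direct computation gives $K_{(11)}^{(1)}=C_W/n_0$ and $K_{(12)}^{(1)}=0$, so \eqref{E:K11} inherits the initial-condition term as its leading order while \eqref{E:K12} is controlled entirely by the forcing.

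I anticipate the main bookkeeping obstacle is simply tracking the error orders carefully: Lemma~\ref{L:rec-lemma} requires $\zeta_\ell-C_2\ell^{-1}=O(\ell^{-2})$, whereas strictly speaking our Taylor expansion yields only $O(\ell^{-2+\delta})$ because of the $\delta$ in \eqref{E:Kstar0-decay}. The consequence is that the $O(\ell^{-1})$ error in the lemma's conclusion should morally be read as $O(\ell^{-1+\delta})$; however, inspecting the proof of Lemma~\ref{L:rec-lemma} one sees the sum $\sum_{\ell''\leq \ell}O(\ell''^{-2+\delta})=O(\ell^{-1+\delta})$ gets exponentiated and absorbed, so the $\delta$-error only affects lower-order terms and does not contaminate the leading asymptotics claimed in \eqref{E:K10}--\eqref{E:K12}. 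No conceptual difficulty beyond this remains; the entire proof is essentially a careful application of Lemma~\ref{L:rec-lemma} with the correct input data.
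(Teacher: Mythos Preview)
Your proposal is correct and follows essentially the same approach as the paper: use the recursions from Lemma~\ref{L:K-deriv-recs}, Taylor-expand $\chi_{||}^{(\ell)}$ and $\chi_\perp^{(\ell)}$ about $K=0$ to get $1-2/\ell+\cdots$ and $1-1/\ell+\cdots$, unfold the product for \eqref{E:K10}, and apply Lemma~\ref{L:rec-lemma} for \eqref{E:K11}--\eqref{E:K12} with the same initial values $K_{(11)}^{(1)}=C_W/n_0$ and $K_{(12)}^{(1)}=0$. Your treatment of the $O(\ell^{-2+\delta})$ error in $\zeta_\ell$ is in fact slightly more careful than the paper's, which silently writes $O((\ell')^{-2})$ in the product computation.
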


\begin{proof}
From Lemma \ref{L:K-deriv-recs} we have, up to errors of size $O(n^{-1})$, that
\begin{align}
\label{E:K00-rec2}    K_{(00)}^{(\ell+1)} &= C_W\bkl{\sigma^2}\\
\label{E:K10-rec2}       K_{(10)}^{(\ell+1)} &=\chi_{||}^{(\ell)}K_{(10)}^{(\ell)}\\
\label{E:K11-rec2}       K_{(11)}^{(\ell+1)} &=C_W\bk{\partial^2 (\sigma')^2}_{(\ell)}\lr{K_{(10)}^{(\ell)}}^2+\chi_{\perp}^{(\ell)}K_{(11)}^{(\ell)}\\
\label{E:K12-rec2}       K_{(12)}^{(\ell+1)} &=C_W\bk{\partial^2 (\sigma')^2}_{(\ell)}K_{(10)}^{(\ell)}K_{(20)}^{(\ell)}+\chi_{\perp}^{(\ell)}K_{(12)}^{(\ell)}.
\end{align}
We already proved \eqref{E:K00} in \S \ref{S:iw-tanh}. Next, \eqref{E:K10-rec2} yields 
\[
K_{(10)}^{(\ell+1)} =K_{(10)}^{(1)}\prod_{\ell'=1}^\ell \chi_{||}^{(\ell')}.
\]
By direct computation we have
\[
K_{(10)}^{(1)} =\lim_{n\gives \infty} \E{z_{i;\alpha}^{(1)}d_1z_{i;\alpha}^{(1)}} = \frac{C_W}{n_0} x_{1;\alpha}.
\]
Moreover, 
\begin{align*}
\prod_{\ell'=1}^\ell \chi_{||}^{(\ell')} &= \prod_{\ell'=1}^\ell \lr{1-\frac{2}{\ell'}+O((\ell')^{-2})}\\
& \exp\left[\sum_{\ell'=1}^\ell \log\lr{1-\frac{2}{\ell'}+O((\ell')^{-2})}\right]\\
&=     \exp\left[O(\ell^{-1})+-2\sum_{\ell'=1}^\ell\frac{1}{\ell'}\right]\\
&=e^{-2\gamma}\ell^{-2}\lr{1+O(\ell^{-1})},
\end{align*}
where $\gamma$ is the Euler-Mascheroni constant. This yields \eqref{E:K10}. Further, 
\[
C_W\bk{\partial^2 (\sigma')^2}_{(\ell)}\lr{K_{(10)}^{(\ell)}}^2 = O\lr{\frac{(\sigma_{1}^{(0)})^2}{n_0^2\ell^4}}.
\]
Hence, using Lemma \ref{L:rec-lemma}, we find to leading order in $\ell, n$ that
\[
 K_{(11)}^{(\ell+1)} = K_{11}^{(1)}\prod_{\ell'=1}^\ell \chi_{\perp}^{(\ell)} +O\lr{\frac{(\sigma_{1}^{(0)})^2}{n_0^2\ell^3}}=K_{11}^{(1)}\prod_{\ell'=1}^\ell \lr{1-\ell^{-1}+O(\ell^{-2})} +O\lr{\frac{(\sigma_{1}^{(0)})^2}{n_0^2\ell^3}}
\]
Noting that
\[
K_{11}^{(1)} = \frac{C_W}{n_0}
\]
shows
 \[
K_{(11)}^{(\ell+1)} =K_{11}^{(1)}\prod_{\ell'=1}^\ell \chi_{\perp}^{(\ell)} +O\lr{\frac{(\sigma_{1}^{(0)})^2}{n_0^2\ell^3}} = \frac{C_We^{-\gamma}}{n_0\ell}\lr{1+O(\ell^{-1})} + O\lr{\frac{(\sigma_{1}^{(0)})^2}{n_0^2\ell^3}},
 \]
completing the derivation of \eqref{E:K11}. The derivation of \eqref{E:k12} is similar except that $K_{12}^{(1)} = 0.$
\end{proof}


\subsection{Four Point Recursions}\label{S:4pt-solns-tanh}
We remind the reader of our standing notation. Namely, we let $\sigma$ be an non-linearity from the $K_*=0$ universality class (see \S \ref{S:tanh-univ}). We also fix a network input $x_\alpha\neq 0\in \R^{n_0}$ and recall that 
\begin{align*}
\kappa_{(ij)(i'j')}^{(\ell)}&:=\Cov\lr{\partial_{x_{i;\alpha}}\partial_{x_{j;\alpha}}\Delta_{\alpha\alpha}^{(\ell)},\, \partial_{x_{i';\alpha}}\partial_{x_{j';\alpha}}\Delta_{\alpha\alpha}^{(\ell)}}\\
& = \kappa\lr{\partial_{x_{i;\alpha}}z_{1;\alpha}^{(\ell+1)},\,\partial_{x_{j;\alpha}}z_{1;\alpha}^{(\ell+1)} ,\, \partial_{x_{i';\alpha}}z_{2;\alpha}^{(\ell+1)}, \, \partial_{x_{j';\alpha}}z_{2;\alpha}^{(\ell+1)}},    
\end{align*}
where $\Delta_{\alpha\alpha}^{(\ell)}$ is defined in \eqref{E:Delta-def}.

The recursive description for $\kappa_{(ij)(i'j')}^{(\ell)}$ was obtained in Proposition \ref{P:k4-deriv-recs}. These recursions held for any non-linearity and any values of $C_b,C_W$. The following Lemma records the large $\ell$ behavior of $\kappa_{(ij)(i'j')}^{(\ell)}$ when $\sigma$ belongs to the $K_*=0$ universality class and the network is tuned to criticality. 


\begin{proposition}\label{P:4pt-deriv-recs-tanh}
We have
\begin{align}
 \label{E:k0000-form}   \kappa_{(00)(00)}^{(\ell)} &=\frac{2}{3n\ell a^2}(1+O(\ell^{-1})) = \frac{2\ell}{3n}\lr{K_{(00)}^{(\ell)}}^2(1+O(\ell^{-1})) \\
  \label{E:k1000-form}     \kappa_{(10)(00)}^{(\ell+1)}&=\frac{C_We^{-2\gamma}x_{1;\alpha}}{3ann_0\ell^2}\lr{1+O(\ell^{-1})} = \frac{\ell}{3n}K_{(10)}^{(\ell)}K_{(00)}^{(\ell)}\lr{1+O(\ell^{-1})}\\
 \label{E:k1010-form}      \kappa_{(10)(10)}^{(\ell+1)} &=\frac{C_W e^{-\gamma}}{3ann_0 \ell}(1+O(\ell^{-1}))=\frac{\ell}{3n} K_{(00)}^{(\ell)}K_{(11)}^{(\ell)}(1+O(\ell^{-1}))\\
  \label{E:k1020-form}    \kappa_{(10)(20)}^{(\ell+1)}&=O\lr{\frac{x_{1;\alpha}x_{2;\alpha}}{nn_0^2\ell^3}}\\
   \label{E:k1100-form}     \kappa_{(11)(00)}^{(\ell+1)} &=-\frac{C_W e^{-\gamma}}{3ann_0\ell}(1+O(\ell^{-1}))=-\frac{\ell}{3n}K_{(11)}^{(\ell)}K_{(00)}^{(\ell)}(1+O(\ell^{-1}))\\
  \label{E:k1200-form}    \kappa_{(12)(00)}^{(\ell+1)}&= O\lr{\frac{1}{nn_0^2\ell^3}}\\
    \label{E:k1110-form}    \kappa_{(11)(10)}^{(\ell+1)} &=O\lr{\frac{1}{nn_0^2\ell^{2}}}\\
 \label{E:k1210-form}   \kappa_{(12)(10)}^{(\ell+1)}&=O\lr{\frac{x_{2;\alpha}}{nn_0^2\ell}}\\
 \label{E:k1120-form}   \kappa_{(11)(20)}^{(\ell+1)}&=O\lr{\frac{x_{2;\alpha}}{nn_0^2\ell}}\\
\label{E:k1111-form}    \kappa_{(11)(11)}^{(\ell+1)} &= \frac{8C_W^2e^{-2\gamma}}{3nn_0^2\ell}(1+O(\ell^{-1})) = \frac{8\ell}{3n}\lr{K_{(11)}^{(\ell)}}^2(1+O(\ell^{-1})) \\
 \label{E:k1122-form}       \kappa_{(11)(22)}^{(\ell+1)}&=\frac{2C_W^2 e^{-2\gamma}}{3nn_0^2\ell}(1+O(\ell^{-1}))=\frac{2\ell}{3n}K_{(11)}^{(\ell)}K_{(22)}^{(\ell)}(1+O(\ell^{-1})) \\
   \label{E:k1212-form}     \kappa_{(12)(12)}^{(\ell+1)}&=\frac{C_W^2 e^{-2\gamma}}{nn_0^2\ell}(1+O(\ell^{-1}))=\frac{\ell}{n}K_{(11)}^{(\ell)}K_{(22)}^{(\ell)}(1+O(\ell^{-1}))  
\end{align}
plus errors of size $O(n^{-2})$. Here, $\gamma$ denotes the Euler-Mascheroni constant. 
\end{proposition}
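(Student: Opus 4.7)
My plan is to reduce each of the twelve cumulant recursions in Proposition \ref{P:k4-deriv-recs} to the scalar form $a_{\ell+1} = \xi_\ell + (1-\zeta_\ell)a_\ell$ treated by Lemma \ref{L:rec-lemma}. The $K_*=0$ class gives $K_{(00)}^{(\ell)} \sim 1/(a\ell)$ and the other $K_{(ij)}^{(\ell)}$ decay as shown in Lemma \ref{L:2pt-deriv-recs-leading}, so every Gaussian expectation $\bk{\cdot}_{K^{(\ell)}}$ appearing on the right-hand side of the recursions can be Taylor-expanded around $K_{(00)}^{(\ell)} = 0$ using $\sigma(z) = \sigma_1 z + \sigma_3 z^3 + \cdots$ together with the criticality identities $C_W\sigma_1^2 = 1$ and $\chi_{||}(0) = \chi_\perp(0) = 1$. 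In each case the leading Taylor coefficient produces a source term $\xi_\ell$ that is a pure power $C_1\ell^{-\psi}$, and the multiplicative factor on $\kappa^{(\ell)}$ expands as $1 - C_2/\ell + O(\ell^{-2})$ for an integer $C_2$ determined by which product of $\chi$'s appears.

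\textbf{Triangular organization.} The coupled system is essentially upper-triangular when the cumulants are ordered by total derivative count. One begins with the scalar recursion for $\kappa_{(00)(00)}^{(\ell+1)}$, which is a restatement of \eqref{E:k4-rec} and yields \eqref{E:k0000-form} via Lemma \ref{L:rec-lemma} with $C_2 = 4$ (from $(\chi_{||}^{(\ell)})^2$), $\psi = 2$, and source $T_{0,2;\alpha}^{(\ell)}/n \sim 2/(na^2\ell^2)$. With $\kappa_{(00)(00)}^{(\ell)}$ in hand, one next solves the recursions for the one-derivative cumulants $\kappa_{(10)(00)}^{(\ell)}$, $\kappa_{(10)(10)}^{(\ell)}$, $\kappa_{(11)(00)}^{(\ell)}$ (and their $2$-variants), each of which receives contributions of two types from Proposition \ref{P:k4-deriv-recs}: a direct source $C_W^2 \bkl{\cdots}/n_\ell$ and inhomogeneous terms proportional to the already-solved lower-order cumulants. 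These aggregate into a power-law source $\xi_\ell$ to which Lemma \ref{L:rec-lemma} applies. Finally, using all previous outputs as inhomogeneous data, one solves the two-derivative recursions for $\kappa_{(11)(11)}^{(\ell)}$, $\kappa_{(11)(22)}^{(\ell)}$, and $\kappa_{(12)(12)}^{(\ell)}$, where the multiplicative factor is $(\chi_\perp^{(\ell)})^2$ and so $C_2 = 2$.

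\textbf{Expansions required.} The Gaussian moments $\bk{\sigma^k (\sigma')^j d_1 z\cdots d_2 z}_{K^{(\ell)}}$ all reduce via Gaussian integration by parts to polynomials in the four quantities $K_{(00)}^{(\ell)}, K_{(10)}^{(\ell)}, K_{(11)}^{(\ell)}, K_{(12)}^{(\ell)}$ with coefficients given by Taylor expansions in $K_{(00)}^{(\ell)}$ of univariate Gaussian integrals of $\sigma$. The scales $K_{(10)}^{(\ell)} = O(\ell^{-2})$ and $K_{(12)}^{(\ell)} = O(\ell^{-3})$ from Lemma \ref{L:2pt-deriv-recs-leading} decay strictly faster than $K_{(11)}^{(\ell)} = O(\ell^{-1})$, so any source contribution that is forced to carry a factor of $K_{(10)}^{(\ell)}$ or $K_{(12)}^{(\ell)}$ is automatically subleading. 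This is the mechanism behind the smaller orders in \eqref{E:k1020-form}, \eqref{E:k1200-form}, \eqref{E:k1110-form}, \eqref{E:k1210-form}, \eqref{E:k1120-form}.

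\textbf{Main obstacle.} The principal technical difficulty is a uniform bookkeeping of Taylor expansions across twelve coupled recursions while correctly identifying $(\psi, C_2)$ and the leading numerical coefficient for each. Two points need particular care. First, for $\kappa_{(11)(00)}^{(\ell+1)}$ in \eqref{E:k1100-form}, the leading coefficient $-1/3$ is negative, so one must verify that the cancellation $\bkl{(\sigma\sigma' d_1 z)^2}-\bkl{\sigma^2}\bkl{(\sigma' d_1 z)^2}$ produces the correct sign and that the inhomogeneous terms proportional to $\kappa_{(00)(00)}^{(\ell)}$ and $\kappa_{(10)(00)}^{(\ell)}$ do not overwhelm it. Second, for the subleading cumulants one must show that the apparently leading source contribution vanishes \emph{identically} rather than merely in order of magnitude; this rests on the oddness of $\sigma$, which kills Gaussian moments of odd polynomials in $z$, and on the identities $C_W\bkl{(\sigma')^2} = 1$, $C_W\bkl{\sigma\sigma'\cdot z}/K_{(00)}^{(\ell)} = 1 + O(K_{(00)}^{(\ell)})$ that follow from criticality. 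Once these cancellations are certified, every recursion falls under Lemma \ref{L:rec-lemma} in the form stated above, and the proposition follows.
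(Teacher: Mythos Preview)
Your strategy is the same as the paper's: feed the recursions of Proposition~\ref{P:k4-deriv-recs} into Lemma~\ref{L:rec-lemma} after Taylor-expanding the Gaussian brackets at $K_*=0$, and process them in a triangular order by total derivative count. That is exactly right, and the mechanism you cite for the ``small'' estimates \eqref{E:k1020-form}, \eqref{E:k1200-form}, \eqref{E:k1110-form}, \eqref{E:k1210-form}, \eqref{E:k1120-form} is the correct one.

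There is one specific point where your accounting is inverted. For $\kappa_{(11)(00)}^{(\ell+1)}$ the direct source $\tfrac{C_W^2}{n_\ell}\bigl[\bkl{(\sigma\sigma' d_1z)^2}-\bkl{\sigma^2}\bkl{(\sigma'd_1z)^2}\bigr]$ is \emph{not} where the negative $-\tfrac{1}{3}$ comes from: after integration by parts this difference is $K_{(11)}^{(\ell)}\bigl[\bkl{\sigma^2(\sigma')^2}-\bkl{\sigma^2}\bkl{(\sigma')^2}\bigr]+O\bigl((K_{(10)}^{(\ell)})^2\bigr)$, and the bracketed variance is $O((K_{(00)}^{(\ell)})^2)$, so the whole term is $O\bigl(1/(nn_0\ell^3)\bigr)$. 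The leading source is instead the inhomogeneous coupling $\tfrac{C_W^2}{8}\cdot 2\,\kappa_{(00)(00)}^{(\ell)}\,\bkl{\partial^2\sigma^2}\,\bkl{\partial^2(\sigma')^2(d_1z)^2}$, which at leading order equals $-a\,K_{(11)}^{(\ell)}\,\kappa_{(00)(00)}^{(\ell)}\sim -\tfrac{2C_We^{-\gamma}}{3ann_0\ell^2}$ and is one power of $\ell$ larger. With $C_2=3$ (from $\chi_\perp^{(\ell)}\chi_{||}^{(\ell)}$) and $\psi=2$, Lemma~\ref{L:rec-lemma} then produces \eqref{E:k1100-form} with the minus sign. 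So the inhomogeneous term does not merely ``not overwhelm'' the direct source---it \emph{is} the dominant contribution. This matters downstream: the constants $8/3$ and $2/3$ in \eqref{E:k1111-form} and \eqref{E:k1122-form} arise only because $\kappa_{(11)(00)}^{(\ell)}$ feeds back into those recursions at the same order as their direct sources (for $\kappa_{(11)(11)}$ the direct source gives $2/n$ and the $\kappa_{(11)(00)}$-coupling adds $2/(3n)$; for $\kappa_{(11)(22)}$ the direct source is actually subleading and the entire $2/3$ comes from the coupling). Once you correct this attribution, your plan goes through exactly as in the paper.
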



\begin{proof}
This proof is a long but straightforward calculation using Theorem \ref{T:pert-exp}, Lemma \ref{L:rec-lemma}, and the estimates from Lemma  \ref{L:2pt-deriv-recs-leading}. We will spell this out for the first few recursions and then simply write the recursion and its solution. For example, Theorem  \ref{T:pert-exp} shows
\begin{align*}
    \kappa_{(00)(00)}^{(\ell+1)} = \frac{C_W^2}{n_\ell}\lr{\bk{\sigma^4}_{(\ell)}-\bk{\sigma^2}_{(\ell)}^2}+\lr{\chi_{||}^{(\ell)}}^2   \kappa_{(00)(00)}^{(\ell)}
\end{align*}
plus errors of size $O(n^{-2})$. A direction computation then yields
\[
\frac{C_W^2}{n_\ell}\lr{\bk{\sigma^4}_{(\ell)}-\bk{\sigma^2}_{(\ell)}^2} = \frac{2}{n_\ell}\lr{\kappal_{(00)}}^2\lr{1+O(\ell^{-1})}.
\]
Hence, setting $n_\ell=n$ we may apply Lemma \ref{L:rec-lemma} to obtain
\begin{align*}
    \kappa_{(00)(00)}^{(\ell)} =\frac{2}{3n\ell a^2}(1+O(\ell^{-1})).
\end{align*}
Similarly, we have 
\begin{align*}
    \kappa_{(10)(00)}^{(\ell+1)} &= \frac{C_W^2}{n_\ell}\lr{\bk{\sigma^3\sigma'd_1z}_{(\ell)}-\bk{\sigma^2}_{(\ell)}\bk{\sigma\sigma'd_1z}_{(\ell)}}\\
    &+\frac{C_W^2}{8}\left[2\kappa_{(00)(00)}^{(\ell)}\bk{\partial^2\sigma^2}_{(\ell)}\bk{\partial^2(\sigma\sigma')d_1z}_{(\ell)}+4\kappa_{(10)(00)}^{(\ell)}\bk{\partial^2\sigma^2}_{(\ell)}\bk{\partial(\sigma\sigma')}_{(\ell)}\right]
\end{align*}
plus errors of size $O(n^{-2})$. A direct computation shows that 
\[
\frac{C_W^2}{n_\ell}\lr{\bk{\sigma^3\sigma'd_1z}_{(\ell)}-\bk{\sigma^2}_{(\ell)}\bk{\sigma\sigma'd_1z}_{(\ell)}}= \frac{2\kappal_{00}\kappal_{10}}{n_\ell}\lr{1+O(\ell^{-1})}
\]
Hence, setting $n_\ell=n$ and  
\begin{align*}
    \kappa_{(10)(00)}^{(\ell+1)} &= -\frac{2C_Wx_{1;\alpha}e^{-2\gamma}}{3ann_0\ell^3}\lr{1+O(\ell^{-1})} + \lr{1-\frac{4}{\ell}+O(\ell^{-2})}\kappal_{(10)(00)}.
\end{align*}
Thus, Lemma \ref{L:rec-lemma} yields
\begin{align*}
    \kappa_{(10)(00)}^{(\ell+1)}&=-\frac{e^{-2\gamma}(x_{1;\alpha})^2}{3ann_0\ell^2} = O\lr{\frac{x_{1;\alpha}}{nn_0\ell^2}}.
\end{align*}
Further, applying Theorem \ref{T:pert-exp} gives
\begin{align*}
    \kappa_{(10)(10)}^{(\ell+1)} &= \frac{C_W^2}{n_\ell}\left[\bk{(\sigma\sigma'd_1z)^2}_{(\ell)}-\bk{\sigma\sigma'd_1z}_{(\ell)}^2\right]\\
    &+\frac{C_W^2}{8}\left[2\kappa_{(00)(00)}^{(\ell)}\bk{\partial^2 (\sigma\sigma') d_1z}^2+\kappal_{(10)(00)}8\bkl{\partial(\sigma\sigma')}\bkl{\partial^2(\sigma\sigma')d_1z}\right.\\
    &\left.\qquad \qquad +8 \kappa_{(10)(10)}^{(\ell)} \bk{\partial(\sigma\sigma')}_{(\ell)}^2\right].
\end{align*}
A direct computation shows 
\[
\frac{C_W^2}{n_\ell}\left[\bk{(\sigma\sigma'd_1z)^2}_{(\ell)}-\bk{\sigma\sigma'd_1z}_{(\ell)}^2\right] = \frac{\kappal_{11}\kappal_{00}}{n}\lr{1+O(\ell^{-1})}.
\]
Setting $n_\ell=n$ and using the formulas above shows
\begin{align*}
    \kappa_{(10)(10)}^{(\ell+1)} &= \frac{\kappal_{11}\kappal_{00}}{n}\lr{1+O(\ell^{-1})}+ \lr{1-\frac{4}{\ell}}\kappal_{(10)(10)}
\end{align*}
and hence by Lemma \ref{L:rec-lemma} we see that
\begin{align*}
    \kappa_{(10)(10)}^{(\ell+1)} &=\frac{C_W e^{-\gamma}}{3nn_0a\ell}\lr{1+O(\ell^{-1})}.
\end{align*}
Next, we obtain 
\begin{align*}
    \kappa_{(10)(20)}^{(\ell+1)}&=\frac{C_W^2}{n_\ell}\left[\bkl{(\sigma\sigma')^2 d_1zd_2z}-\bkl{\sigma\sigma' d_1z}\bkl{\sigma\sigma' d_2z}\right]\\
    &+\frac{C_W^2}{8}\left[\kappal_{(00)(00)}2\bkl{\partial^2(\sigma\sigma')d_1z}\bkl{\partial^2(\sigma\sigma')d_2z} \right.\\
    &\qquad \qquad + \kappal_{(10)(00)}4\bkl{\partial(\sigma\sigma')}\bkl{\partial^2(\sigma\sigma') d_2z}+\kappal_{(20)(00)}4\bkl{\partial(\sigma\sigma')}\bkl{\partial^2(\sigma\sigma') d_1z}\\
    &\qquad \qquad + \left.\kappal_{(10)(20)}8\bkl{\partial(\sigma\sigma')}^2\right].
\end{align*}
Keeping the leading order in $\ell$ we therefore have 
\begin{align*}
    \kappa_{(10)(20)}^{(\ell+1)}&=O\lr{\frac{x_{1;\alpha}x_{2;\alpha}}{nn_0^2\ell^4}} + \lr{1-\frac{4}{\ell}+O(\ell^{-2})}\kappal_{(10)(20)}.
\end{align*}
Thus, 
\begin{align*}
    \kappa_{(10)(20)}^{(\ell+1)}&=O\lr{\frac{x_{1;\alpha}x_{2;\alpha}}{nn_0^2\ell^3}}
\end{align*}
Next, 
\begin{align*}
    \kappa_{(11)(00)}^{(\ell+1)} &= \frac{C_W^2}{n_\ell}\left[\bk{(\sigma\sigma' d_1z)^2}_{(\ell)} - \bk{\sigma^2}_{(\ell)}\bk{(\sigma'd_1z)^2}_{(\ell)}\right]\\
    &+\frac{C_W^2}{8}\left[2\kappa_{(00)(00)}^{(\ell)}\bk{\partial^2 \sigma^2}_{(\ell)}\bk{\partial^2 (\sigma')^2 (d_1z)^2}_{(\ell)} +4 \kappa_{(10)(00)}^{(\ell)}\bk{\partial (\sigma')^2d_1z}_{(\ell)}\bk{\partial^2 \sigma^2}_{(\ell)}\right.\\
    &\qquad \qquad \left.\kappa_{(11)(00)}^{(\ell)} 4\bk{(\sigma')^2}\bk{\partial^2 \sigma^2}\right].
\end{align*}
We have
\[
\frac{C_W^2}{n_\ell}\left[\bk{(\sigma\sigma' d_1z)^2}_{(\ell)} - \bk{\sigma^2}_{(\ell)}\bk{(\sigma'd_1z)^2}_{(\ell)}\right] = O\lr{\frac{1}{nn_0\ell^4}}.
\]
Keeping the leading order in $\ell$ gives
\begin{align*}
    \kappa_{(11)(00)}^{(\ell+1)} &= \kappal_{11}\kappal_{(00)(00)} \frac{C_W}{2}\bkl{\partial^2 (\sigma')^2}\lr{1+O(\ell^{-1})}+\chi_{\perp}^{(\ell)}\chi_{||}^{(\ell)}\kappa_{(11)(00)}^{(\ell)}
\end{align*}
The solution is therefore
\begin{align*}
    \kappa_{(11)(00)}^{(\ell+1)} &=-\frac{2C_We^{-\gamma}}{3ann_0\ell}\lr{1+O(\ell^{-1})}.
\end{align*}
Next, 
\begin{align*}
    \kappa_{(12)(00)}^{(\ell+1)}&=\frac{C_W^2}{n_\ell}\left[\bkl{(\sigma')^2\sigma^2 d_1z d_2z}-\bkl{(\sigma')^2d_1zd_2z}\bkl{\sigma^2}\right]\\
    &=\frac{C_W^2}{8}\left[\kappal_{(00)(00)}2\bkl{\partial^2(\sigma')^2 d_1zd_2z}\bkl{\partial^2\sigma^2}\right.\\
    &\qquad \qquad+\kappal_{(10)(00)}4\bkl{\partial (\sigma')^2 d_2z}\bkl{\partial^2\sigma^2}+\kappal_{(20)(00)}4\bkl{\partial (\sigma')^2 d_1z}\bkl{\partial^2\sigma^2}\\
    &\qquad \qquad +\left.\kappal_{(12)(00)}4\bkl{(\sigma')^2}\bkl{\partial^2\sigma^2}\right].
\end{align*}
A direct computation shows that
\begin{align*}
    \frac{C_W^2}{n_\ell}\left[\bkl{(\sigma')^2\sigma^2 d_1z d_2z}-\bkl{(\sigma')^2d_1zd_2z}\bkl{\sigma^2}\right]= -a\kappal_{11}\lr{\kappal_{00}}^2 +O\lr{\frac{1}{n_0^2n\ell^4}}.
\end{align*}
Thus, to leading order in $\ell$ we have
\begin{align*}
    \kappa_{(12)(00)}^{(\ell+1)}&=-a\kappal_{11}\lr{\kappal_{00}}^2\lr{1+O(\ell^{-1})}+\chi_{||}^{(\ell)}\chi_{\perp}^{(\ell)}\kappal_{(12)(00)}
\end{align*}
and we conclude that
\begin{align*}
    \kappa_{(12)(00)}^{(\ell+1)}= O\lr{\frac{1}{nn_0\ell^2}}.
\end{align*}
Next, 
\begin{align*}
    \kappa_{(11)(10)}^{(\ell+1)} &= \frac{C_W^2}{n_\ell}\left[\bk{\sigma(\sigma'd_1z)^3}_{(\ell)}-\bk{\sigma\sigma'd_1z}_{(\ell)}\bk{(\sigma'd_1z)^2}_{(\ell)}\right]\\
    &+\frac{C_W^2}{8}\left[\kappa_{(00)(00)}^{(\ell)}2\bk{\partial^2(\sigma \sigma')d_1z}_{(\ell)}\bkl{\partial^2 (\sigma')^2 (d_1z)^2}\right.\\
    &\qquad \qquad +\kappa_{(10)(00)}^{(\ell)}4\lr{\bkl{\partial(\sigma\sigma')}\bkl{\partial^2 (\sigma')^2 (d_1z)^2}+2\bkl{\partial^2(\sigma\sigma')d_1z}\bkl{\partial(\sigma')^2d_1z}}\\
    &\qquad \qquad +\kappa_{(10)(10)}^{(\ell)}16\bkl{\partial(\sigma\sigma')}\bkl{\partial(\sigma')^2 d_1z}  +\kappa_{(11)(00)}^{(\ell)}2\bkl{\partial^2(\sigma\sigma')d_1z}\bkl{(\sigma')^2}\\
    &\qquad \qquad \left.+\kappa_{(11)(10)}^{(\ell)}8\bkl{(\sigma')^2}\bkl{\partial(\sigma\sigma')}\right].
\end{align*}
A direct computation shows that to leading order in $\ell$
\[
\frac{C_W^2}{n_\ell}\left[\bk{\sigma(\sigma'd_1z)^3}_{(\ell)}-\bk{\sigma\sigma'd_1z}_{(\ell)}\bk{(\sigma'd_1z)^2}_{(\ell)}\right] = \frac{\kappal_{10}\kappal_{11}}{n_\ell} =O\lr{\frac{1}{n_0^2n\ell^3}}.
\]
We since all other terms are at this order in $\ell$ or lower we obtain
\[
\kappa_{(11)(10)}^{(\ell+1)} =O\lr{\frac{1}{nn_0^2\ell^2}}.
\]
Next, 
\begin{align*}
    \kappa_{(12)(10)}^{(\ell+1)}&= \frac{C_W^2}{n_\ell}\left[\bkl{(\sigma')^3\sigma (d_1z)^2d_2z}-\bkl{(\sigma')^2d_1zd_2z}\bkl{\sigma\sigma'd_1z}\right]\\
    &=\frac{C_W^2}{8}\left[\kappal_{(00)(00)}2\bkl{\partial^2 (\sigma')^2 d_1z d_2z}\bkl{\partial^2(\sigma\sigma')d_1z}\right.\\
    &\qquad \qquad + \kappal_{(10)(00)}4\left\{\bkl{\partial (\sigma')^2d_2z}\bkl{\partial^2(\sigma\sigma')d_1z}+\bkl{\partial^2(\sigma')^2 d_1zd_2z}\bkl{\partial(\sigma\sigma')}\right\}\\
    &\qquad \qquad +\kappal_{(20)(00)}4\bkl{\partial(\sigma')^2 d_1z}\bkl{\partial^2(\sigma\sigma')d_1z}+\kappal_{(12)(00)}4\bkl{(\sigma')^2}\bkl{\partial^2(\sigma\sigma')d_1z}\\
    &\qquad \qquad+ \kappal_{(10)(10)}8\bkl{\partial (\sigma')^2 d_2z}\bkl{\partial (\sigma\sigma')}+\kappal_{(10)(20)}8\bkl{\partial (\sigma')^2 d_1z}\bkl{\partial(\sigma\sigma')}\\
    &\qquad \qquad \left.+ \kappal_{(12)(10)}8\bkl{(\sigma')^2}\bkl{\partial(\sigma\sigma')}\right].
\end{align*}
A direct computation shows that to leading order in $\ell$ we have
\[
\frac{C_W^2}{n_\ell}\left[\bkl{(\sigma')^3\sigma (d_1z)^2d_2z}-\bkl{(\sigma')^2d_1zd_2z}\bkl{\sigma\sigma'd_1z}\right] = O\lr{\frac{x_{2;\alpha}}{nn_0\ell^3}}
\]
and 
\begin{align*}
    \kappa_{(12)(10)}^{(\ell+1)}&= O\lr{\frac{x_{2;\alpha}}{nn_0^2\ell^3}}+ \lr{1-\frac{2}{\ell}+O(\ell^{-2})}\kappal_{(12)(10)}.
\end{align*}
Hence, 
\[
\kappa_{(12)(10)}^{(\ell+1)} = O\lr{\frac{x_{2;\alpha}}{nn_0^2\ell^2}}.
\]
Next, 
\begin{align*}
    \kappa_{(11)(20)}^{(\ell+1)} &= \frac{C_W^2}{n_\ell}\left[\bkl{(\sigma')^2\sigma (d_1z)^2 d_2z}-\bkl{(\sigma' d_1z)^2}\bkl{\sigma\sigma'd_2z}\right]\\
    &+\frac{C_W^2}{8}\left[\kappa_{(00)(00)}^{(\ell)}2\bk{\partial^2(\sigma')^2(d_1z)^2}_{(\ell)}\bkl{\partial^2 (\sigma\sigma') d_2z}\right.\\
    &\qquad \qquad +\kappal_{(10)(00)}8\bkl{\partial^2 (\sigma')^2 d_1z}\bk{\partial^2(\sigma\sigma')d_2z}+\kappal_{(20)(00)}4\bkl{\partial^2(\sigma')^2 (d_1z)^2}\bkl{\partial(\sigma\sigma')}\\
    &\qquad \qquad + \kappal_{(11)(00)}4\bkl{(\sigma')^2}\bkl{\partial^2(\sigma\sigma')d_2z}+\kappal_{(10)(20)}16\bkl{\partial(\sigma')^2d_1z}\bkl{\partial(\sigma\sigma')}\\
    &\qquad \qquad +\left.\kappal_{(11)(20)}8\bkl{\partial(\sigma\sigma')}\bkl{(\sigma')^2}\right].
\end{align*}
We therefore have
\begin{align*}
    \kappa_{(11)(20)}^{(\ell+1)} &= O\lr{\frac{x_{1;\alpha}x_{2;\alpha}}{nn_0^2\ell^3}} + \lr{1-\frac{4}{\ell}+O(\ell^{-2})} \kappal_{(11)(20)}.
\end{align*}
Hence,
\begin{align*}
    \kappa_{(11)(20)}^{(\ell+1)} &= O\lr{\frac{x_{1;\alpha}x_{2;\alpha}}{nn_0^2\ell^2}}.
\end{align*}
Next, 
\begin{align*}
    \kappa_{(11)(11)}^{(\ell+1)} &=\frac{C_W^2}{n_\ell}\left[\bkl{(\sigma'dz)^4}-\bkl{(\sigma'dz)^2}^2\right]\\
    &+\frac{C_W^2}{8}\left[\kappal_{(00)(00)}2\bkl{\partial^2 (\sigma')^2 (d_1z)^2}^2+\kappal_{(10)(00)}16\bkl{\partial (\sigma')^2 d_1z}\bkl{\partial^2 (\sigma')^2 (d_1z)^2}\right.\\
    &\qquad +\kappal_{(11)(00)}8\bkl{(\sigma')^2}\bkl{\partial^2 (\sigma')^2 (d_1z)^2} + \kappal_{(10)(10)}32\bkl{\partial (\sigma')^2 d_1z}^2\\
    &\qquad \left.+\kappal_{(11)(10)} 32 \bkl{(\sigma')^2}\bkl{\partial (\sigma')^2 d_1z}+\kappal_{(11)(11)} 8\bk{(\sigma')^2}^2\right]\\
    &=\frac{C_W^2}{n_\ell}\left[\bkl{(\sigma'dz)^4}-\bkl{(\sigma'dz)^2}^2\right]\\
    &+C_W^2\kappal_{(11)(00)}\bkl{(\sigma')^2}\bkl{\partial^2 (\sigma')^2 (d_1z)^2} +\kappal_{(11)(11)} \lr{\chi_{\perp}^{(\ell)}}^2.
\end{align*}
A direction computation shows that to leading order in $\ell$ 
\[
\frac{C_W^2}{n_\ell}\left[\bkl{(\sigma'dz)^4}-\bkl{(\sigma'dz)^2}^2\right] = \frac{2\lr{\kappal_{11}}^2}{n_\ell}.
\]
Hence, to leading order in $\ell$ we simply have
\begin{align*}
    \kappa_{(11)(11)}^{(\ell+1)} &=\lr{\frac{2}{n_\ell}\lr{\kappa_{11}^{(\ell)}}^2+(-2a)\kappal_{11}\kappal_{(11)(00)}}\lr{1+O(\ell^{-1})} +\lr{1-\frac{2}{\ell}}\kappal_{(11)(11)}.
\end{align*}
We thus find
\begin{align*}
    \kappa_{(11)(11)}^{(\ell+1)} &= \frac{7C_W^2e^{-2\gamma}}{3nn_0^2\ell}\lr{1+O(\ell^{-1})}.
\end{align*}
Next, 
\begin{align*}
    \kappa_{(11)(22)}^{(\ell+1)}&=\frac{C_W^2}{n_\ell}\left[\bkl{(\sigma')^4(d_1zd_2z)^2}-\bkl{(\sigma'd_1z)^2}\bkl{(\sigma'd_2z)^2}\right]\\
    &+\frac{C_W^2}{8}\left[\kappal_{(00)(00)}2\bkl{\partial^2 (\sigma')^2 (d_1z)^2}\bkl{\partial^2 (\sigma')^2 (d_2z)^2}\right.\\
    &\qquad \qquad + \lr{\kappal_{(10)(00)}+\kappal_{(20)(00)}}8\bkl{\partial^2 (\sigma')^2d_1z}\bkl{\partial^2(\sigma')^2 d_2z}\\
    &\qquad \qquad + \kappal_{(11)(00)}4\bkl{(\sigma')^2}\bkl{\partial^2 (\sigma')^2 (d_2z)^2} + \kappal_{(22)(00)} 4\bkl{ (\sigma')^2}\bkl{\partial^2(\sigma') (d_1z)^2}\\
    &\qquad \qquad + \kappal_{(10)(20)}32\bkl{\partial(\sigma')^2 d_1z}\bkl{\partial(\sigma')^2 d_2z}\\
    &\qquad \qquad + \kappal_{(11)(20)}16\bkl{(\sigma')^2}\bkl{\partial(\sigma')^2 d_2z}+\kappal_{(22)(10)}16\bkl{(\sigma')^2}\bkl{\partial(\sigma')^2 d_1z}\\
    &\qquad \qquad \left.+\kappal_{(11)(22)}8\bkl{(\sigma')^2}^2\right].
\end{align*}
Substituting the solutions above we find
\begin{align*}
    \kappa_{(11)(22)}^{(\ell+1)}&=\frac{4C_W^2 e^{-2\gamma}}{3nn_0^2\ell}\lr{1+O(\ell^{-1})}.
\end{align*}
Finally, 
\begin{align*}
    \kappa_{(12)(12)}^{(\ell+1)}&=\frac{C_W^2}{n_\ell}\left[\bkl{(\sigma')^4(d_1zd_2z)^2}-\bkl{(\sigma')^2d_1zd_2z}^2\right]\\
    &+\frac{C_W^2}{8}\left[\kappal_{(00)(00)}2\bkl{\partial (\sigma')^2 d_1z d_2z}^2\right.\\
    &\qquad \qquad +\kappal_{(10)(00)}8\bkl{\partial(\sigma')^2 d_2z}\bkl{\partial^2(\sigma')^2 d_1zd_2z}+\kappal_{(20)(00)}8\bkl{\partial(\sigma')^2 d_1z}\bkl{\partial^2(\sigma')^2 d_1zd_2z}\\ 
    &\qquad \qquad +\kappal_{(12)(00)}8\bkl{(\sigma')^2}\bkl{\partial^2(\sigma')^2d_1zd_2z}+\kappal_{(10)(20)}16\bkl{\partial(\sigma')^2d_2z}\bkl{\partial (\sigma')^2 d_1z}\\
    &\qquad \qquad +\kappal_{(10)(10)}8\bkl{\partial (\sigma')^2 d_2z}^2+\kappal_{(20)(20)}8\bkl{\partial (\sigma')^2 d_1z}^2\\
    &\qquad \qquad +\kappal_{(12)(10)}16\bkl{(\sigma')^2}\bkl{\partial (\sigma')^2 d_2z} + \kappal_{(12)(20)}16\bkl{(\sigma')^2}\bkl{\partial (\sigma')^2 d_1z}\\
    &\qquad \qquad \left. +\kappal_{(12)(12)}8\bkl{(\sigma')^2}^2\right].
\end{align*}
To leading order in $\ell$ we have
\begin{align*}
    \kappa_{(12)(12)}^{(\ell+1)}&=\frac{C_W^2}{n_\ell}\left[\bkl{(\sigma')^4(d_1zd_2z)^2}-\bkl{(\sigma')^2d_1zd_2z}^2\right]\lr{1+O(\ell^{-1})}+\lr{\chi_{\perp}^{(\ell)}}^2\kappal_{(12)(12)}.
\end{align*}
A direct computation shows
\[
\frac{C_W^2}{n_\ell}\left[\bkl{(\sigma')^4(d_1zd_2z)^2}-\bkl{(\sigma')^2d_1zd_2z}^2\right] = \frac{\kappal_{11}\kappal_{22}}{n}\lr{1+O(\ell^{-1})}.
\]
Hence, 
\begin{align*}
    \kappa_{(12)(12)}^{(\ell+1)}&=\frac{C_W^2e^{-2\gamma}}{nn_0^2\ell}\lr{1+O(\ell^{-1})}.
\end{align*}

\end{proof}

\subsection{Finite Width Corrections to 2nd Cumulant}\label{S:2pt-solns-tanh}
We remind the reader of our standing notation. Namely, we let $\sigma$ be an non-linearity from the $K_*=0$ universality class (see \S \ref{S:tanh-univ}). We also fix a network input $x_\alpha\neq 0\in \R^{n_0}$ and recall that 
\[
\kappa_{(ij)}^{(\ell)}:=\Cov\lr{\partial_{x_{i;\alpha}}z_{i;\alpha}^{(\ell)},\, \partial_{x_{j;\alpha}}z_{i;\alpha}^{(\ell)}} = \partial_{x_{i;\alpha_1}}\partial_{x_{j;\alpha_2}}\kappa_{\alpha_1\alpha_2}^{(\ell)}\bigg|_{x_{\alpha_1}=x_{\alpha_2}=x_\alpha},\qquad i,j=1,\ldots, n_0.
\]
The recursive description for $\kappa_{(ij)}^{(\ell)}$ and its correction
\[
S_{(ij)}^{(\ell)}:=\kappa_{(ij)}^{(\ell)}-K_{(ij)}^{(\ell)}
\]
relative to the infinite width limit was obtained in Proposition \ref{P:S-recs}. These recursions held for any non-linearity and any values of $C_b,C_W$. The following Lemma records the large $\ell$ behavior of $\kappa_{(ij)}^{(\ell)}$ when $\sigma$ belongs to the $K_*=0$ universality class and the network is tuned to criticality.

\begin{proposition}\label{P:2pt-deriv-recs-subleading}
Fix $\delta\in (0,1)$. We have
\begin{align}
    \label{E:k00}\kappa_{(00)}^{(\ell)} &= \left[1-\frac{\ell}{3n}+O(n^{-1})+O_\delta(\ell^{-1+\delta})+O(n^{-2})\right]K_{(00)}^{(\ell)}\\
    \label{E:k10}\kappa_{(10)}^{(\ell)} &=\left[1+O\lr{n^{-1}}+O(\ell^{-1})+O(n^{-2})\right]K_{(10)}^{(\ell)}\\
    \label{E:k11}\kappa_{(11)}^{(\ell)} & = \left[1+\frac{\ell}{3n}+O(n^{-1})+O(\ell^{-1}) + O\lr{\frac{(\sigma_{1}^{(0)})^2}{n_0\ell^2}}+O(n^{-2})\right]K_{(11)}^{(\ell)}\\
   \label{E:k12} \kappa_{(12)}^{(\ell)} &=O\lr{\left[1+\frac{\ell}{n}+  O(\ell^{-1})+O(n^{-2})\right]K_{(12)}^{(\ell)}},
\end{align}
where the implicit constants in the error terms $O(n^{-1}), O(\ell^{-1}),O((\sigma_{1}^{_{(0)}})^2/n_0\ell^2), O(\ell/n)$ depend only on $\sigma$, the implicit constant in $O_\delta(\ell^{-1+\delta})$ depends on $\delta$ and $\sigma$, and the implicit constants in $O(n^{-2})$ depend both on $\sigma$ and on $\ell$. 
\end{proposition}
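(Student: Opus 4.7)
\medskip

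\noindent\textbf{Proof proposal.} The plan is to treat each of the four identities \eqref{E:k00}--\eqref{E:k12} by applying Lemma \ref{L:rec-lemma} to the recursions for $S_{(ij)}^{(\ell)}$ established in Proposition \ref{P:S-recs}, after substituting the asymptotic formulas for $K_{(ij)}^{(\ell)}$ from Lemma \ref{L:2pt-deriv-recs-leading}, for the joint fourth cumulants $\kappa_{(ij)(i'j')}^{(\ell)}$ from Proposition \ref{P:4pt-deriv-recs-tanh}, and for $\kappa_{4;\alpha}^{(\ell)}$ from Corollary \ref{C:tanh-cumulants}. Since $\kappa_{(ij)}^{(\ell)} = K_{(ij)}^{(\ell)} - S_{(ij)}^{(\ell)}$, each of \eqref{E:k00}--\eqref{E:k12} follows by dividing the resulting asymptotic for $S_{(ij)}^{(\ell)}$ by the corresponding $K_{(ij)}^{(\ell)}$.

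Before solving the recursions, I would record the Taylor expansions at $K = 0$ of the Gaussian expectations appearing as coefficients, using that $\sigma$ is odd with $\sigma_1 \neq 0$ and $C_W = \sigma_1^{-2}$. In particular, a short computation gives
\begin{equation*}
\chi_{||}(K) = 1 - 2aK + O(K^2), \qquad \chi_{\perp}(K) = 1 - aK + O(K^2),
\end{equation*}
and $C_W \bk{\partial^4 \sigma^2}_K = -8a + O(K)$ as $K \downarrow 0$, together with analogous expansions for $C_W\bk{\partial^2(\sigma')^2}_K$ and the higher-derivative terms that appear in \eqref{E:S11-rec}--\eqref{E:S12-rec}. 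Combined with the decay $K_{(00)}^{(\ell)} = (a\ell)^{-1}(1+O_\delta(\ell^{-1+\delta}))$, these give
\begin{equation*}
\chi_{||;\alpha}^{(\ell)} = 1 - \tfrac{2}{\ell} + O_\delta(\ell^{-2+\delta}), \qquad \chi_{\perp;\alpha}^{(\ell)} = 1 - \tfrac{1}{\ell} + O_\delta(\ell^{-2+\delta}).
\end{equation*}

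The case \eqref{E:k00} is now the cleanest illustration. Inserting $\kappa_{4;\alpha}^{(\ell)} = \tfrac{2}{3}(\ell/n)(K_{(00)}^{(\ell)})^2(1+O(\ell^{-1}))$ and $C_W\bk{\partial^4\sigma^2}_{K^{(\ell)}} = -8a(1+O(\ell^{-1}))$ into \eqref{E:S00-rec} yields
\begin{equation*}
S_{(00)}^{(\ell+1)} = -\tfrac{2}{3an\ell}(1+O(\ell^{-1})) + \bigl(1 - \tfrac{2}{\ell} + O_\delta(\ell^{-2+\delta})\bigr) S_{(00)}^{(\ell)} + O(n^{-2}),
\end{equation*}
so Lemma \ref{L:rec-lemma} with $\psi = 1$, $C_1 = -2/(3an)$, $C_2 = 2$ gives $S_{(00)}^{(\ell+1)} = -1/(3an) + O_\delta(\ell^{-1+\delta}/n) + O(n^{-2})$. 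Dividing by $K_{(00)}^{(\ell+1)} = (a\ell)^{-1}(1+O_\delta(\ell^{-1+\delta}))$ yields exactly \eqref{E:k00}. The same template handles \eqref{E:k10}: the forcing in \eqref{E:S10-rec} is $O(x_{1;\alpha}/(n n_0 \ell^3))$ after using \eqref{E:k1100-form} and Lemma \ref{L:2pt-deriv-recs-leading}, while the decay factor is $\chi_{||}^{(\ell)} = 1 - 2/\ell + \cdots$, so Lemma \ref{L:rec-lemma} with $\psi = 3$, $C_2 = 2$ produces $S_{(10)}^{(\ell+1)} = O(x_{1;\alpha}/(nn_0\ell^3))$, which is $O(\ell^{-1})$ times $K_{(10)}^{(\ell+1)}$.

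The cases \eqref{E:k11} and \eqref{E:k12} are structurally identical but bookkeeping-heavy: the recursions \eqref{E:S11-rec} and \eqref{E:S12-rec} involve eight and nine distinct covariances respectively, each of which has already been computed to leading order in $n,\ell$ in \S \ref{S:4pt-solns-tanh} and \S \ref{S:K-recs}. The plan is to group terms by their behavior under scaling: for $S_{(11)}^{(\ell)}$, the leading contribution comes from $\tfrac{C_W}{2} K_{(11)}^{(\ell)} S_{(00)}^{(\ell)}\bk{\partial^2(\sigma')^2}_{K^{(\ell)}}$, which by \eqref{E:k00} and the Taylor expansion above evaluates to $-\tfrac{1}{3an}\cdot K_{(11)}^{(\ell)}\cdot (-a) = \tfrac{1}{3n}K_{(11)}^{(\ell)}(1+o(1))$; the decay factor is $\chi_\perp^{(\ell)} = 1 - 1/\ell + \cdots$, so Lemma \ref{L:rec-lemma} with $\psi = 2$, $C_2 = 1$ produces $S_{(11)}^{(\ell+1)} = -\tfrac{\ell}{3n}K_{(11)}^{(\ell)}(1+O(\ell^{-1}))$, which rearranges to \eqref{E:k11}. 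All other terms in \eqref{E:S11-rec} are smaller by a factor of $\ell^{-1}$ after the same kind of substitution and can be absorbed in the error. The case \eqref{E:k12} is the same story with the roles of $K_{(11)}^{(\ell)}$ and $K_{(12)}^{(\ell)}$ interchanged and the forcing proportional to $K_{(12)}^{(\ell)}$ plus the additional $K_{(10)}^{(\ell)} K_{(20)}^{(\ell)}$ cross-term, whose contribution is subleading by \eqref{E:K10}.

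The main obstacle is not conceptual but combinatorial: each of \eqref{E:S11-rec} and \eqref{E:S12-rec} contains numerous fourth-cumulant coefficients, and one must verify that after substituting \eqref{E:k0000-form}--\eqref{E:k1212-form} and the Taylor expansions above, only one term survives at the order $1/n$ while all others contribute at $O(\ell^{-1}/n)$. This requires carefully tabulating the $(\ell,n_0)$-scaling of each summand; I would do so in a single auxiliary lemma listing the scaling of every $\bk{\cdot}_{K^{(\ell)}}$-coefficient that appears, so that the final application of Lemma \ref{L:rec-lemma} is mechanical.
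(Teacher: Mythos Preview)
Your approach is essentially the paper's: substitute the asymptotics from Lemma~\ref{L:2pt-deriv-recs-leading}, Proposition~\ref{P:4pt-deriv-recs-tanh}, and Corollary~\ref{C:tanh-cumulants} into the $S_{(ij)}$ recursions of Proposition~\ref{P:S-recs}, then apply Lemma~\ref{L:rec-lemma}. The treatment of $S_{(00)}$ and the overall template are exactly what the paper does.

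Two points deserve more care. First, in the $S_{(10)}$ recursion the forcing is \emph{not} termwise $O(x_{1;\alpha}/(nn_0\ell^3))$: the terms $K_{(10)}^{(\ell)}\tfrac{C_W}{2}S_{(00)}^{(\ell)}\bk{\partial^3(\sigma\sigma')}_{K^{(\ell)}}$ and $\tfrac{C_W}{2}\kappa_{(10)(00)}^{(\ell)}\bk{\partial^3(\sigma\sigma')}_{K^{(\ell)}}$ are each of order $x_{1;\alpha}/(nn_0\ell^2)$, and the paper explicitly verifies that they cancel to leading order (using that $\kappa_{(10)(00)}^{(\ell)} = -K_{(10)}^{(\ell)}S_{(00)}^{(\ell)} + O(n^{-1}\ell^{-3})$ from \eqref{E:k1000-form} and the computed $S_{(00)}^{(\ell)}$). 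Without this cancellation the forcing would be $O(\ell^{-2}/n)$ and the conclusion \eqref{E:k10} would fail, so you should state it. Second, once the forcing is $O(\ell^{-3})$ with decay $1-2/\ell$, you are in the degenerate case $\psi = C_2+1$ excluded by Lemma~\ref{L:rec-lemma}; the correct output is $S_{(10)}^{(\ell)} = O(x_{1;\alpha}/(nn_0\ell^2))$ (possibly with a $\log\ell$), not $O(\ell^{-3})$. This still gives $S_{(10)}^{(\ell)}/K_{(10)}^{(\ell)} = O(n^{-1})$, which is what \eqref{E:k10} asserts, but your stated $\psi$ and output are off. A similar slip occurs for $S_{(11)}$: the forcing $\tfrac{1}{3n}K_{(11)}^{(\ell)}\sim (3nn_0\ell)^{-1}$ has $\psi=1$, not $\psi=2$.
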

\begin{remark}
The results of Proposition \ref{P:2pt-deriv-recs-subleading} make clear that the error terms $O(n^{-2})$ must depend on $\ell$. For instance, note that $\kappa_{(00)}^{(\ell)}$ must be non-negative and hence the $O(n^{-2})$ term is negligible only if $n\gg \ell$. 
\end{remark}
\begin{proof}
Recall that in Lemma \ref{L:2pt-deriv-recs-leading} we already the large $\ell$ behavior of
\begin{align*}
    K_{(ij)}^{(\ell)} &:=\lim_{n\gives \infty} \kappa_{(ij)}^{(\ell)},\quad i,j\in \set{0,1,2}.
\end{align*}
Define
\[
S_{(ij)}^{(\ell)}:=\kappa_{(ij)}^{(\ell)} - K_{(ij)}^{(\ell)},\qquad i,j\in \set{0,1,2}.
\]
By direct computation and Theorem \ref{T:pert-exp} we obtain
\begin{align*}
   \kappa_{(00)}^{(\ell+1)}& = C_W\E{\sigma(z_{i;\alpha}^{(\ell)})^2} =  C_W\left\{\bk{\sigma^2}_{\kappa_{(00)}^{(\ell)}}+\frac{1}{8}\kappa_{4;\alpha}^{(\ell)}\bk{\partial^4 \sigma^2}_{\kappa_{(00)}^{(\ell)}} + O(n^{-2})\right\}.
\end{align*}
Comparing this with the recursion \eqref{E:K00-rec} and using that by Theorem \ref{T:cumulants} we know $\kappa_{4;\alpha}^{(\ell)}=O(n^{-1})$ allows us to conclude
\begin{equation}\label{E:S-est}
S_{(00)}^{(\ell)} = O(n^{-1}),    
\end{equation}
where at the moment the implicit constant in the error term depends in an unknown way on $\ell$. However, substituting this estimate back into the recursion for $ \kappa_{(00)}^{_{(\ell+1)}}$ and integrating by parts yields
\begin{align*}
   S_{(00)}^{(\ell+1)}& = C_W\left\{\bk{\sigma^2}_{ \kappa_{(00)}^{(\ell)}} - \bk{\sigma^2}_{K_{(00)}^{(\ell)}} \right\} +\frac{C_W}{8}\kappa_{4;\alpha}^{(\ell)}\bk{\partial^4 \sigma^2}_{K_{(00)}^{(\ell)}} + O(n^{-2})\\
   &=\frac{C_W}{8}\kappa_{4;\alpha}^{(\ell)}\bk{\partial^4 \sigma^2}_{K_{(00)}^{(\ell)}} +\chi_{||;\alpha}^{(\ell)}S_{(00)}^{(\ell)} + O(n^{-2})
\end{align*}
Using \eqref{E:K00} we obtain  
\[
 S_{(00)}^{(\ell+1)}=-\frac{2}{3na\ell} (1+O(\ell^{-1})) + \lr{1-\frac{2}{\ell}}S_{(00)}^{(\ell)} + O(n^{-2}).
\]
Noting that $S_{(00)}^{(1)}=0$ and applying Lemma \ref{L:rec-lemma}, we conclude
\[
 S_{(00)}^{(\ell+1)}=-\frac{1}{3an} (1+O(\ell^{-1})) + O(n^{-2}),
\]
where as usual the implicit constant in the $O(\ell^{-1})$ error is independent of $n,\ell$ but the constant in $ O(n^{-2})$ may depend on $\ell$. Hence, by \eqref{E:K00} we find
\[
 S_{(00)}^{(\ell+1)} = \lr{-\frac{\ell}{3n} (1+O(\ell^{-1})) + O(n^{-2})}K_{(00)}^{(\ell)}.
\]
Combining this with the definition \eqref{E:S-def} of $S_{(00)}^{_{(\ell)}}$ yields \eqref{E:k00}. To obtain \eqref{E:k10}, we proceed similarly. Namely, Theorem \ref{T:pert-exp}, the estimate $S_{(00)}^{(\ell)}=O(n^{-1})$, and integration by parts yield
\begin{align*}
    \kappa_{(10)}^{(\ell+1)} &= C_W \E{\sigma(z_{i;\alpha}^{(\ell)})\sigma'(z_{i;\alpha}^{(\ell)})d_1z_{i;\alpha}^{(\ell)}}\\
    &= C_W \left\{\bk{\sigma\sigma' d_1z}_{\kappa^{(\ell)}} + \frac{1}{8}\left[\kappa_{(00)(00)}^{(\ell)}\bk{\partial^4(\sigma\sigma')d_1z}_{\kappa^{(\ell)}}+2\kappa_{(10)(00)}^{(\ell)} \bk{\partial^3(\sigma\sigma')}_{\kappa^{(\ell)}}\right]  \right\}\\
    &=\kappa_{(10)}^{(\ell)} \left[\chi_{||}^{(\ell)} +\frac{C_W}{2}S_{(00)}^{(\ell)} \bk{\partial^3(\sigma\sigma')}_{K^{(\ell)}} \right]+O(n^{-2})\\  &+\frac{C_W}{8}\left[\kappa_{(00)(00)}^{(\ell)}\bk{\partial^4(\sigma\sigma')d_1z}_{\kappa^{(\ell)}}+4\kappa_{(10)(00)}^{(\ell)} \bk{\partial^3(\sigma\sigma')}_{\kappa^{(\ell)}}\right].
\end{align*}
Hence, 
\begin{align*}
    S_{(10)}^{(\ell+1)}&= S_{(10)}^{(\ell)}\left[\chi_{||}^{(\ell)} +\frac{C_W}{2}S_{(00)}^{(\ell)} \bk{\partial^3(\sigma\sigma')}_{K^{(\ell)}} \right]+K_{(10)}^{(\ell)}\frac{C_W}{2}S_{(00)}^{(\ell)}\bk{\partial^3(\sigma\sigma')}_{K^{(\ell)}}\\
    &+\frac{C_W}{8}\left[\kappa_{(00)(00)}^{(\ell)}\bk{\partial^4(\sigma\sigma')d_1z}_{\kappa^{(\ell)}}+4\kappa_{(10)(00)}^{(\ell)} \bk{\partial^3(\sigma\sigma')}_{\kappa^{(\ell)}}\right] + O(n^{-2}).
\end{align*}
This recursion shows that $S_{(10)}^{(\ell)}=O(n^{-1})$ and hence we obtain 
\begin{align*}
    S_{(10)}^{(\ell+1)}&= \chi_{||}^{(\ell)}S_{(10)}^{(\ell)} +K_{(10)}^{(\ell)}\frac{C_W}{2}S_{(00)}^{(\ell)}\bk{\partial^3(\sigma\sigma')}_{K^{(\ell)}}\\
    &+\frac{C_W}{8}\left[\kappa_{(00)(00)}^{(\ell)}\bk{\partial^4(\sigma\sigma')d_1z}_{\kappa^{(\ell)}}+4\kappa_{(10)(00)}^{(\ell)} \bk{\partial^3(\sigma\sigma')}_{\kappa^{(\ell)}}\right] + O(n^{-2}).
\end{align*}
The asymptotics from the formula above for $S_{(00)}^{_{(\ell)}}$ and Proposition \ref{P:4pt-deriv-recs-tanh} we see that 
\[
\kappa_{(10)(00)}^{(\ell)} = - K_{(10)}^{(\ell)}\frac{C_W}{2}S_{(00)}^{(\ell)} + O\lr{\frac{1}{n\ell^3}}. 
\]
Hence, we find all together that 
\begin{align*}
    S_{(10)}^{(\ell+1)}&=\lr{1-\frac{2}{\ell}+O(\ell^{-2})}S_{(10)}^{(\ell)}+ O\lr{\frac{x_{1;\alpha}}{nn_0 \ell^3}} +O(n^{-2}) = O\lr{\frac{x_{1;\alpha}}{nn_0 \ell^2}} +O(n^{-2}).
\end{align*}
Combining this with \eqref{E:K10} confirms \eqref{E:k10}. We now proceed to deriving \eqref{E:k11}. By using Theorem \ref{T:pert-exp} and integrating by parts 
\begin{align*}
    \kappa_{(11)}^{(\ell+1)}& = \E{\lr{d_1 z_{i;\alpha}^{(\ell+1)}}^2}\\
    &=\E{\lr{\sigma'(z_{i;\alpha}^{(\ell)})d_1z_{i;\alpha}^{(\ell)}}^2}\\
    &=\kappa_{(11)}^{(\ell)}\left[\chi_{\perp}^{(\ell)} + \frac{C_W}{2}S_{(00)}^{(\ell)}\bkkl{\partial^2 (\sigma')^2}\right] + C_W\lr{\kappa_{(10)}^{(\ell)}}^2 \left[\bkkl{\partial^2 (\sigma')^2} + \frac{1}{2}S_{(00)}^{(\ell)}\bkkl{\partial^4(\sigma')^2}\right]\\
    &+\frac{C_W}{8}\left\{\kappa_{(00)(00)}^{(\ell)} \left[K_{(11)}^{(\ell)}\bkkl{\partial^4(\sigma')^2} + \lr{K_{(10)}^{(\ell)}}^2\bkkl{\partial^6 (\sigma')^2}\right]\right.\\
    &\qquad \qquad +\left.4\kappa_{(10)(00)}^{(\ell)}K_{(10)}^{(\ell)}\bkkl{\partial^4(\sigma')^2} + \lr{2\kappa_{(10)(10)}^{(\ell)}+4\kappa_{(11)(00)}^{(\ell)}}\bkkl{\partial^2(\sigma')^2}\right\} + O(n^{-2}).
\end{align*}
Comparing this with the recursion \eqref{E:K11-rec} for $K_{(11)}^{(\ell)}$ shows that $S_{(11)}^{(\ell)}=O(n^{-1})$ and yields the recursion 
\begin{align*}
    S_{(11)}^{(\ell+1)}&=\chi_{\perp}^{(\ell)} S_{(11)}^{(\ell)} +\frac{C_W}{2}K_{(11)}^{(\ell)} S_{(00)}^{(\ell)}\bkkl{\partial^2 (\sigma')^2}+O(n^{-2})\\
    &+C_W\lr{K_{(10)}^{(\ell)}}^2 \left[\bkkl{\partial^2 (\sigma')^2} + \frac{1}{2}S_{(00)}^{(\ell)}\bkkl{\partial^4(\sigma')^2}\right]+2C_WK_{(10)}^{(\ell)}S_{(10)}^{(\ell)}\bkkl{\partial^2 (\sigma')^2}\\
    &+\frac{C_W}{8}\left\{\kappa_{(00)(00)}^{(\ell)} \left[K_{(11)}^{(\ell)}\bkkl{\partial^4(\sigma')^2} + \lr{K_{(10)}^{(\ell)}}^2\bkkl{\partial^6 (\sigma')^2}\right]\right.\\
    &\qquad \qquad +\left.4\kappa_{(10)(00)}^{(\ell)}K_{(10)}^{(\ell)}\bkkl{\partial^4(\sigma')^2} + \lr{2\kappa_{(10)(10)}^{(\ell)}+4\kappa_{(11)(00)}^{(\ell)}}\bkkl{\partial^2(\sigma')^2}\right\}.
\end{align*}
Recalling the asymptotics from Lemma \ref{L:2pt-deriv-recs-leading} and Proposition \ref{P:4pt-deriv-recs-tanh} we find to leading order in $\ell$ and $n$ that
\begin{align*}
    S_{(11)}^{(\ell+1)}&=\chi_{\perp}^{(\ell)} S_{(11)}^{(\ell)} +\frac{C_W}{2}K_{(11)}^{(\ell)} S_{(00)}^{(\ell)}\bkkl{\partial^2 (\sigma')^2},
\end{align*}
and hence by \eqref{E:K11} we obtain
\[
S_{(11)}^{(\ell)} = \frac{C_We^{-\gamma}}{3nn_0}\lr{1+O(\ell^{-1})}+O(n^{-2})=\left[\frac{\ell}{3n}+O(n^{-1})+O(n^{-2})\right]K_{(11)}^{(\ell)}.
\]
Combining this with \eqref{E:K11} completes our derivation of \eqref{E:k11}. A very similar computation reveals
\begin{align*}
    S_{(12)}^{(\ell+1)} &= \chi_{\perp}^{(\ell)}S_{(12)}^{(\ell)} +C_W\kappa_{(10)}^{(\ell)}\kappa_{(20)}^{(\ell)}\left[\bkkl{\partial^2(\sigma')^2}+\frac{1}{2}S_{(00)}^{(\ell)}\bkkl{\partial^4(\sigma')^2}\right]\\
    &+\frac{C_W}{8}\left\{\kappa_{(00)(00)}^{(\ell)} \left[K_{(12)}^{(\ell)}\bkkl{\partial^4(\sigma')^2}+K_{(10)}^{(\ell)}K_{(20)}^{(\ell)}\bkkl{\partial^6(\sigma')^2}\right]\right.\\
    &\qquad \qquad + 2\lr{\kappa_{(10)(00)}^{(\ell)}K_{(20)}^{(\ell)}+\kappa_{(20)(00)}^{(\ell)}K_{(10)}^{(\ell)}}\bkkl{\partial^4(\sigma')^2}\\
    &\qquad \qquad \left.+ 2\lr{2\kappa_{(12)(00)}^{(\ell)}+\kappa_{(10)(20)}^{(\ell)}}\bkkl{\partial^2(\sigma')^2}\right\}+O(n^{-2}).
\end{align*}
Using the asymptotics from Proposition \ref{P:4pt-deriv-recs-tanh} and Lemma \ref{L:rec-lemma} we obtain 
\[
S_{(12)}^{(\ell)}=O\lr{\frac{x_{1;\alpha}x_{2;\alpha}}{nn_0^2\ell}}+O(n^{-2}).
\]
Combined with \eqref{E:K12} this proves \eqref{E:k12}.
\end{proof}

\section{Statements and Declarations}
As required by Springer, we provide here statements on funding and competing interests. This research was partially funded by the National Science Foundation through Grants  DMS-2143754, DMS-1855684, and DMS-2133806 as well support from an ONR MURI on Foundations of Deep Learning. Finally, the author declares that he has no conflicts of interest. 

\bibliographystyle{plain} 
\bibliography{bibliography}
\appendix

\section{Proof of Theorem \ref{T:iwl}} \label{A:iwl}
Our proof of Theorem \ref{T:iwl} closely follows the proof of Theorem 1.2 in \cite{hanin2021random}. Let us first recall the notation and assumptions. We fix $\sigma:\R\gives \R$ such that
\begin{itemize}
	\item There exists $r\geq 1$ so that the $r$-th derivative of $\sigma$ belongs to $L^\infty$.
	\item There exist $c,c'>0$ so that 
	\[
\norm{e^{-cx^{2-c'}}\frac{d^r}{dx^r}\sigma(x)}_{L^\infty}<\infty.
	\]
\end{itemize}
We take $C_b\geq$ and $C_W>0$ and consider a random depth $L$ neural network with input dimension $n_0$, output dimension $n_{L+1}$, hidden layer widths satisfying
\[
\exists c,C>0\text{ s.t. } cn \leq n_1,\ldots, n_L\leq Cn,\qquad n \gg 1,
\]
non-linearity $\sigma$ and random weights and biases as in \eqref{E:Wb-def}. We also fix a finite collection 
\[
x_{\mA}:=\set{x_\alpha,\quad \alpha \in \mA}
\]
of distinct network inputs as well as an integer $m$ and study for each $\ell$ the random vectors
\[
D^{\leq r}z_{\mA}^{(\ell)}:=\lr{D^{\leq r}z_{i;\mA},\quad i=1,\ldots, m},
\]
where
\[
D^{\leq r}z_{i;\mA} :=\lr{D_{\alpha}^J z_{i;\alpha}^{(\ell)},\quad \alpha \in \mA,\, i=1,\ldots, m,\abs{J}\leq r}
\]
are the derivatives of $z_{i;\mA}^{_{(\ell)}}$ of order at most $r$. Our goal is to show that, as $n\gives \infty$, the joint distribution of the random vectors $D^{\leq r}z_{i;\mA}^{_{(\ell)}}$ converges to that of of centered jointly Gaussian vectors that are independent for different $i$ and satisfy
\[
\lim_{n\gives \infty}\Cov\lr{D_{\alpha_1}^{J_1} z_{i;\alpha_1}^{(\ell)},\, D_{\alpha_2}^{J_2} z_{i;\alpha_2}^{(\ell)}} = D_{\alpha_1}^{J_1}D_{\alpha_2}^{J_2} K_{\alpha_1\alpha_2}^{(\ell)}, 
\]
where
\[
K_{\alpha_1\alpha_2}^{(\ell+1)} = C_b+C_W \bk{\sigma(z_\alpha)\sigma(z_\beta)}_{K^{(\ell)}},\quad K_{\alpha_1\alpha_2}^{(1)} = C_b + C_W \sum_{j=1}^{n_0}x_{j;\alpha_1}x_{j;\alpha_2}
\]
is the infinite width covariance from Theorem \ref{T:iwl}. To prove this, let us denote by $\mF^{(\ell)}$  the sigma algebra generated by the weigts and biases in layer up to and including $\ell$. Observe that, conditional on $\mF^{(\ell)}$, we have that $D^{\leq r}z_{i;\mA}^{_{(\ell)}}$ are already independent for different $i$ and that, since the weights and biases are Gaussian, each is a centered Gaussian with conditional covariance
\[
\Cov\lr{D_{\alpha_1}^{J_1} z_{i;\alpha_1}^{(\ell+1)},\, D_{\alpha_2}^{J_2} z_{i;\alpha_2}^{(\ell+1)} ~\bigg|~\mF^{(\ell)}}= D_{\alpha_1}^{J_1}D_{\alpha_2}^{J_2} \Sigma_{\alpha_1\alpha_2}^{(\ell)},
\]
where
\[
\Sigma_{\alpha_1\alpha_2}^{(\ell)} = C_b + \frac{C_W}{n_{\ell}}\sum_{j=1}^{n_\ell}\sigma\lr{z_{j;\alpha_1}^{(\ell)}} \sigma\lr{z_{j;\alpha_2}^{(\ell)}}.
\]
Thus, by the continuous mapping theorem, it suffices to show that for any multi-indices $J_1,J_2$ with $\abs{J_i}\leq r$ and any $\alpha_1,\alpha_2\in \mA$ we have
\begin{equation}\label{E:iwl-goal1}
\lim_{n\gives \infty}\E{D_{\alpha_1}^{J_1}D_{\alpha_2}^{J_2}\Sigma_{\alpha_1\alpha_2}^{(\ell)}} \quad \text{exists and is finite}
\end{equation}
and 
\begin{equation}\label{E:iwl-goal2}
\lim_{n\gives \infty} \Var\left[D_{\alpha_1}^{J_1}D_{\alpha_2}^{J_2}\Sigma_{\alpha_1\alpha_2}^{(\ell)}\right] =0.
\end{equation}
We establish \eqref{E:iwl-goal1} and \eqref{E:iwl-goal2} by induction on $\ell$ the following more general statement.


\begin{proposition}\label{P:iwl}
Denote by $N(n_0,r)$ the number of derivatives of order at most $r$ in $n_0$ variables. Consider any measureable function $f:\R^{N(n_0,r)\times \abs{\mA}}\gives \R$ that is polynomially bounded, and define
\[
\mO_f^{(\ell)}:=\frac{1}{n_\ell}\sum_{j=1}^{n_\ell} f\lr{D^{\leq r}z_{j;\mA}}.
\]
Then, 
\begin{equation}\label{E:iwl-goal-11}
\lim_{n\gives \infty} \E{\mO_f^{(\ell)}} \quad \text{exists and is finite}
\end{equation}
and 
\begin{equation}\label{E:iwl-goal-21}
\lim_{n\gives \infty} \Var\left[\mO_f^{(\ell)}\right]=0.
\end{equation}
\end{proposition}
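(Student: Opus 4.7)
\textbf{Proof Proposal for Proposition \ref{P:iwl}.} The plan is to proceed by induction on $\ell$, with the two statements \eqref{E:iwl-goal-11} and \eqref{E:iwl-goal-21} proved simultaneously so that each can feed the other at the next layer. Along the way I will need a uniform moment bound of the form
\[
\sup_{n \geq 1}\, \E{\abs{f(D^{\leq r}z_{1;\mA}^{(\ell)})}^p} < \infty, \qquad p \geq 1,
\]
valid for any polynomially bounded $f$; this will be folded into the induction as well and follows easily from the polynomial bound on $\sigma^{(r)}$ in Assumption \ref{A:sigma-prop} plus Gaussianity of the weights and biases.

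For the base case $\ell = 1$, the recursion \eqref{E:z-def} shows that $z_{j;\alpha}^{(1)} = b_j^{(1)} + \sum_i W_{ji}^{(1)} x_{i;\alpha}$ is linear in the Gaussian weights and biases in layer $1$, and hence so is each $D_\alpha^J z_{j;\alpha}^{(1)}$. Thus $\{D^{\leq r}z_{j;\mA}^{(1)},\, j = 1,\ldots, n_1\}$ is an iid sequence of centered Gaussian vectors with an $n$-independent covariance. Moments of polynomials in $D^{\leq r}z_{j;\mA}^{(1)}$ are therefore uniformly bounded in $n$, and the polynomial bound on $f$ coupled with Gaussian tails gives uniform integrability. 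The classical law of large numbers then yields \eqref{E:iwl-goal-11} with explicit limit $\E{f(D^{\leq r}z_{1;\mA}^{(1)})}$, and the variance \eqref{E:iwl-goal-21} is $O(1/n_1)=O(1/n)$ since the summands are iid.

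For the inductive step, fix $\ell \geq 1$ and assume \eqref{E:iwl-goal-11}, \eqref{E:iwl-goal-21} together with uniform polynomial moment bounds hold at layer $\ell$. By Lemma \ref{L:cond-indep}, conditional on $\mF^{(\ell)}$ the vectors $\{D^{\leq r}z_{j;\mA}^{(\ell+1)},\, j = 1,\ldots, n_{\ell+1}\}$ are iid centered Gaussians with covariance matrix built out of the conditional covariances $D_{\alpha_1}^{J_1}D_{\alpha_2}^{J_2}\Sigma_{\alpha_1\alpha_2}^{(\ell)}$, and each such entry is itself a collective observable at layer $\ell$ (i.e. of the form $\mO_{f_0}^{(\ell)}$ for some polynomially bounded $f_0$ depending on $\sigma$ and its derivatives). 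Applying the law of total expectation and the law of total variance,
\[
\E{\mO_f^{(\ell+1)}} = \E{g\lr{\amO_{F_0}^{(\ell)}}}, \qquad \Var\left[\mO_f^{(\ell+1)}\right] = \frac{1}{n_{\ell+1}}\E{\Var\left[f(D^{\leq r}z_{1;\mA}^{(\ell+1)})\,\big|\,\mF^{(\ell)}\right]} + \Var\left[g\lr{\amO_{F_0}^{(\ell)}}\right],
\]
where $g(\Sigma) := \E{f(\Sigma^{1/2}Z)}$ with $Z$ standard Gaussian, and $\amO_{F_0}^{(\ell)}$ collects the finitely many collective observables giving the entries of $\Sigma^{(\ell)}$. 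The first variance term is $O(1/n)$ since the conditional variance of $f(D^{\leq r}z_{1;\mA}^{(\ell+1)})$ has finite expectation (by the inductive uniform moment bounds plus polynomial growth of $f$). The second variance term vanishes in the limit by the inductive hypothesis, since $g$ is a continuous function of the collective observables $\amO_{F_0}^{(\ell)}$, each of which concentrates on a deterministic limit. Concretely, the inductive variance bound gives $\amO_{F_0}^{(\ell)} \to K^{(\ell)}$ in $L^2$, so passing to the limit inside $g$ gives both the existence of $\lim_n \E{\mO_f^{(\ell+1)}} = g(K^{(\ell)})$ and the vanishing of $\Var[g(\amO_{F_0}^{(\ell)})]$, completing the induction.

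The main obstacle, and the reason Lemma \ref{L:smooth-gauss} is needed, is ensuring that $g(\Sigma) = \E{f(\Sigma^{1/2}Z)}$ is continuous (in fact smooth on the relevant locus) as a function of $\Sigma$, uniformly enough to justify exchanging limits with $g$. When $\sigma$ is smooth, Lemma \ref{L:smooth-gauss} gives smoothness of $g$ on the closed cone of PSD matrices, so the continuous mapping theorem together with uniform integrability (via the inductive moment bounds and polynomial growth of $f$) closes the step. When $\sigma$ is only polynomially differentiable and degeneracy of $\Sigma^{(\ell)}$ is a concern, one restricts to the event $\{\norm{\Sigma^{(\ell)} - K^{(\ell)}}_\infty < n^{-1/4}\}$, whose complement has probability $O(n^{-\infty})$ by Markov's inequality combined with the inductive variance bound; on this event $\Sigma^{(\ell)}$ stays in a compact neighborhood of $K^{(\ell)}$ where the second bullet of the lemma applies, and off this event the contribution is negligible thanks to polynomial growth and uniform moments. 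This is essentially the same localization idea used later in Section \ref{S:cumulants-pf}, and it handles both directions of the induction cleanly.
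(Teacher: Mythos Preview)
Your proposal is correct and follows essentially the same inductive scheme as the paper: base case via the i.i.d.\ structure at layer~$1$, inductive step via the conditional Gaussianity of Lemma~\ref{L:cond-indep} together with the fact that the conditional covariance entries are themselves collective observables at the previous layer.

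Two minor remarks on execution. First, the paper does not invoke Lemma~\ref{L:smooth-gauss} or any localization here; it simply uses that $\Sigma^{\leq r,(\ell)}$ converges in distribution to a deterministic limit (by the inductive hypothesis), then applies the Skorohod representation theorem and dominated convergence to pass to the limit inside $\E{f((\Sigma^{\leq r,(\ell)})^{1/2}Z)}$. This is lighter than your route through continuity of $g$ and localization. Second, your claim that the complement of the localization event has probability $O(n^{-\infty})$ is not supported by the inductive hypothesis as stated: Proposition~\ref{P:iwl} only gives $\Var[\mO_{f_0}^{(\ell)}]\to 0$, with no rate, so Chebyshev yields only that the complement has probability $o(1)$. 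This weaker bound is all you actually need (you are taking limits, not proving a rate), so the argument survives, but the $O(n^{-\infty})$ claim belongs to the later Theorem~\ref{T:smooth-cumulants}, where the stronger cumulant bounds are available as part of that induction.
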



\begin{proof}
We proceed by induction, starting with $\ell=1$. Since weights and biases are Gaussian, the vectors $D^{\leq r}z_{i;\mA}^{(1)}$ are independent for all $i$ and jointly Gaussian. The polynomial growth assumption on $f$ show the moments of $f(x)$ are finite if $x$ is Gaussian. This allows us to apply the SLLN to conclude both \eqref{E:iwl-goal-11} and \eqref{E:iwl-goal-21}. 

Let us now assume we have proved \eqref{E:iwl-goal1} and \eqref{E:iwl-goal2} for layers $1,\ldots, \ell$. We start by fixing any polynomially bounded $f$  and establishing \eqref{E:iwl-goal1} at layer $\ell+1.$ We have
\begin{align*}
\E{\mO_{f}^{(\ell+1)}}&= \E{f\lr{D^{\leq r}z_{1;\mA}}}.
\end{align*}
As above, conditionl on $\mF^{(\ell)}$, we have the following equality in distribution: 
\begin{equation}\label{E:iwl-cond-gauss}
D^{\leq r}z_{1;\mA} \stackrel{d}{=} \lr{\Sigma^{\leq r, (\ell)}}^{1/2} Z,\qquad Z\sim \mN\lr{0,I_{N(n_0,r)\times \mA}}
\end{equation}
where $Z$ is independent of the conditional covariance matrix
\[
\Sigma^{\leq r, (\ell)} = \lr{D_{\alpha_1}^{J_1}D_{\alpha_2}^{J_2}\Sigma_{\alpha_1\alpha_2}^{(\ell)}}_{\alpha_1,\alpha_2\in \mA}.
\]
The key observation is that each entry of $\Sigma^{\leq r, (\ell)}$ is of the form $\mO_{f}^{(\ell)}$ for polynomially bounded $f$. Hence, we may apply the inductive hypothesis to conclude that there exists a matrix $\overline{\Sigma}^{_{\leq r, (\ell)}}$ such that the following convergence in distribution holds
\[
\Sigma^{\leq r, (\ell)} \quad \stackrel{d}{\longrightarrow}\quad  \overline{\Sigma}^{\leq r, (\ell)}\qquad \text{ as }\quad n\gives \infty.
\]
The polynomial growth assumption on $f$ together with the Skorohod representation theorem and dominated convergence show that 
\[
\lim_{n\gives \infty} \E{f\lr{D^{\leq r}z_{1;\mA}}} = \E{f\lr{\lr{ \overline{\Sigma}^{\leq r, (\ell)}}^{1/2} Z}}=:\overline{\mO}_{f}^{(\ell+1)}\quad \text{exists and is finite}.
\]
This proves \eqref{E:iwl-goal-11}. To show \eqref{E:iwl-goal-21}, we proceed similarly. Namely, we have
\begin{align*}
\Var\left[\mO_f^{(\ell+1)}\right] &= \frac{1}{n_{\ell+1}}\Var\left[f\lr{D^{\leq r}z_{1;\mA} ^{(\ell+1)}}\right] + \lr{1-\frac{1}{n_{\ell+1}}} \Cov\lr{f\lr{D^{\leq r}z_{1;\mA}^{(\ell+1)}},\, f\lr{D^{\leq r}z_{2;\mA} ^{(\ell+1)}}}.
\end{align*}
Note that
\[
\Var\left[f\lr{D^{\leq r}z_{1;\mA}^{(\ell+1)}}\right] \leq \E{\frac{1}{n}\sum_{j=1}^{n_{\ell}} \left[f\lr{D^{\leq r}z_{1;\mA} ^{(\ell+1)}}\right]^2}.
\]
Hence, since $f^2$ is also polynomially bounded we have already shown that \eqref{E:iwl-goal-11} holds at layer $\ell+1$, we see that
\[
\Var\left[\mO_f^{(\ell+1)}\right]= \Cov\lr{f\lr{D^{\leq r}z_{1;\mA}^{(\ell+1)}},\, f\lr{D^{\leq r}z_{2;\mA} ^{(\ell+1)}}} + O(n^{-1}).
\]
Next, using that conditional on $\mF^{(\ell)}$ the vectors $D^{\leq r}z_{i;\mA}^{(\ell+1)}$ are independent for different $i$ we conclude from the law of total covariance that
\[
\Cov\lr{f\lr{D^{\leq r}z_{1;\mA}^{(\ell+1)}},\, f\lr{D^{\leq r}z_{2;\mA} ^{(\ell+1)}}} \leq \Var\left[\E{f\lr{D^{\leq r}z_{1;\mA}^{(\ell+1)}} ~\bigg|~ \mF^{(\ell)}}\right].
\]
Combining the equality in distribution \eqref{E:iwl-cond-gauss} with the polynomial growth condition on $f$ and the dominated convergence theorem we find 
\[
\lim_{n\gives \infty} \Var\left[\E{f\lr{D^{\leq r}z_{1;\mA}^{(\ell+1)}} ~\bigg|~ \mF^{(\ell)}}\right]  = \Var\left[\E{f\lr{\lr{\overline{\Sigma}^{\leq r, (\ell)}}^{1/2}Z}}\right] = 0.
\]
This completes the proof that \eqref{E:iwl-goal-21} holds at infinite width, establishing Proposition \ref{P:iwl}.

\end{proof}


\section{Infinite Width Analysis of Tanh-like Non-linearities}\label{S:iw-tanh}

The purpose of this section is to derive some basic properties of the infinite width variance recursion 
\begin{equation}\label{E:K-tanh-rec}
\kappa_{\alpha\alpha}^{(\ell+1)} = C_b+ C_W\bk{\sigma(z)^2}_{\kappa_{\alpha\alpha}^{(\ell)}}.
\end{equation}
We abbreviate
\[
\sigma_j:=\frac{1}{j!}\frac{d^j}{dx^j}\bigg|_{x=0}\sigma(x)
\]
and consider here the case when $\sigma$ that is a tanh-like non-linearity in the sense that $\sigma$ satisfies: 
\begin{itemize}
    \item $\sigma$ is smooth at $0$ with $\sigma_1\neq 0$
    \item $\sigma$ has the opposite sign as its second derivative 
\begin{equation}
\label{E:curvature} \text{for almost every } z,~ \mathrm{sgn}\lr{\sigma(z)\sigma''(z)}=-1.
\end{equation} 
    Note that this forces $\sigma_2=0$ and
\[
a:=-\frac{6\sigma_3}{\sigma_1}>0.
\]
\item $\sigma$ is sub-linear:
\begin{equation}
\label{E:sublinear}  \exists C>0\, \text{ s.t. }\forall z\in \R\, \abs{\sigma(z)}\leq \abs{\sigma_1z},
\end{equation}
\item $\sigma$ is controlled by its first few non-zero Taylor series coefficients at $0$:
\begin{equation}\label{E:taylor-zero}
\exists C\geq 0\text{ s.t. }\forall z\geq 0,\quad \sigma_1z +\sigma_3z^3\leq \sigma(z)\leq  \sigma_1z +\sigma_3z^3+ Cz^4
\end{equation}
\end{itemize}
We will be interested in understanding the recursion \eqref{E:K-tanh-rec} at criticality in the sense defined in \S \ref{S:crit}. Specifically, we remind the reader that this means we choose $C_b,C_W$ so that 
\begin{align*}
\exists K_*\geq 0\quad \text{ s.t. }\quad K_* &= C_b + C_W\bk{\sigma^2(z)}_{K_*}\\
\chi_{||}(K_*)&=\frac{C_W}{2}\bk{\partial^2 (\sigma(z)^2)}_{K_*}=1\\
\chi_{\perp}(K_*)&=C_W\bk{(\sigma'(z))^2}_{K_*}=1.
\end{align*}
Before stating our main result (Proposition \ref{P:tanh-crit}), let us explain intuitively what we expect. First of all, as we shall see in Proposition \ref{P:tanh-crit}, $\tanh$-like non-linearities requires $K_*=0$ for criticality. Second, by Taylor expanding the recursion \eqref{E:K-rec} around small values of $K_{\alpha\alpha}^{(\ell)}$ we find
\begin{align*}
K_{\alpha\alpha}^{(\ell+1)} &= K_{\alpha\alpha}^{(\ell)}-a \lr{K_{\alpha\alpha}^{(\ell)}}^2 + O\lr{\lr{K_{\alpha\alpha}^{(\ell)}}^3}.
\end{align*}
This is well-approximated by the ODE
\[
\frac{d}{dt}K(t) = -a K(t)^2,
\]
whose solution is
\[
K(t) = \lr{at + \frac{1}{K(0)}}^{-1}.
\]
This form for the solution has two important properties that we will check in Proposition \ref{P:tanh-crit} hold for the actual solution $K_{\alpha\alpha}^{(\ell)}$ to the discrete difference equation \eqref{E:K-rec}:
\begin{itemize}
	\item At large $t$, $K(t)$ tends to zero like $1/at$ plus an error of size roughly $O(t^{-2})$.
	\item The leading order behavior of $K(t)$ at large $t$ is independent of the initial condition. 
\end{itemize}


\begin{proposition}\label{P:tanh-crit}
If $\sigma$ is a tanh-like non-linearity in the sense defined above then criticality is achieved for $\sigma$ only with 
\begin{equation}\label{E:tanh-crit-cond}
    K_*=0,\qquad C_b=0,\qquad \text{and}\qquad C_W=\sigma_1^{-2}.
\end{equation}
Moreover, for every $\delta\in (0,1)$ we have
\begin{equation}\label{E:K-tanh}
K_{\alpha\alpha}^{(0)}>0\quad \Rightarrow \quad \sup_{\ell\geq 1} \ell^{2-\delta}\abs{K_{\alpha\alpha}^{(\ell)}-\frac{1}{a\ell}}<\infty.
\end{equation}
\end{proposition}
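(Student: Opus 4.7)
The plan is to separate the two claims of the proposition: first pin down the criticality constants, and then analyze the decay of $K_{\alpha\alpha}^{(\ell)}$ at criticality.

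For the criticality half, the key identity is
\[
\chi_{||}(K) = \chi_{\perp}(K) + C_W\bk{\sigma(z)\sigma''(z)}_K,
\]
obtained by Gaussian integration by parts from $\partial_z^2(\sigma^2) = 2(\sigma')^2 + 2\sigma\sigma''$. Assumption (\ref{E:curvature}) guarantees $\sigma\sigma'' < 0$ almost everywhere, and since $\mathcal N(0,K)$ has full support whenever $K > 0$ we deduce $\bk{\sigma\sigma''}_K < 0$. Therefore $\chi_{||}(K) < \chi_{\perp}(K)$ for every $K > 0$, ruling out any positive critical fixed point. This forces $K_* = 0$; evaluating the fixed point equation \eqref{E:K*-def} at $0$ (using $\sigma(0) = 0$, which follows from (\ref{E:taylor-zero}) at $z=0$) gives $C_b = 0$, and then $\chi_{\perp}(0) = C_W\sigma_1^2 = 1$ forces $C_W = \sigma_1^{-2}$. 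The remaining condition $\chi_{||}(0) = 1$ is automatic since $\bk{\sigma\sigma''}_0 = \sigma(0)\sigma''(0) = 0$.

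For the asymptotic (\ref{E:K-tanh}), I will first prove monotone convergence of $K_{\alpha\alpha}^{(\ell)}$ to $0$. At criticality the recursion becomes $K_{\alpha\alpha}^{(\ell+1)} = \sigma_1^{-2}\bk{\sigma^2}_{K_{\alpha\alpha}^{(\ell)}}$, and the sublinearity bound (\ref{E:sublinear}) yields $\sigma(z)^2 \leq \sigma_1^2 z^2$, so $K_{\alpha\alpha}^{(\ell+1)} \leq K_{\alpha\alpha}^{(\ell)}$. A bounded non-increasing sequence converges, and its limit must be a fixed point of the recursion, so $K_{\alpha\alpha}^{(\ell)} \to K_* = 0$. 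I will then derive the local expansion
\[
\bk{\sigma^2}_K = \sigma_1^2 K + 6\sigma_1\sigma_3 K^2 + O(K^3)
\]
for small $K$ by Taylor expanding $\sigma^2$ near $0$, using the pointwise estimate (\ref{E:taylor-zero}) to control the remainder on a neighborhood of the origin and combining (\ref{E:sublinear}) with the rapid Gaussian tail decay to bound the contribution from $|z|$ large. Recalling that $a = -6\sigma_3/\sigma_1 > 0$, this yields the quadratic recursion
\[
K_{\alpha\alpha}^{(\ell+1)} = K_{\alpha\alpha}^{(\ell)} - a\bigl(K_{\alpha\alpha}^{(\ell)}\bigr)^2 + O\bigl((K_{\alpha\alpha}^{(\ell)})^3\bigr).
\]

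The final step, which I expect to carry the most technical weight, is to convert this recursion into the sharp rate. The natural change of variable is $L_\ell := 1/K_{\alpha\alpha}^{(\ell)}$, which converts multiplicative decay into the additive form $L_{\ell+1} = L_\ell + a + O(1/L_\ell)$. A short bootstrap shows $L_\ell \geq a\ell/2$ for $\ell$ sufficiently large, after which telescoping gives $L_\ell = a\ell + R_\ell$ with $R_\ell - R_1 = \sum_{k=1}^{\ell-1} O(1/L_k) = O(\log \ell)$. Substituting back then yields
\[
K_{\alpha\alpha}^{(\ell)} - \frac{1}{a\ell} = -\frac{R_\ell}{a\ell\,(a\ell + R_\ell)} = O\!\left(\frac{\log \ell}{\ell^2}\right),
\]
which is $O(\ell^{-2+\delta})$ for every $\delta \in (0,1)$. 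The main delicate point I anticipate is making the $O(K^3)$ cubic control in the Taylor expansion of $\bk{\sigma^2}_K$ fully rigorous for arbitrary (not necessarily odd) $\sigma$ under the given hypotheses; this amounts to splitting the Gaussian integral at a scale like $|z| \asymp K^{1/4}$ so that (\ref{E:taylor-zero}) supplies local polynomial control while (\ref{E:sublinear}) and Gaussian tail decay absorb the complement up to errors dominated by any polynomial power of $K$.
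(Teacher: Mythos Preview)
Your argument for the criticality constants is essentially identical to the paper's: both use the identity $\chi_{||}(K)=\chi_{\perp}(K)+C_W\bk{\sigma\sigma''}_K$ together with the sign condition \eqref{E:curvature} to force $K_*=0$, and then read off $C_b=0,\ C_W=\sigma_1^{-2}$.

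For the decay estimate your route is genuinely different. The paper sets $\epsilon^{(\ell)}:=K_{\alpha\alpha}^{(\ell)}-\tfrac{1}{a\ell}$ and, after using sublinearity to drive $K_{\alpha\alpha}^{(\ell)}$ into a small neighborhood of $0$, derives from the Taylor bounds \eqref{E:taylor-zero} explicit one-step inequalities of the form
\[
\epsilon^{(\ell+1)}\ \ge\ -\tfrac{1}{a\ell^2(\ell+1)}+\epsilon^{(\ell)}\Bigl(1-\tfrac{2}{\ell}-a\epsilon^{(\ell)}\Bigr),
\qquad
\epsilon^{(\ell+1)}\ \le\ \tfrac{C}{\ell^3}+\epsilon^{(\ell)}\Bigl(1-\tfrac{2-\delta}{\ell}\Bigr),
\]
and then feeds these into a general recursion-solving lemma (Lemma~\ref{L:rec-lemma}). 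The upper bound on $\epsilon^{(\ell)}$ follows directly, but the lower bound needs a separate bootstrap: first a propagation lemma showing that once $\epsilon^{(\ell)}\ge -\ell^{-2+\delta}$ it stays there, and then, in the bad case where $\epsilon^{(\ell)}<0$ for all large $\ell$, a crude a priori bound $K_{\alpha\alpha}^{(\ell)}\ge c/(a\ell)$ used to tame the quadratic term. Your reciprocal substitution $L_\ell=1/K_{\alpha\alpha}^{(\ell)}$ is the classical device for recursions $x_{\ell+1}=x_\ell-ax_\ell^2+O(x_\ell^3)$: it converts the problem to $L_{\ell+1}=L_\ell+a+O(1/L_\ell)$ and replaces the paper's two-sided analysis and bootstrap with a single telescoping argument, yielding in fact the slightly sharper $|K_{\alpha\alpha}^{(\ell)}-\tfrac{1}{a\ell}|=O((\log\ell)/\ell^2)$. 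The trade-off is that your approach requires a clean two-sided cubic remainder $\bk{\sigma^2}_K=\sigma_1^2K-a\sigma_1^2K^2+O(K^3)$, whereas the paper's $\epsilon^{(\ell)}$ inequalities are phrased so that the one-sided pointwise bounds in \eqref{E:taylor-zero} can be inserted directly. Your stated plan for obtaining the cubic remainder (split at $|z|\asymp K^{1/4}$, use \eqref{E:taylor-zero} near $0$ and \eqref{E:sublinear} plus Gaussian tails away from $0$) is the right idea; just note that \eqref{E:taylor-zero} is stated for $z\ge 0$, so you will need to use it on $\sigma(z)^2$ via the sublinearity bound on the negative axis, or invoke the oddness assumed for the $K_*=0$ class in the main text.
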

\begin{proof}
Note that for any $K\geq 0$ we have
\begin{equation}\label{E:para-perp-tanh}
\chi_{||}(K) = \chi_{\perp}(K) + C_W\bk{\sigma(z)\sigma''(z)}_{K}.
\end{equation}
Hence, at criticality, we must have
\[
\bk{\sigma(z)\sigma''(z)}_{K_*}=0.
\]
But due to assumption \eqref{E:curvature} we have
\[
K>0\quad \Longrightarrow \quad \bk{\sigma(z)\sigma''(z)}_{K}<0.
\]
Thus, we indeed find that we must have $K_*=0$ at criticality. Hence, in light of \eqref{E:para-perp-tanh} criticality is equivalent to the system of equations
\[
K_*=0=C_b+C_W\sigma(0)^2,\qquad \chi_{||}(0)=\chi_{\perp}(0) = C_W\bk{(\sigma'(z))^2}_0=C_W\sigma_1^2 = 1.
\]
This system has a unique solution:
\[
C_b=0,\quad C_W = \sigma_1^{-2},
\]
completing the proof of the criticality conditions \eqref{E:tanh-crit-cond}. Let us now establish \eqref{E:K-tanh}. First note that at criticality the sub-linearity condition \eqref{E:sublinear} guarantees that for all $\delta>0$ there exists $c_\delta\in (0,1)$ such that
\[
K>\delta \quad \Longrightarrow \quad C_W\bk{\sigma(z)^2}_{K} < (1-c_\delta) \bk{z^2}_{K} = (1-c_\delta )K.
\]
Hence, for all $K,\delta>0$ there exists $\ell_0\geq 1$ such that
\begin{equation}\label{E:K-decay}
K_{\alpha\alpha}^{(0)}\leq K \qquad \Longrightarrow\qquad K_{\alpha\alpha}^{(\ell)}\leq \delta\quad \forall \ell \geq \ell_0.    
\end{equation}
In particular, $K_{\alpha\alpha}^{_{(\ell)}}$ is monotonically decreasing and converges to $K_*=0$ as $\ell$ grows. Let us now define for each $\ell\geq 1$
\[
K_{\alpha\alpha}^{(\ell)} =:\frac{1}{a\ell} + \epsilon^{(\ell)},\qquad a:=-6 \frac{\sigma_3}{\sigma_1} >0,
\]
where $a$ is positive due to \eqref{E:curvature}. Note that since $K_{\alpha\alpha}^{(\ell)}$ tends to zero with $\ell$, so does $\epsilon^{(\ell)}$. Let us agree that for any $t\in \R$ the symbol $t^+$ (resp. $t^-$) means that for $\ell$ sufficiently large we may make the constant $t^+$ (resp. $t^{-}$) arbitrary close to $t$ from above (resp. below). In order to prove \eqref{E:K-tanh}, we start with the following elementary estimate. 


\begin{lemma}
For all $\ell \geq 1$, we have
\begin{align}
  \label{E:tanh-ep-est-lb}   \epsilon^{(\ell+1)} ~\geq~ -\frac{1}{a\ell^2(\ell+1)}+\epsilon^{(\ell)}\lr{1-\frac{2}{\ell} - a\epsilon^{(\ell)}}.
\end{align}
Further, there exists a constant $C>0$ depending only on $\sigma$ with the following property. For all $K>0$ there exists a constant $\ell_0\geq 1$ so that if $K_{\alpha\alpha}^{_{(0)}}\leq K$, then for all $\delta\in (0,1)$ we have
\begin{align}
  \label{E:tanh-ep-est-ub}   \epsilon^{(\ell+1)} ~\leq~ \frac{C}{\ell^3} + \epsilon^{(\ell)}\lr{1-\frac{2-\delta}{\ell}},\qquad \forall~\ell\geq \ell_\delta:=\max\set{ \frac{C}{\delta},\,\frac{2C}{a}, \, \ell_0}.
\end{align}
\end{lemma}


\begin{proof}
Plugging in the estimates \eqref{E:taylor-zero} into the recursion \eqref{E:K-tanh-rec} yields for some $C>0$ depending only on $\sigma$
\begin{align*}
    \epsilon^{(\ell+1)}&\leq \frac{C}{\ell^3}+\epsilon^{(\ell)}\left[1-\frac{2}{\ell} +\frac{C}{\ell^2} \right]+\lr{\epsilon^{(\ell)}}^2\lr{-a+\frac{C}{\ell}}+C(\epsilon^{(\ell)})^3
\end{align*}
Note that for all $\ell\geq 2C/a$ we have $-a+C/\ell\leq 0$. Hence, for all $\ell\geq \max\set{2C/a, C/\delta}$ we have
\begin{align*}
    \epsilon^{(\ell+1)}&\leq \frac{C}{\ell^3}+\epsilon^{(\ell)}\left[1-\frac{2-\delta}{\ell} \right]+C(\epsilon^{(\ell)})^3.
\end{align*}
Moreover, if $\epsilon^{(\ell)}\leq 0$, then $(\epsilon^{(\ell)})^3\leq 0$. If on the other hand $\epsilon^{(\ell)}\geq 0$, then from \eqref{E:K-decay} we find that there for all $K>0$ there exists $\ell_0$ so that $(\epsilon^{(\ell)})^3\leq a(\epsilon^{(\ell)})^2/4$ for all $\ell\geq \ell_0.$ Hence, in all cases, for each $\delta\in (0,1)$ if $\ell\geq \max\set{2C/a, C/\delta, \ell_0},$ we find
\begin{align*}
    \epsilon^{(\ell+1)}&\leq \frac{C}{\ell^3}+\epsilon^{(\ell)}\left[1-\frac{2-\delta}{\ell}  \right],
\end{align*}
as claimed. The lower bound follows from a similar but simpler computation. 
\end{proof}
\noindent Fix $\delta\in (0,1)$. The relation \eqref{E:tanh-ep-est-ub}, together with Lemma \ref{L:rec-lemma}, show that for all $K>0$ there exists some $C'>0$ depending on $\delta,\sigma,K$ such that if $K_{\alpha\alpha}^{_{(0)}}\leq K$ then 
\[
\epsilon^{(\ell+1)} \leq \sum_{\ell'=\ell_\delta}^{\ell} \frac{C}{\ell^3} \prod_{\ell''=\ell'+1}^\ell \lr{1-\frac{2^-}{\ell}} \leq C'\left[\frac{1}{\ell^2} + \epsilon^{(\ell_\delta)} \frac{1}{\ell^{2-\delta}}\right].
\]
This shows that 
\begin{equation}\label{E:eps-tanh-ub}
\forall \delta\in (0,1)~\exists \ell_\delta \geq 1~\text{ s.t. }\quad \epsilon^{(\ell)}\leq \frac{1}{\ell^{2-\delta}}\quad \forall \ell\geq \ell_\delta.    
\end{equation}
To conclude \eqref{E:K-tanh} it therefore remains to deduce that 
\begin{equation}\label{E:eps-tanh-goal}
\forall K_1,K_2>0,\, \delta\in (0,1)~\exists \ell_\delta \geq 1~\text{ s.t. }\quad K_1<K_{\alpha\alpha}^{(0)}<K_2 \quad \Longrightarrow\quad \epsilon^{(\ell)}\geq -\frac{1}{\ell^{2-\delta}}\quad \forall \ell\geq \ell_\delta.
\end{equation}
To aid with this, we will need the following


\begin{lemma}
For any $\delta\in (0,1)$ there exists $\ell_\delta\geq 1$ with the property that if $\ell\geq \ell_\delta$ then
\[
\epsilon^{(\ell)} \geq -\ell^{-2+\delta}\quad \Longrightarrow \quad \epsilon^{(\ell+1)}\geq -(\ell+1)^{-2+\delta}.
\]
\end{lemma}

\begin{proof}
Suppose $\epsilon^{(\ell)} \geq -\ell^{-2+\delta}$. The lower bound in \eqref{E:tanh-ep-est-lb} yields for some $C,C'>0$
\begin{align*}
    \epsilon^{(\ell+1)}+\lr{\ell+1}^{-2+\delta}& \geq \lr{\ell+1}^{-2+\delta}\left[1-\lr{1+\ell^{-1}}^{2-\delta}\right] - 2\ell^{-3+\delta} - C\ell^{-4+2\delta}\\
    &\geq \delta \ell^{-3+\delta} -C'(\ell^{-3}+\ell^{-4+2\delta}),
\end{align*}
which is non-negative for all $\ell$ sufficiently large. 
\end{proof}
We are now in a position to establish \eqref{E:eps-tanh-goal}. In light of the previous Lemma we need only consider the case when 
\[
\forall \delta\in (0,1)~\exists ~\ell_\delta\geq 1\quad \text{s.t.}\quad \epsilon^{(\ell_\delta)}<-\ell^{2-\delta}.
\]
Note that in light of the upper bound \eqref{E:eps-tanh-ub} we find that for all $\delta\in (0,1)$ there exists $\ell_\delta\geq 1$ and $C_\delta>0$ so that for all $\ell\geq \ell_\delta$ we have
\begin{align*}
    K_{\alpha\alpha}^{(\ell+1)}\geq K_{\alpha\alpha}^{(\ell)}\lr{1-aK_{\alpha\alpha}^{(\ell)}}\geq K_{\alpha\alpha}^{(\ell)}\lr{1-a\lr{-\frac{1}{a\ell}+C_\delta \ell^{-2+\delta}}}=K_{\alpha\alpha}^{(\ell)}\lr{1-\frac{1}{\ell}-aC_\delta \ell^{-2+\delta}}.
\end{align*}
Hence, assuming $K_2\geq K_{\alpha\alpha}^{(0)}\geq K_1 >0,$ we may iterate this inequality to find that there exists $c>0$ depending on $K_1,K_2$ and $\ell_0\geq 1$ so that 
\[
K_{\alpha\alpha}^{(\ell)}\geq \frac{c}{a\ell} \qquad \forall \ell\geq \ell_0.
\]
Hence, since $\epsilon^{(\ell)}<0$ for all $\ell\geq \ell_\delta$ we find for all $\ell\geq \max\set{\ell_0,\ell_\delta}$ that 
\[
-a(\epsilon^{(\ell)})^2 \geq \epsilon^{(\ell)}\frac{1-c}{\ell}
\]
Substituting this into the lower bound \eqref{E:tanh-ep-est-lb}, we find that for all $\ell\geq \max\set{\ell_0,\ell_\delta}$
\[
\epsilon^{(\ell+1)}\geq -\frac{C'}{\ell^3} + \epsilon^{(\ell)}\lr{1-\frac{1+c}{\ell}}.
\]
Since $\epsilon^{(\ell_\delta)}<0$, we see by applying Lemma \ref{L:rec-lemma} that there exists $C>0$ so that for all $\ell\geq \max\set{\ell_0,\ell_\delta}$
\[
\epsilon^{(\ell+1)}\geq -\frac{C}{\ell^{1+c}}.
\]
But now we can bootstrap this estimate. Indeed, for any $\delta\in (0,1)$ we substitute this into the lower bound \eqref{E:tanh-ep-est-lb} to find that for all $\ell$ sufficiently large
\[
\epsilon^{(\ell+1)}\geq -\frac{C'}{\ell^3} + \epsilon^{(\ell)}\lr{1-\frac{2-\delta}{\ell}}.
\]
Again applying Lemma \ref{L:rec-lemma} yields that for all $\ell$ sufficiently large
\[
\epsilon^{(\ell+1)}\geq -\frac{C}{\ell^{2-\delta}}.
\]
This completes the proof.

\end{proof}

\section{Exact Solutions for $1$-homogeneous activations}\label{A:relu-exact}
In this appendix, we collect several known computations related to the distribution of neuron activations in random fully connected networks with $1-$homogeneous activations. Specifically, we fix a one homogeneous non-linearity 
\[
\sigma(t) = (a_+{\bf 1}_{t>0} + a_- {\bf 1}_{t<0})t
\]
and consider a random fully connected neural network with input dimension $n_0$, output dimension $n_{L+1}$, hidden layer widths $n_1,\ldots, n_\ell$, and non-linearity $\sigma$ that is tuned to criticality in the sense that
\[
C_b = 0,\qquad C_W = \frac{2}{a_+^2+a_-^2}.
\]
Our first task is to derive in \S \ref{S:relu-angle} a known exact formula for the infinite width covariance $K_{\alpha\beta}^{(\ell+1)}$ as a function of $K_{\alpha\alpha}^{(\ell)},K_{\alpha\beta}^{(\ell)},K_{\beta\beta}^{(\ell)}$. Then, in Section \S \ref{S:1-homog-limit}, we sketch a derivation of the limiting distribution \eqref{E:1-homog-limit} of a neuron pre-activation in the double scaling limit $n,L\gives \infty, L/n\gives \gamma$.

\subsection{Covariance Propagation in Random Fulluy Connected $1$-homogeneous Networks}\label{S:relu-angle}

In this section, we consider two network inputs $x_\alpha,x_\beta$ of the same norm:
\begin{equation}\label{E:same-norm}
K_{\alpha\alpha}^{(0)}=\frac{1}{n_0}\norm{x_\alpha}^2 = K =\frac{1}{n_0}\norm{x_\beta}^2= K_{\beta\beta}^{(0)},\qquad K>0.    \end{equation}
Let us define
\[
\epsilon_{\alpha\beta}^{(\ell)}:= \frac{1-\mathrm{Corr}_{\alpha\beta}^{(\ell)}}{2},\qquad \mathrm{Corr}_{\alpha\beta}^{(\ell)}:=\frac{K_{\alpha\beta}^{(\ell)}}{\lr{K_{\alpha\alpha}^{(\ell)}K_{\beta\beta}^{(\ell)}}^{1/2}}  
\]
Our goal is to derive the following explicit recursion for $\epsilon_{\alpha\beta}^{(\ell+1)}$ in terms of $\epsilon_{\alpha\beta}^{(\ell)}$. This derivation follows the approach in \S 5.5 \cite{roberts2022principles}. To the author's knowledge, the following formula (or really something equivalent) was first derived in \cite{cho2009kernel}.
\begin{proposition}[Correlation propagation for $1-$homogeneous activation functions]\label{P:1homog-angle-prop}
At criticality, we have the following exact formula:
\begin{align}
  \notag  1-2\epsilon_{\alpha\beta}^{(\ell+1)}&=\frac{2C_W(a_+-a_-)^2}{\pi}\left[\frac{1}{2}\sqrt{\epsilon_{\alpha\beta}^{(\ell)}(1-\epsilon_{\alpha\beta}^{(\ell)})}+\lr{\frac{1}{2}-\epsilon_{\alpha\beta}^{(\ell)}}\cos^{-1}\lr{\sqrt{\epsilon_{\alpha\beta}^{(\ell)}}}\right]\\
  \label{E:ep-rec}  &+C_Wa_+a_-(1-2\epsilon_{\alpha\beta}^{(\ell)})
\end{align}
In particular, taking $\eps_{\alpha\beta}^{(\ell)}$ small we find
\[\eps_{\alpha\beta}^{(\ell+1)} = \eps_{\alpha\beta}^{(\ell)} - \frac{4}{3\pi}\lr{\eps_{\alpha\beta}^{(\ell)}}^{3/2} +O\lr{\lr{\eps_{\alpha\beta}^{(\ell)}}^{5/2}}.\]
Hence, as $\ell\gives \infty$, 
\[
\eps_{\alpha\beta}^{(\ell)} = \frac{2}{3\pi}\ell^{-2}(1+o(1)).
\]
\end{proposition}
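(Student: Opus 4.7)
The plan is to evaluate the Gaussian integral defining $K_{\alpha\beta}^{(\ell+1)}$ explicitly using the $1$-homogeneity of $\sigma$, the sign-symmetry of the bivariate standard normal, and the classical identity for $\mathbb E[|U||V|]$ for correlated standard normals.

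\medskip

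\noindent\textbf{Step 1: Variance preservation.} First I would verify that under the equal-norm assumption \eqref{E:same-norm} and critical tuning $(C_b,C_W)=(0,2/(a_+^2+a_-^2))$, the $1$-homogeneity of $\sigma$ combined with sign-symmetry of a centered Gaussian gives $K_{\alpha\alpha}^{(\ell)}=K_{\beta\beta}^{(\ell)}=K$ for all $\ell\ge 0$; this is exactly the finite-width identity \eqref{E:fin-width-var} specialized to the infinite-width limit. Consequently $\mathrm{Corr}_{\alpha\beta}^{(\ell+1)}=C_W \mathbb E[\sigma(u)\sigma(v)]$, where $(u,v)$ is standard bivariate Gaussian with correlation $\rho=\rho^{(\ell)}:=1-2\epsilon_{\alpha\beta}^{(\ell)}$; again this uses $\sigma(ct)=c\sigma(t)$ for $c>0$.

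\medskip

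\noindent\textbf{Step 2: Algebraic decomposition of $\sigma(u)\sigma(v)$.} Write $\sigma(t)=a_+t_+ - a_-|t|_-$ with $t_+:=\max(t,0)$, $|t|_-:=\max(-t,0)$, so that
\[
\sigma(u)\sigma(v)=a_+^2\,u_+v_+ + a_-^2\,|u|_-|v|_- - a_+a_-\bigl(u_+|v|_- + |u|_- v_+\bigr).
\]
The joint law of $(u,v)$ is invariant under $(u,v)\mapsto(-u,-v)$, which swaps $u_+\leftrightarrow|u|_-$ and $v_+\leftrightarrow|v|_-$, giving $\mathbb E[u_+v_+]=\mathbb E[|u|_-|v|_-]$ and $\mathbb E[u_+|v|_-]=\mathbb E[|u|_- v_+]$. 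Using the two algebraic identities $uv=(u_+-|u|_-)(v_+-|v|_-)$ and $|u||v|=(u_++|u|_-)(v_++|v|_-)$, one obtains
\[
4\,\mathbb E[u_+v_+]=\rho+\mathbb E[|u||v|],\qquad 4\,\mathbb E[u_+|v|_-]=\mathbb E[|u||v|]-\rho,
\]
which, after substitution and simplification using $(a_++a_-)^2-(a_+-a_-)^2=4a_+a_-$, yields
\[
C_W\,\mathbb E[\sigma(u)\sigma(v)]=\frac{C_W(a_++a_-)^2}{4}\rho+\frac{C_W(a_+-a_-)^2}{4}\mathbb E[|u||v|].
\]

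\medskip

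\noindent\textbf{Step 3: Classical evaluation of $\mathbb E[|u||v|]$ and change of variables.} I would then invoke (or re-derive, by differentiating in $\rho$) the standard formula
\[
\mathbb E[|u||v|]=\tfrac{2}{\pi}\bigl(\sqrt{1-\rho^2}+\rho\arcsin\rho\bigr).
\]
With $\rho=1-2\epsilon$, the half-angle identities give $\sqrt{1-\rho^2}=2\sqrt{\epsilon(1-\epsilon)}$ and $\arcsin\rho=2\cos^{-1}\sqrt{\epsilon}-\pi/2$. Plugging back, using the criticality identity $C_W(a_+^2+a_-^2)=2$ (so that $\tfrac{C_W}{4}[(a_++a_-)^2-(a_+-a_-)^2]=C_Wa_+a_-$ absorbs the residual $\rho$-terms cleanly), produces exactly \eqref{E:ep-rec}. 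Sanity checks at $\epsilon\in\{0,\tfrac12,1\}$ (giving $\rho^{(\ell+1)}=1,\ \tfrac{C_W(a_+-a_-)^2}{2\pi},\ -C_Wa_+a_-$ respectively) confirm the expression.

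\medskip

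\noindent\textbf{Step 4: Small-$\epsilon$ expansion and asymptotics.} Expanding $\sqrt{\epsilon(1-\epsilon)}=\sqrt{\epsilon}(1-\tfrac{\epsilon}{2})+O(\epsilon^{5/2})$ and $\cos^{-1}\sqrt{\epsilon}=\tfrac{\pi}{2}-\sqrt{\epsilon}-\tfrac{1}{6}\epsilon^{3/2}+O(\epsilon^{5/2})$ in the bracketed quantity yields $\tfrac{\pi}{4}-\tfrac{\pi}{2}\epsilon+\tfrac{2}{3}\epsilon^{3/2}+O(\epsilon^{5/2})$. Inserting into \eqref{E:ep-rec} and collecting powers of $\epsilon$, the constant matches $1$ and the linear term matches $-2\epsilon$ (both by criticality), leaving the claimed $-\tfrac{4}{3\pi}(\epsilon^{(\ell)})^{3/2}$ correction with an $O((\epsilon^{(\ell)})^{5/2})$ remainder that is uniform in $\ell$ for $\epsilon^{(\ell)}$ small. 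Comparing with the continuous ODE $\dot\epsilon=-\tfrac{4}{3\pi}\epsilon^{3/2}$ (whose substitution $\epsilon^{-1/2}$ linearizes it) gives $\epsilon^{(\ell)}=\Theta(\ell^{-2})$; a standard discrete-to-continuous bootstrap argument, similar to that in Appendix~\ref{S:iw-tanh} (see the proof of Proposition~\ref{P:tanh-crit}), converts the decay into the sharp estimate. The main technical care is required here: I would first show via the one-sided inequality $\epsilon^{(\ell+1)}\le \epsilon^{(\ell)}-c(\epsilon^{(\ell)})^{3/2}$ for large $\ell$ that $\epsilon^{(\ell)}\to 0$, then extract the precise leading constant by iterating the expansion on $\epsilon^{-1/2}$, matching the two-sided $\frac{2}{3\pi}\ell^{-2}(1+o(1))$ asymptotic.

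\medskip

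\noindent The only nontrivial step is the Gaussian identity for $\mathbb E[|u||v|]$ in Step~3; everything else is algebraic manipulation or a standard discrete Grönwall-type argument.
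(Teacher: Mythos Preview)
Your argument is correct and complete, but it follows a genuinely different route from the paper's. The paper makes the orthogonal change of variables $\xi=(z_\alpha+z_\beta)/2\sqrt{K}$, $\eta=(z_\alpha-z_\beta)/2\sqrt{K}$, which are \emph{independent} centered Gaussians with variances $1-\epsilon$ and $\epsilon$; this diagonalization makes the $\epsilon$-parametrization appear directly. It then expands $\sigma(z_\alpha)\sigma(z_\beta)$ as a product of sign indicators, symmetrizes under $(\xi,\eta)\mapsto(-\xi,-\eta)$, and reduces to a single integral $\int_{\{(1-\epsilon)\xi^2>\epsilon\eta^2\}}((1-\epsilon)\xi^2-\epsilon\eta^2)\,e^{-(\xi^2+\eta^2)/2}\,d\xi\,d\eta$ which it finishes in polar coordinates. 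By contrast you stay in the original correlated coordinates $(u,v)$, split $\sigma$ into positive and negative parts, and use the two algebraic identities for $uv$ and $|u||v|$ to funnel everything into the single known quantity $\mathbb E[|u||v|]=\tfrac{2}{\pi}(\sqrt{1-\rho^2}+\rho\arcsin\rho)$; the $\epsilon$-form then comes out via half-angle identities. Your approach is arguably cleaner because it isolates the one nontrivial Gaussian integral at the outset and quotes it, at the cost of an extra trigonometric conversion at the end; the paper's route is more self-contained and explains structurally why $\epsilon$ rather than $\rho$ is the right variable. Both the small-$\epsilon$ expansion and the discrete-ODE asymptotic in your Step~4 match what the paper states without further detail.
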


\begin{proof}
We have from Theorem \ref{T:iwl} that
\begin{equation}\label{E:K-rec-1homog}
K_{\alpha\beta}^{(\ell+1)} = C_b + C_W \bk{\sigma(z_\alpha)\sigma(z_\beta)}_{K^{(\ell)}},    
\end{equation}
where we recall that the brackets above mean the average with respect to the Gaussian distribution
\[
\lr{\begin{array}{c}
     z_\alpha  \\
     z_\beta
\end{array}}~\sim~\mathcal N\lr{0, \lr{\begin{array}{cc}
    K_{\alpha\alpha}^{(\ell)} & K_{\alpha\beta}^{(\ell)} \\
    K_{\alpha\beta}^{(\ell)} & K_{\beta\beta}^{(\ell)} 
\end{array}}}.
\]
Since we are at criticality, we have
\[
C_b= 0,\qquad C_W = \frac{2}{a_+^2+a_-^2}
\]
and that moreover
\[
 K_{\alpha\alpha}^{(\ell)}  =  K_{\beta\beta}^{(\ell)} = K,
\]
where $K$ is the constant from \eqref{E:same-norm}. Our first step is to change from the Gaussian variables $z_\alpha,z_\beta$ to the new Gaussian variables
\[\xi = \frac{z_\alpha + z_\beta}{2\sqrt{K}},\qquad \eta = \frac{z_\alpha-z_\beta}{2\sqrt{K}}.\]
We have
\[z_\alpha = \sqrt{K}(\xi +\eta),\qquad z_\beta = \sqrt{K}\lr{\xi-\eta}.\]
Moreover, writing 
\[
\epsilon:=\epsilon_{\alpha\beta}^{(\ell)}= \frac{1}{2}\lr{1-\frac{K_{\alpha\beta}^{(\ell)}}{\lr{K_{\alpha\alpha}^{(\ell)}K_{\beta\beta}^{(\ell)}}^{1/2}}}
\]
we find
 \begin{align*}
\Var[\xi]  =1-\eps,\qquad \Var[\eta] = \eps,\qquad 
\Cov[\xi,\eta] = 0.
\end{align*}
Hence, the right hand side of the recursion \eqref{E:K-rec-1homog} reads
\begin{equation*}
C_WK\int_\R\int_\R \sigma\lr{(1-\eps)^{1/2}\xi+\eps^{1/2} \eta}\sigma\lr{(1-\eps)^{1/2}\xi-\eps^{1/2} \eta} \exp\left[-\frac{1}{2}\lr{\xi^2 + \eta^2}\right] \frac{d\xi d\eta}{2\pi}.
\end{equation*}
Using the definition of $\sigma$ yields
\begin{align*}
    &\sigma\lr{(1-\eps)^{1/2}\xi+\eps^{1/2} \eta}\sigma\lr{(1-\eps)^{1/2}\xi-\eps^{1/2} \eta}\\
    &\qquad =\lr{a_+{\bf 1}_{\xi + \eta >0}+a_-{\bf 1}_{\xi+\eta <0}}\lr{a_+{\bf 1}_{\xi - \eta >0}+a_-{\bf 1}_{\xi-\eta <0}} ((1-\epsilon)\xi^2-\epsilon\eta^2).
\end{align*}
Changing variables $(\xi,\eta)\rightarrow (-\xi,-\eta)$ inside the integral and averaging yields
\begin{align*}
    K_{\alpha\beta}^{(\ell+1)}&= C_WKa_+a_-(1-2\epsilon) \\
    &+ \frac{C_WK(a_+-a_-)^2}{2} \int_{\R^2} {\bf 1}_{(1-\epsilon)\xi^2-\epsilon \eta^2>0} ((1-\epsilon)\xi^2-\epsilon \eta^2) \exp\left[-\frac{1}{2}\lr{\xi^2 + \eta^2}\right]\frac{d\xi d\eta}{2\pi}.
\end{align*}
Passing to polar coordinates and explicitly computing the resulting integral is now straightforward and completes the derivation of \eqref{E:ep-rec}. 
\end{proof}

\subsection{Full Distribution of Neuron Pre-activations at a Single Input and the Derivation of \eqref{E:1-homog-limit}}\label{S:1-homog-limit}
Our purpose in this section is to briefly recall an exact formula for the full distribution of a neuron pre-activation $z_{i;\alpha}^{(L+1)}$. For this, note that since $x_\alpha\mapsto z_\alpha^{(L+1)}$ is piecewise linear and the event that the Jacobian $J_{x_\alpha}z_{\alpha}^{(L+1)}$ is not well-defined at $x_\alpha$ has probability zero, we may write
\[
z_{\alpha}^{(L+1)} = J_{x_\alpha}z_{\alpha}^{(L+1)} x_\alpha. 
\]
Next, 
\begin{equation}\label{E:Jac-prod}
J_{x_\alpha}z_{\alpha}^{(L+1)} = W^{(L+1)}D^{(L)}W^{(L)}\cdots D^{(1)}W^{(1)},    
\end{equation}
where $W^{(\ell)}$ are simply the weight matrices and 
\[
D^{(\ell)}:=\mathrm{Diag}\lr{\sigma'(z_{i;\alpha}^{(\ell)}),\, i=1,\ldots, n_\ell}.
\]
Arguing exactly as in Proposition 2 of \cite{hanin2020products}, we have the following equality in distribution:
\[
 D^{(L)}W^{(L)}\cdots D^{(1)}W^{(1)}\stackrel{d}{=}  A\widehat{D}^{(L)}W^{(L)}\cdots \widehat{D}^{(1)}W^{(1)},
\]
where $A$ is a diagonal matrix with iid $\pm 1$ entries on the diagonal that is independent of $W^{(\ell)}$ and the diagonal matrices
\[
\widehat{D}^{(\ell)}=\mathrm{Diag}\bigg(\underbrace{a_+\xi_i^{(\ell)} + a_-(1-\xi_{i}^{(\ell)})}_{=:d_{i}^{(\ell)}},\, i=1,\ldots, n_\ell\bigg),\qquad \xi_i^{(\ell)} \sim \mathrm{Bernoulli}(1/2)\,\, iid.
\]
Combining this with \eqref{E:Jac-prod} and recalling that the entries of $W^{(L+1)}$ are iid centered Gaussians with variance $C_W/n_L$ yields
\[
z_{i;\alpha}^{(L+1)} \stackrel{d}{=}Z_1\cdot \lr{\frac{C_W}{n_L}}^{1/2} \norm{\widehat{D}^{(L)}W^{(L)}\cdots \widehat{D}^{(1)}W^{(1)}x_\alpha},
\]
where $Z_1\sim \mN(0,1)$ is independent of $\widehat{D}^{(\ell)},W^{(\ell)},\, i=1,\ldots, L$. Further, due to the right orthogonal invariance of the Gaussian matrices $W^{(\ell)}$ and the normalization that the variance of the entries of $W^{(\ell)}$ is $C_W/n_{\ell-11}$, we have that 
\begin{align*}
&\log \left[\lr{\frac{C_W}{n_L}}^{1/2}  \norm{\widehat{D}^{(L)}W^{(L)}\cdots \widehat{D}^{(1)}W^{(1)}x_\alpha}\right]\\ &\qquad\stackrel{d}{=}\frac{1}{2}\log\left[\frac{C_W}{n_0}\norm{x_\alpha}^2 \right]+\sum_{\ell=1}^L \frac{1}{2}\log\left[\frac{C_W}{n_L}\norm{\widehat{D}^{(\ell)}\widehat{W}^{(\ell)}u^{(\ell)}}^2\right]    
\end{align*}
where $u^{(\ell)}\in \R^{n_{\ell-1}}$ is collection of deterministic unit vectors and $\widehat{W}^{(\ell)}$ are independent random matrices with iid standard Gaussian entries. The summands on the previous line are independent and are each distributed like the logarithm of a randomly weighted $\chi^2$ random viable:
\[
\frac{C_W}{n_\ell}\norm{\widehat{D}^{(\ell)}W^{(\ell)}u^{(\ell)}}^2 \stackrel{d}{=} \frac{C_W}{n_\ell}\sum_{j=1}^{n_\ell} \lr{d_i^{(\ell)}}^2 \lr{Z_i^{(\ell)}}^2,
\]
where $Z_i^{(\ell)}\sim \mN(0,1)$ are iid and independent of $d_i^{(\ell)}$. Putting this all together, we find that
\begin{align*}
    z_{i;\alpha}^{(L+1)}&\stackrel{d}{=} \lr{\frac{C_W}{n_0}\norm{x_\alpha}^2}^{1/2}\cdot Z_1\cdot  \prod_{\ell=1}^L \lr{\frac{C_W}{n_\ell}}^{1/2}\norm{\widehat{D}^{(\ell)}W^{(\ell)}u^{(\ell)}}
\end{align*} 
is a product of $L+1$ independent random variables. Moreover, a direct computation shows that
\begin{align*}
\E{\log\left[\frac{C_W}{n_\ell}\sum_{j=1}^{n_\ell} \lr{d_i^{(\ell)}}^2 \lr{Z_i^{(\ell)}}^2 \right]}&= -\frac{1}{2}\Var\left[\frac{C_W}{n_\ell}\sum_{j=1}^{n_\ell} \lr{d_i^{(\ell)}}^2 \lr{Z_i^{(\ell)}}^2\right] + O(n_\ell^{-2})\\
&=-\frac{1}{2n_\ell}\lr{6\frac{a_+^4+a_-^4}{(a_+^2+a_-^2)^2}-1} + O(n_\ell^{-2})
\end{align*}
and also that
\begin{align*}
\Var\left[\log\left[\frac{C_W}{n_\ell}\sum_{j=1}^{n_\ell} \lr{d_i^{(\ell)}}^2 \lr{Z_i^{(\ell)}}^2 \right]\right]&= \Var\left[\frac{C_W}{n_\ell}\sum_{j=1}^{n_\ell} \lr{d_i^{(\ell)}}^2 \lr{Z_i^{(\ell)}}^2\right] + O(n_\ell^{-2})\\
&=\frac{1}{n_\ell}\lr{6\frac{a_+^4+a_-^4}{(a_+^2+a_-^2)^2}-1 }+ O(n_\ell^{-2}).
\end{align*}
Combining the preceding two estimates, taking $n,L\gives \infty$ with $L/n\gives \gamma$ and applying the CLT yields
\[
\lim_{\substack{n,L\gives \infty \\ L/n \gives \gamma\in [0,\infty)}} z_{i;\alpha}^{(L)}\quad \stackrel{d}{\longrightarrow}\quad \lr{\frac{C_W}{n_0}\norm{x_\alpha}^2}^{1/2} Z_1  \exp\left[-\mu(\gamma,a_+,a_-)+\sigma(\gamma,a_+,a_-)Z_2\right],    
\]
where
\[
\mu(\gamma,a_+,a_-) = \sigma^2(\gamma,a_+,a_-) := \frac{\gamma}{4}\lr{6\frac{a_+^4+a_-^4}{(a_+^2+a_+^2)^2}-1},\quad Z_1,Z_2\sim \mN(0,1)\text{ iid}.
\]
This is precisely the statement of \eqref{E:1-homog-limit}.
\end{document}